\begin{document}

\begin{title}
{On $p$-stablility in groups and fusion systems}
\end{title}
\author{L.~Héthelyi, M.~Szőke and A.~Zalesski}
\maketitle

\begin{abstract}
The aim of this paper is to generalise the notion of $p$-stability ($p$
is an odd prime) in finite group theory to fusion
systems. We first compare the different definitions of $p$-stability for
groups and examine properties of $p$-stability concerning subgroups and
factor groups. Motivated by Glauberman's theorem, we study the question how
$Qd(p)$ is involved in finite simple groups. We show that with a single
exception a simple group involving $Qd(p)$ has a subgroup isomorphic to
either $Qd(p)$ or a central extension of $Qd(p)$ by a cyclic group of order
$p$. Then we define $p$-stability for fusion systems and
characterise some of its properties. We prove a fusion theoretic version of
Thompson's maximal subgroup theorem. We introduce the notion of section
$p$-stability both for groups and fusion systems and prove a version of
Glauberman's
theorem to fusion systems. We also examine relationship between solubility
and $p$-stability for fusion systems and determine the simple groups whose
fusion systems are $Qd(p)$-free.
\end{abstract}

\section*{Introduction}
%Throught this paper, l
Throughout, let $p$ be an odd prime. The concept of $p$-stability
goes back to the middle of the $1960$s. It was originally defined by
D.~Gorenstein and J.~H.~Walter in \cite{gorenstein:walter:64} but, since
then, it has undergone several modifications. $p$-stability was investigated
by G.~Glauberman and also played a role in the classification of finite
simple groups. In the $1960$s, several different definitions of
$p$-stability arose and, at a first sight, these definitions appear not
to be equivalent. In Section~\ref{pstabsumm} of the present paper we go
around the notion of $p$-stability and examine some basic properties
that do not seem to have been considered so far. We show that $p$-stability
inherits to subgroups but not to factor groups. The smallest group which is
not $p$-stable is the semidirect product of $SL_2(p)$ with an elementary
Abelian group of order $p^2$ (acted on by $SL_2(p)$ in the natural way).
%2-dimensional representation).
%vector space over $\F_p$ with the natural action).
Glauberman denoted this group
by $Qd(p)$ and showed that a group does not involve $Qd(p)$ if and only
if all of its sections are $p$-stable. 
For further investigation, we define the concept of section $p$-stability
and give a new version of Glauberman's theorem (see
\ref{glaubpropforsections}).

%Several properties of groups can be considered `locally', that is, within
%the normalisers of non-trivial $p$-subgroups. Moreover, the operation of
%a group on its $p$-subgroups (by conjugation) had been extensively studied
%and led to the definition of a (saturated) fusion system, which can be
%considered as generalisation of the notion of a group. This concept was
%introduced by L.~Puig in the $1990$s and was originally called a `Frobenius
%category' (see~\cite{puig:06}). We give the exact definition of a fusion
%system in Section~\ref{prelims}. In the past 2 decades, fusion systems were
%studied thoroughly and many concepts of group theory such as solubility
%or simplicity were defined for fusion systems. Several group theoretical
%results has been proved to be valid also for fusion systems. Even
%$Qd(p)$-free fusion systems were defined and studied in
%\cite{kessar:linckelmann:2008}. In Section~\ref{secpstabfs} of the present
%paper we define $p$-stability for saturated fusion systems and investigate
%its basic properties. It turns out that, unlike finite groups,
%solubility does not imply $p$-stability (not even for $p > 5$).
%
Motivated by this result, we ask the question
%In the next three sections, we answer the question
which finite simple groups involve $Qd(p)$. Not only do we answer this
question, but we also investigate how $Qd(p)$ is involved. Our main result
can be summarised as follows:

\begin{thm}
Let $G$ be a finite simple group. Then $G$ involves $Qd(p)$ if and only
if $G$ is non-$p$-stable. This happens if and only if $G$ has a subgroup
isomorphic to $Qd(p)$ or a central extension of $Qd(p)$ by a cyclic group
of order $p$ or $G = He$ and $G$ contains an extension of $Qd(p)$ by a
Klein $4$-group.
\end{thm}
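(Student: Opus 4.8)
The three properties are most efficiently handled by collapsing them onto a single implication. If $G$ is non-$p$-stable, then $G$ is a non-$p$-stable section of itself, so not all of its sections are $p$-stable and hence, by the section version of Glauberman's theorem~\ref{glaubpropforsections}, $G$ involves $Qd(p)$. In the other direction, each of the three candidate subgroups is itself non-$p$-stable: $Qd(p)$ by definition, and for the two larger candidates, the central extension of $Qd(p)$ by a group of order $p$ and the Held extension of $Qd(7)$ by a Klein $4$-group, by a direct verification from the definition. Since $p$-stability is inherited by subgroups (Section~\ref{pstabsumm}), the presence of any one of these forces $G$ to be non-$p$-stable, while trivially each of them makes $G$ involve $Qd(p)$. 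The theorem therefore reduces to one assertion: a simple group $G$ that involves $Qd(p)$ must contain a subgroup of one of the three listed isomorphism types.

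My plan for this implication is to convert involvement into a concrete local configuration. The characteristic feature of $Qd(p)$ is that an element $x$ of order $p$ in $SL_2(p)$ acts quadratically and nontrivially on the natural module $V$, an elementary abelian group of order $p^2$, with $\langle x^{SL_2(p)}\rangle=SL_2(p)$ acting irreducibly on $V$. I would first prove that if $G$ involves $Qd(p)$ then such data occur honestly inside a single $p$-local subgroup of $G$: a subgroup $L$ with $L\cong SL_2(p)$ (or $L/Z(L)\cong PSL_2(p)$, using that this is quasisimple for $p\ge5$; the case $p=3$, where $SL_2(3)$ is soluble, needs separate attention) together with an $L$-invariant $p$-subgroup $W$ on which $L$ induces the natural module on $W/\Phi(W)$. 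One then attempts to straighten this into a subgroup of the required shape: the preimage in a Sylow $p$-subgroup of an $L$-invariant natural $V$ is either elementary abelian of rank $2$, giving a genuine $Qd(p)$, or extraspecial of order $p^3$, giving the central extension of $Qd(p)$ by a cyclic group of order $p$.

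The existence of such a pair $(L,W)$, and the success of the straightening, would then be checked by running through the classification of finite simple groups family by family. For groups of Lie type in the defining characteristic $p$ of rank at least $2$, one finds $Qd(p)$ at once, already inside $SL_3(p)$, whose point-stabiliser parabolic has Levi factor $GL_2(p)$ acting naturally on its unipotent radical of order $p^2$; the rank-$1$ groups have abelian Sylow $p$-subgroups and hence cannot contain the nonabelian, extraspecial Sylow $p$-subgroup of $Qd(p)$, so they are $Qd(p)$-free. For the alternating groups one shows that $A_n$ involves $Qd(p)$ exactly when $n\ge p^2$, the affine group $ASL_2(p)=Qd(p)$ embedding into $A_{p^2}$ through its action on the affine plane, the translations being even permutations as products of $p$-cycles. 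Groups of Lie type in characteristic different from $p$ are treated through the structure of the normalisers of their $p$-tori, deciding in each case when a natural $SL_2(p)$ is realised, and the sporadic groups are settled by a finite inspection of their $p$-local subgroups.

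I expect the main obstacle to be exactly the straightening step in the cross-characteristic and sporadic cases, namely deciding whether an $L$-invariant natural rank-$2$ section can be lifted to an $L$-invariant subgroup of a Sylow $p$-subgroup. For every simple group other than $He$ at $p=7$ this lift succeeds and delivers one of the first two subgroup types. In $He$, whose Sylow $7$-subgroup has order $7^3$, the relevant $7$-local subgroup realises $Qd(7)$ only as a quotient of a non-split extension by a Klein $4$-group, and the delicate point will be to prove that $He$ contains \emph{no} subgroup isomorphic to $Qd(7)$ or to the central extension of $Qd(7)$ by a cyclic group of order $7$, so that this extension is genuinely the minimal witness to the involvement. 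This last, negative, statement depends on the explicit $7$-local geometry and $7$-fusion of $He$ and is most reliably confirmed with the aid of its character table and computer algebra.
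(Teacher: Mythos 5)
Your opening reduction is exactly the paper's: by Glauberman's theorem (Theorem~\ref{glaubthm}, through Proposition~\ref{glaubpropforsections}) a non-$p$-stable simple group involves $Qd(p)$; conversely each of the listed subgroups is non-$p$-stable and $p$-stability is inherited by subgroups (Proposition~\ref{def2tosubgroup}), so the whole theorem rests on the implication that involvement forces containment of one of the three types. Like the paper, you settle this by a family-by-family run through the classification, so the architecture is sound. The problem is that several of the concrete claims inside your run are false, and at least two of them would sink the argument.

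First, you have attached the Held exception to the wrong prime. At $p=7$ the group $He$ has a maximal subgroup of type $7^2{:}2.L_2(7)$, which \emph{is} isomorphic to $Qd(7)$; the non-existence statement you plan to verify (that $He$ contains no subgroup isomorphic to $Qd(7)$) is simply false, and the delicate $7$-local analysis you anticipate is both doomed and unnecessary. The genuine exception occurs at $p=3$: the derived subgroup of the normaliser of a $3A^2$-subgroup of $He$ has shape $(2^2\times 3^2)\cdot SL_2(3)$, a non-split extension of $Qd(3)$ by a Klein four-group, and it is at this prime that one must check $He$ contains neither $Qd(3)$ nor a central extension of it by $C_3$. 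Second, your defining-characteristic analysis is wrong at both ends. Among the rank-one groups, ${}^2A_2$ (that is, $PSU_3(q)$ with $q$ a power of $p$) and ${}^2G_2(3^{2n+1})$ at $p=3$ have \emph{non-abelian} Sylow $p$-subgroups, so your abelian-Sylow argument does not apply to them; the paper instead shows for $U_3(q)$ that any fusion of non-trivial elements of a Sylow $p$-subgroup $P$ is realised in $N_G(P)=P\rtimes C$ with $C$ cyclic, a group that cannot involve $Qd(p)$, and disposes of the Ree groups because their Sylow $2$-subgroups are abelian while those of $Qd(p)$ are quaternion. In rank at least two, types $B_2$ and ${}^2A_n$ ($n\ge 3$) contain no copy of $PSL_3(p)$, so ``$Qd(p)$ at once inside $SL_3(p)$'' fails there; for these groups the paper can only exhibit $\widetilde{Qd}(p)$ (for instance as a vector stabiliser in $Sp_4(p)$), which is one of the reasons the central-extension clause appears in the statement at all. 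A smaller imprecision: for $p=3$ the central extension of $Qd(3)$ by $C_3$ is not unique, and in several cross-characteristic families (e.g.\ $PSL_3(q)$ with $9\mid q-1$, or $G_2(q)$ with $9\mid q^2-1$) only the variant $\widetilde{Qd}^-(3)$ embeds, not $\widetilde{Qd}(3)$; your straightening dichotomy glosses over this, although it does not affect the truth of the statement as worded.
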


%We show that
%either $Qd(p)$ or a central extension of $Qd(p)$ by a cyclic group of order
%$p$ is a subgroup of a given simple group or $Qd(p)$ is not involved in it.
%The only exception is the Held group for $p = 3$, which contains a
%subgroup where a non-split extension of $SL_2(3)$ by a Klein $4$-group acts
%on an elementary Abelian group of order $9$. In any case, a simple group is
%$p$-stable if and only if it is section $p$-stable.
Our proof of Theorem~1 is divided into three parts: we examine the
alternating
groups and simple groups of Lie type in defining characteristic in
Section~\ref{Qdsecdefchar}. The sporadic simple groups are discussed in
Section~\ref{sporadic}. Finally, in Section~\ref{Qdsecnondefchar}, we
investigate simple groups of Lie type in non-defining characteristic.

Several properties of groups can be considered `locally', that is, within
the normalisers of non-trivial $p$-subgroups. Moreover, the operation of
a group on its $p$-subgroups (by conjugation) had been extensively studied
and led to the definition of a (saturated) fusion system, which can be
considered as generalisation of the notion of a group. This concept was
introduced by L.~Puig in the $1990$s and was originally called a `Frobenius
category' (see~\cite{puig:06}). We give the exact definition of a fusion
system in Section~\ref{prelims}. In the past 2 decades, fusion systems were
studied thoroughly and many concepts of group theory such as solubility
or simplicity were defined for fusion systems. Several group theoretical
results has been proved to be valid also for fusion systems. Even
$Qd(p)$-free fusion systems were defined and studied in
\cite{kessar:linckelmann:2008}. In Section~\ref{secpstabfs} of the present
paper we define $p$-stability for saturated fusion systems and investigate
its basic properties. It turns out that, unlike finite groups,
solubility does not imply $p$-stability (not even for $p > 5$).

In Section~\ref{secthompson}, we show a fusion theoretic version of
Thompson's maximal subgroup theorem (see~\cite[p. 295, Thm 8.6.3]
{gorenstein}). This can be summarised in the following way:

\begin{thm}
Let $\mf F$ be a saturated fusion system defined on the $p$-group $P$.
Let $\mf Q$ be a collection of subgroups of $P$ closed under $\mc
F$-morphisms. Let $\mf N$ be the set of normaliser systems of subgroups
of $P$ that are defined on elements of $\mf Q$. Assume each element of
$\mf N$ is constrained and $p$-stable. Then $\mf N$ has a unnique maximal
element.
\end{thm}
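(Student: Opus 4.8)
The plan is to mirror the classical group-theoretic proof of Thompson's maximal subgroup theorem, but carried out entirely in the language of fusion systems. The statement concerns the poset $\mf N$ of normaliser systems $N_{\mf F}(Q)$ for $Q \in \mf Q$, ordered presumably by inclusion of the underlying groups together with containment of the fusion data. To produce a unique maximal element, the natural strategy is to show that $\mf N$ is \emph{directed}: given any two elements $N_{\mf F}(Q_1)$ and $N_{\mf F}(Q_2)$ of $\mf N$, I would exhibit a third element of $\mf N$ containing both. Since a finite poset that is directed has a unique maximal element, this suffices.

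First I would fix notation for a normaliser system $N_{\mf F}(Q)$ defined on $N_P(Q)$, and recall from Section~\ref{secpstabfs} exactly what it means for such a system to be constrained and $p$-stable; the constrained hypothesis is what lets me pass back and forth between the fusion system and an honest finite \emph{model} $M$, a group with $N_P(Q) \in \mathrm{Syl}_p(M)$ realising $N_{\mf F}(Q)$ and satisfying $C_M(O_p(M)) \le O_p(M)$. The $p$-stability of the system should translate into $p$-stability of each model, which is precisely the hypothesis the classical Thompson argument consumes. The heart of the classical proof is the construction of a characteristic subgroup — via the Thompson subgroup $J(P)$ or a $K$-invariant argument using $p$-stability and Glauberman's replacement — that is normalised by every relevant normaliser; I would reproduce this by working inside the models and using that $\mf Q$ is closed under $\mf F$-morphisms to guarantee the pieces glue across different choices of $Q$.

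Concretely, the directedness step is where I would do the real work. Given $Q_1, Q_2 \in \mf Q$, I would form the normaliser system of a suitable subgroup built from the two — the key is to locate a single subgroup $Q_0 \in \mf Q$ whose normaliser system dominates both, using $p$-stability to control how $J(N_P(Q_i))$ sits inside the models and invoking the factorisation results that $p$-stability provides (the fusion analogue of the statement that $p$-stable constrained groups admit a well-behaved factorisation through $N(J(P))$ or $N(Z(J(P)))$). The closure of $\mf Q$ under $\mf F$-morphisms is essential here, because the characteristic subgroups produced by the Thompson factorisation must themselves lie in $\mf Q$ for their normaliser systems to be elements of $\mf N$.

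The main obstacle I anticipate is exactly this transfer between the fusion system and its models, and ensuring it is canonical rather than model-dependent. A constrained fusion system has a model that is unique up to isomorphism, but the Thompson argument manipulates specific characteristic subgroups, and I must check that these are preserved by $\mf F$-conjugacy so that the resulting normaliser systems are genuinely comparable in the poset $\mf N$. A secondary difficulty is verifying that $p$-stability, as defined for fusion systems in Section~\ref{secpstabfs}, passes to the models in the precise form the replacement theorem requires; if the fusion-theoretic definition is weaker, I would need an intermediate lemma bridging the two notions before the classical machinery applies. Once these compatibility issues are settled, the combinatorial conclusion that a finite directed poset has a unique maximal element is immediate.
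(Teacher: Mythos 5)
Your combinatorial reduction is sound --- in a finite poset, directedness does force a unique maximal element --- but the proof has a genuine gap exactly where you defer it: the directedness step is never carried out, and it is not a routine adaptation of classical machinery. In the paper's proof the unique maximal element is exhibited explicitly as $\mathcal M = \mathcal N_{\mathcal F}(Z(J(P)))$, and the entire content of the theorem is the verification that every $\mathcal N_{\mathcal F}(R) \in \mathfrak N$ is contained in $\mathcal M$. This is done in two stages. For $R \triangleleft P$ the normaliser system lives on all of $P$; constraint plus $p$-stability give a model $L$ (a $p$-constrained, $p'$-reduced group realising the system), Glauberman's Theorem~A applied to $L$ gives $Z(J(P)) \triangleleft L$, hence $\mathcal N_{\mathcal F}(R) \subseteq \mathcal M$. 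For $R \ntriangleleft P$ one argues by downward induction on $|N_P(R)|$: the same model argument gives $Z := Z(J(N_P(R))) \triangleleft \mathcal N_{\mathcal F}(R)$; one picks a fully $\mathcal F$-normalised conjugate $Z^*$ of $Z$ and a morphism $\phi\colon N_P(Z) \to N_P(Z^*)$, decomposes $\phi$ via Alperin's fusion theorem into automorphisms of essential subgroups whose normalisers have strictly larger order, so that the inductive hypothesis places every factor (hence $\phi$ itself) inside $\mathcal M$, and finally transports $\mathcal N_{\mathcal F}(R)$ into $\mathcal N_{\mathcal F}(Z^*) \subseteq \mathcal M$ by conjugating with $\phi$; this transport is itself a separate lemma (Lemma~\ref{normfunctor}), which needs $Z \triangleleft \mathcal N_{\mathcal F}(R)$ to produce an injective functor $\mathcal N_{\mathcal F}(R) \to \mathcal N_{\mathcal F}(Z^*)$.

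None of these ingredients appears in your plan. You invoke Thompson factorisation and models, but you never say how a common upper bound for $\mathcal N_{\mathcal F}(Q_1)$ and $\mathcal N_{\mathcal F}(Q_2)$ would actually be produced; in the fusion-system setting, ``containment'' of normaliser systems defined on different subgroups of $P$ cannot be arranged by intersecting or multiplying subgroups --- it must be witnessed by morphisms, and producing those morphisms is exactly what the Alperin decomposition together with the induction supplies. A further point your outline misses: the characteristic subgroup $Z(J(N_P(R)))$ is in general not fully $\mathcal F$-normalised, so its normaliser system need not even be an element of $\mathfrak N$ until one conjugates it to a fully normalised representative; handling this is the purpose of the functor lemma and is invisible in your sketch. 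In short, the wrapper (directedness plus the finite-poset observation) is admissible, but the mathematical core --- the universal upper bound $\mathcal N_{\mathcal F}(Z(J(P)))$ and the induction/Alperin/functor argument showing every element of $\mathfrak N$ lies below it --- is precisely the part of the theorem that requires proof, and it is absent.
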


Then, in Section~\ref{secQdpfree}, we investigate $Qd(p)$-free fusion
systems and show the following:

\begin{thm}
A group does not involve $Qd(p)$ if and only if its fusion system is
$Qd(p)$-free.
\end{thm}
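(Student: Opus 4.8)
The plan is to prove the two contrapositive halves of the equivalence, writing $\mathcal{F}=\mathcal{F}_P(G)$ for the fusion system of $G$ on a Sylow $p$-subgroup $P$ and $\mathcal{Q}=\mathcal{F}_{P_0}(Qd(p))$ for the fusion system of $Qd(p)$ on its Sylow $p$-subgroup $P_0$ (extraspecial of order $p^3$ and exponent $p$). By the definition of $Qd(p)$-freeness, $\mathcal{F}$ is $Qd(p)$-free exactly when no section of $\mathcal{F}$ --- that is, no quotient of a subsystem of $\mathcal{F}$ --- is isomorphic to $\mathcal{Q}$. So it suffices to show: (A) if $G$ involves $Qd(p)$ then $\mathcal{F}$ involves $\mathcal{Q}$, and (B) if $\mathcal{F}$ involves $\mathcal{Q}$ then $G$ involves $Qd(p)$. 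Direction (A) is routine; direction (B) carries the weight.

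For (A), suppose $K/L\cong Qd(p)$ for some $L$ normal in $K$ with $K\le G$. Replacing $K$ by a conjugate, I may assume $S:=P\cap K$ is a Sylow $p$-subgroup of $K$, so that $\mathcal{F}_S(K)$ is a fusion subsystem of $\mathcal{F}$. Then $S\cap L$ is strongly $\mathcal{F}_S(K)$-closed, and by the standard description of quotients of realizable fusion systems one has $\mathcal{F}_S(K)/(S\cap L)\cong \mathcal{F}_{SL/L}(K/L)\cong\mathcal{Q}$. Thus $\mathcal{Q}$ is a quotient of the subsystem $\mathcal{F}_S(K)$ of $\mathcal{F}$, so $\mathcal{F}$ involves $\mathcal{Q}$ and is not $Qd(p)$-free.

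For (B) I would argue through $p$-stability. The observation that makes the reverse direction tractable is that for a fully $\mathcal{F}$-normalized subgroup $Q\le P$ the automizer $\mathrm{Aut}_{\mathcal{F}}(Q)$ is the genuine group section $N_G(Q)/C_G(Q)$; hence any failure of $p$-stability visible at the level of automizers of $\mathcal{F}$ is automatically a failure of $p$-stability in an honest section of $G$. Concretely, $\mathcal{Q}$ is the prototypical non-$p$-stable fusion system: its essential subgroup $E\cong C_p\times C_p$ has automizer $\mathrm{Aut}_{\mathcal{Q}}(E)\cong SL_2(p)$ acting as the natural module, and a transvection provides an element acting quadratically on $E$ yet lying outside $O_p(\mathrm{Out}_{\mathcal{Q}}(E))$. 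Assuming $\mathcal{F}$ involves $\mathcal{Q}$, I would take a subsystem $\mathcal{E}\subseteq\mathcal{F}$ and a strongly closed $T$ with $\mathcal{E}/T\cong\mathcal{Q}$, and trace this quadratic witness back through the quotient map and the inclusion $\mathcal{E}\subseteq\mathcal{F}$ to a fully normalized subgroup $Q^{*}\le P$ whose automizer $\mathrm{Aut}_{\mathcal{F}}(Q^{*})=N_G(Q^{*})/C_G(Q^{*})$ still contains a quadratically acting element outside its $p$-core. Then $N_G(Q^{*})/C_G(Q^{*})$ is a non-$p$-stable section of $G$, and by Glauberman's theorem (cited in the introduction) $G$ involves $Qd(p)$, as required.

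The hard part will be the bookkeeping in (B): verifying that the quadratic, non-$p$-stable configuration inside $\mathcal{Q}$ genuinely survives as a witness at the level of the automizers of the ambient system $\mathcal{F}$, rather than only inside the abstract subsystem $\mathcal{E}$. This is the realizability gap --- an arbitrary subsystem or quotient of $\mathcal{F}_P(G)$ need not be the fusion system of any subgroup or section of $G$ --- and bridging it is the whole content of the reverse implication. I expect this to require the compatibility between normalizer and automizer subsystems and the corresponding group normalizers established in the earlier sections, together with the fusion-theoretic form of Glauberman's theorem, to guarantee that the offending automizer is realized by the $p$-local section $N_G(Q^{*})/C_G(Q^{*})$ of $G$.
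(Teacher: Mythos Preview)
Your proposal rests on a misreading of the definition. In this paper (following Kessar--Linckelmann, see Definition~\ref{Qdpfree}) a fusion system $\mathcal F$ on $P$ is called \emph{$Qd(p)$-free} when $Qd(p)$ is not involved, as a \emph{group}, in the model of $\mathcal N_{\mathcal F}(Q)$ for any fully $\mathcal F$-normalised $\mathcal F$-centric subgroup $Q\le P$. It is \emph{not} defined via fusion-theoretic sections of $\mathcal F$, so your reformulation ``no quotient of a subsystem of $\mathcal F$ is isomorphic to $\mathcal Q=\mathcal F_{P_0}(Qd(p))$'' is not what is being proved. With the correct definition the two directions exchange roles: since the model of $\mathcal N_{\mathcal F}(Q)$ is $N_G(Q)/O_{p'}(N_G(Q))$ (Proposition~\ref{modelofN}), the implication ``$\mathcal F$ not $Qd(p)$-free $\Rightarrow$ $G$ involves $Qd(p)$'' is immediate, while the substantive direction is the one you called routine.

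The genuine content is therefore: if $G$ involves $Qd(p)$, find an $\mathcal F$-centric (equivalently $p$-centric, Lemma~\ref{F=pcentric}) fully normalised $Q$ with $Qd(p)$ still involved in $N_G(Q)/O_{p'}(N_G(Q))$. Your argument for (A) does not produce this; passing from $K/L\cong Qd(p)$ to a fusion subquotient gives no control over centricity. The paper handles this in Proposition~\ref{QdpinNpcentric}: starting from $H/K\cong Qd(p)$ one first localises at $\tilde V$ (a Sylow $p$-subgroup of the preimage of $V$) via the Frattini argument (Lemma~\ref{plocal}), then enlarges to $Q=\tilde V\,C_P(\tilde V)$, which is $p$-centric by Lemma~\ref{constpcentric}, and checks via a second Frattini argument that the quadratic $SL_2(p)$-configuration survives in $N_{H_1}(Q)/W$, forcing non-$p$-stability there and hence $Qd(p)$-involvement by Glauberman's theorem. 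Your plan for (B) --- lifting a quadratic witness from an abstract subquotient of $\mathcal F$ back to some $N_G(Q^*)/C_G(Q^*)$ --- is aimed at the wrong target and, even on its own terms, the ``bookkeeping'' you flag is exactly the part that cannot be done without a concrete construction like the one above; an arbitrary subsystem $\mathcal E\subseteq\mathcal F$ need not have automizers realised by sections of $G$.
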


We define section $p$-stability for
fusion systems and prove a fusion theoretic version of Glauberman's
result (see Section~\ref{secsectpstab}):

\begin{thm}
A fusion system is section $p$-stable if and only if it is $Qd(p)$-free.
\end{thm}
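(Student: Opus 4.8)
The plan is to prove the biconditional by chaining together the definitions and the theorems already established in the paper. The statement to prove is: a fusion system $\mf F$ is section $p$-stable if and only if it is $Qd(p)$-free. The natural strategy is to route everything through the group-theoretic version of Glauberman's theorem, which the paper recalls in the introduction (a group does not involve $Qd(p)$ if and only if all of its sections are $p$-stable), together with Theorem~3 above, which asserts that a group does not involve $Qd(p)$ precisely when its fusion system is $Qd(p)$-free.

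Let me think about how the argument should flow. First I would unwind the definition of section $p$-stability for fusion systems (given in Section~\ref{secsectpstab}) so that it is expressed in terms of the $p$-stability of suitable sections or sub/quotient systems of $\mf F$. The key conceptual point is that ``section $p$-stability'' for fusion systems should have been defined so as to mirror the group case, i.e.\ $\mf F$ is section $p$-stable exactly when every ``section'' of $\mf F$ (in the appropriate fusion-theoretic sense) is $p$-stable. Assuming that definition, the forward direction would proceed by contraposition: if $\mf F$ is \emph{not} $Qd(p)$-free, then by the definition of $Qd(p)$-freeness for fusion systems there is a section of $\mf F$ isomorphic to (the fusion system of) $Qd(p)$, and I would show directly that the fusion system of $Qd(p)$ is not $p$-stable, whence $\mf F$ is not section $p$-stable. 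The non-$p$-stability of $Qd(p)$'s fusion system should follow from the same $SL_2(p)$-on-$(\mathbb Z/p)^2$ witness that makes the group $Qd(p)$ itself non-$p$-stable, as discussed in the Introduction.

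For the reverse direction I would again argue by contraposition: suppose $\mf F$ is not section $p$-stable, so some section of $\mf F$ fails to be $p$-stable. I would then trace this failure back to a copy of $Qd(p)$ appearing in the relevant section, which is exactly what $Qd(p)$-freeness forbids. At both stages the crucial bridge is the realisability/consistency between the group notion and the fusion notion: I expect the cleanest route is to reduce to the case of a fusion system $\mf F_S(G)$ of a finite group $G$ on a Sylow $p$-subgroup $S$, invoke Theorem~3 to identify $Qd(p)$-freeness of $\mf F$ with $G$ not involving $Qd(p)$, invoke the group version of Glauberman's theorem to identify this with all sections of $G$ being $p$-stable, and finally match the fusion-theoretic notion of section $p$-stability with the group-theoretic one. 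The definitions in Sections~\ref{secpstabfs} and~\ref{secsectpstab} should have been set up so that these two notions agree on fusion systems coming from groups.

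The main obstacle, I expect, is the step that is \emph{not} simply quoting an earlier theorem: namely handling fusion systems that are \emph{not} a priori realised by a finite group, or sections of fusion systems that need not themselves be group fusion systems. One must verify that the fusion-theoretic definition of section $p$-stability is insensitive to whether the ambient system is realisable, and that a ``non-$p$-stable section'' of an abstract saturated fusion system genuinely produces a $Qd(p)$-subsystem (the fusion-theoretic analogue of the group-level involvement of $Qd(p)$). Concretely, the delicate point is to show that non-$p$-stability of a constrained section forces, via its $p$-stability-violating automorphism acting on an elementary abelian section, the presence of an $SL_2(p)$-action witnessing a $Qd(p)$ subsystem; this is where I would need to lean on the constrained-case model theorem (a constrained fusion system is the fusion system of a finite group) to transport the argument back into group theory and then reuse the group-level Glauberman theorem. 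I would structure the final write-up so that the abstract case is reduced to the constrained/realisable case, after which the biconditional follows formally from the group version of Glauberman's theorem and Theorem~3.
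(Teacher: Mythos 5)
Your skeleton --- reduce everything to finite groups via models, then quote Glauberman's Theorem~\ref{glaubthm} and Theorem~\ref{freeinv} --- is indeed the route the paper takes, but as written the proposal has a genuine gap in each direction, and in both cases the missing step is the actual mathematical content of the theorem. For the direction ``not $Qd(p)$-free $\Rightarrow$ not section $p$-stable'', you begin with ``by the definition of $Qd(p)$-freeness there is a section of $\mathcal{F}$ isomorphic to (the fusion system of) $Qd(p)$''. That is not the definition: by Definition~\ref{Qdpfree}, failure of $Qd(p)$-freeness means the \emph{group} $Qd(p)$ is involved in the model $L$ of $\mathcal{N}_{\mathcal{F}}(Q)$ for some single $\mathcal{F}$-centric fully normalised $Q$, whereas section $p$-stability (Definition~\ref{ver1sectpstabfussys}) asserts $p$-stability of the quotient systems $\mathcal{N}_{\mathcal{F}}(R)/R$ for \emph{all} fully normalised $R \leq P$. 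Bridging these two local pictures is the real work. From $L$ one does get (via Theorem~\ref{glaubthm}, Proposition~\ref{glaubpropforsections}, Stancu's quotient theorem from \cite{craven}, and Theorem~\ref{stabeqpstab}) a subgroup $Q'$ such that $\mathcal{N}_{\mathcal{N}_{\mathcal{F}}(Q)}(Q')/Q'$ is not $p$-stable; but this is a quotient of a normaliser formed \emph{inside} $\mathcal{N}_{\mathcal{F}}(Q)$, not one of the systems $\mathcal{N}_{\mathcal{F}}(R)/R$ that your target statement is about. The paper closes exactly this gap with Lemma~\ref{normfunctor}: an $\mathcal{F}$-morphism $N_P(Q') \to N_P(Q_1')$ onto a fully normalised $\mathcal{F}$-conjugate $Q_1'$ induces an injective functor of normaliser subsystems which descends to quotients, embedding $\mathcal{N}_{\mathcal{N}_{\mathcal{F}}(Q)}(Q')/Q'$ into $\mathcal{N}_{\mathcal{F}}(Q_1')/Q_1'$; Proposition~\ref{subsyststable} (subsystems inherit $p$-stability) then transfers the failure upward. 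Nothing in your proposal plays this role, and your stated ``delicate point'' (extracting an $SL_2(p)$-action witnessing a $Qd(p)$ \emph{subsystem}) is aimed at a statement the proof never needs.

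In the converse direction you propose to apply the constrained-case model theorem to $\mathcal{F}$ itself, but nothing in the hypothesis that $\mathcal{F}$ is $Qd(p)$-free visibly makes $\mathcal{F}$ constrained, and the sections $\mathcal{N}_{\mathcal{F}}(R)/R$ with $R$ non-centric need not be constrained either, so the reduction you describe never gets started. The missing ingredient is the Kessar--Linckelmann result, recorded in the paper's remark following Definition~\ref{Qdpfree}, that a $Qd(p)$-free fusion system is \emph{soluble}, hence constrained, hence equal to $\mathcal{F}_P(G)$ for a model $G$; only then do Theorem~\ref{freeinv} (so $G$ does not involve $Qd(p)$), Glauberman's theorem with Proposition~\ref{glaubpropforsections} (so every $N_G(R)/R$ is $p$-stable), and Theorem~\ref{stabeqpstab} together with Stancu's theorem (so every $\mathcal{N}_{\mathcal{F}}(R)/R$ is $p$-stable) finish the argument. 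You correctly flag realisability as the main obstacle, but flagging it is not overcoming it: with the solubility theorem and the embedding functor supplied, your outline becomes precisely the paper's proof of Theorem~\ref{eqdefs}; without them, both implications remain open.
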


As a consequence, we give a slight refinement of Glauberman's theorem, see
Theorem~\ref{refglaubthm}.

As the Sylow $p$-subgroups of $Qd(p)$ are extraspecial of exponent
$p$ and order $p^3$, we study the fusion systems defined on this group
in Secion~\ref{extraspecfussysts}. We show that with trivial exceptions
all of these fusion systems are non-$p$-stable and non-soluble.

Finally, we apply our group theoretic results to fusion systems and
investigate the relationship between solubility, $p$-stability and section
$p$-stability for fusion systems in Ssection~\ref{finalrems}.

%In the next three sections, we answer the question, which finite simple
%groups involve $Qd(p)$. But not only do we answer this question, but we
%also investigate how $Qd(p)$ is involved. We show that either $Qd(p)$
%or a central extension of $Qd(p)$ by a cyclic group of order $p$ is a
%subgroup of a given simple group or $Qd(p)$ is not involved in it. The
%only exception is the Held group for $p = 3$, which contains a subgroup,
%where a non-split extension of $SL_2(3)$ by a Klein $4$-group acts on an
%elementary Abelian group of order $9$. In any case, a simple group is
%$p$-stable if and only if it is section $p$-stable if and only if its
%fusion system is soluble. Our investigations are divided into three parts:
%we examine the alternating groups and simple groups of Lie type in defining
%characteristic in Section~\ref{Qdsecdefchar}. The sporadic simple groups are
%discussed in Section~\ref{sporadic}. Finally, in
%Section~\ref{Qdsecnondefchar}, we investigate simple groups of Lie type in
%non-defining characteristic.

\section{Summary on {\itshape p}-stable groups}\label{pstabsumm}
In the literature, we can find different definitions of $p$-stability
for groups. The notion of $p$-stability appears first
in~\cite[Def.~2, p. 171]{gorenstein:walter:64}, then in
\cite[p. 268]{gorenstein}. Later, Glauberman redefines this notion in
\cite[Def. 2.1 and 2.3, p. 1104]{glauberman:68} and
in~[p. 22]\cite{glauberman:71}.
 
Unfortunately, the four definitions are (pairwise) different and it is
not clear at all whether they are equivalent. For the sake of completeness,
we cite all four definitions. Glauberman proves that the definition
in~\cite{glauberman:71} is equivalent to that in~\cite{gorenstein}, but
the one in a later edition of the same book (see~\cite{gorenstein:07})
appears to be non-equivalent to that in~\cite{gorenstein}. Later in the
literature the definition in~\cite{glauberman:71} is used (see e. g.
in~\cite{Huppert:Blackburn} or \cite{solomon}). However, results
from~\cite{glauberman:68} have great importance and are oft cited, so the
equivalence of these definitions might be crucial. In the following, we
shall compare the two  definitions by examining some properties of
$p$-stability.

The original definiton of Gorenstein and Walter is the following:

\begin{Def}[Gorenstein--Walter, 1964]
Let $G$ be a finite group. Let $S$ be the largest soluble normal subgroup of
$G$. Let $p$ be a prime that divides $|S|$. Let $P$ be a Sylow $p$-subgroup of
$O_{p', p}(S)$ and $Q \leq P$ such that $(i)$ $O_{p'}(S)Q \triangleleft G$
and $(ii)$ $O_p\big(N_G(Q) / C_G(Q) P\big) = 1$. We shall say that $G$ is
{\itshape $p$-stable} provided the following condition holds for any such
subgroup $Q$:
\begin{itemize}
\item[] If $A$ is a $p$-subgroup that normalises $Q$ and satisfies the
commutator identity $[Q, A, A] = 1$, then $A \subseteq P C_G(Q)$.
\end{itemize}
\end{Def}

Gorenstein's advanced definition in \cite{gorenstein}:

\begin{Def}[Gorenstein, 1968]\label{gordef2}
Let $G$ be a finite group and $p$ an odd prime. $G$ is called {\itshape
$p$-stable} if the following condition is satisfied:
\begin{itemize}
\item[] If $K$ is a normal subgroup of $G$, $P$ is a $p$-subgroup of $K$
with $G = K N_G(P)$, and $A$ is a $p$-subgroup of $N_G(P)$ such that
$[P, A, A] = 1$, then
\[
A C_G(P) / C_G(P) \subseteq
O_p\big(N_G(P) / C_G(P)\big).
\]
\end{itemize}
\end{Def}

In~\cite{gorenstein:07}, the above group $K$ is specified as $O_{p'}(G)$.

The definition appearing in~\cite{glauberman:68} is as follows:

\begin{Def}[Glauberman, 1968]\label{pstabdef1}
Let $G$ be a finite group, let $p > 2$ be a prime, and let $\mc M(G)$ be the
set of subgroups $M$ of $G$ maximal with respect to the property that
$O_p(M) \ne 1$. $G$ is said to be {\itshape $p$-stable} if for all $M
\in \mc M(G)$ and for all $p$-subgroups $Q$ of $M$ such that $O_{p'}(M) Q
\triangleleft M$, whenever an element $x \in N_M(Q)$ has the property
that if
\[
[Q, x, x] = 1,
\]
then $x$ maps into $O_p\big(N_M(Q) / C_M(Q)\big)$ under the natural
homomorphism \break $N_M(Q)$ $\to N_M(Q) / C_M(Q)$.
\end{Def}

The revised definition of $p$-stability in~\cite{glauberman:71} is
the following:

\begin{Def}[Glauberman, 1971]\label{pstabdef2}
A group $G$ is said to be {\itshape $p$-stable} if for all $p$-subgroups
$Q$ of $G$ whenever an element $x \in N_G(Q)$ satisfies
\[
[Q, x, x] = 1,
\]
then $x$ maps into $O_p\big(N_G(Q) / C_G(Q)\big)$ under the natural
homomorphism \break $N_G(Q)$ $\to N_G(Q) / C_G(Q)$.
\end{Def}

\begin{rem}
\begin{enumerate}[$(i)$]
\item It can be easily checked that Gorenstein's subgroups $A$ can be
substituted by single elements $x$. Moreover, let $x = x_p x_{p'} \in
N_G(Q)$, where $x_p$ and $x_{p'}$ are commuting $p$- and $p'$-elements,
respectively. It is straightforward to check that if $[Q, x, x] = 1$,
then $x_{p'} \in C_G(Q)$. As a consequence, it can be assumed that $x$
is a $p$-element.

\item By any of the four definitions, every group with an Abelian Sylow
$p$-subgroup is trivially $p$-stable.

\item If we set $K = G$ in Definition~\ref{gordef2}, we obtain
Definition~\ref{pstabdef2}, so Gorenstein's definition implies
Glauberman's one.

\item It is less obvious, what the connection between the complicated
first definition and the other ones is. Since this definition was soon
revisited by Gorenstein himself, we shall not discuss this connection
here.
\end{enumerate}
\end{rem}

The smallest example for a group {\itshape not} being $p$-stable (by all
four definitions but we only check Glauberman's definitions) is the group
usually denoted by $Qd(p)$:

\begin{example}\label{qdp}
The group $Qd(p)$ is defined as a semidirect product of a two-dimensional
vector space $V$ over $\F_p$ with the special linear group $SL_2(p)$ via
the natural action:
\[
Qd(p) = V \rtimes SL_2(p).
\]
Clearly, $O_p(Qd(p)) = V \ne 1$, so $\mc M(G)$ consists solely of the
group itself. Since $O_{p'}(Qd(p)) = 1$, the subgroup $Q$ has to be
normal in $Qd(p)$. Hence $Q = V$ (or 1, but this case is trivial).
Now, $V$ is self-centralising, so $N_{Qd(p)}(V) / C_{Qd(p)}(V)
\cong
SL_2(p)$. The element
\[
x = \mkk 1 1 0 1 \in SL_2(p)
\]
satisfies the commutator relation $[Q, x, x] = 1$. Nevertheless,
$x$ is not contained in $O_p(SL_2(p))$ since the latter is trivial.
In the literature, this group is of great importance.
\end{example}

The next lemma gives a well-known description of $Qd(p)$ as a matrix group
(see Example~7.5 in \cite[p. 494]{HuppertII}):

\begin{lem}\label{QdpinPSL3}
$Qd(p)$ can be represented as a subgroup of $SL_3(p)$, namely, consisting
of  matrices of the form
\[
\mhh a b t c d u 0 0 1 ,
\]
where $ad - bc = 1$. This subgroup intersects
%(cosets with respect to 
$Z(SL_3(p))$ trivially and hence maps isomorphically into $PSL_3(q)$.
%of)
\end{lem}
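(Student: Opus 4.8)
The plan is to exhibit an explicit isomorphism from $Qd(p) = V \rtimes SL_2(p)$ onto the displayed set of matrices and then observe that this set meets $Z(SL_3(p))$ trivially. First I would define a map $\phi$ sending an element $(v, M) \in V \rtimes SL_2(p)$, where $v = (t,u)^{\top} \in V$ and $M = \mkk a b c d \in SL_2(p)$, to the $3 \times 3$ matrix in the statement: the one whose upper-left $2 \times 2$ block is $M$, whose upper-right column is $v$, whose lower-left $1\times 2$ block is zero, and whose $(3,3)$-entry is $1$. Expanding the determinant along the bottom row gives $1\cdot(ad-bc) = 1$, so $\phi$ indeed lands in $SL_3(p)$, and its image is precisely the described collection of matrices.

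Next I would check that $\phi$ is a homomorphism. The heart of the computation is the block product
\[
\mkk {M_1}{v_1}{0}{1}\,\mkk {M_2}{v_2}{0}{1} = \mkk {M_1M_2}{v_1 + M_1v_2}{0}{1},
\]
where $M_1 v_2$ denotes the natural action of $SL_2(p)$ on the column vector $v_2$. This is exactly the multiplication rule of the semidirect product, namely $(v_1, M_1)(v_2, M_2) = (v_1 + M_1 v_2,\, M_1 M_2)$, confirming that $\phi$ respects products (and hence that the displayed matrices are closed under multiplication and inversion). Injectivity is immediate, since the matrix recovers both $M$ as its top-left block and $v$ as its top-right column; thus $\phi$ is an isomorphism onto its image.

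Finally I would dispose of the centre. Recall that $Z(SL_3(p))$ consists of the scalar matrices $\lambda I$ with $\lambda^3 = 1$. Every element of $\phi(Qd(p))$ has $(3,3)$-entry equal to $1$, so a scalar matrix $\lambda I$ lying in the image forces $\lambda = 1$, i.e.\ the identity. Hence $\phi(Qd(p)) \cap Z(SL_3(p)) = 1$, and composing $\phi$ with the natural projection $SL_3(p) \to PSL_3(p)$ yields a map with trivial kernel. Therefore $Qd(p)$ embeds isomorphically into $PSL_3(p)$, as asserted.

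This argument is purely computational and poses no genuine obstacle. The only points meriting care are verifying that the block product reproduces the semidirect-product multiplication \emph{in the correct order}, so that the action realised is the natural one and not its transpose or inverse, and confirming that the bottom-right entry being fixed at $1$ is what rules out every nontrivial scalar; both are checked directly from the formulas above.
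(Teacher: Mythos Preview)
Your proposal is correct; the paper itself does not prove this lemma at all but simply states it as well known and cites Example~7.5 in \cite[p.~494]{HuppertII}. Your explicit verification of the block multiplication and the trivial intersection with the centre is exactly the routine computation one would expect and fills in what the paper omits.
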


As already mentioned, we shall focus on the latter two definitions of
Glauberman. The first question concerning $p$-stability is whether these
two definitions are equivalent. This question is important especially as
theorems proved with Definition~\ref{pstabdef1} in \cite{glauberman:68}
are often cited when using Definition~\ref{pstabdef2} of $p$-stability.
Nevertheless, this problem does not seem to have been dealt with.

A group $G$ with $O_p(G) \ne 1$ which is $p$-stable according to
Definition~\ref{pstabdef2} also satisfies Definition~\ref{pstabdef1},
simply because more subgroups $Q$ are considered there. There are also
some natural questions concerning $p$-stability which do not seem to have
been considered so far, such as whether a subgroup or a factor group of a
$p$-stable group is necessarily $p$-stable (according to any of the
definitions).

In the following, we answer the questions asked above.
In~\cite[p. 82]{gagen} it is shown that the semidirect product of
$A_8$ with an elementary Abelian group of order $3^8$ is $3$-stable
according to Definition~\ref{pstabdef1} and it contains a subgroup
isomorphic to $Qd(3)$. Hence this definition does not inherit to
subgroups. However, we can prove the following proposition using
Definition~\ref{pstabdef2} of $p$-stability:

\begin{prop}\label{def2tosubgroup}
Let $G$ be a group that is $p$-stable according to
Definition~$\ref{pstabdef2}$. Let $H$ be a subgroup of $G$. Then $H$ is
$p$-stable according to the same definition.
\end{prop}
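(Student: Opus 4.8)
The plan is to check Definition~\ref{pstabdef2} for $H$ by pulling every instance of its hypotheses back to $G$, exploiting the fact that a $p$-subgroup of $H$ is automatically a $p$-subgroup of $G$ and that normalisers and centralisers behave well under restriction. Concretely, fix a $p$-subgroup $Q$ of $H$ and an element $x \in N_H(Q)$ with $[Q,x,x]=1$; by Remark~(i) we may assume $x$ is a $p$-element, though the argument does not actually require this. Since $Q$ is a $p$-subgroup of $G$, since $N_H(Q) = H \cap N_G(Q)$ and $C_H(Q) = H \cap C_G(Q)$, and since $x \in N_G(Q)$ still satisfies $[Q,x,x]=1$, the $p$-stability of $G$ gives that the image $\bar x$ of $x$ in $\bar N := N_G(Q)/C_G(Q)$ lies in $O_p(\bar N)$. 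The task is then to descend this information from $\bar N$ to the smaller quotient $N_H(Q)/C_H(Q)$.

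The next step is to realise $N_H(Q)/C_H(Q)$ as a subgroup of $\bar N$. Because $C_G(Q) \leq N_G(Q)$ we have $C_H(Q) = N_H(Q) \cap C_G(Q)$, so the second isomorphism theorem yields an isomorphism $N_H(Q)/C_H(Q) \cong N_H(Q)\,C_G(Q)/C_G(Q) =: \bar H \leq \bar N$, under which the image of $x$ in $N_H(Q)/C_H(Q)$ corresponds precisely to $\bar x \in \bar H$. Thus everything now takes place inside $\bar N$: I have $\bar x \in O_p(\bar N)$ and $\bar x \in \bar H$, and I must show $\bar x$ lies in $O_p(\bar H)$.

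The crux is the elementary but essential observation that for any group $\bar N$ and any subgroup $\bar H \leq \bar N$ one has $O_p(\bar N) \cap \bar H \leq O_p(\bar H)$: indeed $O_p(\bar N) \cap \bar H$ is a $p$-group and is normal in $\bar H$ because $O_p(\bar N) \trianglelefteq \bar N$, so it is contained in the largest normal $p$-subgroup $O_p(\bar H)$. Applying this to $\bar x \in O_p(\bar N) \cap \bar H$ gives $\bar x \in O_p(\bar H)$, and transporting back through the isomorphism $\bar H \cong N_H(Q)/C_H(Q)$ shows that $x$ maps into $O_p\big(N_H(Q)/C_H(Q)\big)$, which is exactly the $p$-stability of $H$. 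The only genuine point requiring care is this last containment $O_p(\bar N) \cap \bar H \leq O_p(\bar H)$, and more broadly the bookkeeping that identifies the image of $N_H(Q)/C_H(Q)$ inside $\bar N$ correctly; everything else is a routine restriction of the defining data from $G$ to $H$. It is worth contrasting this with the failure for factor groups noted before the proposition: passing to a quotient does not preserve the relation $O_p(\bar N)\cap \bar H \leq O_p(\bar H)$ in the required direction, which is precisely why the subgroup case succeeds while the quotient case does not.
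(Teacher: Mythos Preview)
Your proof is correct and follows essentially the same route as the paper: identify $N_H(Q)/C_H(Q)$ with the subgroup $N_H(Q)C_G(Q)/C_G(Q)$ of $\bar N = N_G(Q)/C_G(Q)$ via the second isomorphism theorem, then use $O_p(\bar N)\cap \bar H \leq O_p(\bar H)$ to descend. The paper's version is terser, leaving that last containment implicit, whereas you spell it out explicitly; there is no substantive difference.
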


\begin{proof}
Let $Q$ be a $p$-subgroup of $H$. Set $C = C_G(Q)$, $N = N_G(Q)$, $\bar N =
N / C$, $N_H = N_H(Q)$, $C_H = C_H(Q)$ and $\bar N_H = N_H / C_H$. As
$C_H = C \cap N_H$, we have
\[
\bar N_H \cong N_H C / C \leq \bar N,
\]
so the former can be naturally considered as a subgroup of the latter. Let
$x \in N_H$ such that $[Q,x,x]=1$. By Definition~\ref{pstabdef2}, $x C \in
O_p(\bar N) \cap \bar N_H \subseteq O_p(\bar N_H)$, whence the lemma. 
%Let $H$ be a subgroup of $G$ and $Q$ a $p$-subgroup of $H$. Assume the
%element $x \in N_H(Q)$ satisfies $[Q, x, x] = 1$. Then certainly
%$x \in N_G(Q)$, and hence $x C_G(Q) \in O_p\big(N_G(Q) / C_G(Q)\big)$
%as $G$ is $p$-stable. As $x \in N_H(Q)$ we have
%\begin{multline*}
%x C_G(Q) \in O_p\big(N_G(Q) / C_G(Q)\big) \cap N_H(Q) C_G(Q) / C_G(Q)
%\triangleleft \\
%\triangleleft N_H(Q) C_G(Q) / C_G(Q) \cong N_H(Q) / C_H(Q),
%\end{multline*}
%the latter by the second isomorphism theorem. Observe that
%$x C_G(Q)$ is mapped to $x C_H(Q)$ via the natural isomorphism
%\[
%N_H(Q) C_G(Q) / C_G(Q) \cong N_H(Q) / C_H(Q).
%\]
%Therefore, $x C_H(Q)$ is contained in a normal $p$-subgroup of $N_H(Q) /
%C_H(Q)$ and so is an element of $O_p\big(N_H(Q) / C_H(Q)\big)$, whence $H$
%is $p$-stable.
\qed\end{proof}

This proposition has three immediate consequences:

\begin{cor}
A group $G$ satisfying Definition~$\ref{pstabdef2}$ also satisfies
Definition~$\ref{pstabdef1}$.
\end{cor}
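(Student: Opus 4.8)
The plan is to observe that Definition~\ref{pstabdef1} is, in effect, Definition~\ref{pstabdef2} applied to the members of the family $\mc M(G)$, with the collection of tested pairs $(Q,x)$ restricted still further by the additional requirement $O_{p'}(M)Q \triangleleft M$. Since Proposition~\ref{def2tosubgroup} guarantees that $p$-stability in the sense of Definition~\ref{pstabdef2} is inherited by subgroups, every $M \in \mc M(G)$ already satisfies Definition~\ref{pstabdef2}; the conclusion required by Definition~\ref{pstabdef1} then transfers verbatim.

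Concretely, I would fix an arbitrary $M \in \mc M(G)$, that is, a subgroup of $G$ maximal subject to $O_p(M) \ne 1$. As $M \leq G$ and $G$ is $p$-stable by Definition~\ref{pstabdef2}, Proposition~\ref{def2tosubgroup} shows that $M$ is $p$-stable by the very same definition. Now take any $p$-subgroup $Q$ of $M$ with $O_{p'}(M)Q \triangleleft M$, and any $x \in N_M(Q)$ with $[Q,x,x] = 1$. Applying Definition~\ref{pstabdef2} to $M$ — whose hypotheses only ask that $Q$ be a $p$-subgroup of $M$ and that $x \in N_M(Q)$ satisfy $[Q,x,x]=1$, all of which hold — yields that $x$ maps into $O_p\big(N_M(Q)/C_M(Q)\big)$ under the natural homomorphism $N_M(Q) \to N_M(Q)/C_M(Q)$. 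This is precisely the conclusion demanded by Definition~\ref{pstabdef1}, and since $M$, $Q$ and $x$ were arbitrary, $G$ satisfies Definition~\ref{pstabdef1}.

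I do not anticipate a genuine obstacle here: all of the content is carried by Proposition~\ref{def2tosubgroup}. The only point requiring a moment's care is the direction of the quantifiers. Definition~\ref{pstabdef1} tests a smaller collection of subgroups $M$ and, within each such $M$, a smaller collection of pairs $(Q,x)$ — namely those with $O_{p'}(M)Q \triangleleft M$ — than Definition~\ref{pstabdef2} does over the whole of $M$. Because Definition~\ref{pstabdef2} certifies the conclusion for \emph{all} $p$-subgroups $Q$ of $M$, the extra constraint $O_{p'}(M)Q \triangleleft M$ is never invoked and may simply be dropped. The corollary is therefore an immediate consequence of the inheritance property already established.
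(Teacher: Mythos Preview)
Your proof is correct and follows essentially the same route as the paper: apply Proposition~\ref{def2tosubgroup} to pass $p$-stability in the sense of Definition~\ref{pstabdef2} down to each $M \in \mc M(G)$, and then read off the conclusion required by Definition~\ref{pstabdef1}. Your additional remarks about the quantifiers simply make explicit what the paper leaves implicit.
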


\begin{proof}
Assume $G$ is $p$-stable according to Definition~\ref{pstabdef2}.
Let $M \in \mc M(G)$ and let $Q \leq M$ with $QO_{p'}(M) \triangleleft M$.
By Proposition~\ref{def2tosubgroup} $M$ is $p$-stable by
Definition~\ref{pstabdef2}. Then for any $x \in N_M(Q)$ such that $[Q, x, x]
= 1$ we have $x C_M(Q) \in O_p\big(N_M(Q) / C_M(Q)\big)$, proving $G$ is
$p$-stable according to Definition~\ref{pstabdef1}.
\qed\end{proof}

\begin{cor}
Definition~$\ref{pstabdef1}$ does {\itshape not} imply
Definition~$\ref{pstabdef2}$, hence the two definitions are \emph{not}
equivalent.
\end{cor}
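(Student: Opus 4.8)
The plan is to establish the non-implication by exhibiting a single group that is $p$-stable in the sense of Definition~\ref{pstabdef1} but fails to be $p$-stable in the sense of Definition~\ref{pstabdef2}. All the ingredients needed for this are already available in the excerpt, so the argument reduces to a contrapositive application of Proposition~\ref{def2tosubgroup}.

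First I would take $p = 3$ and set $G = A_8 \ltimes E$, where $E$ is elementary Abelian of order $3^8$ acted on by $A_8$, exactly the group treated in \cite[p. 82]{gagen}. By Gagen's computation this group is $3$-stable according to Definition~\ref{pstabdef1}, and I would simply cite this fact rather than reproduce the verification. The same reference records that $G$ contains a subgroup isomorphic to $Qd(3)$, and this is the feature I want to exploit. Crucially, the excerpt has already noted that Definition~\ref{pstabdef1} does \emph{not} inherit to subgroups, so there is no tension in $G$ being $3$-stable by Definition~\ref{pstabdef1} while its subgroup $Qd(3)$ is not.

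Next I would invoke Example~\ref{qdp}, which shows that $Qd(3)$ fails to be $3$-stable by Glauberman's Definition~\ref{pstabdef2}: taking $Q = V$ and the transvection $x$, one has $[Q,x,x]=1$, yet the image of $x$ lies outside $O_p(N/C) = O_3(SL_2(3)) = 1$. Thus the subgroup $Qd(3) \leq G$ is a concrete witness to the failure of Definition~\ref{pstabdef2}.

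Finally I would apply the contrapositive of Proposition~\ref{def2tosubgroup}: since Definition~\ref{pstabdef2} passes to all subgroups, any group possessing a subgroup that is not $3$-stable by Definition~\ref{pstabdef2} cannot itself be $3$-stable by Definition~\ref{pstabdef2}. Hence $G$ violates Definition~\ref{pstabdef2} while satisfying Definition~\ref{pstabdef1}, which shows Definition~\ref{pstabdef1} $\not\Rightarrow$ Definition~\ref{pstabdef2} and completes the proof. The only point carrying genuine content is Gagen's claim that $G$ is $3$-stable by Definition~\ref{pstabdef1}; once that is cited, every remaining step is immediate from Example~\ref{qdp} and Proposition~\ref{def2tosubgroup}, so I do not anticipate any real obstacle beyond trusting the cited computation.
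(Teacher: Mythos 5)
Your proposal is correct and follows exactly the paper's own argument: both use Gagen's group $V \ltimes A_8$ as a witness satisfying Definition~\ref{pstabdef1}, and both rule out Definition~\ref{pstabdef2} via the non-$3$-stable subgroup $Qd(3)$ together with the heredity result of Proposition~\ref{def2tosubgroup}. You merely spell out the contrapositive and the appeal to Example~\ref{qdp} more explicitly than the paper does.
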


\begin{proof}
By~\cite[p. 82]{gagen}, the group $G = V \ltimes A_8$ is $3$-stable
according to Definition~\ref{pstabdef1}, but it is certainly not
$p$-stable according to Definition~\ref{pstabdef2} as $G$ contains a
subgroup isomorphic to $Qd(3)$ which is not $3$-stable.
\qed\end{proof}

\begin{cor}\label{localpstab}
A group $G$ is $p$-stable according to Definition~$\ref{pstabdef2}$ if and
only if $N_G(Q)$ is $p$-stable for all non-cyclic $p$-subgroups $Q$ of $G$.
\end{cor}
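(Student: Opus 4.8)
The plan is to prove the two implications separately. The forward implication is immediate: if $G$ is $p$-stable in the sense of Definition~\ref{pstabdef2}, then since each $N_G(Q)$ is a subgroup of $G$, Proposition~\ref{def2tosubgroup} shows that $N_G(Q)$ is $p$-stable for every $p$-subgroup $Q$, a fortiori for every non-cyclic one.

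For the reverse implication, I assume $N_G(Q)$ is $p$-stable for all non-cyclic $p$-subgroups $Q$ and verify Definition~\ref{pstabdef2} for $G$ directly. So let $R$ be an arbitrary $p$-subgroup of $G$ and $x \in N_G(R)$ with $[R,x,x]=1$; I must show that the image $\bar x$ of $x$ in $N_G(R)/C_G(R)$ lies in $O_p\big(N_G(R)/C_G(R)\big)$. By Remark~(i) I may assume $x$ is a $p$-element, whence $\bar x$ is a $p$-element too; note this reduction is compatible with $[R,x,x]=1$, since the $p'$-part of $x$ centralises $R$ and so affects neither the commutator relation nor $\bar x$. If $R$ is non-cyclic I apply the hypothesis with $Q=R$: the group $N_G(R)$ is $p$-stable, and feeding into its $p$-stability the normal $p$-subgroup $R \triangleleft N_G(R)$ together with $x$ gives the conclusion, once one records that $N_{N_G(R)}(R)=N_G(R)$ and $C_{N_G(R)}(R)=C_G(R)$, so the relevant quotient is exactly $N_G(R)/C_G(R)$.

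The case $R$ cyclic is the one that justifies restricting to non-cyclic subgroups in the statement, and I expect it to be the only genuine point. Here I would exploit that $p$ is odd: then $\mathrm{Aut}(R)$ is cyclic, hence abelian, and since $N_G(R)/C_G(R)$ embeds into $\mathrm{Aut}(R)$ it is abelian as well. Consequently $O_p\big(N_G(R)/C_G(R)\big)$ is simply its (unique) Sylow $p$-subgroup, i.e.\ the set of all its $p$-elements, so the $p$-element $\bar x$ lies in it with no extra hypothesis needed. Combining the two cases shows that $G$ satisfies Definition~\ref{pstabdef2}, completing the reverse implication. The only thing demanding care is this cyclic analysis together with the bookkeeping identifying the normaliser and centraliser quotients in the non-cyclic case; everything else is formal.
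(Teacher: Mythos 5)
Your proof is correct and takes essentially the same approach as the paper: the forward direction is Proposition~\ref{def2tosubgroup} applied to the subgroup $N_G(Q)$, and the reverse direction rests on exactly the observation the paper uses, namely that cyclic $p$-subgroups satisfy the stability condition automatically because $\Aut(Q)$ is Abelian (so the $p$-element $\bar x$ lies in $O_p$ of the Abelian quotient), while non-cyclic $R$ are handled by invoking the hypothesis with $Q=R$. Your write-up merely makes explicit the bookkeeping that the paper leaves implicit: the reduction to $p$-elements via Remark~$(i)$ and the identifications $N_{N_G(R)}(R)=N_G(R)$, $C_{N_G(R)}(R)=C_G(R)$.
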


\begin{proof}
Note that $\Aut(Q)$ is Abelian if $Q$ is cyclic. So cyclic $p$-subgroups of
$G$ satisfy the $p$-stability condition, and hence this only needs to be
verified for non-Abelian subgroups Q.
%One direction is clear by Proposition~\ref{def2tosubgroup}. To prove the
%converse, observe first that if $Q$ is cyclic then $N_G(Q) / C_G(Q) \leq
%\Aut(Q)$ is Abelian, so every $p$-element is contained in $O_p\big(
%N_G(Q) / C_G(Q) \big)$.
%
%If $Q$ is non-cyclic, then $N_G(Q)$ is $p$-stable by assumption and hence
%for each $x$ in $N_G(Q)$
%\[
%[Q, x, x] = 1 \text{ implies } xC_G(Q) \in O_p\big(N_G(Q) / C_G(Q) \big),
%\]
%proving the $p$-stability of $G$.
\qed\end{proof}

From now on, we use Definition~\ref{pstabdef2} for $p$-stability (unless
otherwise stated explicitly).

The next question is about factor groups. In~\cite[p. 88]{gagen} it
is shown that $G/O_{p'}(G)$ is $p$-stable if $G$ is so. Although Gagen
uses Definition~\ref{pstabdef1}, the proof can be easily carried over to
Definition~\ref{pstabdef2}, too.

The next example shows that a factor group of a $p$-stable group need not
be $p$-stable in general. We are thankful to professor O.~Yakimova for
pointing out this example.

\begin{example}\label{pstabnonsectpstab_ex}
Let $p > 3$ and let $X$ and $Y$ be indeterminates over $\F_p$. Then the
polynomial ring $\F_p[X, Y]$ can be viewed as an $\F_p SL_2(p)$-module via
the action extending the natural operation on the $2$-dimensional vector
space $\langle X, Y \rangle_{\F_p}$. Let $W$ be the $p+1$-dimensional
subspace of $\F_p[X, Y]$ generated by the homogeneous polynomials of degree
$p$. Then the elements $X^p$, $X^{p-1}Y$, \ldots, $XY^{p-1}$, $Y^p$ form a
basis of $W$ and $W$ is an $\F_p SL_2(p)$-submodule. $W$ has
a single submodule $V = \langle X^p, Y^p \rangle_{\F_p}$. Note that
$SL_2(p)$ acts on $V$ via its natural representation. Consider the
group $G = W^* \rtimes SL_2(p)$, where $W^*$ denotes the module
contragredient to $W$. Since $W^*$ has a factor module isomorphic to $V^*
\cong V$, $G$ has a factor group isomorphic to $Qd(p)$. However, it can be
easily computed that the group $G$ itself is $p$-stable.
\end{example} 

In~\cite[Lemma~6.3.]{glauberman:68}, Glauberman proved a
characterisation of the groups all of whose sections are $p$-stable:

\begin{theorem}[Glauberman]\label{glaubthm}
Let $G$ be a finite group. Then the following two conditions are
equivalent:
\begin{enumerate}[$(i)$]
\item All sections of $G$ are $p$-stable;
\item $G$ does not involve $Qd(p)$.
\end{enumerate}
\end{theorem}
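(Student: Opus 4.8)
The plan is to concentrate the entire difficulty into a single statement about one group at a time and then deduce the theorem by formal manipulation. Call this the \emph{Key Lemma}: if a finite group $Y$ is not $p$-stable in the sense of Definition~\ref{pstabdef2}, then $Y$ involves $Qd(p)$. Granting it, the theorem follows at once. For $(i)\Rightarrow(ii)$ I argue contrapositively: if $G$ involves $Qd(p)$, then $Qd(p)$ is isomorphic to a section of $G$, and by Example~\ref{qdp} that section is not $p$-stable, so $(i)$ fails. For $(ii)\Rightarrow(i)$ I again contrapose: if some section $S=A/B$ of $G$ is not $p$-stable, the Key Lemma gives that $S$ involves $Qd(p)$, i.e. $Qd(p)$ is a section of $S$; since a section of a section of $G$ is again a section of $G$, $Qd(p)$ is a section of $G$ and $(ii)$ fails. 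Thus everything reduces to the Key Lemma.

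To prove the Key Lemma I first strip it down to a question about a single quadratic element acting on a vector space. Since $Y$ is not $p$-stable, there are a $p$-subgroup $Q\le Y$ and, as noted in the remark after Definition~\ref{pstabdef2}, a $p$-element $x\in N_Y(Q)$ with $[Q,x,x]=1$ whose image in $\bar N:=N_Y(Q)/C_Y(Q)$ lies outside $O_p(\bar N)$; note $\bar N$ acts faithfully on $Q$. Passing to the Frattini quotient $V:=Q/\Phi(Q)$, the kernel of $\bar N\to\Aut(V)$ is a normal $p$-subgroup of $\bar N$, hence is contained in $O_p(\bar N)$; therefore the image $t$ of $x$ in $G^*:=\bar N/\ker$ is a non-trivial $p$-element, still acts quadratically (the relation $[Q,x,x]=1$ survives), and still lies outside $O_p(G^*)$. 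Refining a $G^*$-composition series of $V$ and using that the kernel of the action on $\bigoplus$ of the composition factors is again a normal $p$-subgroup, I may replace $V$ by a composition factor on which $t$ acts non-trivially; on this irreducible faithful module $O_p$ of the acting group is trivial. I have thus reduced to the \emph{quadratic module situation}: a finite group $G^*\le GL(V)$ acting faithfully and irreducibly on an $\F_p$-space $V$, containing a non-trivial element $t$ of order $p$ with $[V,t,t]=0$ and $O_p(G^*)=1$.

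The crux is to show that such a configuration forces $G^*$ to involve $SL_2(p)$ with $V$ admitting the natural $SL_2(p)$-module as a section. When $t$ is a transvection one may invoke McLaughlin's classification of irreducible subgroups of $GL(V)$ generated by transvections (these are $SL_n(p)$, $Sp_{2n}(p)$ and their relatives, each involving a natural $SL_2(p)$); in general $t$ has higher quadratic degree and the appropriate tool is Thompson's theory of quadratic pairs, whose irreducible members again have $SL_2(p)$ as the basic building block. This structural input is the main obstacle of the proof, and it is where the hypothesis $p$ odd is decisive: for $p\ge5$ Thompson's classification applies cleanly, whereas $p=3$ is genuinely exceptional and must be treated separately.

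Finally I would assemble $Qd(p)$ as a section of $Y$. Tracking the reductions backwards, the module $V$ is realised as a chief section of $\tilde V:=Q/\Phi(Q)$, which is a normal elementary abelian section of $M:=N_Y(Q)$; writing $\pi\colon M\to G^*$ for the induced map and $SL_2(p)=\hat R/\hat S$ for the section found above, the preimages $R=\pi^{-1}(\hat R)$ and $S=\pi^{-1}(\hat S)$ satisfy $R/S\cong SL_2(p)$, and the natural $2$-dimensional module $V_0$ appears as an $R$-invariant section $A_0/B_0$ of $\tilde V$ on which $S$ acts trivially while $R/S$ acts naturally. Quotienting $R$ by an $R$-invariant complement to $V_0$ inside $S$ (modulo $B_0$) then exhibits an extension of $SL_2(p)$ by $V_0$ as a section of $M$, and hence of $Y$. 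The only remaining point is that this extension be \emph{split}, so that the section is $Qd(p)$ itself rather than a proper cover: the existence of the required invariant complement is a cohomological matter that is routine for the natural module when $p\ge5$ but again needs separate care for $p=3$. This completes the Key Lemma, and with it the theorem.
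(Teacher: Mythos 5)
You should first be aware that the paper does not prove Theorem~\ref{glaubthm} at all: it quotes it as Lemma~6.3 of \cite{glauberman:68}. So your attempt has to be measured against Glauberman's classical argument, not against anything in the text. Your formal skeleton is the right one and matches that argument's opening moves: the reduction of the theorem to the Key Lemma is correct, and so is the reduction of the Key Lemma to the quadratic module situation (the kernel of $\Aut(Q)\to\Aut(Q/\Phi(Q))$ is a $p$-group, the kernel of the action on the composition factors is a $p$-group, and faithful plus irreducible forces $O_p=1$). The first genuine gap is at what you yourself call the crux. You outsource ``nontrivial quadratic $p$-element forces an $SL_2(p)$-section with natural module'' to McLaughlin and to Thompson's quadratic pairs, and then concede that $p=3$ ``is genuinely exceptional and must be treated separately'' --- but you never treat it. Thompson's classification assumes $p\ge 5$ (quadratic pairs for $p=3$ were only handled much later and are substantially harder), so as written your Key Lemma has no proof for $p=3$, the prime of greatest interest here; moreover quadratic pairs concern groups \emph{generated} by quadratic elements, so you would at least have to pass to the normal closure $\langle t^{G^*}\rangle$ first. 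The classical route needs none of this and is uniform in odd $p$: since $t\notin O_p(G^*)=1$, the Baer--Suzuki theorem gives a conjugate $t^g$ such that $D=\langle t,t^g\rangle$ is not a $p$-group; on a composition factor $W$ for $D$ on which the image of $D$ is not a $p$-group, the quadratic hypothesis forces, by a dimension count, $W=[W,t]\oplus[W,t^g]$ with $[W,t]=C_W(t)$ and $[W,t^g]=C_W(t^g)$, which exhibits the image of $D$ inside some $SL_2(\F_{p^m})$ as a group generated by two opposite unipotent elements; Dickson's theorem then identifies it with $SL_2(p^m)$, or with $SL_2(5)$ when $p=3$, and in either case it contains a copy of $SL_2(p)$ for which $W$ restricts to a direct sum of natural modules. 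This is essentially how the weaker statement ``$SL_2(p)$ not involved implies $p$-stable'' is proved in \cite{gorenstein}, with no case split on $p$.

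The second gap is in your assembly, and you have placed the difficulty in exactly the wrong spot. The existence of an ``$R$-invariant complement to $V_0$ inside $S$ (modulo $B_0$)'' is asserted, not proved, and it is precisely the nontrivial content of upgrading ``$SL_2(p)$ is involved in the action on $Q$'' to ``$Qd(p)$ is involved in $Y$'': the group $S/B_0$ is an arbitrary finite group containing $V_0$ in its centre, and central subgroups need not admit any complement at all (for instance $V_0$ may meet the Frattini subgroup nontrivially), let alone one normalised by $R$; Glauberman's actual proof spends real effort, in a minimal-counterexample configuration, collapsing this centraliser so that nothing like an arbitrary complement is ever needed. By contrast, the step you flag as needing ``separate care for $p=3$'' --- splitness of an extension of the natural module $V_0$ by $SL_2(p)$ --- is trivial and uniform in odd $p$: the central involution $-I\in SL_2(p)$ acts on $V_0$ without nonzero fixed vectors, so restriction to $\langle -I\rangle$ shows $H^n\big(SL_2(p),V_0\big)=0$ for all $n$; in particular $H^2=0$ and every such extension splits, for $p=3$ included. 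Until the invariant-complement step is supplied (or the argument is restructured, as in Glauberman's proof, so that it is not needed), the proposal does not establish the theorem.
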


Theorem~\ref{glaubthm} implies that for $p \geq 5$ all $p$-soluble groups
are $p$-stable. The converse is obviously false: there are plenty of simple
groups whose Sylow $p$-subgroups are Abelian for some prime $p$.

Unfortunately, there is no nice characterisation of $p$-stable groups. It
is not true that a non-$p$-stable group necessarily has a subgroup
isomorphic to $Qd(p)$:

\begin{example}\label{tildeqdp}
The group $Qd(p)$ has a central extension with a cyclic group $Z$ of order
$p$: Let $E = \langle \tilde a, \tilde b \rangle $ be an
extraspecial group of exponent $p$. Denote its centre by $Z$ so that $Z =
\langle [\tilde a, \tilde b] \rangle$. Then $E / Z \cong V$ (the
normal subgroup of $Qd(p)$ of order $p^2$). Moreover, the images $a$ and
$b$ under the homomorphism $E \to V$ of $\tilde a$ and $\tilde b$,
respectively, generate $V$. It is well-known that the automorphism group of
$E$ has a subgroup isomorphic to $SL_2(p)$ and the action of
$SL_2(p)$ on  $\tilde a$ and $\tilde b$ is the same as on $a$ and $b$. Let
$\widetilde{Qd}(p) = E \rtimes SL_2(p)$ with the action just
defined. Then $\widetilde{Qd}(p)$ is non-$p$-stable as it is proven by the
subgroup $Q = E$ and $x \in SL_2(p)$ as in Example~\ref{qdp}. It is
easy to see that $\widetilde{Qd}(p)$ does not contain a subgroup isomorphic
to $Qd(p)$.
\end{example}

As we shall see later, $\widetilde{Qd}(p)$ has a representation as a subgroup
of $GL_p(q)$ if $p | q-1$ (see Lemma~\ref{uu2}.)
In order to give some more examples of non-3-stable groups, we now construct
$\widetilde{Qd}(3)$ as a subgroup of $GL_3(\mathbb C)$.

\begin{example}\label{tildeQd3}
Let $\rho$ be a (complex) primitive third root of unity. We define the
following complex matrices:
\[
\tilde a = \mhh {\rho} 0 0 0 {\rho^2} 0 0 0 1 , \hspace{2cm}
\tilde b = \mhh 0 1 0 0 0 1 1 0 0 , %\hspace{4mm}
\]
\[
x = \mhh 1 0 0 0 {\rho} 0 0 0 1 , \hspace{2cm}
t = \frac{1}{1 - \rho} \cdot \mhh 1 1 1 {\rho} {\rho^2} 1 {\rho^2} {\rho} 1 .
\]
A straightforward calculation shows that $E = \langle \tilde a$, $\tilde b
\rangle$ is an extraspecial group of order 27 and exponent 3, whereas, $S =
\langle x$, $t \rangle$ is isomorphic to $SL_2(3)$. Moreover, $S$
normalises $E$ and the operation of the elements $x$ and $t$ with respect
to the basis $a$, $b$ of $E / Z(E)$ is represented by the matrices $\smkk
1 0 1 1 $ and $\smkk 0 -1 1 0 $, respectively. Therefore, $\langle \tilde
a$, $\tilde b$, $x$, $t \rangle \cong \widetilde{Qd}(3)$.
\end{example}

The group in Example~\ref{tildeQd3} can be modified to obtain 2 more
non-3-stable groups of the same order:

\begin{example}\label{tildeQd3+}
We keep the notation of Example~\ref{tildeQd3}. Let $\theta$ be a primitive
ninth root of unity with $\theta^3 = \rho$ and let $x^- = \theta^{-1} x$ and
$x^+ = \theta x$. Define the groups $\widetilde{Qd}^-(3) = \langle a$, $b$,
$x^-$, $t \rangle$ and $\widetilde{Qd}^+(3) = \langle a$, $b$, $x^+$, $t
\rangle$. As the original group is `twisted' by a scalar matrix, all three
groups have the same image in $PSL_3(\mathbb C)$ (namely, a subgroup isomorphic to
$Qd(3)$. Hence all these groups are central extensions of $Qd(3)$ by a
cyclic group of order $3$. Moreover, the elements $x^+$ and $x^-$ together
with the subgroup $E$ show that $\widetilde{Qd}^+(3)$ and
$\widetilde{Qd}^-(3)$ are non-3-stable.
\end{example}

\begin{rem}\label{tildeqd3rem}
By construction, the group $\widetilde{Qd}^-(3)$ is contained in $SL_3(\mbb C)$
unlike the other two groups. An easy calculation shows that the centraliser
of a Sylow $2$-subgroup of $\widetilde{Qd}(3)$ (a subgroup of order 72)
contains an elementary Abelian group of order 9, while that in any of the
other two groups contains a cyclic group of order 9.

Further investigation shows that $\widetilde{Qd}^-(3)$ and
$\widetilde{Qd}^+(3)$ have non-isomorphic Sylow 3-subgroups.

Moreover, the Sylow 3-subgroups of all three groups have exponent 9 and the
those of $\widetilde{Qd}(3)$ and $\widetilde{Qd}^+(3)$ cannot be embedded
into $(C_9 \times C_9) \rtimes C_3$, the largest subgroup of $SL_3(\mbb C)$
of exponent 9.
%Moreover, $SL_3(\C)$ does not contain a subgroup isomorphic to a Sylow
%$3$-subgroup of $\widetilde{Qd}^+(3)$ and hence $\widetilde{Qd}^+(3)$
%cannot be embedded into $SL_3(\C)$.

Let $q = \ell^s$ such that $3|q-1$. Then reduction modulo $\ell$ carries
over the construction in Example~\ref{tildeQd3} to $GL_3(q)$. To see this
observe that $\F_q$ contains primitive third roots of unity in this case.

If, moreover, $9|q-1$, then $\F_q$ contains primitive ninth roots of unity
as well, and hence the constructions of Example~\ref{tildeQd3+} are valid
in $SL_3(q)$ and $GL_3(q)$.

Note that the above defined groups are minimal non-3-stable subject to
containment. The question naturally arises: which groups are minimal
non-$p$-stable? We do not answer this question in this paper, but in
section~\ref{sporadic}, we shall see one more example for the prime $p = 3$.
\end{rem}

%For the prime 3, there are even more examples of non-$p$-stable groups having
%no subgroups isomorphic to $Qd(p)$:
%
%\begin{example}
%Let $G$ be the group of size $648$ with number $531$ in the small-group
%library of {\sf GAP}. Then $G$ has a factor group isomorphic to $Qd(p)$
%by a normal subgroup of order $3$ contained in the centre of
%the Sylow 3-subgroups of $G$. By construction, $G$ is not $3$-stable
%as the preimage in $G$ (under the natural homomorphism) of the element $x$
%and the subgroup $V$ of Example~\ref{qdp} contradict the definition of
%$p$-stability. Furthermore, $G$ has no subgroups isomorphic to $Qd(3)$
%(actually, no one isomorphic to $SL_2(3)$).
%\end{example}

Although Theorem~\ref{glaubthm} was proved with Definition~\ref{pstabdef1}
of $p$-stability, the result is often used with Definition~\ref{pstabdef2}.
In fact, the theorem is cited in~\cite{glauberman:71}, where
Definition~\ref{pstabdef2} appears, without mentioning that the proof was
worked out with another definition. However, the next result is clear by
the above:

\begin{prop}
For a group $G$, the following are equivalent:
\begin{enumerate}[(i)]
\item All sections of $G$ are $p$-stable according to
Definition~$\ref{pstabdef1}$.
\item All sections of $G$ are $p$-stable according to
Definition~$\ref{pstabdef2}$.
\end{enumerate}
\end{prop}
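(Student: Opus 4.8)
The plan is to establish the two implications separately, using only facts already proved and, in fact, without appealing to Glauberman's Theorem~\ref{glaubthm}. The implication (ii) $\Rightarrow$ (i) is immediate: the Corollary stating that Definition~\ref{pstabdef2} implies Definition~\ref{pstabdef1} holds for every group, hence in particular for each section of $G$. So if all sections of $G$ are $p$-stable by Definition~\ref{pstabdef2}, then all sections are $p$-stable by Definition~\ref{pstabdef1}.

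For the converse (i) $\Rightarrow$ (ii) I would argue by localising at the normaliser of a $p$-subgroup. Assume all sections of $G$ are $p$-stable by Definition~\ref{pstabdef1}, let $S$ be a section of $G$, and fix a $p$-subgroup $Q$ of $S$ together with an element $x \in N_S(Q)$ satisfying $[Q,x,x]=1$ (the case $Q=1$ being trivial, I assume $Q \ne 1$). Put $N = N_S(Q)$. Since the class of sections of $G$ is closed under passing to subgroups of its members, $N$ is again a section of $G$, and therefore $N$ is $p$-stable by Definition~\ref{pstabdef1}.

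The heart of the argument is that inside $N$ the subgroup $Q$ is normal, so it becomes one of the subgroups to which Definition~\ref{pstabdef1} applies, with the maximal subgroup taken to be $N$ itself. Indeed $Q \leq O_p(N)$ forces $O_p(N) \ne 1$, so $\mc M(N) = \{N\}$; moreover $O_{p'}(N)Q$ is a product of two normal subgroups of $N$ and hence $O_{p'}(N)Q \triangleleft N$. Applying Definition~\ref{pstabdef1} to $N$ with $M = N$ and this $Q$ (noting $N_N(Q) = N$, as $Q \triangleleft N$) yields $xC_N(Q) \in O_p\big(N/C_N(Q)\big)$. Finally $C_N(Q) = C_S(Q)$, because $C_S(Q) \leq N_S(Q) = N$, so $N/C_N(Q) = N_S(Q)/C_S(Q)$ and the conclusion becomes $xC_S(Q) \in O_p\big(N_S(Q)/C_S(Q)\big)$. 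This is exactly the Definition~\ref{pstabdef2} condition for $S$; as $S$ and $Q$ were arbitrary, (ii) follows.

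The single substantive step, and the one I would check most carefully, is this localisation: recognising that verifying Definition~\ref{pstabdef2} for $Q$ in $S$ is identical to verifying the Definition~\ref{pstabdef1} condition for the \emph{normal} subgroup $Q$ of the section $N = N_S(Q)$ with $M = N$. Once that reduction is made, the remaining points — that $N$ is a section, that $O_p(N) \ne 1$ so $\mc M(N)=\{N\}$, that $O_{p'}(N)Q \triangleleft N$, and the identification $C_N(Q)=C_S(Q)$ — are all routine.
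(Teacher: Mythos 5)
Your proof is correct and follows essentially the same route as the paper: the paper states the key step contrapositively (a section $H/K$ failing Definition~\ref{pstabdef2} at $Q$ and $x$ yields the section $N_{H/K}(Q)$ failing Definition~\ref{pstabdef1} with the same $Q$ and $x$), which is precisely your localisation at $N = N_S(Q)$ where $Q$ becomes normal and $\mc M(N)=\{N\}$. You merely spell out the routine verifications ($O_{p'}(N)Q \triangleleft N$, $C_N(Q)=C_S(Q)$, the trivial case $Q=1$) that the paper leaves implicit, and handle (ii)~$\Rightarrow$~(i) via the earlier Corollary exactly as the paper intends.
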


For the proof observe that if $G$ has a non-$p$-stable section $H/K$
according to Definition~\ref{pstabdef2} proved by the subgroup $Q \leq
H/K$ and the element $x \in N_{H/K}(Q)$, then the section $N_{H/K}(Q)$
of $G$ is non-$p$-stable according to Definition~\ref{pstabdef1} (proved
by the same $p$-subgroup $Q$ and element $x$).

\medskip
After introducing some notation, we define a more general notion.
For $p$-subgroups $Q$, $R$ of $G$ such that $R \triangleleft Q$, we let
$N_G(Q/R)$ be the largest subgroup of $G$ that acts by conjugation on $Q/R$
and $C_G(Q/R)$ be the largest subgroup of $N_G(Q/R)$ that acts trivially
on $Q/R$. Note that
\[
N_G(Q/R) = N_G(Q) \cap N_G(R)
\]
and
\[
C_G(Q/R) = \{ x \in N_G(Q/R)\ |\ [Q, x] \subseteq R \}.
\]

\begin{Def}\label{secpstab}
A group $G$ is said to be {\itshape section $p$-stable} if for all
$p$-sub\-groups $R$ and $Q$ of $G$ such that $R \triangleleft Q$,
whenever an element $x \in  N_G(Q/R)$ satisfies $[Q, x, x] \subseteq R$,
then $x C_G(Q/R)$ is contained in $O_p\big(N_G(Q/R) / C_G(Q/R)\big)$.
\end{Def}

Clearly, any section $p$-stable group is $p$-stable.

\begin{prop}\label{glaubpropforsections}
For a group $G$, the following are equivalent:
\begin{enumerate}[$(i)$]
\item $G$ is section $p$-stable.
\item All sections of $G$ are $p$-stable.
\item $N_G(R) / R$ is $p$-stable for all $p$-subgroups $R$ of $G$.
\end{enumerate}
\end{prop}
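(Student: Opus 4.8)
The plan is to prove the three conditions equivalent by establishing the cycle $(i) \Rightarrow (ii) \Rightarrow (iii) \Rightarrow (i)$, keeping in mind that the genuinely new content is the passage between ordinary $p$-stability of the quotients $N_G(R)/R$ and the ``section'' version phrased in terms of quotients $Q/R$. Throughout I would reduce statements about the element $x$ to $p$-elements using Remark~$(i)$ following Definition~\ref{pstabdef2}, since that removes the $p'$-part for free.

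For $(i) \Rightarrow (ii)$, suppose $G$ is section $p$-stable and let $H/K$ be an arbitrary section of $G$, where $K \triangleleft H \leq G$. Let $\bar Q$ be a $p$-subgroup of $H/K$ and $\bar x \in N_{H/K}(\bar Q)$ with $[\bar Q, \bar x, \bar x] = 1$. The idea is to pull back $\bar Q$ and $\bar x$ to $G$. Writing $Q$ for the preimage of $\bar Q$ in $H$ (so $Q \supseteq K$ and $Q/K = \bar Q$), the condition $[\bar Q, \bar x, \bar x] = 1$ translates into a commutator relation modulo the preimage of the identity. The main care here is that $K$ need not be a $p$-group, so one cannot directly take $R = K$; instead I would pass to a Sylow $p$-subgroup of $Q$ (or work inside $O_{p',p}$ of the relevant normaliser) and use that the natural maps on the automorphism groups induced on $Q/R$ factor appropriately. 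This is the step I expect to be the most delicate, because aligning the abstract section $H/K$ with a concrete pair $R \triangleleft Q$ of genuine $p$-subgroups of $G$ requires controlling how $O_p(N_G(Q/R)/C_G(Q/R))$ sits inside the automorphisms of the quotient $\bar Q$; the identities $N_G(Q/R) = N_G(Q) \cap N_G(R)$ and the explicit description of $C_G(Q/R)$ recorded just before Definition~\ref{secpstab} are exactly what is needed to make this translation.

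For $(ii) \Rightarrow (iii)$ the work is easy: $N_G(R)/R$ is itself a section of $G$ for every $p$-subgroup $R$, so if all sections of $G$ are $p$-stable then in particular each $N_G(R)/R$ is $p$-stable. For $(iii) \Rightarrow (i)$, assume every $N_G(R)/R$ is $p$-stable and let $R \triangleleft Q$ be $p$-subgroups with $x \in N_G(Q/R)$ satisfying $[Q, x, x] \subseteq R$; by the remark I may take $x$ to be a $p$-element. The plan is to work inside $\bar N = N_G(R)/R$, where the image $\bar Q = QR/R = Q/R$ is a $p$-subgroup and the image $\bar x$ normalises $\bar Q$. The hypothesis $[Q, x, x] \subseteq R$ becomes $[\bar Q, \bar x, \bar x] = 1$ in $\bar N$, so $p$-stability of $\bar N$ forces $\bar x$ into $O_p(N_{\bar N}(\bar Q)/C_{\bar N}(\bar Q))$. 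The remaining task is a bookkeeping identification: I would check that $N_{\bar N}(\bar Q) = N_G(Q/R)/R$ and $C_{\bar N}(\bar Q) = C_G(Q/R)/R$ (using $N_G(Q/R) = N_G(Q)\cap N_G(R)$ together with the commutator description of $C_G(Q/R)$), so that the quotient $N_{\bar N}(\bar Q)/C_{\bar N}(\bar Q)$ is canonically isomorphic to $N_G(Q/R)/C_G(Q/R)$ and the two $O_p$'s correspond. Under this isomorphism the conclusion $\bar x \in O_p(\cdots)$ is precisely $x C_G(Q/R) \in O_p(N_G(Q/R)/C_G(Q/R))$, which is what Definition~\ref{secpstab} demands, completing the cycle.
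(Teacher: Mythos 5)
Your overall route coincides with the paper's: the real burden is $(i) \Rightarrow (ii)$, handled by taking a Sylow $p$-subgroup $Q$ of the preimage of the given $p$-subgroup $T$ of $H/K$ and setting $R = Q \cap K$, while $(ii) \Rightarrow (iii)$ is trivial and $(iii) \Rightarrow (i)$ is isomorphism-theorem bookkeeping. Your detailed verification in $(iii) \Rightarrow (i)$ that $N_{\bar N}(\bar Q) = N_G(Q/R)/R$ and $C_{\bar N}(\bar Q) = C_G(Q/R)/R$, with the $O_p$'s corresponding under the induced isomorphism, is correct; it is exactly what the paper compresses into ``clear by the isomorphism theorems.''

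In $(i) \Rightarrow (ii)$, however, there is a genuine gap. To invoke section $p$-stability you must produce an element of $N_G(Q/R) = N_G(Q) \cap N_G(R)$, but an arbitrary lift $x \in H$ of $\bar x \in N_{H/K}(T)$ need not normalise $Q$: the hypothesis only gives $Q^x \subseteq KQ$. The missing step is the Sylow-conjugacy adjustment of the lift: $Q$ and $Q^x$ are both Sylow $p$-subgroups of $KQ$, so $Q^x = Q^k$ for some $k \in K$, and the corrected lift $xk^{-1}$ of $\bar x$ lies in $N_H(Q)$ (and then in $N_H(R)$ automatically, since it normalises both $Q$ and $K$). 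Only for this corrected lift does $[T, \bar x, \bar x] = 1$ yield $[Q, x, x] \subseteq Q \cap K = R$, which is what feeds into Definition~\ref{secpstab}. Your sketch instead locates the difficulty in controlling how $O_p\big(N_G(Q/R)/C_G(Q/R)\big)$ relates to automorphisms of $\bar Q$; that part is comparatively routine -- one intersects $O_p\big(N_G(Q/R)/C_G(Q/R)\big)$ with the image of $N_H(Q/R)$, obtaining a normal $p$-subgroup of $N_H(Q/R)C_G(Q/R)/C_G(Q/R) \cong N_H(Q/R)/C_H(Q/R) \cong N_{H/K}(T)/C_{H/K}(T)$. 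Without the lift adjustment, the section $p$-stability hypothesis cannot be applied at all, so the implication as sketched does not go through.
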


\begin{proof}
The equivalence of $(i)$ and $(iii)$ is clear by the isomorphism theorems. Also, the implication $(ii) \to (iii)$ is trivial.

$(i) \Rightarrow (ii)$: Assume first that $G$ is section $p$-stable and let
$H/K$ be a section of $G$.
% By definition, $G$ is also $p$-stable and hence $H$ is $p$-stable by
% Lemma~\ref{def2tosubgroup}.
Let $T$ be a $p$-subgroup of $H/K$. Denote by
$Q$ a Sylow $p$-subgroup of the preimage of $T$ under the natural
homomorphism $H \to H/K$. Let $R = Q \cap K$. Then $T = KQ / K \cong  Q/R$.
Assume an element $\bar x \in N_{H/K}(T)$ satisfies $[T, \bar x, \bar x]
= 1$.

Let $x \in H$ be such that $xK = \bar x$. Observe that $Q^x \subseteq KQ$
as $T$ is normalised by $\bar x$. Since $Q$ is a Sylow $p$-subgroup of $KQ$,
we have $Q^x = Q^k$ for some $k \in K$. Hence $xk^{-1} \in N_H(Q)$ is also
a preimage of $\bar x$, so we may assume $x \in N_H(Q)$.

By assumption, $[Q, x, x] \subseteq K$, so $[Q, x, x] \subseteq Q \cap K = R$
as $Q$ is normalised by $x$. Now, as $G$ is section $p$-stable,
\[
x C_G(Q/R) \in O_p\big(N_G(Q/R) / C_G(Q/R)\big) \cap
\big( N_H(Q/R) \cdot C_G(Q/R) / C_G(Q/R) \big)
\]
follows. Since
\[
N_H(Q/R) / C_H(Q/R) \cong N_H(Q/R) \cdot C_G(Q/R) / C_G(Q/R),
\]
the coset $x C_H(Q/R)$ is contained in a normal $p$-subgroup of the factor
group $N_H(Q/R) / C_H(Q/R)$. The claim now follows because
\[
N_H(Q/R) / C_H(Q/R) \cong N_{H/K}(T) / C_{H/K}(T).
\]
(Observe that $N_H(KQ/K) = K\cdot N_H(Q/R)$ and $C_H(KQ/R) = K \cdot
C_H(Q/R)$ hold by straightforward calculations.)
%
%$(ii) \Rightarrow (iii)$ is trivial.
%
%$(iii) \Rightarrow (i)$:
%Assume now that each $N_G(R) / R$ is $p$-stable and let $Q$ be a
%$p$-subgroup of $N_G(R)$ containing $R$. (Then $R \triangleleft Q$.)
%Let $H = N_G(Q/R)$ so that $H/R$ is $p$-stable. Let $x \in
%N_G(Q/R)$ such that $[Q, x, x] \subseteq R$. Then $\bar x = x R$ satisfies
%$[Q/R, \bar x, \bar x] = 1$. Hence $\bar x C_{H/R}(Q/R) \in O_p\big((H/R)
%/ C_{H/R}(Q/R)\big)$  as $H/R$ is $p$-stable. The claim now follows as
%\[
%(H/R) / C_{H/R}(Q/R) \cong H / C_H(Q/R).
%\]
\qed\end{proof}

By Theorem~\ref{glaubthm}, a group is section $p$-stable if and only if
it does not involve $Qd(p)$.

%%%%%%%%%%%%%%%%%%%%%%%%%%%%%%%%%%%%%%%%%%%%%%%%%%%%%%%%%%%%%%%%%%%%%%%%%%%
\section{{\itshape Qd}({\itshape p}) as a section of simple groups}
\label{Qdsecdefchar}
We now discuss the problem which simple groups involve $Qd(p)$.
% By Lemma~\ref{Qdpfreesol}, if a group does not involve $Qd(p)$, then its
% fusion system is soluble. Thus the groups with non-soluble $p$-fusion
% systems do involve $Qd(p)$ and we only have to go through the list in 
% Theorem~\ref{flores-foote}.
More specifically, we want to examine how the group $Qd(p)$ is involved
in finite simple groups. This question is discussed in the next few
sections. Besides this, we also determine whether the simple group in
question is $p$-stable.
 
%%%%%%%%%%%%%%%%%%%%%%%%%%%%%%%%%%%%%%%%%%%%%%%%%%%%%%%%%%%%%%%%%%%%%%%%%%%
% \section{{\itshape Qd}({\itshape p}) as a section of simple groups}

% In the next few sections we examine the question how the group $Qd(p)$ is
% involved in simple groups. % of Lie type. 
This section is devoted to alternating groups and simple groups of Lie type
in defining characteristic.

\begin{theorem}\label{QdpinAn}
The alternating group $A_n$ has a subgroup which is isomorphic to $Qd(p)$ if and
only if $n \geq p^2$. For $n < p^2$, $Qd(p)$ is not involved in $A_n$.
Therefore, $A_n$ is $p$-stable for $n < p^2$ and non-p-stable otherwise.
\end{theorem}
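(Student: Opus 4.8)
The plan is to treat the three assertions separately: the affine permutation action of $Qd(p)$ gives the existence of the subgroup when $n\ge p^2$, while the structure of Sylow $p$-subgroups of symmetric groups rules out even the \emph{involvement} of $Qd(p)$ when $n<p^2$; the $p$-stability statement then follows formally from results already in this section.

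First, for $n\ge p^2$ I would exhibit an explicit embedding. Since $Qd(p)=V\rtimes SL_2(p)$ is a subgroup of the affine group $AGL_2(p)=V\rtimes GL_2(p)$, it acts faithfully on the set $V$ of $p^2$ points, giving an embedding $Qd(p)\hookrightarrow S_{p^2}$. The key point is that the image lies in $A_{p^2}$: the group $Qd(p)$ is generated by $p$-elements (the translations making up $V$, together with the two transvections that generate $SL_2(p)$), and an element of odd order is always an even permutation, its sign being a homomorphism from a group of odd order to $\{\pm 1\}$ and hence trivial. Thus $Qd(p)\le A_{p^2}\le A_n$ for every $n\ge p^2$, the last inclusion fixing the remaining $n-p^2$ points.

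The heart of the matter is the converse: for $n<p^2$, no section of $A_n$ is isomorphic to $Qd(p)$. Here I would use the Sylow $p$-structure of $S_n$. Writing $n=a_1p+a_0$ with $0\le a_0,a_1\le p-1$, the hypothesis $n<p^2$ forces $a_1=\lfloor n/p\rfloor\le p-1$, so there is no room for $p$ disjoint $p$-cycles and hence no wreath product $C_p\wr C_p$; a Sylow $p$-subgroup of $S_n$ is therefore generated by $\lfloor n/p\rfloor$ commuting disjoint $p$-cycles, i.e. it is elementary abelian of order $p^{\lfloor n/p\rfloor}$. As $p$ is odd these cycles are even, so this same elementary abelian group is a Sylow $p$-subgroup of $A_n$. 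Now I would invoke the elementary fact that having abelian Sylow $p$-subgroups passes to sections: if $P$ is a Sylow $p$-subgroup of $H\le A_n$ then $P$ is abelian, and for $K\trianglelefteq H$ the image $PK/K\cong P/(P\cap K)$ is an abelian Sylow $p$-subgroup of $H/K$. Since the Sylow $p$-subgroups of $Qd(p)$ are extraspecial of order $p^3$, hence non-abelian, $Qd(p)$ cannot occur as a section of $A_n$ when $n<p^2$; in particular it is not a subgroup, which completes the stated equivalence.

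Finally, the $p$-stability claim is formal. For $n<p^2$ the Sylow $p$-subgroups of $A_n$ are abelian, so $A_n$ is $p$-stable by the remark (following the four definitions) that an abelian Sylow $p$-subgroup forces $p$-stability. For $n\ge p^2$ the subgroup $Qd(p)\le A_n$ fails to be $p$-stable by Example~\ref{qdp}, and since $p$-stability is inherited by subgroups (Proposition~\ref{def2tosubgroup}), $A_n$ is non-$p$-stable. I expect the decisive step to be the identification of the Sylow $p$-subgroup of $S_n$ for $n<p^2$ as elementary abelian: a naive order count only gives $p^3\mid |A_n|$ once $n\ge 3p$, so it disposes of $n<3p$ but leaves the range $3p\le n<p^2$ open, and it is precisely the abelianness of the Sylow subgroup—not its order—that closes this gap.
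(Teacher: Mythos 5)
Your proof is correct and takes essentially the same route as the paper: the degree-$p^2$ permutation representation (your affine action on $V$ is exactly the paper's action on the cosets of $SL_2(p)$, which is the point stabiliser) embeds $Qd(p)$ into $A_{p^2}$, while for $n<p^2$ the abelianness of the Sylow $p$-subgroups of $A_n$ rules out $Qd(p)$ even as a section, and the $p$-stability claim follows formally from Example~\ref{qdp} and Proposition~\ref{def2tosubgroup}. The only differences are cosmetic: you get evenness from the fact that $Qd(p)$ is generated by odd-order elements where the paper notes it has no subgroup of index $2$, and you spell out the elementary abelian Sylow structure of $S_n$, which the paper asserts without proof.
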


\begin{proof}
As the Sylow $p$-subgroups of $A_n$ are Abelian if $n < p^2$, $Qd(p)$
cannot be involved in $A_n$ in this case.

$SL_2(p)$ has index $p^2$ in $Qd(p)$. The permutation representation of
$Qd(p)$ on the (right) cosets of $SL_2(p)$ is faithful as $Qd(p)$ has no
normal subgroup contained in $SL_2(p)$ rather than the trivial one. This
permutation representation gives an embedding of $Qd(p)$ into $A_{p^2}$
(observe that $Qd(p)$ has no subgroup of index 2) and hence into each
$A_n$ with $n \geq p^2$.

The statement on $p$-stability follows from the above.
\qed\end{proof}

The description of $Qd(p)$ as in Lemma~\ref{QdpinPSL3} gives the main part
of the following theorem:

\begin{theorem}\label{defchar}
Let $G$ be a simple group of Lie type of characteristic $p$. Then $Qd(p)$
is not involved in $G$ if and only if $G$ is of type $A_1$, ${^2A_2}$ or
${^2G_2(3^{2n+1})}$. If $G$ is of type $B_2$ or ${^2A_n}$ with $n \geq 3$,
then $G$ has a subgroup isomorphic to $\widetilde{Qd}(p)$. In all other cases, $G$ has a subgroup isomorphic to $Qd(p)$. Consequently, $G$ is
$p$-stable if and only if it does not involve $Qd(p)$.
%if and only if the type of $G$ is one of the following list:
%\begin{itemize}
%\item $A_n$ with $n \geq 2$,
%\item $B_n$ ($n \geq 2$) or $C_n$ with $n \geq 3$,
%\item $D_n$ with $n \geq 4$,
%\item $E_6$, $E_7$, $E_8$ or $F_4$.
%\end{itemize}
\end{theorem}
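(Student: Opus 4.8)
The plan is to prove the involvement criterion by exhibiting concrete witnesses and then read off $p$-stability from them. First I record the reduction. By Example~\ref{qdp} and Example~\ref{tildeqdp} both $Qd(p)$ and $\widetilde{Qd}(p)$ are non-$p$-stable, and $\widetilde{Qd}(p)/Z \cong Qd(p)$, so $\widetilde{Qd}(p)$ involves $Qd(p)$. Hence, once $G$ is shown to contain a copy of $Qd(p)$ or $\widetilde{Qd}(p)$, Proposition~\ref{def2tosubgroup} (inheritance of $p$-stability to subgroups) forces $G$ to be non-$p$-stable, and $G$ visibly involves $Qd(p)$. Conversely, if $G$ does not involve $Qd(p)$, then all sections of $G$ are $p$-stable by Theorem~\ref{glaubthm}, in particular $G$ itself. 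This already yields the final ``Consequently'' clause, so the theorem reduces to two tasks: (a) the three families $A_1$, ${}^2A_2$, ${}^2G_2(3^{2n+1})$ do not involve $Qd(p)$; (b) every other type contains $Qd(p)$, respectively $\widetilde{Qd}(p)$ for $B_2=C_2$ and ${}^2A_n$ with $n\ge 3$, as a subgroup.

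For (a) I would use that these are exactly the groups of twisted Lie rank $1$ in odd characteristic (the Suzuki groups ${}^2B_2$ being excluded, as they live in characteristic $2$). In a rank-$1$ group the only proper parabolic subgroups are the Borel subgroups $B = U \rtimes T$, where $U = O_p(B)$ is the Sylow $p$-subgroup and the complement $T$ (a maximal torus) is a $p'$-group; thus $B$ is $p$-closed. By the Borel--Tits theorem the normaliser $N_G(R)$ of any nontrivial $p$-subgroup $R$ lies in such a Borel, hence is $p$-closed, and therefore so is $N_G(R)/R$. A $p$-closed group is $p$-stable, since every $p$-element lies in the normal Sylow $p$-subgroup, which maps into $O_p$ of the relevant automiser. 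Invoking Proposition~\ref{glaubpropforsections}, the $p$-stability of $N_G(R)/R$ for all $p$-subgroups $R$ gives that $G$ is section $p$-stable, equivalently that $G$ does not involve $Qd(p)$. I expect this argument to be uniform across the three families and to be the clean part of the proof.

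For (b) the engine is Lemma~\ref{QdpinPSL3}: since $Qd(p) \le SL_3(p) \le SL_3(q)$ with trivial intersection with the centre, any subsystem subgroup of type $A_2$ (which is $SL_3(q)$ or $PSL_3(q)$) contains $Qd(p)$. So I would check, type by type, that every untwisted type of rank $\ge 2$ other than $B_2=C_2$, and every twisted type of twisted rank $\ge 2$ other than ${}^2A_n$, carries a $\sigma$-stable $A_2$-subsystem on which the Frobenius acts by a field automorphism only; for the awkward twisted types I would instead quote the containments $G_2(q) \le {}^3D_4(q)$ and $F_4(q) \le {}^2E_6(q)$ and reduce to the untwisted case, and for ${}^2D_n$ with $n\ge 4$ use the $A_{n-2}$-subdiagram fixed pointwise by the graph automorphism. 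The two remaining families $B_2=C_2$ and ${}^2A_n$ with $n\ge 3$ contain no such untwisted $A_2$; here I realise $\widetilde{Qd}(p) = p^{1+2}\rtimes SL_2(p)$ inside $Sp_4(p)$ as the $\F_p$-points of the parabolic whose unipotent radical is the Heisenberg group and whose Levi factor $SL_2$ acts naturally on the order-$p^2$ quotient. Then $\widetilde{Qd}(p) \le Sp_4(p) \le Sp_4(q)$ settles $B_2=C_2$, and the standard embedding $Sp_4(q) \le SU_4(q) \le SU_{n+1}(q)$ settles ${}^2A_n$ with $n\ge 3$; in each case one checks that the copy meets the centre of the ambient group trivially, so that it survives in the simple quotient.

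The main obstacle is the case analysis in (b): I must verify for every Lie type and every isogeny and twisting that the claimed $A_2$-subsystem (respectively $Sp_4$) really occurs and is defined over the correct field, keeping careful track of how the Frobenius permutes roots in the twisted cases ${}^2A_n$, ${}^2D_n$, ${}^3D_4$ and ${}^2E_6$. The two genuinely special points are confirming that $B_2=C_2$ and ${}^2A_n$ lack an untwisted $A_2$-subsystem, so that $\widetilde{Qd}(p)$ is the right witness, and pinning down the extraspecial-by-$SL_2$ parabolic of $Sp_4$ explicitly enough to recognise $\widetilde{Qd}(p)$ and to check that it avoids the centre of the surrounding special linear, unitary or symplectic group.
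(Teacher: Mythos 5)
Your proposal is correct in substance, and its second half is essentially the paper's own argument: for type $B_2$ your witness is the stabiliser of a non-zero vector of the natural module of $Sp_4(p)$, exactly as in the paper; for ${}^2A_n$ with $n\ge 3$ you push that copy of $\widetilde{Qd}(p)$ through $Sp_4(q)\le SU_4(q)\le SU_{n+1}(q)$ and check it avoids the (here $p'$-) centre, again as in the paper; and for all remaining types both you and the paper reduce to Lemma~\ref{QdpinPSL3} by locating a copy of $SL_3(p)$. The only difference in that half is that the paper simply cites the known existence of $A_2(p)$-subgroups (with the Liebeck--Saxl--Seitz tables for the exceptional types), whereas you propose to re-derive it from $\sigma$-stable $A_2$-subsystems together with the containments $G_2(q)\le {}^3D_4(q)$, $F_4(q)\le {}^2E_6(q)$ and the $A_{n-2}$-subdiagram of ${}^2D_n$; that is more work but self-contained. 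Where you genuinely diverge is the three exceptional families. The paper treats them ad hoc: $A_1$ has Abelian Sylow $p$-subgroups; ${}^2G_2(3^{2n+1})$ has Abelian Sylow $2$-subgroups, while $Qd(p)$, containing $SL_2(p)$, has non-Abelian ones; and for ${}^2A_2$ it performs a hands-on fusion computation with Huppert's matrix description of the Sylow $p$-subgroup to show every $p$-local subgroup lies in $N_G(P)=P\rtimes C$. Your uniform argument --- these are precisely the groups of twisted rank one in odd characteristic, Borel--Tits puts every $p$-local subgroup inside a Borel $U\rtimes T$, which is $p$-closed, and $p$-closed groups are $p$-stable, so $G$ is section $p$-stable and hence $Qd(p)$-free by Proposition~\ref{glaubpropforsections} and Theorem~\ref{glaubthm} --- is cleaner and explains conceptually why exactly these families occur; the price is the appeal to Borel--Tits and to Glauberman's theorem where the paper's $A_1$ and ${}^2G_2$ cases need nothing beyond Sylow theory.

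One small patch is needed in that step: you establish $p$-stability of $N_G(R)/R$ only for \emph{nontrivial} $R$, but clause $(iii)$ of Proposition~\ref{glaubpropforsections} quantifies over all $p$-subgroups, including $R=1$, i.e.\ it also requires $G$ itself to be $p$-stable --- and $G$, being simple, is certainly not $p$-closed, so your argument does not literally cover that case. The repair is immediate with material already in the paper: by Corollary~\ref{localpstab}, $G$ is $p$-stable as soon as $N_G(Q)$ is $p$-stable for every non-cyclic $Q\neq 1$, and your Borel--Tits argument gives exactly that (equivalently, the $p$-stability condition for $G$ is vacuous at $Q=1$ and is verified inside the $p$-closed groups $N_G(Q)$ for $Q\neq 1$). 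With that sentence added, the proof goes through.
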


\begin{proof}
Note that the cases of $^2B_2$ and $^2F_4$ are irrelevant because they are
defined in characteristic 2.

The Ree groups $^2G_2(3^{2n+1})$ have Abelian Sylow $2$-subgroups, hence
they cannot involve $Qd(p)$. The simple groups of type $A_1$ have Abelian
Sylow $p$-sub\-groups, so they do not involve $Qd(p)$.

% The cases of $^2A_2$ (unitary groups) and
For the unitary groups $G = U_3(q^2)$, we can use the description of a
Sylow $p$-subgroup $P$ of $G$ as in~\cite[Satz 10.12, p. 242]{HuppertI}.
% A Sylow $p$-subgroup $P$ of
% $G$ consists of transformations represented by matrices of the form
% \[
% x(a, b) = \rhh 1 a b . 1 -a^q . . 1 ,
% \]
% where $Tr(b) + N(a) = 0$ (with trace and norm of $\F_{q^2} / \F_q$).
% but lengthy 
% $x(a, b)^g$ of $x(a, b) \ne 1$ 
A straightforward calculation shows the following: If a
conjugate of an element (different from $1$) of $P$ is contained in $P$,
then the conjugating element lies in the normaliser $N_G(P)$. Now,
$N_G(P)$ is the semidirect product of $P$ with a cyclic group $C$
of order dividing $q^2 - 1$. Since such a semidirect product cannot
involve $Qd(p)$, we have that no $p$-local subgroup of $G$
involves $Qd(p)$ and hence $G$ does not involve it, either.

Let $G=Sp_4(q)$ and let $X \cong Sp_4(p)$ be a subgroup of $G$. It is
well-known that the stabiliser in $X$ of a non-zero vector of the natural
$\F_p Sp_4(p)$-module is isomorphic to $\widetilde{Qd}(p)$.
%It is well known
%and can be easily checked that $O_p(\widetilde{Qd}(p))$ is an extraspecial
%group $E$ of order $p^3$ and exponent $ p$, and $\widetilde{Qd}(p) = E
%\cdot R$ is a semidirect product, where $R = Sp_2(p) \cong SL_2(p)$. In
%fact, $Z(\widetilde {Qd}(p))=Z(E)$ and $\widetilde {Qd}(p)/Z(E) \cong
%Qd(p)$. 
As $|Z(G)|=2$, $PSp_4(q)$ has a subgroup isomorphic to $\widetilde{Qd}(p)$.
 
Note that $SO_5(q)$ is isomorphic to $PSp_4(q)$.

For $n \geq 4$, the special unitary group $SU_n(q)$ contains a subgroup
isomorphic to $Sp_4(q)$ and hence it has a subgroup isomorphic to
$\widetilde{Qd}(p)$. Since $Z(SU_n(q))$ is a $p'$-group, the same is true
for $PSU_n(q)$.

All the other simple groups of Lie type ($A_n$ for $n \geq 2$,  $B_n$, 
$C_n$ for $n \geq 3$, $D_n$ and $^2D_n$ for $n \geq 4$, $E_n$ for $6 \leq n
\leq 8$, $F_4$, $G_2$, $^2E_6$, and $^3D_4$) are known to have a subgroup
isomorphic to $PSL_3(p)$, that is, $A_2(p)$ (for the exceptional groups,
see also \cite{liebeck:saxl:seitz}). Thus they all have subgroups
isomorphic to $Qd(p)$ by Lemma~\ref{QdpinPSL3}.
\end{proof}

%%%%%%%%%%%%%%%%%%%%%%%%%%%%%%%%%%%%%%%%%%%%%%%%%%%%%%%%%%%%%%%%%%%%%%%%%%%
\section{The case of simple groups of Lie type in non-defining
characteristic} \label{Qdsecnondefchar}

In this section, we discuss the question how $Qd(p)$ is involved in simple
groups of Lie type in non-defining characteristic. More precisely, $G$ is a
simple group of Lie type defined over the field $\F_q$, where $q$ is
a power of a prime $\ell \ne p$. This means $p$ differs from the defining
characteristic $\ell$ of $G$.

The main result of this section is the following:

\begin{theorem}\label{mainLie}
Let $G$ be a simple group of Lie type of characteristic $\ell \ne p$.
Suppose that the Sylow $p$-subgroups of $G$ are non-Abelian. Then one of
the following holds:
\begin{enumerate}[$(i)$]
\item $G$ contains a subgroup isomorphic to $\widetilde{Qd}(p)$;

\item Either $G \cong PSL_p(q)$ (whith $p | q-1$) or $G\cong PSU_p(q)$
(with $p | q+1$) or $p = 3$, $G \cong {}^3D_4(q)$, $F_4(q)$, ${}^2F_4(q)$,
(with $q=2^{2m+1}$ $m > 0$), or ${^2F}_4(2)'$ and $G$ contains a subgroup
isomorphic to $Qd(p)$;

\item $p = 3$, $9 | q^2 - 1$, $G = G_2(q)$ and $G$ contains a subgroup
isomorphic to $\widetilde{Qd}^-(3)$.

\item $p=3$ and $q^2-1$ is not a multiple of $9$, $G=G_2(q)$ and $G$ has no
section isomorphic to $Qd(3)$.

\end{enumerate}
Consequently, $G$ is $p$-stable if and only if it is section $p$-stable.
\end{theorem}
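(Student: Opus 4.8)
The plan is to go through the families of simple groups of Lie type in cross characteristic one at a time, organising everything around the single invariant $d=\mathrm{ord}_p(q)$, the multiplicative order of $q$ modulo $p$ (so that $d\mid p-1$, in particular $d<p$). In cross characteristic the isomorphism type of a Sylow $p$-subgroup of $G$ is governed by $d$ together with the number of ``$\Phi_d$-factors'' dividing $|G|$ -- the classical description going back to Weir and refined in the Gorenstein--Lyons--Solomon analysis -- and such a subgroup is non-Abelian precisely when there are enough of these factors. The two model embeddings I would use repeatedly are $Qd(p)\hookrightarrow PSL_3(p)$ from Lemma~\ref{QdpinPSL3} and $\widetilde{Qd}(p)\hookrightarrow GL_p(q)$ for $p\mid q-1$ from Lemma~\ref{uu2}; the latter is the Heisenberg--Weil realisation in which the extraspecial group $E$ of order $p^3$ acts by clock-and-shift matrices of degree $p$ and $SL_2(p)=Sp_2(p)$ acts on $E/Z(E)$.

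First I would treat the linear and unitary families, where the mechanism is cleanest. For $p\mid q-1$ (so $d=1$), Lemma~\ref{uu2} gives $\widetilde{Qd}(p)\le GL_p(q)$, with the extraspecial centre $Z(E)$ acting as the scalar $\zeta I_p$ for $\zeta$ a primitive $p$-th root of unity; embedding the degree-$p$ block diagonally gives $\widetilde{Qd}(p)\le GL_n(q)$ for every $n\ge p$. Everything turns on what survives after adjusting by scalars to land in $SL_n(q)$ and then passing to the simple quotient. For $n=p$ the scalar $\zeta I_p$ coincides with a generator of $Z(SL_p(q))\cong C_{\gcd(p,q-1)}=C_p$, so $Z(E)$ is killed and the image in $PSL_p(q)$ is only $E/Z(E)\rtimes SL_2(p)\cong Qd(p)$; one must then prove that $\widetilde{Qd}(p)$ genuinely does \emph{not} embed into $PSL_p(q)$, which is the substance of case~$(ii)$. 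For $n>p$ one places the block inside a larger space, so that the image of $Z(E)$ is non-scalar and $\widetilde{Qd}(p)$ meets $Z(SL_n(q))$ trivially, hence embeds in $PSL_n(q)$ -- this is case~$(i)$. The unitary groups $SU_n(q)$ with $p\mid q+1$ (so $d=2$) run identically with $GU_p$ replacing $GL_p$: $n=p$ yields $PSU_p(q)$ carrying only $Qd(p)$, and $n>p$ yields $\widetilde{Qd}(p)$.

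For the remaining classical groups, and for the linear and unitary groups with $d\ge 3$, the idea is to locate inside $G$ -- via the $\Phi_d$-torus structure over the extension field in which $p$-th roots of unity first appear -- a subgroup of type $GL_p$ or $GU_p$ whenever the Sylow $p$-subgroup of $G$ is non-Abelian, and then to import the analysis above, the extra rank ensuring the extraspecial centre stays non-central and hence survives; this puts all of them in case~$(i)$. The exceptional groups I would dispatch using the known maximal-rank and $p$-local subgroups (torus normalisers and subsystem subgroups as tabulated by Liebeck--Saxl--Seitz~\cite{liebeck:saxl:seitz} and others): most contain an $SL_3(p)$ or a $GL_p/GU_p$-type subgroup and land in $(i)$ or $(ii)$, while the small $p=3$ configurations $F_4(q)$, ${}^3D_4(q)$, ${}^2F_4(q)$ and ${}^2F_4(2)'$ are fixed to case~$(ii)$ by explicit local computation. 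The hardest point, which resists any uniform argument, is $G_2(q)$ at $p=3$: here the relevant $3$-local subgroup is the normaliser of a $\Phi$-torus whose relative Weyl group is $SL_2(3)$ acting on an extraspecial group of order $27$, and whether this yields the $SL_3(\mathbb{C})$-form $\widetilde{Qd}^-(3)$ or no copy of $Qd(3)$ at all depends delicately on the exponent and isomorphism type of the Sylow $3$-subgroup -- controlled by whether $9\mid q^2-1$. One must show that $9\mid q^2-1$ produces exactly $\widetilde{Qd}^-(3)$ (matching Remark~\ref{tildeqd3rem}), case~$(iii)$, whereas $9\nmid q^2-1$ forces a Sylow $3$-subgroup with no section isomorphic to $Qd(3)$, case~$(iv)$; this requires an explicit determination of the $3$-fusion in $G_2(q)$.

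The final clause then follows at once. Since $p$-stability passes to subgroups (Proposition~\ref{def2tosubgroup}), any $G$ containing one of the non-$p$-stable groups $Qd(p)$, $\widetilde{Qd}(p)$ or $\widetilde{Qd}^-(3)$ is itself non-$p$-stable; and as each of these has $Qd(p)$ as a section, such a $G$ involves $Qd(p)$ and so is not section $p$-stable by Theorem~\ref{glaubthm} together with Proposition~\ref{glaubpropforsections}. Hence in cases~$(i)$--$(iii)$ both properties fail, whereas in case~$(iv)$ $G$ involves no $Qd(3)$ and the same two results make $G$ section $p$-stable and \emph{a fortiori} $p$-stable. In every case $G$ is $p$-stable if and only if it is section $p$-stable, as claimed.
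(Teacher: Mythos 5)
Your overall strategy is the same as the paper's: organise everything around $e_p(q)$, use Lemma~\ref{QdpinPSL3} and Lemma~\ref{uu2} as the two model embeddings, reduce the remaining classical groups to subgroups of type $GL_p$ or $GU_p$ over extension fields, quote the Liebeck--Saxl--Seitz tables for the exceptional groups, and deduce the final clause from Proposition~\ref{def2tosubgroup} and Theorem~\ref{glaubthm} (that last deduction is correct as you state it). However, there is a genuine gap, and it is concentrated exactly where the theorem's delicate cases live, namely at $p=3$.

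Your mechanism for case $(ii)$ --- ``$Z(E)$ acts by scalars, hence is killed, so the image in $PSL_p(q)$ is $Qd(p)$'' --- tacitly assumes that the image of $\widetilde{Qd}(p)\le GL_p(q)$ in $PGL_p(q)$ lands inside $PSL_p(q)$. For $p>3$ this is true, because $\widetilde{Qd}(p)$ is perfect and therefore lies in $SL_p(q)$ (this is precisely how Lemma~\ref{uu2} argues). For $p=3$ it is false in general: $\det\big(\widetilde{Qd}(3)\big)$ is the group of cube roots of unity in $\F_q^*$, and a cube root of unity is a cube in $\F_q^*$ if and only if $9\mid q-1$; so the image of $\widetilde{Qd}(3)$ in $PGL_3(q)$ lies in $PSL_3(q)$ exactly when $9\mid q-1$. (Consistently, when $3\mid q-1$ but $9\nmid q-1$ the Sylow $3$-subgroups of $PSL_3(q)$ are Abelian of order $9$, so your conclusion $Qd(3)\le PSL_3(q)$ cannot hold there.) The correct argument, under the non-Abelian-Sylow hypothesis which forces $9\mid q-1$, is the paper's Lemma~\ref{p3l}: one rescales the order-$3$ element $x$ by a cube root of $\det x$ to obtain a subgroup of $SL_3(q)$ isomorphic to $\widetilde{Qd}^-(3)$ (not $\widetilde{Qd}(3)$, which does not embed) whose image in $PSL_3(q)$ is $Qd(3)$; the unitary analogue is Lemma~\ref{p3u}. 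The same determinant obstruction breaks your plain diagonal embedding for $n>p$ when $p=3$; the paper's Lemma~\ref{co2} circumvents it with the embedding $x\mapsto\diag\big(x,(\det x)^{-1},\Id_{n-4}\big)$ into $SL_n(q)$. Finally, this is not an issue you can quarantine: the $G_2(q)$, $p=3$ case that you leave open (``requires an explicit determination of the $3$-fusion'') is resolved in the paper by exactly these lemmas. Indeed, $G_2(q)$ has maximal subgroups containing $SL_3(q)$ and $SU_3(q)$, which supply $\widetilde{Qd}^-(3)$ when $9\mid q^2-1$ (case $(iii)$); and when $9\nmid q^2-1$ the Sylow $3$-subgroup $E$ is extraspecial of order $27$, every candidate $Qd(3)$-section is forced into $N_G(E)$ or $N_G(V)$ for $V<E$ elementary Abelian, both contained in $N_G(Z(E))$, whose relevant index-$2$ subgroup is $SL_3(q)$ or $SU_3(q)$ --- and these do not involve $Qd(3)$ by Lemmas~\ref{p3l} and~\ref{p3u} (case $(iv)$). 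So the $p=3$ linear/unitary analysis that your sketch mishandles is the very ingredient your ``hardest point'' is missing.
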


The conditions on a prime $p$ which guarantee that a Sylow $p$-subgroup of
a simple group $G$ is Abelian must be known to experts, but we have not
found any reference. So we write down these in Proposition \ref{p11}
for cases relevant to Theorem \ref{mainLie}, that is, for the cases where
$G$ is a simple group of Lie type defined over the field $\F_q$, $q =
\ell^s$ and $\ell \ne p$. Denote by $e_p(q)$ the order of $q$ modulo $p$,
that is, the smallest natural number $i$ such that $p |q^i-1$.

\begin{prop}\label{p11}
Let $G$ be a simple group of Lie type in characteristic $\ell \ne p$.

\begin{enumerate}[$(1)$]
\item Suppose that $p=3$ and the Sylow $3$-subgroups of $G$ are Abelian. Then one of
the following holds:
\begin{enumerate}[$(i)$]
\item $G \cong PSL_2(q)$, where $q > 2$;
\item $G \cong PSL_3(q)$, where $q-1\equiv 3$ or $6 \pmod 9$;
\item $G \cong PSL_n(q)$, where $3 | q+1$ and $2 < n < 6$;
\item $G \cong PSU_3(q)$, where $q > 2$ and $q+1\equiv 3$ or $6\pmod 9$;
\item $G \cong PSU_n(q)$, where $3 | q-1$ and $2 < n < 6$;
\item $G \cong B_2(q)$;
\item $G \cong {}^2B_2(q)$, where $q=2^{2m+1}$ and $m > 0$;

%\item $G \cong {}^2F_4(2)'$.
\end{enumerate}

\item Suppose that $p>3$ and the Sylow $p$-subgroups of $G$ are Abelian.
Then one of the following holds:
\begin{enumerate}[$(i)$]
\item $G \cong {}^2B_2(q)$, where $q = 2^{2m+1}$, $m > 0$;
\item $G \cong G_2(q)$;
\item $G \cong {}^2G_2(q)$, where $q = 3^{2m+1}$, $m > 0$;
\item $G \cong {}^2F_4(2)'$ or ${}^2F_4(q)$, where $q = 2^{2m+1}$, $m > 0$;
\item $G \cong {}^3D_4(q)$;
\item $G \cong F_4(q)$;
\item $G \cong E_6(q)$, where $p > 5$ or $p = 5 \ndiv q-1$;
\item $G \cong {}^2E_6(q)$, where $p > 5$ or $p = 5 \ndiv q+1$;  
\item $G \cong E_7 (q)$, where $p>7$ or $p = 5$ or 7 and $p \ndiv q^2-1$;
\item $G \cong E_8(q)$, where $p > 7$ or $p = 7 \ndiv q^2 - 1$ or $p = 5$;
\item $G \cong PSL_n(q)$, where $n<e_p(q)p$;

\item $G \cong PSU_n(q)$, where $2<n<2e_p(q)p$ if $e_p(q)$ is odd,
$2<n<e_p(q)p $ if $e_p(q) \equiv 0 \pmod 4$ and $n<e_p(q)p/2$ if $e_p(q)
\equiv 2 \pmod 4$;

\item $G \cong B_{n}(q)$, where $q$ is odd and $1<n<e_p(q)p $ if $e_p(q)$
is odd, $1<n<e_p(q)p/2$ if $e_p(q)$ is even;
\item $G \cong C_n(q)$, where $2<n<e_p(q)p $ if $e_p(q)$ is odd,
$2<n<e_p(q)p/2$ if $e_p(q)$  is even;

\item $G \cong D_{n}(q)$, where $3<n<e_p(q)p$ if $e_p(q)$ is odd and
$4<n\leq e_p(q)p/2$ if $e_p(q)$ is even;
  
\item $G \cong {}^2D_{n}(q)$, where $3<n\leq e_p(q)p$ if $e_p(q)$ is odd and
$4<n<e_p(q)p/2$ if $e_p(q)$ is even.
\end{enumerate}
\end{enumerate}
\end{prop}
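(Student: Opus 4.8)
The plan is to convert the condition ``Sylow $p$-subgroup abelian'' into the non-divisibility of a single relative Weyl group order, and then to read off the answer type by type. Fix $p\ne\ell$ and set $d=e_p(q)$. First I would recall the generic description of the $p$-local structure through a Sylow $\Phi_d$-torus $S$ of the ambient algebraic group (Brou\'e--Malle--Michel): a Sylow $p$-subgroup $P$ of $G$ is contained in $N_G(S)$, the $p$-part of $C_G(S)$ is a homocyclic abelian group lying inside a torus, and $W_d:=N_G(S)/C_G(S)$ is a complex reflection group whose order is tabulated. The reduction I would establish is
\[
P \text{ is abelian} \iff p\nmid |W_d|.
\]
Comparing $p$-parts of $|G|$ and $|N_G(S)|$ shows that $P$ already meets $N_G(S)$ in a full Sylow subgroup; hence if $p\nmid|W_d|$ then $P$ is just the abelian toral part, while if $p\mid|W_d|$ a $p$-element of $W_d$ acts on the toral $p$-group as a non-trivial unipotent automorphism (it cannot be trivial, the mod-$p$ congruence subgroup being torsion-free for $p$ odd), so $P$ is non-abelian. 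The whole proposition then reduces to deciding, for each type, when $p\mid|W_d|$.

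For the classical groups I would make this completely explicit using the wreath-product description of Sylow subgroups (Weir, Carter--Fong): with $d=e_p(q)$ and $k=\lfloor n/d\rfloor$, a Sylow $p$-subgroup of $\mathrm{GL}_n(q)$ is $C_{p^a}\wr\mathrm{Syl}_p(S_k)$, which is abelian exactly when $k<p$, i.e.\ $n<e_p(q)\,p$; this is case $(2)(\mathrm{xi})$. The unitary groups are reduced to this by the substitution $q\mapsto-q$: since $p\mid q^i-(-1)^i$ iff $e_p(-q)\mid i$, the governing parameter is $d^*=e_p(-q)$, and an elementary computation gives $d^*=2d$, $d$, or $d/2$ according as $d$ is odd, $\equiv0$, or $\equiv2\pmod4$ -- precisely the trichotomy in $(2)(\mathrm{xii})$. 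The symplectic, odd-orthogonal and (twisted) even-orthogonal families are handled in the same spirit, the relevant $W_d$ being the reflection groups $G(d,1,k)$ and their index-$2$ relatives; whether $d$ is odd or even decides whether the prime sits in a $\Phi_d$- or $\Phi_{2d}$-factor, which halves the rank bound and yields the odd/even splits in $(2)(\mathrm{xiii})$--$(\mathrm{xvi})$ and $(1)(\mathrm{vi})$.

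For the exceptional groups there are only finitely many types, so I would argue by a finite check against the Brou\'e--Malle--Michel tables of relative Weyl groups. Since $|W|$ is bounded (the largest order being $|W(E_8)|=2^{14}3^5 5^2 7$), for $p$ exceeding all primes dividing $|W|$ one automatically has $p\nmid|W_d|$ for every $d$, giving the ``large $p$'' clauses; for $G_2,F_4,{}^3D_4$ one even has $p\nmid|W_d|$ for all $p>3$, matching their unconditional appearance in $(2)(\mathrm{ii}),(\mathrm{v}),(\mathrm{vi})$. The content lies with the small primes $p=5,7$ (and with $p=3$ throughout part $(1)$), where one locates exactly those $d$ with $p\mid|W_d|$ and excludes them; the offending $d$ is almost always small -- typically $d=1$, where $W_1=W(G)$ is the full Weyl group -- and removing it produces the divisibility hypotheses such as $5\nmid q-1$ for $E_6(q)$ or $p\nmid q^2-1$ for $E_7(q)$.

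The main obstacle I anticipate is reconciling this criterion, which is natural for the full linear (or the algebraic) group, with the passage to the actual simple group at the prime $p$. When $p$ divides the order of the centre -- that is, in the boundary cases where $d$ or $d^*$ equals $1$ -- the determinant-one condition and the quotient by the centre can each strip off a central factor of order $p$ and so turn a non-abelian Sylow subgroup abelian. This is exactly what isolates $p=3$: for $PSL_3(q)$ with $3\mid q-1$ and for $PSU_3(q)$ with $3\mid q+1$, a direct count (via lifting the exponent) gives $|\,\mathrm{Syl}_3\,|=3^{2v}$ with $v=v_3(q\mp1)$, which is abelian precisely when $v=1$, producing cases $(1)(\mathrm{ii})$ and $(1)(\mathrm{iv})$; for $p>3$ the same collapse fails, because after deleting the two central factors the base of the wreath product is still large enough for the top $C_p$ to act non-trivially, which is why no $n=e_p(q)p$ boundary case survives in part $(2)$. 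Carrying out this centre bookkeeping uniformly across the isogeny types, and checking the handful of exceptional small-prime entries against the reflection-group tables, is where the genuine work of the proof will be.
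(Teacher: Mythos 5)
Your proposal is correct in substance and would prove the proposition, but it is organised around a different key principle than the paper's proof. You package everything into one uniform criterion --- ``the Sylow $p$-subgroup is Abelian if and only if $p$ does not divide the order of the relative Weyl group $W_d$ of a Sylow $\Phi_d$-torus'' (Brou\'e--Malle--Michel generic Sylow theory) --- proving the non-Abelian direction abstractly via the torsion-free congruence kernel, and you then repair the boundary cases for the simple quotients separately. The paper never formulates such a criterion. For the classical groups it instead combines Weir's reduction theorems (Lemmas \ref{ug1}, \ref{sp0}, \ref{oo2} and \ref{o84}, identifying unitary, symplectic and orthogonal Sylow subgroups with those of suitable general linear groups) with direct computations of $p$-parts of group orders: Lemma \ref{wn1} shows that for $n < e_p(q)p$ the Sylow $p$-subgroup of $GL_n(q)$ coincides with the Abelian Sylow subgroup of a torus, while the converse, non-Abelian direction is obtained by explicitly embedding $\widetilde{Qd}(p)$ (Lemmas \ref{uu2} and \ref{w22}, Propositions \ref{sp2}, \ref{oo1}, \ref{oo3} and \ref{po3}). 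For the exceptional groups the paper reads $|P_W|$ off the Gorenstein--Lyons table; since $P_W$ is exactly the $p$-part of the image of $P$ in the relative Weyl group, this is the same numerical data as your tables, so there the two arguments essentially coincide. The boundary cases are likewise the same in both treatments: Lemmas \ref{p3l} and \ref{p3u} carry out precisely the centre bookkeeping you describe for $PSL_3(q)$ with $3 \mid q-1$, $9 \ndiv q-1$, and $PSU_3(q)$ with $3 \mid q+1$, $9 \ndiv q+1$ (and, as you correctly note, your headline equivalence is genuinely false for these groups, so this patch is not optional --- it is the one place where your reduction cannot be applied verbatim to the simple group). As for what each route buys: yours is shorter and conceptually uniform, at the price of invoking heavier machinery and deferring the isogeny bookkeeping to the end; the paper's is elementary and self-contained, and its extra effort is not wasted, because the explicit $Qd(p)$- and $\widetilde{Qd}(p)$-subgroups that witness non-Abelianness are exactly what the companion Theorem \ref{mainLie} requires, whereas your abstract faithfulness argument proves Proposition \ref{p11} but contributes nothing towards that theorem.
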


We reach the proof of the above two results towards a series of lemmas which
we state and prove below.

\begin{lem}\label{hu2}
Let $m$, $n$ be positive integers, and let $c = \gcd(m, n)$, the greatest
common divisor of $m$ and $n$. Then $q^c-1$ is the greatest common divisor
of $q^m-1,q^n-1$. Furthermore,
 %if %$q$ is a prime power? and $e=e_p(q)$then 
$p$ divides $q^n-1$ if and only if $e_p(q)$ divides $n$.
 \end{lem}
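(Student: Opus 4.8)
The plan is to prove Lemma~\ref{hu2}, which asserts two facts: first, that $\gcd(q^m-1,q^n-1)=q^{\gcd(m,n)}-1$, and second, that $p\mid q^n-1$ if and only if $e_p(q)\mid n$. These are standard number-theoretic facts about orders and cyclotomic divisibility, so the aim is to give a clean self-contained argument rather than to invoke heavy machinery.

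For the first statement, I would argue by the Euclidean algorithm combined with a single divisibility observation. Write $c=\gcd(m,n)$ and set $d=\gcd(q^m-1,q^n-1)$. Since $c\mid m$ and $c\mid n$, the elementary factorisation $q^a-1\mid q^b-1$ whenever $a\mid b$ (because $q^b-1=(q^a-1)(q^{b-a}+q^{b-2a}+\cdots+1)$) shows that $q^c-1$ divides both $q^m-1$ and $q^n-1$, hence $q^c-1\mid d$. For the reverse divisibility, I would use the key reduction identity: assuming without loss $m\ge n$, one has $q^m-1\equiv q^{m-n}-1\pmod{q^n-1}$ (since $q^m-1=q^{m-n}(q^n-1)+(q^{m-n}-1)$), so $\gcd(q^m-1,q^n-1)=\gcd(q^{m-n}-1,q^n-1)$. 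This mirrors the Euclidean step $\gcd(m,n)=\gcd(m-n,n)$ on the exponents. Iterating the Euclidean algorithm on the exponents therefore drives $\gcd(q^m-1,q^n-1)$ down to $\gcd(q^c-1,q^c-1)=q^c-1$, giving $d=q^c-1$.

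The second statement follows quickly from the first, and I expect it to be routine once the first is in hand. By definition $e_p(q)$ is the least positive $i$ with $p\mid q^i-1$; it is exactly the multiplicative order of $q$ modulo $p$ (well defined since $\gcd(q,p)=1$, as $\ell\ne p$). The standard order-divides-exponent fact from group theory gives that $p\mid q^n-1$, i.e.\ $q^n\equiv 1\pmod p$, holds precisely when $e_p(q)\mid n$. Alternatively, one can derive it directly from the first part: setting $e=e_p(q)$, the gcd formula yields $\gcd(q^n-1,q^e-1)=q^{\gcd(n,e)}-1$, and since $p\mid q^e-1$, the prime $p$ divides $q^n-1$ iff $p\mid q^{\gcd(n,e)}-1$; minimality of $e$ then forces $\gcd(n,e)=e$, that is $e\mid n$.

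The only genuine point requiring care is the reverse divisibility in the first statement, since the forward direction $q^c-1\mid d$ is immediate from the factorisation lemma. The cleanest route is the Euclidean-algorithm reduction via the congruence $q^m-1\equiv q^{m-n}-1\pmod{q^n-1}$, which transfers the descent on exponents to a descent on the gcd of the $q$-values; this is the main (though still elementary) obstacle and should be stated explicitly rather than left implicit.
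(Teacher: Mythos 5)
Your proof is correct. The two halves of your argument are sound: the factorisation $q^a-1\mid q^b-1$ for $a\mid b$ gives one divisibility, and the reduction $\gcd(q^m-1,q^n-1)=\gcd(q^{m-n}-1,q^n-1)$, which faithfully mirrors the subtractive Euclidean step on exponents, gives the other; iterating indeed terminates at $q^{\gcd(m,n)}-1$. Where you differ from the paper is that the paper does not prove the first statement at all: it simply cites it as Hilfssatz~2(a) of Huppert's 1970 paper, and disposes of the second statement exactly as you do, by noting that $e_p(q)$ is the multiplicative order of $q$ in $\F_p^{*}$ (valid since $\ell\ne p$ makes $q$ invertible mod $p$). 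So your submission replaces an external reference with a self-contained two-line induction via the Euclidean algorithm --- slightly longer, but it buys independence from the literature and makes explicit the one genuinely non-trivial point (the congruence $q^m-1\equiv q^{m-n}-1\pmod{q^n-1}$), which the cited source would otherwise hide. Your alternative derivation of the second statement from the gcd formula is also a nice touch the paper does not contain, though the direct order argument you give first is the same as the paper's and is all that is needed.
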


\begin{proof}
The first statement is Hilfsatz~2(a) in \cite{huppert:1970}.
%By definition of
%$e$,  $p$ divides $q^e-1$ and does not divide $q^i-1$ for $i<e$. So if $p$
%divides $q^n-1$ then $n$ is a multiple of $e$.  Conversely, if $n=ke$ for
%some integer   $k>0$ then $q^e-1$ divides $q^{ke}-1$, whence the result. 
The second is an elementary consequence of that $e_p(q)$ is the order of
$q$ in the multiplicative group $\F^{*}_p$ of the field of $p$ elements.
\qed\end{proof}

%The proof of the statements of the following lemma can be found on pages
%532, 531, and 532 in \cite{Weir:1955}, respectively.
%
%\begin{lem}\label{ww3}
%\begin{enumerate}[$(i)$]
%\item If $G = U_n(q)$ and $e_p(q) \equiv 2\pmod 4$, then $G$ contains a
%Sylow $p$-subgroup of $GL_n(q^2)$.
%
%\item If $G = Sp_{2n}(q)$, and $e_p(q)$ is even, then $G$ contains a Sylow
%$p$ subgroup of $GL_{2n}(q)$.
%
%\item If $G = O_{2n+1}(q)$ and $e_p(q)$ is even, then $G$ contains a Sylow
%$p$-subgroup of $GL_{2n+1}(q)$.
%\end{enumerate}
%\end{lem}

%Now we discuss the case of linear and unitary groups.
\paragraph{Linear and unitary groups}

\begin{lem}\label{uu1}
Let $E$ be an extraspecial group of order $p^3$. If $p$
divides $q-1$ (resp. $q+1$), then $E$ is isomorphic to a subgroup of
$GL_p(q)$ (resp., $U_p(q)$).
\end{lem}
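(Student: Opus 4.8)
The plan is to realise $E$ via its essentially unique faithful irreducible representation of degree $p$, the Heisenberg--Weyl representation, and then to check that the defining matrices can be written over the appropriate finite field. I take $E = \langle a, b\rangle$ with $a^p = b^p = 1$ and $z = [a,b] = aba^{-1}b^{-1}$ central of order $p$, that is, $E$ of exponent $p$, which is the case relevant to $Qd(p)$. Fix a field $F$ containing a primitive $p$-th root of unity $\zeta$, and on $F^p$ with basis $e_0, \dots, e_{p-1}$ let $\rho(a)$ be the diagonal matrix whose $i$-th entry is $\zeta^{i}$ (for $0 \leq i \leq p-1$) and let $\rho(b)$ be the cyclic permutation $e_i \mapsto e_{i+1}$ (indices taken modulo $p$). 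Then $\rho(a)$ and $\rho(b)$ have order $p$ and satisfy $\rho(a)\rho(b)\rho(a)^{-1}\rho(b)^{-1} = \zeta I$, so $\rho$ extends to a homomorphism $E \to GL_p(F)$ with $\rho(z) = \zeta I \neq I$. As every non-trivial normal subgroup of the extraspecial group $E$ contains $Z(E) = \langle z\rangle$, and $\rho(z) \neq 1$, the kernel of $\rho$ is trivial; hence $\rho$ is a faithful embedding.

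For the linear case, suppose $p | q-1$. The cyclic group $\F_q^{*}$ has order $q-1$, so it contains an element $\zeta$ of order $p$, i.e. $\F_q$ contains a primitive $p$-th root of unity. Taking $F = \F_q$ in the construction above, both $\rho(a)$ and $\rho(b)$ have entries in $\F_q$, whence $E \cong \rho(E) \leq GL_p(q)$.

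For the unitary case, suppose $p | q+1$. Then $p | q^2-1$, while $p \ndiv q-1$ since $p$ is odd; hence $\F_{q^2}$ contains a primitive $p$-th root of unity $\zeta$ lying outside $\F_q$ and satisfying $\zeta^{q} = \zeta^{-1}$. Taking $F = \F_{q^2}$, I claim $\rho(E)$ preserves the standard Hermitian form on $\F_{q^2}^{p}$ attached to the involution $x \mapsto x^{q}$, so that $\rho(E) \leq U_p(q)$. Denoting by $g^{(q)}$ the matrix obtained by raising every entry of $g$ to the $q$-th power, the form with identity Gram matrix is preserved by $g$ exactly when $(g^{(q)})^{t} g = I$. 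For $g = \rho(a)$ this holds because the diagonal entries $\zeta^{i}$ satisfy $(\zeta^{i})^{q}\zeta^{i} = \zeta^{-i}\zeta^{i} = 1$; for $g = \rho(b)$ it holds because $\rho(b)$ is a permutation matrix, fixed by the Frobenius and orthogonal. The matrices satisfying $(g^{(q)})^{t} g = I$ form a group, so $\rho(E) \leq U_p(q)$, and $\rho$ stays faithful; thus $E \cong \rho(E) \leq U_p(q)$.

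The only delicate point is the unitary case, where one must ensure that the image lands in the standard copy of $U_p(q)$ rather than in some conjugate unitary group; this is handled by fixing the identity Gram matrix from the outset and verifying invariance on the generators (nondegeneracy is then immediate, and in any event would follow from the irreducibility of $\rho$). I note that for an extraspecial group of exponent $p^{2}$ the same scheme works, but the element of order $p^{2}$ forces primitive $p^{2}$-th roots of unity, so one would instead need $p^{2} | q-1$ (resp. $p^{2} | q+1$); this is why the group relevant here is the exponent-$p$ one occurring as a Sylow $p$-subgroup of $Qd(p)$.
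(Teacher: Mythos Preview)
Your argument for the exponent-$p$ extraspecial group is correct and complete in both cases: the Heisenberg--Weyl matrices are written down over the right field, faithfulness is verified via the centre, and the unitarity check against the identity Gram matrix is clean.

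Your approach differs from the paper's. The paper dismisses the $GL_p(q)$ case as well known and, for $U_p(q)$, does not verify unitarity directly; instead it observes that $E$ embeds in $GL_p(q^2)$ (since $p\mid q^2-1$) and then invokes Weir's result that a Sylow $p$-subgroup of $U_p(q)$ is already a Sylow $p$-subgroup of $GL_p(q^2)$, so the $p$-group $E$ may be conjugated into $U_p(q)$. Your explicit construction is more self-contained and avoids the external reference; the paper's Sylow argument is shorter and, more importantly, is insensitive to the isomorphism type of $E$.

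This last point matters because the lemma, as stated, covers \emph{both} extraspecial groups of order $p^3$, and you only treat the exponent-$p$ one. Your closing remark is misleading here: it is true that the diagonal-plus-shift construction for the exponent-$p^2$ group $M_p$ would require a primitive $p^2$-th root of unity, but it is \emph{not} true that $M_p$ fails to embed in $GL_p(q)$ when only $p\mid q-1$. Indeed, for $p \parallel q-1$ the Sylow $p$-subgroup of $GL_p(q)$ is $C_p\wr C_p$, and inside it one can take $a=((1,0,\dots,0),\sigma)$ of order $p^2$ and $b=((0,1,2,\dots,p-1),1)$ of order $p$, which satisfy $bab^{-1}=a^{1+p}$, giving $M_p\leq C_p\wr C_p\leq GL_p(q)$. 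So the exponent-$p^2$ case of the lemma does hold under the stated hypothesis; your scheme simply does not reach it, whereas the paper's Sylow argument does. Since only the exponent-$p$ group is used later (cf.\ the hypothesis in the next lemma), your restriction is harmless for the paper's purposes, but you should not suggest that the stronger divisibility $p^2\mid q-1$ is genuinely needed.
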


\begin{proof}
The statement on $GL_p(q)$ is well known. Let $ p$ divide $q+1$. Then $E$
is isomorphic to a subgroup of $GL_p(q^2)$. As $p>2$, a Sylow $p$-subgroup
of $U_p(q)$ is a Sylow $p$-subgroup in $GL_p(q^2)$, see
\cite[p. 532]{Weir:1955}, whence the statement.
\qed\end{proof}

\begin{lem}\label{ug1}
Let $G=U_n(q)$, and $P$ a Sylow $p$-subgroup of G. If $e_p(q) \equiv 2
\pmod 4$, then $P$ is a Sylow $p$-subgroup of $GL_n(q^2)$,
otherwise $P$ is isomorphic to a Sylow $p$-subgroup of $GL_l(q^2)$, where
$l$ is the integral part of $n/2$.
\end{lem}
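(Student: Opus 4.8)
The plan is to avoid any structural (wreath-product) analysis and instead pin down the isomorphism type of a Sylow $p$-subgroup purely from the order of $U_n(q)$ together with two subgroup containments. Recall $|U_n(q)| = q^{n(n-1)/2}\prod_{i=1}^{n}\bigl(q^i-(-1)^i\bigr)$ and $|GL_m(q^2)| = q^{m(m-1)}\prod_{i=1}^{m}\bigl(q^{2i}-1\bigr)$; since $p \ne \ell$, the $p$-parts are $|U_n(q)|_p = \prod_{i=1}^{n}(q^i-(-1)^i)_p$ and $|GL_m(q^2)|_p = \prod_{i=1}^{m}(q^{2i}-1)_p$. Write $e = e_p(q)$ and note $e_p(q^2) = e/\gcd(e,2)$. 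By Lemma~\ref{hu2}, $p \mid q^i - 1$ iff $e \mid i$, and a short computation gives $p \mid q^i+1$ iff $e$ is even and $i$ is an odd multiple of $e/2$. The whole argument rests on recording, for each $i$, which (if any) of the two factors $q^i-1$ and $q^i+1$ is divisible by $p$; this is governed by the residue of $e$ modulo $4$, and this is exactly where the case division of the lemma originates.

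\emph{Case $e \equiv 2 \pmod 4$.} Here I use the standard inclusion $U_n(q) \le GL_n(q^2)$ and show the two groups have the same $p$-part of order, whence a Sylow $p$-subgroup of $U_n(q)$ is automatically a Sylow $p$-subgroup of $GL_n(q^2)$. Writing $e = 2m$ with $m$ odd, one checks that $q^m \equiv -1 \pmod p$, so that for a multiple $i = km$ exactly one of $q^i-1$ and $q^i+1$ is divisible by $p$ — namely $q^i+1$ when $k$ is odd and $q^i-1$ when $k$ is even — while $p$ divides neither factor when $m \nmid i$. Comparing this termwise with the factors $q^{2i}-1 = (q^i-1)(q^i+1)$ of $GL_n(q^2)$ yields $|U_n(q)|_p = |GL_n(q^2)|_p$, which is precisely the assertion.

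\emph{Cases $e$ odd and $e \equiv 0 \pmod 4$.} Now I realise $GL_l(q^2)$, with $l = \lfloor n/2\rfloor$, as a subgroup of $U_n(q)$: it is the Levi complement $GL_l(q^2)$ of the stabiliser in $U_{2l}(q)$ of a maximal totally isotropic subspace, and $U_{2l}(q) \le U_n(q)$ via a non-degenerate subspace. It then suffices to prove $|GL_l(q^2)|_p = |U_n(q)|_p$, for then a Sylow $p$-subgroup of $GL_l(q^2)$ is already a Sylow $p$-subgroup of $U_n(q)$, giving $P \cong$ a Sylow $p$-subgroup of $GL_l(q^2)$. In both cases the indices $i$ contributing a $p$-factor to $|U_n(q)|_p$ turn out to be precisely the multiples of $d$, where $d = 2e$ when $e$ is odd and $d = e$ when $e \equiv 0 \pmod 4$ (there being no odd $i$ with $p \mid q^i+1$ in either case), and each such factor equals the corresponding $(q^{2k\, e_p(q^2)}-1)_p$ occurring in $|GL_l(q^2)|_p$. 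Matching the ranges reduces to the floor identity $\lfloor \lfloor n/2\rfloor/e_p(q^2)\rfloor = \lfloor n/d\rfloor$, which holds since $d = 2\, e_p(q^2)$ in both cases.

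I expect the main obstacle to be the bookkeeping in the middle steps: keeping straight, as a function of $n$ and of $e \bmod 4$, exactly which cyclotomic factors $q^i\pm 1$ carry the $p$-part on each side, and verifying the termwise and range matches (including the floor identity). The conceptual content — that $p \mid q^i+1$ can occur only when $e$ is even, and for an odd $i$ only when $e \equiv 2\pmod 4$ — is precisely what separates the ``$P$ is a Sylow $p$-subgroup of $GL_n(q^2)$'' case from the ``$P$ is isomorphic to a Sylow $p$-subgroup of $GL_l(q^2)$'' case, so once this dichotomy is set up cleanly the remainder is routine verification.
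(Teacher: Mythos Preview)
Your proof is correct and follows essentially the same approach as the paper: both arguments use the containments $U_n(q)\le GL_n(q^2)$ and $GL_l(q^2)\le U_n(q)$ and then compare $p$-parts of orders, the key computation being that $q^i+1$ is coprime to $p$ for odd $i$ unless $e\equiv 2\pmod 4$. The only notable difference is that for the case $e\equiv 2\pmod 4$ the paper simply cites Weir, whereas you supply the direct termwise verification that $(q^i-(-1)^i)_p=(q^{2i}-1)_p$ for all $i$.
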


\begin{proof}
If $e = e_p(q) \equiv 2 \pmod 4$, then this is in stated in
\cite[p. 532]{Weir:1955}. 
So we may assume that either $4|e$ or $e$ is odd.

Note that $U_n(q)$ contains a subgroup $X$ isomorphic to $GL_{l}(q^2)$. 
It suffices to prove the result for  $n=2l+1$. As
\begin{align*}
|U_n(q)| &= (q+1)(q^2-1)\cdots (q^n+1)q^a
\intertext{and}
|GL_l(q^2)| &= (q^2-1)\cdots (q^{2l}-1)q^b
\end{align*}
for some integers $b>a>1$, the index of $GL_l(q^2)$ in $U_n(q)$ equals
\[
(q+1)(q^3+1)\cdots (q^n+1)q^{a-b}.
\]
We show that this number is coprime to $p$. For this it suffices to observe
that $q^i+1$ is coprime to $p$ for $i$ odd. Suppose the contrary that 
$p|q^i+1$ for some $i$. Then $p|q^{2i}-1$. By Lemma \ref{hu2}, $e|2i$.

Let first $e$ be odd. Then $e|i$ and hence $p|q^i-1$, so $p\ndiv
q^i+1$.  
%% \itf $q^i+1$ is coprime to $p$, unless possibly $i$ is a multiple of $e$ . 
%Let $i=te$ for some integer $t\geq 1$. 
%Then $q^{te}+1=q^{te}-1+2$. As $p$ is odd and  $p|q^e-1$,
%it follows that $p$ does not divide $q^{te}+1$. 

Now let $e=2m$, where $m$ is even.  Then
% $p|q^m+1$. Moreover, 
$m|i$ as $e|2i$. This is a contradiction as  $m$ is even, whereas, $i$ is
odd.
\qed\end{proof}

\begin{lem}\label{uu2}
%Assume $p > 3$.
Let $G=GL_p(q)$ (resp.,  $U_p(q)$), so that $G'=SL_p(q)$
(resp., $SU_p(q)$). Suppose $p|q-1$  (resp. $p|q+1$). Then $G$ contains a
subgroup isomorphic to $\widetilde{Qd}(p)$. If  $p > 3$, then this subgroup
is contained in $G'$. Consequently, $Qd(p)$ is isomorphic to a subgroup of
$PGL_p(q)$ (resp. $PGU_p(q)$ and is contained in $PSL_p(q)$ (resp.,
$PSU_p(q)$) if $p > 3$.
\end{lem}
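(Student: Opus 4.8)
The plan is to realise $\widetilde{Qd}(p)=E\rtimes SL_2(p)$ concretely, building on Lemma~\ref{uu1}. Treating first $G=GL_p(q)$ with $p\mid q-1$, I would fix a primitive $p$-th root of unity $\omega\in\F_q$ and take $E=\langle X,Y\rangle\le GL_p(q)$ with $X=\mathrm{diag}(1,\omega,\dots,\omega^{p-1})$ and $Y$ the cyclic permutation matrix sending $e_i\mapsto e_{i+1}$ (indices mod $p$); this is the embedding of Lemma~\ref{uu1}, and one checks that $Z(E)=\langle\omega I\rangle$ consists of scalar matrices while $\det X=\det Y=1$, so that $E\le SL_p(q)$. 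The module $\F_q^p$ is absolutely irreducible for $E$, so by Schur $C_{GL_p(q)}(E)=\F_q^{\times}I$. Now any $\alpha\in\mathrm{Aut}(E)$ fixing $Z(E)$ pointwise preserves the central character of the representation; since $E$ has a \emph{unique} irreducible $p$-dimensional representation for each nontrivial central character, $\rho\circ\alpha\cong\rho$, and Schur's lemma produces $g_\alpha\in GL_p(q)$ inducing $\alpha$. Hence $N:=N_{GL_p(q)}(E)$ surjects onto the group of automorphisms fixing $Z(E)$ pointwise, which is exactly $\mathrm{Inn}(E)\rtimes SL_2(p)\cong Qd(p)$; that is, $N/(\F_q^{\times}I)\cong Qd(p)$.

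It remains to split off a genuine copy of $SL_2(p)$. Let $\hat S\le N$ be the full preimage of the subgroup $SL_2(p)\le Qd(p)$, a central extension of $SL_2(p)$ by the cyclic group $\F_q^{\times}I$. For $p\ge 5$ the group $SL_2(p)$ is perfect with trivial Schur multiplier, so its derived subgroup $\hat S'$ is a perfect central extension of $SL_2(p)$, forcing $\hat S'\cong SL_2(p)$. Since $\hat S'$ normalises $E$ and $E\cap\hat S'\trianglelefteq\hat S'$ is a $p$-group while $O_p(SL_2(p))=1$, we get $E\cap\hat S'=1$, whence $E\hat S'=E\rtimes\hat S'\cong\widetilde{Qd}(p)$ sits inside $GL_p(q)$. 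Because $\hat S'$ is perfect every element has determinant $1$, and $E\le SL_p(q)$, so in fact $\widetilde{Qd}(p)\le SL_p(q)=G'$. For $p=3$ this argument breaks down — $SL_2(3)$ is not perfect and the relevant central extension can be non-split — so here I would instead invoke the explicit matrices of Example~\ref{tildeQd3}, whose reduction mod $\ell$ (Remark~\ref{tildeqd3rem}) realises $\widetilde{Qd}(3)\le GL_3(q)$ whenever $3\mid q-1$; no containment in $SL_3(q)$ is claimed.

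For the unitary case $G=U_p(q)$ with $p\mid q+1$ I would run the same scheme on the Hermitian module $\F_{q^2}^p$, using Lemma~\ref{uu1} to place $E$ in $U_p(q)$ together with the coincidence of Sylow $p$-subgroups of $U_p(q)$ and $GL_p(q^2)$ (Lemma~\ref{ug1}). The one extra point is to keep the realising matrices unitary: given $\alpha$ as above, the $E$-invariant Hermitian form is unique up to an $\F_q^{\times}$-scalar, so $g_\alpha$ is a similitude, and since the norm map $\F_{q^2}^{\times}\to\F_q^{\times}$ is surjective I can rescale $g_\alpha$ to an isometry, i.e. into $U_p(q)$. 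The splitting and determinant arguments then give $\widetilde{Qd}(p)\le U_p(q)$, lying in $SU_p(q)$ for $p>3$.

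Finally, passing to the quotient by scalars yields the projective statement: the image of $Z(E)$ in $PGL_p(q)$ (resp.\ $PGU_p(q)$) is trivial while $\hat S'\cap(\F_q^{\times}I)=1$, so the image of $\widetilde{Qd}(p)$ is $(E/Z(E))\rtimes SL_2(p)\cong Qd(p)$; for $p>3$ the containment $\widetilde{Qd}(p)\le SL_p(q)$ (resp.\ $SU_p(q)$) places this copy of $Qd(p)$ inside $PSL_p(q)$ (resp.\ $PSU_p(q)$). I expect the main obstacle to be twofold: the failure of the splitting argument at $p=3$, which must be handled separately by the explicit construction, and the bookkeeping in the unitary case needed to guarantee that the $SL_2(p)$ produced by Schur's lemma can be chosen inside $U_p(q)$ rather than merely inside $GL_p(q^2)$.
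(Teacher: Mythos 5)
Your construction is sound for $GL_p(q)$ (all odd $p$) and for $U_p(q)$ when $p\ge 5$, and it is a genuine variant of the paper's route: where you form $N=N_G(E)$, identify $N/\F_q^{\times}I$ with $\mathrm{Inn}(E)\rtimes SL_2(p)\cong Qd(p)$, and split off $SL_2(p)$ as the derived subgroup of the preimage $\hat S$ (using that a perfect central extension of $SL_2(p)$, $p\ge 5$, is $SL_2(p)$ itself), the paper instead lifts the Skolem--Noether projective representation of $SL_2(p)$ to an ordinary one and, in the unitary case, descends from $GL_p(q^2)$ to $U_p(q)$ via the scalar-valued homomorphism $g\mapsto z_g=g^{-1}\tau(g)$; your similitude-rescaling argument replaces that descent for $p\ge5$. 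One small repair: in the unitary case you have no explicit matrices, so $E\le SU_p(q)$ is not given; but for $p>3$ the group $E\hat S'\cong\widetilde{Qd}(p)$ is perfect, so the whole subgroup lies in $\ker(\det)=SU_p(q)$ anyway.

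The genuine gap is the case $p=3$, $G=U_3(q)$, $3\mid q+1$, which the lemma asserts and which the paper needs later (e.g.\ in Lemma~\ref{w22}$(iv)$ and Lemma~\ref{p3u}). Neither of your two patches covers it: the splitting argument fails since $SL_2(3)$ is not perfect, and this failure is not cosmetic --- $\mathrm{Ext}(C_3,C_{q+1})\neq 0$ when $3\mid q+1$, and the analogous extension inside $SL_3(q)$ really is non-split (that is exactly why $SL_3(q)$ contains $\widetilde{Qd}^-(3)$ but not $\widetilde{Qd}(3)$, cf.\ Lemma~\ref{p3l}); meanwhile the explicit-matrix patch of Example~\ref{tildeQd3}/Remark~\ref{tildeqd3rem} needs primitive cube roots of unity in $\F_q$, i.e.\ $3\mid q-1$, which is incompatible with $3\mid q+1$. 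So after your similitude step you are left with a central extension $1\to C_{q+1}\to\hat S\to SL_2(3)\to 1$ inside $U_3(q)$ and no reason for it to contain a complement. The paper closes this case by a separate argument which you would need (or some substitute): since $3\mid q^2-1$, the $GL$ case gives $\widetilde{Qd}(3)\cong H\le GL_3(q^2)$; by the Sylow coincidence you already cite (Lemma~\ref{ug1}, after Weir), $H$ can be conjugated so that its Sylow $3$-subgroup --- which contains $E$ --- lies in $U_3(q)$; then the homomorphism $g\mapsto z_g=g^{-1}\tau(g)$ into the scalars vanishes on $H'$ and on that Sylow $3$-subgroup, and these generate $H$ because $H/H'\cong C_3$, so $z_g\equiv 1$ and $H\le U_3(q)$.
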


\begin{proof}
Set $Z=Z(G)$. Let $E$ be the  extraspecial group of order $p^3$ and
exponent $p$.  By Lemma~\ref{uu1}, there is a faithful representation
$\phi$: $E \to G$. Then the character $\chi$ of $\phi$ vanishes on $E
\setminus Z(E)$ \cite[9.20]{doerk:hawkes}. Then $(\chi,\chi)=1$, and hence
$\phi$ is
absolutely irreducible.   (As $q$ is coprime to $|E|$, the representation
theory of $E$ over $\overline{\F}_q$ is paralleled with that over the
complex numbers.)

For $g\in SL_2(p)\leq \widetilde{Qd}(p)$, the
characters of representations  $\phi$ and $\phi^g$ coincide, so $\phi$ and
$\phi^g$ are equivalent. Therefore, there is $h\in GL_p(\overline{\F}_q)$
such that $\phi^g=\phi^h$. As $\phi$ is absolutely irreducible, the
$\F_q$-envelope of $\phi(E)$ is $Mat_p(\F_q)$, and $h$ induces an
automorphism of  $Mat_p(\F_q)$. By the  Skolem-Noether theorem, $h$ can be
chosen in $G=GL_p(q)$. By Schur's lemma, $h$ is unique up to a scalar
multiple. So $g \mapsto h$ is  a projective representation of $SL_2(q)\to
G$. As the Schur multiplier of $PSL_2(p)$ is of order 2, every projective
representation of $SL_2(q)$ arises from an ordinary one, so $h$ can be chosen
so that $g \mapsto h$ is an ordinary representation. If $p > 3$, then
$\widetilde{Qd}(p)$ has no non-trivial Abelian quotient. Since $G/G'$ is
Abelian, it follows that $G'$ contains a subgroup $H$ isomorphic to
$\widetilde{Qd}(p)$. 

Let us now consider the case $G = U_p(q)$. Assume first $p > 3$. By the
previous paragraph, we can assume that $\widetilde{Qd}(p) \cong H \leq
SL_p(q^2)$ and $E \leq G$. It is well known that there exists an
involutive automorphism $\tau$, say, of $GL_p(q^2)$ such that $U_p(q)$ is
exactly the fixed point subgroup of $\tau$. Let $g \in H$,
%\widetilde{Qd}(p)$,
$x \in E$. Then $gxg\up = \tau(gxg\up)=\tau(g)x\tau(g)\up$, whence $g\up
\tau(g) x(g\up \tau(g))\up = x$. As $E$ is absolutely irreducible, by
Schur's lemma, $g\up \tau(g)$ is a scalar matrix, $z_g$, say, so $\tau(g)=
z_g g$.  One easily  observes that the mapping $g \mapsto z_g$ is a
homomorphism  of $H \cong \widetilde{Qd}(p)$ into the group of scalar
matrices of $GL_p(q^2)$. As $\widetilde{Qd}(p)$ is perfect for $p>3$, we
have $z_g=1$, and hence $\tau(g)=g$, that is, $g\in SU_p(q)$.

The above argument has to be refined for $p = 3$. In this case, $GL_3(q^2)$
has a subgroup $H$ isomorphic to $\widetilde{Qd}(3)$. Recall that a Sylow
$3$-subgroup of $GL_3(q^2)$ coincides with one of $U_3(q)$ and hence $H$
can be assumed to have a Sylow $3$-subgroup contained in $U_3(q)$. The
kernel of the mapping $g \mapsto z_g$ as in the previous paragraph contains
both the derived subgroup $H'$ and the Sylow $3$-subgroup of $H$ contained
in $U_3(q)$. As $H$ is generated by these subgroups, $z_g = 1$ follows for
all $g \in H$. Hence  $H \leq U_3(q)$.

Finally, let again $p > 3$. Observe that the centre of $H$
%the subgroup of $G'$ isomorphic to $\widetilde{Qd}(p)$ 
is contained in $Z(G')$. Therefore,
its image in $PSL_p(q)$ (resp. $PSU_p(q)$) is isomorphic to $Qd(p)$.
\qed\end{proof}

%\begin{lem} \label{pslpq}
%Let $p > 3$. Let $G = PSL_p(q)$ (resp.
%$PSU_p(q)$). Suppose $p | q-1$ (resp. $p | q+1$).  Then $Qd(p)$ is a subgroup of $G$.
%\end{lem}
%
%\begin{proof}
%Under the assumptions on $p$ and $q$, $SL_p(q)$ (resp. $SU_p(q)$) contains
%a subgroup $H$ isomorphic to $\widetilde{Qd}(p)$ by Lemma~\ref{uu2}. By the
%construction in Lemma~\ref{uu2}, $Z(H) = Z(E)$ is contained in $Z(SL_p(q))$
%(resp. $Z(SU_p(q))$). Hence the image of $H$ in $G$ is isomorphic to
%$Qd(p)$.
%\qed\end{proof}

Next we examine the case $p=3$ not discussed completely in Lemma \ref{uu2}.

\begin{lem}\label{p3l}
Let $p=3$  and $G=PSL_3(q)$. Suppose that $3|q-1$.
\begin{enumerate}[$(i)$]
\item If $q-1$ is not a multiple of $9$, then the Sylow $3$-subgroups of
$G$ are Abelian, and $G$ has no section isomorphic to $Qd(3)$.

\item If $q-1$ is a multiple of $9$, then $Qd(3)$ is isomorphic to a
subgroup of $G$. Moreover, $SL_2(3)$ has a subgroup isomorphic to
$\widetilde{Qd}^-(3)$ but no one isomorphic to $\widetilde{Qd}(3)$.
\end{enumerate}
\end{lem}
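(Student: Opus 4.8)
The plan is to separate the two cases by first computing the $3$-part of $|G|$, and then to realise the required embeddings (and the single non-embedding) explicitly, leaning on the matrix models of Examples~\ref{tildeQd3} and \ref{tildeQd3+}.

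For part $(i)$ I would argue purely by order. Writing $|GL_3(q)| = q^3(q-1)^3(q+1)(q^2+q+1)$ and using that $q\equiv 1 \pmod 3$ forces $v_3(q^2+q+1)=1$ and $v_3(q+1)=0$, the hypothesis $9\nmid q-1$ gives $v_3(q-1)=1$, whence $v_3(|GL_3(q)|)=4$, so $v_3(|SL_3(q)|)=3$ and $v_3(|PSL_3(q)|)=2$. Thus a Sylow $3$-subgroup of $G$ has order $9$ and is therefore Abelian, since every group of order $p^2$ is. For the second assertion, the Sylow $3$-subgroup of $Qd(3)$ is extraspecial of order $27$, whereas for any section $H/K$ of $G$ one has $v_3(|H/K|)\le v_3(|G|)=2<3$; hence no section of $G$ can be isomorphic to $Qd(3)$.

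For part $(ii)$ I assume $9\mid q-1$. By Remark~\ref{tildeqd3rem}, $\F_q$ then contains primitive ninth roots of unity, so the construction of Example~\ref{tildeQd3+} reduces modulo $\ell$ and yields a copy of $\widetilde{Qd}^-(3)$ inside $SL_3(q)$; this gives the first half of the ``moreover'' clause. Its centre is the central $C_3$ of the extension, realised by the scalar matrices $\langle \rho I\rangle = Z(SL_3(q))$, where $\rho$ is a primitive cube root of unity (available since $3\mid q-1$). Factoring out $Z(SL_3(q))$ therefore carries $\widetilde{Qd}^-(3)$ onto $\widetilde{Qd}^-(3)/C_3\cong Qd(3)$ inside $PSL_3(q)$, establishing the first assertion of $(ii)$.

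The hard part will be the remaining claim that $SL_3(q)$ has \emph{no} subgroup isomorphic to $\widetilde{Qd}(3)$. I would prove this by showing that the determinant is a fixed nontrivial linear character on \emph{every} faithful $3$-dimensional representation of the abstract group $\widetilde{Qd}(3)$. First, a faithful $\psi\colon \widetilde{Qd}(3)\hookrightarrow GL_3(q)$ restricts on $E$ to a faithful representation of the extraspecial group of order $27$; as the faithful irreducibles of $E$ all have degree $3$ (a sum of linear characters would kill $Z(E)$, contradicting faithfulness), $\psi|_E$ is one of them, hence irreducible and, since $|E|$ is coprime to $\ell$, absolutely irreducible. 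Next, any two extensions of $\psi|_E$ to $\widetilde{Qd}(3)$ differ by a linear character $\lambda$ of $\widetilde{Qd}(3)/E\cong SL_2(3)$; because $SL_2(3)$ has Abelianisation $C_3$ we get $\lambda^3=1$, so $\det(\psi\otimes\lambda)=\lambda^3\det\psi=\det\psi$. Thus $\det\circ\psi$ is independent of the chosen faithful embedding and may be computed from the complex model of Example~\ref{tildeQd3}, where $\det x=\rho\ne 1$, a value that stays a primitive cube root of unity under reduction modulo $\ell$. Consequently $\det\circ\psi$ is nontrivial, the image of any such $\psi$ meets $GL_3(q)\setminus SL_3(q)$, and no copy of $\widetilde{Qd}(3)$ lies in $SL_3(q)$. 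The delicate point — and the reason this is the main obstacle — is exactly the invariance step: one must check that twisting by linear characters cannot alter the determinant, which is precisely what $\lambda^3=1$ secures, and it is here that the structural difference from $\widetilde{Qd}^-(3)$ (whose standard model already sits in $SL_3$) is exploited.
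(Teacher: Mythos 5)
Your part $(i)$ is correct and is in substance the paper's own argument: both come down to the computation that the $3$-part of $|PSL_3(q)|$ is $9$ when $q-1$ is divisible by $3$ but not by $9$, after which no section can be isomorphic to $Qd(3)$ (the paper argues via Abelian Sylow subgroups, you via a straight comparison of $3$-parts of orders; both are fine).

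In part $(ii)$ you genuinely depart from the paper, in both halves. For the embedding of $Qd(3)$ into $PSL_3(q)$, the paper starts from the copy $X\cong\widetilde{Qd}(3)$ of $GL_3(q)$ provided by Lemma~\ref{uu2}, shows that the order-$3$ generator $x$ cannot lie in $SL_3(q)$, and repairs this by the scalar twist $y=\alpha^{-1}x$ with $\alpha^3=\det(x)$ (such an $\alpha$ exists precisely because $9\mid q-1$), so that $Y=\langle X',y\rangle\leq SL_3(q)$ has the same image in $PGL_3(q)$ as $X$; only at the very end is $Y$ identified with $\widetilde{Qd}^-(3)$, by elimination. You instead reduce the explicit model of $\widetilde{Qd}^-(3)\subseteq SL_3(\mathbb C)$ modulo $\ell$, as licensed by Remark~\ref{tildeqd3rem}, and factor out $Z(SL_3(q))$, which is indeed exactly the centre of this copy since $[\tilde a,\tilde b]$ is a scalar of order $3$; this is correct, shorter, and delivers the $\widetilde{Qd}^-(3)$ assertion simultaneously. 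For the non-embedding of $\widetilde{Qd}(3)$ into $SL_3(q)$ the two proofs are truly different: the paper argues through Sylow structure (every exponent-$9$ subgroup of a Sylow $3$-subgroup $(C_{3^\theta}\times C_{3^\theta})\rtimes C_3$ of $SL_3(q)$ lies in a copy of $(C_9\times C_9)\rtimes C_3$, which by Remark~\ref{tildeqd3rem} cannot contain a Sylow $3$-subgroup of $\widetilde{Qd}(3)$), whereas you use determinant rigidity of faithful $3$-dimensional representations. Your route has the merit of replacing the remark's unproved embedding claim and the paper's ``straightforward calculation'' by standard representation theory of extraspecial groups plus one determinant evaluated in the complex model.

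There is, however, one genuine gap in your rigidity step. The Schur--twisting argument compares two representations with the \emph{same} restriction to $E$, so it only shows that all extensions of a fixed faithful irreducible representation of $E$ share one determinant character. It does not show that $\det\circ\psi$ is independent of the faithful embedding $\psi$, and that literal claim is false: $E$ has two inequivalent faithful irreducible $3$-dimensional representations, distinguished by their central character, and both occur as restrictions of faithful representations of $\widetilde{Qd}(3)$ --- namely the model $\psi_0$ of Example~\ref{tildeQd3} and its entrywise complex conjugate $\overline{\psi_0}$, whose determinant characters send $x$ to $\rho$ and to $\rho^2$ respectively. So the determinant character of $\psi$ is pinned down only up to this ambiguity. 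The repair is a one-line case distinction: after a conjugation in $GL_3(\overline{\F}_q)$ (which changes no determinant), $\psi|_E$ equals the reduction of $\psi_0|_E$ or of $\overline{\psi_0}|_E$, and your twisting argument then gives $\det\circ\psi$ equal to the corresponding one of the two characters above; since both are nontrivial, the image of $\psi$ still cannot lie in $SL_3(q)$. With this patch the proof is complete; as written, the assertion that $\det\circ\psi$ ``may be computed from the complex model'' is not justified.
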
 

\begin{proof}
\begin{enumerate}[$(i)$]
\item The order of $G$ is $q^3(q-1)^2(q+1)(q^2+q+1)/3$. One easily observes
that the $3$-part of $|G|$ is 9, so the Sylow $3$-subgroups of $G$ are
Abelian. Then $Qd(3)$ is not a section of $G$. 

\item Assume $9 | q-1$. By Lemma~\ref{uu2}, $GL_3(q)$ contains a subgroup
$X$ isomorphic to $\widetilde{Qd}(3)$ whose image in $PGL_3(q)$ is
isomorphic to $Qd(3)$. Now, $X \cong E \rtimes (Q_8 \rtimes
C_3)$, where $Q_8$ is a quaternion group. Moreover, $X' \cong E \rtimes
Q_8$ is contained in $SL_3(q)$ and $X = X' \rtimes \langle x \rangle$,
where $x^3 = 1$.

Let $3^\theta$ be the 3-part of $q-1$. Then a Sylow 3-subgroup $P$ of
$SL_3(q)$ is isomorphic to $(C_{3^\theta} \times C_{3^\theta}) \rtimes
C_3$. A straightforward calculation shows that any subgroup of $P$ of
exponent 9 is contained in a subgroup isomorphic to $(C_9 \times C_9)
\rtimes C_3$ obtained from $P$ in the obvious way. 
%The latter group is
%isomorphic to a Sylow 3-subgroup of $SL_3(19)$. 
However, this group does not contain a Sylow 3-subgroup
of $\widetilde{Qd}(3)$ (see also Remark~\ref{tildeqd3rem}). Thus $x
\notin SL_3(q)$ and hence
%If $a$ is any
%non-central element of order $3$ of $P$, then the elements of order $3$ of
%$C_P(a)$ are contained in $Z(P) \times \langle a \rangle$. Since the
%element $x$ of $X$ does centralise some non-central element $a \in E$ and
%does not lie in $Z(E) \times \langle a \rangle$, we conclude that $P$ has
%no subgroup isomorphic to a Sylow 3-subgroup of $\widetilde{Qd}(3)$ and
%hence the latter is not a subgroup of $SL_3(q)$.
$\det(x)^3 = 1 \ne \det(x)$. Let $\alpha \in \F_q$ such that
$\alpha^3 = \det(x)$ and set $y = \alpha^{-1}x$. Let $Y = \langle X', y
\rangle$. Then $Y$ is contained in $SL_3(q)$ and the image of $Y$ in
$PGL_3(q)$ is equal to that of $X$ whence the claim on $G$.

Finally, Remark~\ref{tildeqd3rem} implies that $SL_3(q)$ does not contain
a subgroup isomorphic to $\widetilde{Qd}^+(3)$ and hence $Y \cong
\widetilde{Qd}^-(3)$ whence the claim.
\end{enumerate}
\qed\end{proof}

\begin{lem}\label{p3u}
Let $p=3$  and $G=PSU_3(q)$. Suppose that $q+1$ is a multiple of $3$. Then
$Qd(3)$ is  isomorphic to a subgroup of $G$ if and only if $q+1$ is a
multiple of $9$. In this case, $SU_3(q)$ has a subgroup isomorphic to
$\widetilde{Qd}^-(3)$ but no one isomorphic to $\widetilde{Qd}(3)$ or
$\widetilde{Qd}^+(3)$. Otherwise, $Qd(3)$ is not a section of $G$.
\end{lem}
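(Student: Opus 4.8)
The plan is to mirror the proof of Lemma~\ref{p3l}, replacing the role of $\F_q^*$ and of the ordinary determinant of $GL_3(q)$ by the unitary determinant, which takes values in the group $\Lambda = \{\lambda \in \F_{q^2}^* : \lambda^{q+1} = 1\}$; this $\Lambda$ is exactly the group of scalars lying in $GU_3(q) = U_3(q)$. First I would pin down the Sylow $3$-subgroup of $SU_3(q)$. Since $3 \mid q+1$ we have $q \equiv -1 \pmod 3$, so $e_3(q) = 2 \equiv 2 \pmod 4$, and by Lemma~\ref{ug1} a Sylow $3$-subgroup of $U_3(q)$ is a Sylow $3$-subgroup of $GL_3(q^2)$. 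Writing $3^\theta$ for the $3$-part of $q+1$ (equivalently the $3$-part of $q^2-1$, since $3 \ndiv q-1$), a short order count gives that a Sylow $3$-subgroup of $SU_3(q)$ is isomorphic to $(C_{3^\theta} \times C_{3^\theta}) \rtimes C_3$, exactly the shape appearing for $SL_3$ in Lemma~\ref{p3l}. When $9 \ndiv q+1$, i.e.\ $\theta = 1$, this group has order $9$ and is Abelian, so the $3$-part of $|PSU_3(q)|$ is $9 < 27$ and $Qd(3)$ cannot be a section of $G$; this disposes of the last sentence of the statement.

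Now assume $9 \mid q+1$. Lemma~\ref{uu2} already provides a subgroup $H \cong \widetilde{Qd}(3)$ of $U_3(q)$ whose image in $PGU_3(q)$ is $Qd(3)$. I would decompose $H = H' \rtimes \langle x \rangle$ with $H' \cong E \rtimes Q_8$ the derived subgroup and $x^3 = 1$, exactly as in Lemma~\ref{p3l}. Since $H'$ is generated by commutators and the unitary determinant is a homomorphism into the Abelian group $\Lambda$, we get $\det(H') = 1$, so $H' \le SU_3(q)$. The key point is then to locate $x$: because $\det(x)^3 = \det(x^3) = 1$ and $3 \mid q+1$, the value $\det(x) \in \Lambda$ has order dividing $3$. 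If $\det(x) = 1$ then $H \le SU_3(q)$; but the Sylow $3$-subgroup of $\widetilde{Qd}(3)$ has exponent $9$ and, by Remark~\ref{tildeqd3rem}, cannot be embedded into $(C_9 \times C_9) \rtimes C_3$, whereas (by the same straightforward calculation as in Lemma~\ref{p3l}) every exponent-$9$ subgroup of the Sylow $3$-subgroup of $SU_3(q)$ does lie in such a group. Hence $\det(x) \ne 1$ is a primitive cube root of unity.

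To push $Qd(3)$ into $SU_3(q)$ I would twist by a scalar, using that $9 \mid q+1$ guarantees $\Lambda$ contains a primitive ninth root of unity: choose $\lambda \in \Lambda$ with $\lambda^3 = \det(x)$, and set $y = \lambda^{-1} x$ and $Y = \langle H', y \rangle$. Then $\det(y) = 1$, so $Y \le SU_3(q)$, while $\lambda^{-1} I$ is central, so the image of $Y$ in $PGU_3(q)$ is still $Qd(3)$; as $Y \le SU_3(q)$, this image lies in $PSU_3(q)$, proving $Qd(3) \le PSU_3(q)$. A short centre computation shows $y^3 = \lambda^{-3} = \det(x)^{-1}$ is a scalar of order $3$, so $Y \cap Z(GU_3(q)) = \langle y^3 \rangle$ has order $3$ and $Y$ is a central extension of $Qd(3)$ by $C_3$; hence $Y$ is one of $\widetilde{Qd}(3)$, $\widetilde{Qd}^+(3)$, $\widetilde{Qd}^-(3)$, the three such extensions of Examples~\ref{tildeQd3} and~\ref{tildeQd3+}. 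Since $Y \le SU_3(q)$ and the exponent-$9$ argument above already excludes $\widetilde{Qd}(3)$ and $\widetilde{Qd}^+(3)$ from $SU_3(q)$, we conclude $Y \cong \widetilde{Qd}^-(3)$, which simultaneously establishes the remaining two assertions.

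The main obstacle I anticipate is the bookkeeping around the unitary determinant and the centre of $GU_3(q)$: one must check carefully that $Y$ genuinely lands in $SU_3(q)$ (not merely $GU_3(q)$), that its central kernel over $Qd(3)$ is exactly $C_3$, and that a twisting scalar of order $9$ is available precisely when $9 \mid q+1$. The transfer of the Sylow $3$-subgroup exponent calculation from $GL_3(q^2)$ to $SU_3(q)$ via Lemma~\ref{ug1} is the other point requiring care, though it runs parallel to the linear case of Lemma~\ref{p3l}.
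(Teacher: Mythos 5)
Your argument is correct in substance, but it takes a genuinely different route from the paper's in the main direction. The paper never touches the unitary determinant: since $3$ divides $q+1$, it does not divide $q-1$, so $9 \mid q+1$ is equivalent to $9 \mid q^2-1$, and Lemma~\ref{p3l} applied to $SL_3(q^2)$ already produces a subgroup $Y \cong \widetilde{Qd}^-(3)$ with $Y/Z(Y) \cong Qd(3)$. The paper then descends to the unitary group by a Galois-type argument: with $\tau$ as in Lemma~\ref{uu2}, Schur's lemma gives $z_g := g^{-1}\tau(g) \in Z(SL_3(q^2))$, a group of order $3$, for every $g \in Y$, while applying $\tau$ twice forces $z_g^2 = 1$; hence $z_g = 1$ and $Y \le SU_3(q)$. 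The exclusions of $\widetilde{Qd}(3)$ and $\widetilde{Qd}^+(3)$ then come for free from Lemmas~\ref{p3l} and~\ref{ug1}, because $SU_3(q) \le SL_3(q^2)$. You instead re-run the twisting proof of Lemma~\ref{p3l} internally in $U_3(q)$: start from $\widetilde{Qd}(3) \le U_3(q)$ (Lemma~\ref{uu2}), prove $\det(x) \ne 1$ via the exponent-$9$ obstruction, and untwist $x$ by a scalar in $\Lambda$, whose existence is exactly where $9 \mid q+1$ enters. Both routes are valid; the paper's is shorter because the identification of $Y$ and both exclusions are inherited from Lemma~\ref{p3l}, whereas yours makes the role of $9 \mid q+1$ (ninth roots of unity in the norm-one group $\Lambda$) visible. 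One caveat in your identification step: being a central extension of $Qd(3)$ by $C_3$ does not by itself restrict $Y$ to the three groups of Examples~\ref{tildeQd3} and~\ref{tildeQd3+} --- the split extension $Qd(3) \times C_3$ is also such an extension. What justifies the step is that your $Y = \langle H', \lambda^{-1}x\rangle$ has literally the twisted form of Example~\ref{tildeQd3+} (alternatively, your element $y$ of order $9$ rules out the split case, since $Qd(3) \times C_3$ has exponent $12$). This is the same level of implicitness as the paper's own proof of Lemma~\ref{p3l}, so it is acceptable, but it deserves a sentence.

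One computational slip must be repaired in the converse direction. For $\theta = 1$ the group $(C_3 \times C_3) \rtimes C_3$ has order $27$, not $9$, and it is non-Abelian (it is the extraspecial group of exponent $3$); indeed it \emph{is} the Sylow $3$-subgroup of $SU_3(q)$ in that case. What you need, and what is true, is that after factoring out $Z(SU_3(q))$, which has order $3$, the $3$-part of $|PSU_3(q)|$ is $3^{2\theta} = 9 < 27$; since the $3$-part of $|Qd(3)|$ is $27$, the group $Qd(3)$ cannot be a section of $G$. With that sentence corrected, your treatment of the last assertion coincides with the paper's, which simply computes the order of $PSU_3(q)$ directly and notes that its Sylow $3$-subgroups are Abelian.
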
 

\begin{proof}
Suppose $9|q+1$. We have shown in the proof of Lemma \ref{p3l} that
$SL_3(q^2)$ contains a subgroup $Y$ such that $Y/Z(Y)\cong Qd(3)$. 
Note that
$
|Z(SL_3(q^2))|=|Z(SU_3(q))|=3.
$
Let $\tau$ be as in the proof of
Lemma~\ref{uu2}, so by the argument there $z_g:=g\up \tau(g)\in
Z(SL_3(q^2))$, and hence $z_g\in SU_3(q)$. Then, applying $\tau$ to
$\tau(g)=z_gg$, we have $g=\tau^2(g)=z_g \tau(g)=z_g^2g$, whence $z_g^2=1$,
$z_g=1$. Therefore, $g=\tau(g)$ and  hence $g\in SU_3(q)$. The statement on
$\widetilde{Qd}^\pm(3)$ follows from Lemmas~\ref{p3l} and \ref{ug1}.

Conversely, let $G=PSU_3(q)$, where $q+1$ is not a multiple of 9. The order
of $G$ is $q^3(q+1)^2(q-1)(q^2-q+1)/3$. One easily observes that the $3$-part
of $|G|$ is 9, so the Sylow  3-subgroups of $G$ are Abelian, whence the
result.
\qed\end{proof}

\begin{lem}\label{co2}
Let $n>p$ and $G=PSL_n(q)$ (resp.,  $PSU_n(q)$), where  $p|q-1$  (resp.
$p|q+1$). Then $G$ contains a subgroup isomorphic to $\widetilde{Qd}(p)$.  
\end{lem}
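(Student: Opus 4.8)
The plan is to bootstrap from Lemma~\ref{uu2}, which already places a copy of $\widetilde{Qd}(p)$ inside $GL_p(q)$ (resp. $U_p(q)$) under exactly the hypothesis $p\mid q-1$ (resp. $p\mid q+1$), and then to transplant that copy into $SL_n(q)$ (resp. $SU_n(q)$) by a block-diagonal embedding that corrects determinants. Concretely, since $n>p$ we have $n\ge p+1$, and I would use
$\phi\colon GL_p(q)\to SL_n(q)$, $\ g\mapsto \mathrm{diag}\big(g,\ \det(g)^{-1},\ I_{n-p-1}\big)$.
This $\phi$ is injective (the leading block returns $g$) and lands in $SL_n(q)$ because $\det\phi(g)=\det(g)\cdot\det(g)^{-1}=1$. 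For the unitary case I would fix an orthogonal decomposition of the Hermitian space into a non-degenerate $p$-dimensional piece, a non-degenerate line, and an $(n-p-1)$-dimensional piece, and use the same formula; here $\det(g)$ is a norm-one scalar for $g\in U_p(q)$, so $\det(g)^{-1}$ is a legitimate unitary scalar on the line and $\phi$ carries $U_p(q)$ into $SU_n(q)$. Either way $\phi(\widetilde{Qd}(p))$ is a subgroup of $SL_n(q)$ (resp. $SU_n(q)$) isomorphic to $\widetilde{Qd}(p)$.

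It then remains to pass to the projective quotient, i.e. to show that the natural projection $\pi$ onto $PSL_n(q)$ (resp. $PSU_n(q)$) is injective on $\phi(\widetilde{Qd}(p))$, equivalently that $\phi(\widetilde{Qd}(p))\cap Z(SL_n(q))=1$ (resp. $\cap\,Z(SU_n(q))=1$). The first step here is to pin down which matrices of $\widetilde{Qd}(p)\le GL_p(q)$ are scalar. In the proof of Lemma~\ref{uu2} the subgroup $E$ acts absolutely irreducibly in degree $p$, so by Schur's lemma $Z(E)$ acts as the scalars $\langle\omega I_p\rangle$ with $\omega^p=1$. A direct check of the semidirect product $\widetilde{Qd}(p)=E\rtimes SL_2(p)$ gives $C_{\widetilde{Qd}(p)}(E)=Z(E)$ (because $SL_2(p)$ acts faithfully on $E/Z(E)$, while inner automorphisms act trivially there), hence $Z(\widetilde{Qd}(p))=Z(E)$. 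Since any scalar matrix lying in $\widetilde{Qd}(p)$ is central in it, the scalar matrices of $\widetilde{Qd}(p)$ are exactly $\{\zeta I_p:\zeta^p=1\}$.

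Now if $g\in\widetilde{Qd}(p)$ has $\phi(g)$ scalar, then already its leading block $g$ is scalar, say $g=\zeta I_p$ with $\zeta^p=1$. Consequently $\det(g)=\zeta^p=1$, so the correcting coordinate is $\det(g)^{-1}=1$ and $\phi(\zeta I_p)=\mathrm{diag}(\zeta I_p,\,1,\,I_{n-p-1})$; this is a scalar matrix $\lambda I_n$ only when $\zeta=1$, the remaining diagonal entries being $1$. Thus $\phi(\widetilde{Qd}(p))\cap Z(SL_n(q))=1$, and $\pi$ embeds $\widetilde{Qd}(p)$ into $PSL_n(q)$. The unitary computation is identical, with $Z(SU_n(q))$ consisting of norm-one scalars $\lambda I_n$ satisfying $\lambda^n=1$; the same observation $\zeta^p=1\Rightarrow\det(\zeta I_p)=1$ forces $\zeta=1$, giving the embedding into $PSU_n(q)$.

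The genuinely delicate point—and the reason the hypothesis $n>p$ (as opposed to $n=p$) is indispensable—is precisely this last scalar bookkeeping: in $PSL_p(q)$ the centre of $E$ collapses and one recovers only $Qd(p)$ (cf. Lemma~\ref{uu2}), whereas the determinant-correcting coordinate keeps each nontrivial $\zeta I_p$ non-scalar in $SL_n(q)$, so the central $C_p$ survives projection and the full $\widetilde{Qd}(p)$ is recovered. The one place to be careful is uniformity at $p=3$, where $\widetilde{Qd}(p)$ is \emph{not} perfect: there the identification of the scalar subgroup cannot be argued via perfectness, but it still follows from $Z(\widetilde{Qd}(p))=Z(E)$ together with Schur's lemma exactly as above, so the whole argument goes through without change.
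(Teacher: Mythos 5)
Your proof is correct, and it rests on the same two inputs as the paper's: Lemma~\ref{uu2}, which produces $\widetilde{Qd}(p)$ inside $GL_p(q)$ (resp.\ $U_p(q)$), and a block-diagonal embedding into $SL_n(q)$ (resp.\ $SU_n(q)$) followed by a check that the image meets the centre trivially. The organisation, however, differs in a genuine way. The paper splits into two cases: for $p>3$ it invokes the stronger clause of Lemma~\ref{uu2} --- perfectness of $\widetilde{Qd}(p)$ forces the copy into $SL_p(q)$ --- so the plain embedding $x\mapsto\diag(x,\Id_{n-p})$ suffices, and avoidance of $Z(SL_n(q))$ is immediate from the trailing identity block; the determinant-correcting map $x\mapsto\diag(x,\det(x)^{-1},\Id_{n-4})$ appears only for $p=3$, where $\widetilde{Qd}(3)$ is not perfect and indeed does not embed in $SL_3(q)$ (Lemma~\ref{p3l}). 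You instead run the determinant-correcting embedding uniformly for every $p$, which eliminates the case split and any appeal to perfectness; the cost is that the scalar bookkeeping must be done in general, and you do it correctly: by Schur's lemma together with $Z(\widetilde{Qd}(p))=Z(E)$, the scalars inside the copy are exactly the $p$-th roots of unity times the identity, and these have determinant $1$, so their images under your $\phi$ are non-scalar unless trivial. (The paper's $p=3$ computation is the same bookkeeping in compressed form: the scalars it must exclude satisfy $a^4=1$, while those available in $\widetilde{Qd}(3)$ have order dividing $3$.) Each version buys something: the paper's generic case is shorter per case, while yours is uniform, makes transparent why the hypothesis $n>p$ is exactly what keeps the central $C_p$ alive after projecting, and spells out the unitary transfer (a non-degenerate $p$-dimensional summand plus a line carrying the norm-one scalar $\det(g)^{-1}$), which the paper dismisses with ``the proof for the case of unitary groups is similar.''
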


\begin{proof}
Consider the embedding
$
\nu: SL_p(q) \to SL_n(q),\ \ \ x \mapsto \diag(x,\Id_{n-p}).
$
Then $\nu(SL_p(q))\cap Z(SL_n(q))=1$. This provides an embedding
$SL_p(q) \to PSL_n(q)$. So $G$ has a subgroup isomorphic to
$\widetilde{Qd}(p)$ for $p > 3$.

This can be refined to the case $p=3$ by using the embedding
\[
\mu: GL_3(q) \to SL_n(q),\ \ \ x \mapsto \diag(x, \det x\up , \Id_{n-4}).
\]
If the matrix  $\diag(x, \det x\up, \Id_{n-4})$ is scalar, then either $x =
\Id_n$ or $n=4$ and $x=a\cdot \Id_3\in GL_3(q)$. Moreover, in the latter case
%$\diag(x, \det x\up )=\diag(a,a,a,a^{-3})$, which is scalar only if
$\det x\up = a^{-3}=a$ must hold, so $a^4=1$. As such, if $x \ne \Id$, then
it is not contained in $\widetilde{Qd}(3) \leq GL_3(q)$. Therefore, the
homomorphism $GL_3(q) \to PSL_4(q)$ is faithful on $\widetilde{Qd}(3)$, so
$G$ contains a subgroup isomorphic to $\widetilde{Qd}(3)$.

The proof for the case of unitary groups is similar.
\qed\end{proof}

\begin{lem}\label{hp2}
\begin{enumerate}[$(i)$]
\item Let $g\in GL_n(q)$, $g^p=1\neq g$. Suppose that $g$ is irreducible.
Then $n=e_p(q)$.

\item Let $g\in GL_n(q)$, where $n=e_p(q)$. Then the Sylow $p$-subgroups
of $G$ are cyclic. 

\item Let $2n=e_p(q)$. Then $U_n(q)$ contains an element of order $p$ if
and only if $n$ is odd.
\end{enumerate}  
\end{lem}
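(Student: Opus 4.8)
The plan is to handle the three parts separately, reducing each to the $p$-part of a group order via Lemma~\ref{hu2}; throughout, recall $p\neq\ell$, so $p$ is coprime to $q$ and every element of order $p$ is semisimple. For $(i)$ I would argue through minimal polynomials. Since $g^p=1\neq g$, the minimal polynomial $m(x)$ of $g$ divides $x^p-1=(x-1)(x^{p-1}+\cdots+1)$, which is separable over $\F_q$. As $g$ acts irreducibly on $V=\F_q^n$, the polynomial $m(x)$ is irreducible of degree $n$, and since $g\neq1$ it is an irreducible factor of $\Phi_p(x)=x^{p-1}+\cdots+1$. The key point is that all irreducible factors of $\Phi_p$ over $\F_q$ share the common degree $e_p(q)$: the roots of $\Phi_p$ are the primitive $p$-th roots of unity, and such a root lies in $\F_{q^j}$ precisely when $p | q^j-1$, i.e. when $e_p(q) | j$ by Lemma~\ref{hu2}; hence each generates $\F_{q^{e_p(q)}}$ and has degree $e_p(q)$ over $\F_q$. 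Therefore $n=\deg m=e_p(q)$.

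For $(ii)$ (here $G=GL_n(q)$ with $n=e_p(q)$) I would compare the $p$-part of $|G|=q^{n(n-1)/2}\prod_{i=1}^n(q^i-1)$ with that of a Singer cycle. As $p\ndiv q$, the $p$-part of $|G|$ equals that of $\prod_{i=1}^n(q^i-1)$; by Lemma~\ref{hu2}, $p | q^i-1$ if and only if $e_p(q) | i$, and for $1\leq i\leq n=e_p(q)$ this happens only at $i=n$. Thus the $p$-part of $|G|$ is exactly the $p$-part of $q^n-1$. Now $GL_n(q)$ contains a cyclic Singer subgroup of order $q^n-1$, namely the image of $\F_{q^n}^{*}$ acting on $\F_{q^n}\cong\F_q^n$ by multiplication; its (cyclic) Sylow $p$-subgroup already attains the full $p$-part of $|G|$, so every Sylow $p$-subgroup of $G$ is cyclic.

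For $(iii)$, with $e_p(q)=2n$, I would read off the existence of an order-$p$ element from $|U_n(q)|=q^{n(n-1)/2}\prod_{i=1}^n(q^i-(-1)^i)$; for $n\geq2$ the relevant central factors are prime to $p$, since $p | q+1$ would force $e_p(q)=2$, so the choice among $GU_n$, $SU_n$, $PSU_n$ is immaterial. By Cauchy's theorem an element of order $p$ exists if and only if $p | q^i-(-1)^i$ for some $1\leq i\leq n$. If $i$ is even the factor is $q^i-1$, and $p | q^i-1$ would force $2n=e_p(q) | i\leq n$, which is impossible. If $i$ is odd the factor is $q^i+1$, and $p | q^i+1$ gives $q^{2i}\equiv1$ but $q^i\not\equiv1\pmod p$, hence $e_p(q) | 2i$ while $e_p(q)\ndiv i$, i.e. $n | i$ but $2n\ndiv i$; for $i\leq n$ this forces $i=n$, which is odd exactly when $n$ is odd. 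Conversely, if $n$ is odd then $q^n$ has order $2$ in $\F_p^{*}$, so $q^n\equiv-1$ and $p | q^n+1$. Hence $U_n(q)$ has an element of order $p$ precisely when $n$ is odd.

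The computations are routine; the only points needing care are the uniform-degree statement for the irreducible factors of $\Phi_p$ in $(i)$ and, in $(iii)$, the sign bookkeeping in $q^i-(-1)^i$ together with the parity argument pinning $i=n$. I expect the latter to be the main (though mild) obstacle, since one must verify both directions and track exactly which residue $q^i$ takes modulo $p$.
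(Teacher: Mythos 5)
Your proposal is correct in all three parts, and for $(ii)$ and $(iii)$ it is essentially the paper's own argument: in $(ii)$ both proofs use the Singer subgroup $\F_{q^n}^{*}\leq GL_n(q)$ together with the observation that, when $n=e_p(q)$, only the factor $q^n-1$ in $|GL_n(q)|$ carries $p$-torsion (you make the $p$-part computation explicit where the paper merely asserts it); in $(iii)$ both proofs reduce, via the order formula and Cauchy's theorem, to deciding which factors $q^i\pm 1$ can be divisible by $p$, and your write-up is actually cleaner, since you verify both directions (the converse via $q^n$ having order $2$ modulo $p$, hence $q^n\equiv -1$) and you avoid the paper's garbled divisibility step (``hence $2i=2e_p(q)$ must hold'', which should read $e_p(q)\mid 2i$, so $n\mid i$ and thus $i=n$). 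The genuine divergence is in $(i)$. The paper argues module-theoretically: by Schur's lemma the enveloping algebra $[g]$ is a field, of dimension exactly $n$ because the matrix algebra $\mathrm{Mat}_n(\F_q)$ contains no subfield of larger degree, so $[g]\cong\F_{q^n}$ and $e_p(q)\mid n$; then cyclicity of $\F_{q^n}^{*}$ places $g$ inside the subfield $\F_{q^{e_p(q)}}$, forcing $e_p(q)=n$. You instead argue with polynomials: irreducibility of the action makes the minimal polynomial of $g$ an irreducible factor of $\Phi_p$ of degree $n$, and every irreducible factor of $\Phi_p$ over $\F_q$ has degree $e_p(q)$, since each root is a primitive $p$-th root of unity generating $\F_{q^{e_p(q)}}$. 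Both routes are complete; yours is more elementary and self-contained (pure finite field theory, no appeal to Schur's lemma or to maximality of subfields of the matrix algebra) and reaches $n=e_p(q)$ in a single step, while the paper's two-step argument has the mild advantage of exhibiting $[g]$ as a Singer-type subfield, the structural picture it reuses in part $(ii)$.
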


\begin{proof}
\begin{enumerate}[$(i)$]
\item It follows from the formula for $|GL_n(q)|$ that $e:=
e_p(q)\leq n$, otherwise $p$ does not divide the group order.
As $g$ is irreducible, the enveloping algebra $[g]$ of $g$ is a field (by
Schur's lemma). In addition, the natural $\F_q GL_n(q)$-module $V$ is of
shape $[g] \cdot v$ for some $v \in V$, so $\dim [g] \geq n$. In fact,
$\dim [g] = n$ as the matrix algebra Mat${}_n(\F_q)$ is well known to
contain no subfield of dimension greater than $n$ over $\F_q$. It follows
that $[g] \cong \F_{q^n}$, and hence  $p$ divides $q^n-1$. By
Lemma~\ref{hu2}, $e$ divides $n$. Then $\F_{q^n}$ contains a subfield $F$
isomorphic to $\F_{q^e}$. As the multiplicative group of $\F_{q^n}$ is
cyclic, we have $g \in F$, and hence $[g] \cong F$, which means $F \cong
\F_{q^n}$, that is, $e = n$.

\item The assumption $n = e_p(q)$ is equivalent to saying that $\F_{q^n}$
contains an element of order $p$, whereas $\F_{q^i}$ for $i<n$ contains no
such element. As  $\F^*_{q^i}$ embeds into $GL_{i}(q)$, it follows that a
subgroup of $GL_n(q)$ isomorphic to $\F^*_{q^n}$ contains a Sylow
$p$-subgroup of $GL_n(q)$, which is cyclic. 

\item Recall that
\[
|U_n(q)| = (q+1) (q^2 - 1) \cdot \ldots \cdot (q^n \pm 1)
\]
according to whether $n$ is even or odd. As $e_p(q) = 2n$, no
term of the form $q^i -1$ in the above formula is divisible by $p$. If some
$q^i + 1$ is divisible by $p$, then so is $q^{2i} - 1$ and hence $2i = 2
e_p(q)$ must hold. Then $i = n$ is an odd number and the claim is proved.
%If $n$ is odd, then $U_n(q)$ contains an element of order $q^n+1$ and
%$p$ divides this number as $p$ divides $q^{2n}-1=q^{e_p(q)}-1$.
%Suppose $n$ is even.
%Note that $2n = e_p(q)$ implies $n = e_p(q^2)$, so $g$ is
%irreducible in $GL_n(q^2)$ by definition of $e_p(q^2)$. However, $U_n(q)$
%with $n$ even contains no irreducible elements.
\qed\end{enumerate}
\end{proof}

\begin{lem}\label{w22}
Let $e=e_p(q)$. 
\begin{enumerate}[$(i)$]
\item  If $n\geq pe$, then $GL_n(q)$ has a subgroup isomorphic to
$\widetilde{Qd}(p)$. If $n > 3$, then this subgroup is contained in
$SL_n(q)$.

\item If $n\geq 2pe$, then $SU_n(q)$ has a subgroup isomorphic to
$\widetilde{Qd}(p)$.

\item If $e$ is even and $n\geq ep$, then $SU_n(q)$ has a subgroup
isomorphic to $\widetilde{Qd}(p)$.

\item If $e\equiv 2\pmod 4$ and $n\geq pe/2$, then $SU_n(q)$ has a subgroup
isomorphic to $\widetilde{Qd}(p)$ except for the case $e = 2$, $p = 3$ and
$n = 3$.
\end{enumerate}
\end{lem}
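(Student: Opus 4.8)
The overall plan is to realise $\widetilde{Qd}(p)$ as a subgroup of a classical group of small rank via Lemma~\ref{uu2}, and then to transport it by standard field-extension and restriction-of-scalars embeddings into $GL_n$ and $GU_n$ of the required size, tracking determinants in order to land in $SL_n$ and $SU_n$. I will use three auxiliary facts, each with a routine verification: (a) restriction of scalars $GL_m(q^a)\hookrightarrow GL_{ma}(q)$, under which an $\F_{q^a}$-matrix of determinant $d$ acquires $\F_q$-determinant $N_{\F_{q^a}/\F_q}(d)$; (b) the determinant-correcting embedding $GU_m(q)\hookrightarrow SU_{m+1}(q)$, $g\mapsto\diag(g,\det(g)^{-1})$, which is well defined because $\det(g)^{q+1}=1$ for every $g\in GU_m(q)$; and (c) the identity $e_p(q^2)=e/\gcd(e,2)$.

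For part $(i)$, since $p\mid q^e-1$, Lemma~\ref{uu2} gives $\widetilde{Qd}(p)\leq GL_p(q^e)$, and (a) yields $\widetilde{Qd}(p)\leq GL_{pe}(q)\leq GL_n(q)$ for all $n\geq pe$. To descend to $SL_n(q)$ when $n>3$ I would split on $p$: for $p>3$ the group $\widetilde{Qd}(p)$ is perfect (as in the proof of Lemma~\ref{uu2}), so it already lies in $SL_{pe}(q)$; for $p=3$ one has $e\in\{1,2\}$, and when $e=2$ the $\F_q$-determinant of the image is the norm $\zeta^{1+q}=1$ of a cube root $\zeta$ of unity (since $3\mid q+1$), again landing in $SL_6(q)$; when $e=1$ I route $\widetilde{Qd}(3)\leq GL_3(q)$ through the embedding $\mu$ of Lemma~\ref{co2}, which is faithful into $SL_n(q)$ exactly for $n\geq 4$. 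Padding by an identity block preserves determinants and covers all $n\geq pe$.

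For $(ii)$ and $(iii)$ the engine is the Levi embedding $GL_N(q^2)\hookrightarrow GU_{2N}(q)$ arising from the stabiliser of a maximal totally isotropic subspace of the ($2N$-dimensional, hence split) Hermitian space, under which an element of $\F_{q^2}$-determinant $d$ has image of determinant $d^{\,1-q}$; thus $SL_N(q^2)$ maps into $SU_{2N}(q)$. Applying $(i)$ over $\F_{q^2}$ with $e'=e_p(q^2)=e/\gcd(e,2)$ produces $\widetilde{Qd}(p)\leq SL_N(q^2)\leq SU_{2N}(q)$ once $N\geq pe'$ and $N>3$; taking $N=pe'$ gives $2N=2pe$ when $e$ is odd, settling $(ii)$, and $2N=pe$ when $e$ is even, settling $(iii)$, with padding for larger $n$. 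The residual $p=3$ cases have $e'=1$, $N=3$, where $(i)$ gives only $GL_3(q^2)$: for $e=1$ the Levi exponent makes $\zeta^{1-q}=1$, so the image already lies in $SU_6(q)$; for $e=2$ I apply (b) to get $GU_6(q)\hookrightarrow SU_7(q)$, which lies within the bound $2pe=12$ of $(ii)$, while the assertion of $(iii)$ with bound $pe=6$ then follows from the sharper bound $pe/2=3$ established in $(iv)$.

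For $(iv)$, the hypothesis $e\equiv 2\pmod 4$ makes $e/2$ odd, so the unitary field-extension subgroup $GU_p(q^{e/2})\leq GU_{pe/2}(q)$ is available and sends $SU$ into $SU$ (the relevant determinant being a norm of $1$). As $p\mid q^e-1$ while $p\ndiv q^{e/2}-1$, we have $p\mid q^{e/2}+1$, so Lemma~\ref{uu2} gives $\widetilde{Qd}(p)\leq GU_p(q^{e/2})$, inside $SU_p(q^{e/2})$ for $p>3$; hence $\widetilde{Qd}(p)\leq SU_{pe/2}(q)$ and padding finishes the cases $p>3$. For $p=3$ we must have $e=2$ and $e/2=1$, so the construction lives in $GU_3(q)$, and Lemma~\ref{p3u} shows that $SU_3(q)$ contains $\widetilde{Qd}^-(3)$ but not $\widetilde{Qd}(3)$: this is exactly the excluded triple $(e,p,n)=(2,3,3)$, while for $n\geq 4$ the embedding $GU_3(q)\hookrightarrow SU_4(q)$ of (b) repairs the determinant. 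I expect the main obstacle to be precisely this determinant bookkeeping separating $SL$ from $GL$ and $SU$ from $GU$: one must get the three determinant computations right (the norm $\zeta^{1+q}$, the Levi exponent $1-q$, and the field-extension norm) and, above all, recognise that the unique genuine exception in $(iv)$ is forced by Lemma~\ref{p3u} rather than produced by the generic construction.
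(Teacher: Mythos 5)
Your proposal is correct, and its skeleton coincides with the paper's: Lemma~\ref{uu2} supplies the copy of $\widetilde{Qd}(p)$ in a degree-$p$ linear or unitary group over an extension field, and then restriction of scalars, the Levi embedding $SL_N(q^2)\to SU_{2N}(q)$, the odd-degree unitary field-extension embedding $SU_p(q^{e/2})\to SU_{pe/2}(q)$, and determinant-correcting embeddings (Lemma~\ref{co2} and your map (b)) transport it into the stated groups, with perfectness of $\widetilde{Qd}(p)$ settling the $SL$/$SU$-containment for $p>3$. The genuine difference lies in the $p=3$ edge cases. You do explicit determinant bookkeeping: the determinant image of $\widetilde{Qd}(3)$ consists of cube roots of unity $\zeta$, and the norm $\zeta^{1+q}$, resp.\ the Levi exponent $\zeta^{1-q}$, is trivial exactly when $3\mid q+1$, resp.\ $3\mid q-1$. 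The paper instead argues structurally: the relevant index divides $q\mp 1$, is therefore coprime to $3$, hence is a $2$-power, and $\widetilde{Qd}(3)$ has no nontrivial quotient of $2$-power order. Both are sound where the paper applies them, but your version buys something real in part $(iii)$: for its exceptional case $p=3$, $e=2$ the paper says to ``proceed as in part $(ii)$'', which does not literally work there, since now $3\mid q+1$, so the index $|U_6(q):SU_6(q)|=q+1$ is divisible by $3$ (and $\widetilde{Qd}(3)$ does have a quotient of order $3$); indeed the Levi image of $\widetilde{Qd}(3)\leq GL_3(q^2)$ has determinant image of order $3$, so it genuinely fails to land in $SU_6(q)$. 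Your re-routing of this case through $(iv)$'s embedding $g\mapsto\diag(g,\det(g)^{-1})$ of $U_3(q)$ into $SU_4(q)$, with the $n=3$ exception pinned down by Lemma~\ref{p3u}, is the correct repair of that step.
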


\begin{proof}
\begin{enumerate}[$(i)$]
\item Suppose first that $n=pe$. Set $Y=GL_p(q^e)$. By Lemma \ref{uu2}, $Y$
contains a subgroup isomorphic to $\widetilde{Qd}(p)$. So it suffices to
show that there is a homomorphism $Y \to GL_n(q)$ faithful on
$\widetilde{Qd}(p)$. First, observe that, viewing $\F_{q^e}$ as a vector
space of dimension $e$ over $\F_q$, we obtain an embedding of $\F_{q^e}$
into Mat${}_e(\F_q)$, which yields an embedding of Mat${}_p(\F_{q^e})$ into
Mat${}_{pe}(\F_q)$. Therefore, $Y= GL_p(q^e)$ embeds into $GL_{pe}(q)$. 

Note that $n = 3$ if and only if $p = 3$, $e = 1$ and $n = ep$.
If $p>3$, then $\widetilde{Qd}(p)$ is perfect, so $\widetilde{Qd}(p)$
embeds into $SL_{pe}(q)$. If $p=3$ and $e > 1$, then $p|q+1$, so $e=2$.
Let $\overline Y$ be the image of $Y$ in $GL_{6}(q)$. Then the index of
$\overline Y \cap SL_{6}(q)$ in $\overline Y$ divides $q-1$. So either
$\widetilde{Qd}(3)$ embeds into $SL_{6}(q)$ or $\widetilde{Qd}(3)$ has a
proper normal subgroup, whose index in $\widetilde{Qd}(3)$ divides $q-1$.
So the index is coprime to $3$, and hence is a 2-power as
$|\widetilde{Qd}(3)|=3^4\cdot 8$. It is well known that $SL_2(3)$, and
hence $\widetilde{Qd}(3)$, has no proper quotient group of 2-power order.
It follows that $SL_6(q)$ has a subgroup isomorphic to $\widetilde{Qd}(3)$.

Finally, let $n > pe$. The case $p = 3$, $e = 1$ has already been handled
in the proof of Lemma~\ref{co2}. Otherwise $SL_n(q)$ has a subgroup
isomorphic to $SL_{pe}(q)$ and $(i)$ follows from the above. 
 
\item Suppose first that $(e,p)\neq (1,3)$. By part $(i)$, $SL_{pe}(q)$
contains a subgroup isomorphic to $\widetilde{Qd}(p)$. There is an
embedding $SL_{pe}(q) \to SU_{2pe}(q)$, whence the result.

Let $e=1$, $p=3$, so $3|q-1$. Then $\widetilde{Qd}(3)$ is a subgroup of
$GL_3(q)$ (see Lemma~\ref{uu2}) and there is an  embedding $GL_{3}(q) \to
U_{6}(q)$. Note that $U_6(q)/SU_6(q)$ is of order $q+1$, which is coprime
to 3. So either $\widetilde{Qd}(3)$ embeds into $SU_{6}(q)$ or
$\widetilde{Qd}(3)$ has a proper normal subgroup, whose index in
$\widetilde{Qd}(3)$ divides $q+1$. So the index is coprime to $3$, and
hence a 2-power as above. As $\widetilde{Qd}(3)$ has no proper quotient
group of 2-power order, it follows that $SU_6(q)$ has a subgroup isomorphic
to $\widetilde{Qd}(3)$.

Consequently, $(ii)$ holds for $n = 2pe$ and hence for $n > 2pe$, too.

\item Let $e$ be even and let $e' = e_p(q^2)$. Then $e' = e/2$. By part
$(i)$, $SL_{pe'}(q^2)=SL_{ep/2}(q^2)$ has a subgroup  isomorphic to
$\widetilde{Qd}(p)$ unless $n = 3$. As there is an embedding
$SL_{pe/2}(q^2) \to SU_{ep}(q)$, the statement follows. For $n = 3$ we
proceed as in part $(ii)$.
 
\item Let $e = 2m$, where $m$ is odd. Then $p$ divides $q^{m}+1$. By
Lemma~\ref{uu2}, $\widetilde{Qd}(p)$ is isomorphic to a subgroup of
$SU_p(q^m)$, provided  $p > 3$. By \cite[Hilfsatz 1]{huppert:1970}, there
is an embedding $SU_{p}(q^{m}) \to SU_{pm}(q)$, whence the result follows
for $p > 3$. If, however, $p = 3$ and hence $e = 2$, then
$\widetilde{Qd}(3)$ is isomorphic to a subgroup of $U_3(q)$ by
Lemma~\ref{uu2}. Since there is an embedding $U_3(q) \to SU_n(q)$ for $n >
3$, the result follows.
\qed\end{enumerate}
\end{proof}

Next we show that if the assumptions of Lemma~\ref{w22} fail, then the Sylow
$p$-subgroups of $G$ are Abelian.

\begin{lem}\label{wn1}
Let $e=e_p(q)$. 
\begin{enumerate}[$(i)$]
\item If $n<ep$, then the Sylow $p$-subgroups of $GL_n(q)$ and hence of
$PSL_n(q)$ are Abelian.

\item If $e$ is odd and $n<2ep$, then the Sylow
$p$-subgroups of $U_n(q)$ and hence of $PSU_n(q)$ are Abelian. 

\item If $e \equiv 0\pmod 4$ and $n < ep$, then the Sylow
$p$-subgroups of $U_n(q)$ and hence of $PSU_n(q)$ are Abelian. 

\item If $e\equiv 2\pmod 4$ and $n<ep/2$, then the Sylow $p$-subgroups of
$U_n(q)$ and hence of $PSU_n(q)$ are Abelian. % \ii $n<2pe'$
\end{enumerate}
\end{lem}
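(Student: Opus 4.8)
The plan is to reduce the abelian-Sylow statements in Lemma~\ref{wn1} to counting the $p$-part of the relevant group orders, and then to invoke the structure of Sylow $p$-subgroups of $GL_n(q)$ and $U_n(q)$ that is implicit in the earlier lemmas. The key mechanism throughout is that, for a prime $p\nmid q$, a Sylow $p$-subgroup of $GL_n(q)$ is a direct-product/wreath-product built out of copies of the cyclic Sylow $p$-subgroup of $GL_e(q)$, where $e=e_p(q)$. Concretely, by Lemma~\ref{hp2}(ii), $GL_e(q)$ has cyclic Sylow $p$-subgroups; the natural embedding of $GL_e(q)\wr S_k$ into $GL_{ek}(q)$ (acting on $k$ blocks of size $e$) carries a Sylow $p$-subgroup of $GL_n(q)$ when $n=ek+r$ with $0\le r<e$. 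Such a Sylow subgroup becomes \emph{non}-abelian exactly when the symmetric group $S_k$ first contributes non-abelian $p$-structure, i.e. when $k\ge p$, since $S_k$ has abelian Sylow $p$-subgroups precisely for $k<p$ (a Sylow $p$-subgroup of $S_p$ is already non-abelian only starting from the wreath structure at $k=p$, but for $k<p$ it is elementary abelian of rank $\lfloor k/p\rfloor=0$, hence trivial in the relevant degree, so the Sylow $p$-subgroup is a direct product of cyclic groups).

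\medskip
For part $(i)$, I would argue as follows. Write $n=ek+r$ with $0\le r<e$. The hypothesis $n<ep$ forces $k<p$ (since $k=\lfloor n/e\rfloor\le\lfloor(ep-1)/e\rfloor=p-1$). A Sylow $p$-subgroup of $GL_n(q)$ is then isomorphic to a Sylow $p$-subgroup of $GL_e(q)\wr S_k$ by the block decomposition; since $k<p$, the Sylow $p$-subgroup of $S_k$ is trivial, so the wreath product contributes nothing beyond the base group, and the Sylow $p$-subgroup is a direct product of $k$ copies of the cyclic Sylow $p$-subgroup of $GL_e(q)$. A direct product of cyclic groups is abelian, which is the claim. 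The statement for $PSL_n(q)$ follows because a Sylow $p$-subgroup of $PSL_n(q)$ is the image of one of $GL_n(q)$ under the quotient maps $GL_n\to PGL_n$ and the passage to $SL_n$, all of which send abelian subgroups to abelian subgroups (the central and determinant quotients cannot destroy commutativity).

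\medskip
For parts $(ii)$--$(iv)$, the plan is to transport the $GL$ result through Lemma~\ref{ug1}, which identifies the Sylow $p$-subgroup of $U_n(q)$ with that of a suitable $GL_\ell(q^2)$. I would split into the three cases dictated by $e\bmod 4$. When $e$ is odd (part $(ii)$), Lemma~\ref{ug1} gives $P\cong$ Sylow $p$-subgroup of $GL_\ell(q^2)$ with $\ell=\lfloor n/2\rfloor$, and $e_p(q^2)=e$ since $e$ is odd; the bound $n<2ep$ yields $\ell<ep=e_p(q^2)\cdot p$, so part $(i)$ applies to $GL_\ell(q^2)$ and $P$ is abelian. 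When $e\equiv 0\pmod 4$ (part $(iii)$), again $P$ is a Sylow $p$-subgroup of $GL_\ell(q^2)$ with $\ell=\lfloor n/2\rfloor$, but now $e_p(q^2)=e/2$; the bound $n<ep$ gives $\ell<ep/2=(e/2)p=e_p(q^2)\cdot p$, so part $(i)$ again yields abelianness. When $e\equiv 2\pmod 4$ (part $(iv)$), Lemma~\ref{ug1} states that $P$ is a Sylow $p$-subgroup of $GL_n(q^2)$ directly, with $e_p(q^2)=e/2$; the bound $n<ep/2=(e/2)p=e_p(q^2)\cdot p$ lets part $(i)$ conclude. The main obstacle is keeping the three unitary cases correctly aligned with the shifts in both the dimension ($\ell=\lfloor n/2\rfloor$ versus $n$) and the relevant order $e_p(q^2)$ (either $e$ or $e/2$); once Lemma~\ref{ug1} and part~$(i)$ are in hand, each case is a one-line inequality check, so the real work was already done in establishing the block structure for the linear case.
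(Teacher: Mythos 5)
Your parts $(ii)$--$(iv)$ coincide with the paper's proof: both reduce to part $(i)$ via Lemma~\ref{ug1}, keeping track of whether $e_p(q^2)$ equals $e$ or $e/2$ and whether the relevant dimension is $\lfloor n/2\rfloor$ or $n$, and your three inequality checks are exactly the ones the paper performs. Part $(i)$ is where you genuinely diverge. The paper never invokes the wreath-product structure of Sylow $p$-subgroups of $GL_n(q)$: it computes, via Lemma~\ref{hu2} and the identity $\frac{q^{ie}-1}{q^e-1}=(q^{(i-1)e}-1)+\cdots+(q^e-1)+i$ (whose value is $\equiv i\not\equiv 0\pmod p$ for $i<p$), that the $p$-part of $|GL_n(q)|$ is exactly $p^{dk}$, where $p^d$ is the $p$-part of $q^e-1$ and $k=\lfloor n/e\rfloor<p$; this is also the $p$-part of the order of the diagonal subgroup of $GL_k(q^e)$, so under the embedding $GL_k(q^e)\to GL_n(q)$ the abelian $p$-part of that torus is already a full Sylow $p$-subgroup of $GL_n(q)$. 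You instead cite the block-monomial description: a Sylow $p$-subgroup of $GL_n(q)$ lies in $GL_e(q)\wr S_k$ and equals $C_{p^d}\wr P_k$ with $P_k$ Sylow in $S_k$, hence is $(C_{p^d})^k$ when $k<p$. That is correct, but be aware it is Weir's theorem, not something \emph{implicit in the earlier lemmas}: Lemma~\ref{hp2}$(ii)$ only gives cyclicity of the Sylow $p$-subgroup of $GL_e(q)$, and without citing \cite{Weir:1955} (which the paper does use, but only for the unitary, symplectic and orthogonal comparisons) you would still need the paper's order count to see that the block subgroup captures the whole $p$-part of $|GL_n(q)|$. Also, your parenthetical claim that $S_k$ has abelian Sylow $p$-subgroups \emph{precisely for} $k<p$ is false (they are abelian if and only if $k<p^2$); what you actually use --- triviality for $k<p$ --- is true, so no harm results. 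In the end both proofs exhibit essentially the same abelian Sylow subgroup, namely $(C_{p^d})^k$ realized block-diagonally; they differ only in how they certify it has the right order: your way is shorter granting Weir's structure theorem, while the paper's is self-contained.
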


\begin{proof}
\begin{enumerate}[$(i)$]
\item As $|GL_n(q)| = (q-1)\cdot \ldots \cdot (q^n-1)q^a$, the order of a
Sylow $p$-subgroup of $GL_n(q)$ equals the $p$-part of $(q-1)\cdot \ldots
\cdot (q^n-1)$. By Lemma \ref{hu2}, $p$ divides $q^j-1$ if and only if $e$
divides $j$. Therefore, the $p$-part of  $(q-1)\cdots (q^n-1)$ coincides
with that of $(q^e-1)(q^{2e}-1)\cdot \ldots \cdot (q^{ke}-1)$ for some
$k < p$.

We claim that $p$ is coprime to $\frac{q^{ie}-1}{q^e-1}$ for $i<p$. Indeed,
\[
\frac{q^{ie}-1}{q^e-1}=(q^{(i-1)e}-1)+\cdots +(q^e-1)+i,
\]
whence the claim follows. Therefore, if $p^d$ is the $p$-part of $q^e-1$,
then the $p$-part of $|GL_n(q)|$ equals $p^{dk}$, and coincides with that
of $GL_k(q^e)$. In addition, $p^{dk}$ coincides with the $p$-part of the
order of the group of diagonal matrices of $GL_k(q^e)$. Hence the latter
is one of the Sylow $p$-subgroups of $GL_k(q^e)$ and these are Abelian.

Now, there is an embedding $GL_k(q^e) \to GL_n(q)$ and the $p$-parts of the
orders of these groups are the same. So the Sylow $p$-subgroups of
$GL_k(q^e)$ are isomorphic to those of $GL_n(q)$, whence the result.

\item By Lemma~\ref{ug1}, the Sylow $p$-subgroups of $U_n(q)$ are
isomorphic to those of $Gl_l(q^2)$, where $l$ is the integral part of
$n/2$. By assumption $n < 2ep$, so $l < ep$. Moreover, $e_p(q) = e_p(q^2)$
as this number is odd. Therefore, the Sylow $p$-subgroups of $Gl_l(q^2)$
are Abelian by part $(i)$ and the claim follows.

\item We proceed in a similar way as in part $(ii)$. By Lemma~\ref{ug1},
the Sylow $p$-subgroups of $U_n(q)$ are isomorphic to those of $Gl_l (q^2)$
with the same $l$. But now we have $l < ep/2$ and $e_p(q^2) = e_p(q) / 2 =
e/2$, so part $(i)$ applies again and the Sylow $p$-subgroups under
consideration are Abelian.

\item Now the Sylow $p$-subgroups of $U_n(q)$ are isomorphic to those of
$GL_n(q^2)$ and $e_p(q^2) = e/2$, so the assumption $n < ep/2$ ensures that
part $(i)$ can be applied and the result follows.
%
%In cases $(ii)$ and $(iii)$, by Lemma \ref{ug1}, a Sylow $p$-subgroup of
%$PSU_N(q)$ is isomorphic to one of $GL_{l}(q^2)$, where $l$ is the integral part of $n/2$. 
%
%\item It suffices to deal with the case where $n=2ep-1$. Then $l=ep-1$. We
%have seen that $p$ is coprime to $q^i+1$ for $i<2e$. Therefore, $e=2e':=e_p(q^2)$, so  $l <e'p$. By (1), Sylow $p$-subgroups of $GL_{l}(q^2)$ are Abelian, and the same is true for $n<2ep-1$. 
%
%
%
%(4) Suppose first that $e\equiv 2 \pmod 4$, that is,  $e=2m$, where $m$ is odd. By \cite[p. 532]{W55}, Sylow $p$-subgroups of 
%$H$ remains Sylow in $GL_n(q^2)$. Note that $e'=e_p(q^2)=e/2$. 
%%Then $e'=e/2$.??
%So the result follows from (1).
%
 %
 %(3) Write $e=2m$, where $m$ is even. Then $e'=e_p(q^2)=e/2$.  It suffices to prove the statement for $n=ep-1$. Set $l=(n-2)/2$. Then $l=(ep-2)/2=ep'-1$, and the result again follows from (1). 
\qed\end{enumerate}
\end{proof}

%So $n<mp$. It suffices to prove the statement for $n=mp-1$. 
%As above,  $p$ divides $q^{2j}-1$ \ii $2j$  is a multiple of $e$, equivalently, $j$ is a multiple of  $m$.   
%Therefore, the $p$-part of $(q+1)(q^2-1)\cdots (q^n-1)$ coincides with that of  $(q^m+1)(q^{2m}-1)\cdots (q^{mt}-1)$, where $t=p-1$. Note that the order of the group $X:=U_{p-1}(q^m)$ is $(q^m+1)(q^{2m}-1)\cdots (q^{mt}-1)q^b$ for some integer $b>0$, so the order of a \syl of $X$ coincides with that of $H$.
%As $m$ is odd, there is an embedding $X\ra U_{m(p-1)}(q)$ \cite[Hilfsatz 1]{Hu}, so $X$ is isomorphic to a subgroup of $H$. So it suffices to observe that Sylow $p$-subgroups of $X$ are Abelian. This is trivial as a non-Abelian $p$-grouphas no \irr of degree $d$ with $1<d<p$ over any field of characteristic distinct from $p$. 

\begin{prop}
\begin{enumerate}[$(i)$]
\item Let $G = GL_n(q)$ or $U_n(q)$. If the Sylow $p$-sub\-groups of $G$ are
non-Abelian, then $G$ contains a subgroup isomorphic to $\widetilde{Qd}(p)$.

\item Let $G = PSL_n(q)$ or $PSU_n(q)$. If the Sylow $p$-subgroups of $G$
are non-Abelian, then $G$ contains a subgroup isomorphic to
$\widetilde{Qd}(p)$ or $Qd(p)$.
\end{enumerate}
\end{prop}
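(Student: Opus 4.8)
The plan is to read the statement as the contrapositive of Lemma~\ref{wn1} fed into the existence results of Lemma~\ref{w22}, with the finitely many boundary cases cleaned up by hand using Lemmas~\ref{uu2}, \ref{p3l} and~\ref{p3u}. Throughout I write $e = e_p(q)$.

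For part $(i)$ I would first take $G = GL_n(q)$: Lemma~\ref{wn1}$(i)$ says $n < ep$ forces Abelian Sylow $p$-subgroups, so the hypothesis gives $n \geq ep$, and Lemma~\ref{w22}$(i)$ then produces a subgroup isomorphic to $\widetilde{Qd}(p)$. For $G = U_n(q)$ I would split on the residue of $e$ modulo $4$: if $e$ is odd, Lemma~\ref{wn1}$(ii)$ gives $n \geq 2ep$ and I apply Lemma~\ref{w22}$(ii)$; if $e \equiv 0 \pmod 4$, Lemma~\ref{wn1}$(iii)$ gives $n \geq ep$ and I apply Lemma~\ref{w22}$(iii)$; if $e \equiv 2 \pmod 4$, Lemma~\ref{wn1}$(iv)$ gives $n \geq ep/2$ and I apply Lemma~\ref{w22}$(iv)$. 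Since Lemma~\ref{w22} places the copy of $\widetilde{Qd}(p)$ inside $SU_n(q) \leq U_n(q)$, this settles $(i)$ except for the lone exceptional triple of Lemma~\ref{w22}$(iv)$, namely $e = 2$, $p = 3$, $n = 3$; there $3 \mid q+1$, and Lemma~\ref{uu2} directly supplies $\widetilde{Qd}(3) \leq U_3(q)$.

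For part $(ii)$ the numerical reductions are verbatim, since Lemma~\ref{wn1} is stated simultaneously for the linear/unitary groups and their projective quotients. The additional task is to push a copy of $\widetilde{Qd}(p)$ sitting in $SL_n(q)$ (resp.\ $SU_n(q)$) down to the projective group. For $n > 3$, Lemma~\ref{w22}$(i)$ places $\widetilde{Qd}(p)$ inside $SL_n(q)$, and I would observe that its intersection with the scalar centre $Z(SL_n(q))$ is a central subgroup of $\widetilde{Qd}(p)$, hence lies in $Z(\widetilde{Qd}(p)) = Z(E) \cong C_p$; the intersection is therefore either trivial or all of $Z(E)$, so the image in $PSL_n(q)$ is isomorphic to $\widetilde{Qd}(p)$ or to $\widetilde{Qd}(p)/Z(E) \cong Qd(p)$, both of which are allowed. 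The unitary case $PSU_n(q)$ runs identically, using Lemma~\ref{w22}$(ii)$--$(iv)$ inside $SU_n(q)$.

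It remains to absorb the low-rank exceptions $n = 3$. Under the hypothesis, $n \geq ep$ with $n = 3$ and $p$ odd forces $p = 3$ and $e = 1$, so $G = PSL_3(q)$ with $3 \mid q-1$; the non-Abelian assumption rules out the Abelian conclusion of Lemma~\ref{p3l}$(i)$, hence $9 \mid q-1$, and Lemma~\ref{p3l}$(ii)$ gives $Qd(3) \leq PSL_3(q)$. The unitary analogue is the exceptional $e = 2$, $p = 3$, $n = 3$, giving $G = PSU_3(q)$ with $3 \mid q+1$; the non-Abelian assumption forces $9 \mid q+1$ via Lemma~\ref{p3u}, which then yields $Qd(3) \leq PSU_3(q)$. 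The only part calling for real care is this boundary bookkeeping: matching each congruence class of $e$ to the correct clauses of Lemmas~\ref{wn1} and~\ref{w22}, and making sure that the single exceptional triple $(e,p,n) = (2,3,3)$ together with its linear shadow $n = 3$ are caught by the dedicated low-rank Lemmas~\ref{p3l} and~\ref{p3u} rather than slipping through.
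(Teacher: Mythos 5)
Your proof is correct and follows essentially the same route as the paper: the paper's proof of $(i)$ simply cites Lemmas~\ref{uu2}, \ref{w22} and \ref{wn1}, and its proof of $(ii)$ splits off the case $p=n=3$ (settled by Lemmas~\ref{p3l} and \ref{p3u}) before invoking Lemmas~\ref{wn1} and \ref{w22}, exactly as you do. Your write-up is in fact slightly more careful than the paper's, since you explicitly dispatch the exceptional triple $(e,p,n)=(2,3,3)$ via Lemma~\ref{uu2} and spell out the descent from $SL_n(q)$, resp.\ $SU_n(q)$, to the projective quotients by intersecting with the scalar centre, steps which the paper leaves implicit.
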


\begin{proof}
\begin{enumerate}[$(i)$]
\item This follows from Lemmas~\ref{uu2}, \ref{w22} and \ref{wn1}.

\item Suppose first that $p=n=3$ and $3|q-1$ (resp., $3|q+1$). Then the
Sylow $3$-subgroups of  $G$ are Abelian if and only if $q-1$ (resp., $q+1$)
is not a multiple of 9. So in this case the result follows from
Lemmas~\ref{p3l} and \ref{p3u} for $G=PSL_3(q)$ and  $PSU_3(q)$,
respectively.

%Next, suppose that $p=n>3$ and and $p|q-1$ (resp., $p|q+1$). Then
%$SL_p(q)$ (resp., $SU_p(q)$) contains a subgroup isomorphic to
%$\widetilde{Qd}(p)$ and hence $Qd(p)$ is a subgroup of $G$. In these cases
%the Sylow $p$-subgroups of $PSL_p(q)$ (resp., $PSU_p(q)$) are not Abelian,
%whence the result.
%
%If $n>p\geq 3$ and $p|q-1$ (resp., $p|q+1$) then $G$ contains a subgroup
%isomorphic to $\widetilde G$ by Lemma \ref{co2}.
%
%Finally, suppose that $q-1$ (resp., $q+1$) is not divisible by $p$.
%Then the result follows by comparing Lemmas~\ref{w22} and \ref{wn1}.

Assume $p > 3$ or $n \ne 3$. If by Lemma~\ref{wn1} the Sylow
$p$-subgroups of $G$ are non-Abelian, then we are in one of the situations
in Lemma~\ref{w22} whence the result.
\qed\end{enumerate}
\end{proof}

\paragraph{Symplectic groups}

\begin{lem}\label{sp0}
Let $G=Sp_{2n}(q)$ and let $P$ be a Sylow $p$-subgroup of $G$.

\begin{enumerate}[$(i)$]
\item If $e_p(q)$ is odd, then $P$ is isomorphic to a Sylow $p$-subgroup of
$GL_{n}(q)$.

\item If $e_p(q)$ is even, then $P$ is a Sylow $p$-subgroup of
$GL_{2n}(q)$. If, in addition, $e$ divides $2n$, then a Sylow $p$-subgroup
of $G$ is contained in a subgroup isomorphic to $U_{2n/e}(q^{e/2})$.
\end{enumerate}
\end{lem}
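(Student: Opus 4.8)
The plan is to reduce the whole statement to one elementary principle, already exploited in Lemmas~\ref{ug1} and~\ref{wn1}: if $H\leq K$ are finite groups with the same $p$-part of order, then a Sylow $p$-subgroup of $H$ is a Sylow $p$-subgroup of $K$, and in particular the two are isomorphic. I would open with the order formula $|Sp_{2n}(q)|=q^{n^2}\prod_{i=1}^{n}(q^{2i}-1)$ and record the two embeddings supplying the groups $H$ and $K$: the tautological inclusion $Sp_{2n}(q)\leq GL_{2n}(q)$ on the natural module, and the Levi embedding $GL_n(q)\leq Sp_{2n}(q)$ as the Levi factor of the stabiliser of a maximal totally isotropic subspace (the Siegel parabolic). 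Everything then reduces to comparing $p$-parts, which I would carry out by matching the factors $q^{j}-1$ directly, using Lemma~\ref{hu2}, rather than by computing valuations.

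For part~$(i)$, with $e=e_p(q)$ odd, I factor $q^{2i}-1=(q^{i}-1)(q^{i}+1)$. By Lemma~\ref{hu2}, $p\mid q^{2i}-1$ exactly when $e\mid 2i$, equivalently $e\mid i$ since $e$ is odd; and whenever $e\mid i$ we have $p\mid q^{i}-1$, so $p\nmid q^{i}+1$ because $p$ is odd. Hence the $p$-part of each factor $q^{2i}-1$ equals that of $q^{i}-1$, so the $p$-part of $\prod_{i=1}^{n}(q^{2i}-1)$ equals that of $\prod_{i=1}^{n}(q^{i}-1)$, i.e.\ of $|GL_n(q)|$. Combined with $GL_n(q)\leq Sp_{2n}(q)$, this gives $P\cong$ a Sylow $p$-subgroup of $GL_n(q)$.

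For the first assertion of part~$(ii)$, with $e$ even, the key observation is that $e\mid j$ forces $j$ to be even; thus the factors $q^{j}-1$ of $|GL_{2n}(q)|$ divisible by $p$ are precisely the factors $q^{2i}-1$ ($1\leq i\leq n$) divisible by $p$ in $|Sp_{2n}(q)|$. The two collections of $p$-contributing factors coincide, so the $p$-parts of $|Sp_{2n}(q)|$ and $|GL_{2n}(q)|$ agree, and $Sp_{2n}(q)\leq GL_{2n}(q)$ finishes this case. For the refinement, assume in addition $e\mid 2n$ and set $f=e/2$, $m=2n/e=n/f$. I would embed $U_m(q^{f})\hookrightarrow Sp_{2n}(q)$ by restriction of scalars: regarding the $m$-dimensional Hermitian space over $\F_{q^{2f}}=\F_{q^{e}}$ as a $2n$-dimensional $\F_q$-space and composing the Hermitian form with $\mathrm{Tr}_{\F_{q^{2f}}/\F_q}(\lambda\,\cdot\,)$ for a $\lambda$ with $\lambda^{q^{f}}=-\lambda$ yields a nondegenerate alternating $\F_q$-form preserved by $U_m(q^{f})$. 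Since $e_p(q^{f})=e/\gcd(e,f)=2\equiv 2\pmod 4$, Lemma~\ref{ug1} identifies the Sylow $p$-subgroups of $U_m(q^{f})$ with those of $GL_m(q^{2f})=GL_m(q^{e})$, whose $p$-part equals that of $\prod_{t=1}^{m}(q^{et}-1)$. On the other hand the $p$-contributing factors of $|Sp_{2n}(q)|$ are the $q^{2i}-1$ with $f\mid i$, i.e.\ $i=tf$ and $2i=te$ for $t=1,\dots,m$, giving the same product $\prod_{t=1}^{m}(q^{et}-1)$. Hence $|U_m(q^{f})|$ and $|Sp_{2n}(q)|$ have equal $p$-part, so a Sylow $p$-subgroup of $U_m(q^{f})\leq Sp_{2n}(q)$ is a Sylow $p$-subgroup of $Sp_{2n}(q)$; up to conjugacy $P$ lies in $U_{2n/e}(q^{e/2})$, as claimed.

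I expect the only genuinely non-formal step to be the restriction-of-scalars embedding $U_{2n/e}(q^{e/2})\hookrightarrow Sp_{2n}(q)$ together with the verification $e_p(q^{e/2})=2$ that makes Lemma~\ref{ug1} applicable; the remaining content is the bookkeeping of which factors $q^{j}-1$ are divisible by $p$, for which Lemma~\ref{hu2} and the parity remarks above suffice.
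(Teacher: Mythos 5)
Your proposal is correct, and for part $(ii)$ it takes a genuinely different route from the paper. Part $(i)$ is essentially the paper's argument: both use the Levi embedding $GL_n(q)\leq Sp_{2n}(q)$ and the observation that, for $e$ odd, $p\nmid q^i+1$, so the index is prime to $p$. The differences are in part $(ii)$. For the first claim the paper simply cites Weir, whereas you prove it directly by noting that for $e$ even every $p$-contributing factor $q^j-1$ of $|GL_{2n}(q)|$ has $j$ even, so the $p$-contributing factors of $|GL_{2n}(q)|$ and $|Sp_{2n}(q)|$ coincide; this makes the statement self-contained. For the second claim the paper produces the subgroup $U_{2n/e}(q^{e/2})$ as the centraliser $C_G(h)$ of a homogeneous semisimple element $h$ of order $p$ (quoting Lemma~6.6 of Emmett--Zalesski) and then matches $p$-parts against the explicit order formula for $U_l(q^m)$, while you construct the embedding $U_{2n/e}(q^{e/2})\hookrightarrow Sp_{2n}(q)$ by restriction of scalars, twisting the Hermitian form by $\mathrm{Tr}_{\F_{q^e}/\F_q}(\lambda\,\cdot\,)$ with $\lambda^{q^{e/2}}=-\lambda$, and then outsource the $p$-part comparison to Lemma~\ref{ug1} via the computation $e_p(q^{e/2})=2$. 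Both are valid; your version avoids the external citation and recycles a lemma already proved in the paper, which is tidier, whereas the paper's centraliser description carries extra structural information (the unitary subgroup arises as $C_G(h)$ for a $p$-element $h$), a formulation it reuses verbatim for the orthogonal groups in Lemma~\ref{o9}. One small point worth a line in your write-up: when $q$ is even the condition $\lambda^{q^{e/2}}=-\lambda$ just says $\lambda\in\F_{q^{e/2}}$, and then the form $\mathrm{Tr}(\lambda h(x,y))$ is symmetric rather than skew, so you should verify separately that $\mathrm{Tr}(\lambda h(x,x))=0$ (it does hold, since $\lambda h(x,x)\in\F_{q^{e/2}}$ and the relative trace of $2z$ vanishes in characteristic $2$), so the form is genuinely alternating in all characteristics.
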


\begin{proof}
\begin{enumerate}[$(i)$]
\item Note that $G$ contains a subgroup $X$ isomorphic to $GL_n(q)$.
Recall that
\[
|Sp_{2n}(q)|=(q^2-1)(q^4-1)\cdot \ldots \cdot (q^{2n}-1)q^a
\]
and\[
|GL_n(q)|=(q-1)(q^2-1)\cdot \ldots \cdot (q^n-1) q^b
\]
for some integers $a>b>0$. So the index of $GL_n(q)$ in $Sp_{2n}(q)$ is
equal to $q^{a-b}(q+1)(q^2+1)\cdot \ldots \cdot (q^n+1)$. We show that the
index is coprime to $p$. If $p | q^i + 1$, then $p \mathrel{|}
q^{2i} - 1$. Then, by Lemma \ref{hu2}, $e$ divides $2i$ and hence $i$ as
$e$ is odd. It follows that $p | q^i - 1$, which is impossible since $p$ is
odd.
%is coprime to $p$, unless possibly if $i$ is a
%multiple of $e$. Let $i=te$ for some integer $t\geq 1$. Then $q^{te}+1 =
%q^{te}-1+2$. As $p$ is odd and divides $q^e-1$, it follows that $p$ does
%not divide $q^{te}+1$, as required. (This argument appeared in the proof of
%Lemma~\ref{uu2}.)

\item For the first statement, see \cite[p. 531]{Weir:1955}.

Let $e = e_p(q)$. To prove the second statement, we start by showing that
$G$ contains a subgroup isomorphic to $U_l(q^m)$, where $m=e/2$ and $l=
2n/e$.

Observe first that $Sp_{e}(q)$ contains an element $g$, say, of order $p$
%(see \cite[Satz 5]{huppert:1970})
since $p | q^e - 1 \mathrel{|} |Sp_e(q)|$. Then $g$ is irreducible as an
element of
$GL_{e}(q)$ by the very definition of $e$. As $e|2n$, it follows that the
natural $\F_qG$-module $V$ is a direct sum of $2n/e$ non-degenerate
subspaces of dimension $e$. One observes that there is a homogeneous element
$h\in G$ of order $p$ (in other words, $h=\diag(g, \ldots, g)$ under a
suitable basis of $V$). Then $C_G(h)\cong U_{2n/e}(q^m)$, see for instance
\cite[Lemma 6.6]{emmett:zalesski:2011}.

Furthermore, observe that $p| q^{m}+1$ as $p | q^{2m}-1 = (q^m-1)(q^m+1)$
and $p \ndiv q^m-1$. Note that $p|q^{2i}-1$ implies $e|2i$, and hence
$m|i$. Therefore, the $p$-part of $|G|$ divides
\[
(q^{e}-1)(q^{2e}-1) \cdot \ldots \cdot (q^{2n}-1).
\]
Consider the term
\[
q^{ie}-1=q^{2im}-1=(q^{im}-1)(q^{im}+1)
\]
with $i$ odd. As $p|q^m+1$, and hence $p|q^{im}+1$, we observe that $p$
is coprime to $q^{im}-1$. Similarly, if $i = 2j$ is even, then
\[
q^{ie}-1=q^{2je}-1=(q^{je}-1)(q^{je}+1).
\]
As $p$ divides $q^{je} - 1 = q^{im} - 1$, it is coprime to $q^{je} + 1$.
Therefore, the $p$-part of $|G|$ divides
\[
(q^{m}+1)(q^{2m}-1)(q^{3m}+1)(q^{me}-1) \cdot \ldots \cdot (q^{le} \pm 1)
\]
according to whether $l$ is odd or even.
%if $l$ is even, otherwise
%\[
%(q^{m}+1)(q^{2e}-1)(q^{3m}+1)\cdot \ldots \cdot (q^{lm}+1).
%\]

Recall that
\[
|U_l(q^m)|= q^b(q^m+1)(q^{2m}-1)(q^{3m}+1) \cdot\ldots\cdot (q^{lm}-(-1)^l)
\]
for some integer $b>0$. Therefore, the $p$-part of $|G|$ is equal to that
of $|U_{l}(q^m)|$ and the lemma is proven.
%equals that of
%\[
%(q^{e}+1)(q^{2e}+1) \cdot \ldots \cdot (q^{le}+1)
%\]
%if $l$ is even and
%\[
%(q^{e}+1)(q^{2e}+1) \cdot \ldots \cdot (q^{(l-1)e}+1)
%\]
%otherwise.
%As $p|q^e-1$, we conclude that $p$ is coprime to $q^{ie}+1$ for any integer
%$i$. So the index of $U_{l}(q^m)$ in $G$ is coprime to $p$, whence the result.
\qed\end{enumerate}
\end{proof}  

\begin{prop}\label{sp2}
Let $G=Sp_{2n}(q)$ and set $e = e_p(q)$. The following are equivalent:
\begin{enumerate}[$(1)$]
\item $G$ contains a subgroup  isomorphic to $\widetilde{Qd}(p)$; 

\item a Sylow $p$-subgroup of $G$ is non-Abelian;

\item $n\geq ep$ if e is odd, and $2n\geq ep$ if e is even.
\end{enumerate}
\end{prop}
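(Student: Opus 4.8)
The plan is to establish the cycle of implications $(1) \Rightarrow (2) \Rightarrow (3) \Rightarrow (1)$, relying on the description of the Sylow $p$-subgroups of $G$ in Lemma~\ref{sp0} together with the embedding and Abelianity criteria of Lemmas~\ref{uu2}, \ref{w22} and \ref{wn1}. The implication $(1) \Rightarrow (2)$ is immediate: a Sylow $p$-subgroup of $\widetilde{Qd}(p)$ is extraspecial of order $p^3$, hence non-Abelian, so if $\widetilde{Qd}(p) \leq G$ then a Sylow $p$-subgroup of $G$ contains a non-Abelian subgroup and is therefore non-Abelian itself.

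For $(2) \Rightarrow (3)$ I would argue by contraposition, assuming $(3)$ fails and deducing that a Sylow $p$-subgroup $P$ of $G$ is Abelian. If $e$ is odd and $n < ep$, then Lemma~\ref{sp0}$(i)$ identifies $P$ with a Sylow $p$-subgroup of $GL_n(q)$, which is Abelian by Lemma~\ref{wn1}$(i)$. If $e$ is even and $2n < ep$, then Lemma~\ref{sp0}$(ii)$ makes $P$ a Sylow $p$-subgroup of $GL_{2n}(q)$, again Abelian by Lemma~\ref{wn1}$(i)$ applied with parameter $2n < ep$. Either way $P$ is Abelian, so $(2)$ fails.

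The real content is $(3) \Rightarrow (1)$. When $e$ is odd this is routine: $G$ contains a subgroup isomorphic to $GL_n(q)$ (as used in the proof of Lemma~\ref{sp0}$(i)$), and since $n \geq ep$, Lemma~\ref{w22}$(i)$ places a copy of $\widetilde{Qd}(p)$ inside $GL_n(q) \leq G$. The even case is where the difficulty lies, because Lemma~\ref{sp0}$(ii)$ supplies a unitary overgroup of a Sylow $p$-subgroup only under the extra hypothesis $e \mid 2n$, which $(3)$ does not provide. To circumvent this I would first descend to a smaller symplectic group: set $n' = ep/2$, so that $n' \leq n$ by $(3)$ and hence $Sp_{2n'}(q) = Sp_{ep}(q)$ embeds in $G$ via the standard embedding $Sp_{2n'}(q) \hookrightarrow Sp_{2n}(q)$. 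Now $e \mid 2n' = ep$ holds automatically, so Lemma~\ref{sp0}$(ii)$ yields a subgroup $H \cong U_{2n'/e}(q^{e/2}) = U_p(q^{e/2})$ of $Sp_{ep}(q)$. Since $e = e_p(q)$ is even, $p \mid q^{e} - 1 = (q^{e/2}-1)(q^{e/2}+1)$ while $p \nmid q^{e/2}-1$ (as $e/2 < e$), forcing $p \mid q^{e/2}+1$; Lemma~\ref{uu2}, applied with $q^{e/2}$ in place of $q$, then embeds $\widetilde{Qd}(p)$ into $H \leq Sp_{ep}(q) \leq G$, closing the cycle. (Using Lemma~\ref{uu2} directly, rather than Lemma~\ref{w22}$(iv)$, has the advantage of avoiding the exceptional case $e = 2$, $p = 3$, $n = 3$, and of covering $p = 3$ uniformly.)

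The main obstacle is precisely this even-$e$ reduction: one must dispose of the divisibility constraint $e \mid 2n$ built into Lemma~\ref{sp0}$(ii)$ by passing to $Sp_{ep}(q)$, and then confirm that the resulting $U_p(q^{e/2})$ meets the hypothesis $p \mid q^{e/2}+1$ of Lemma~\ref{uu2}. The remaining ingredients — the standard embeddings $GL_n(q) \leq Sp_{2n}(q)$ and $Sp_{2n'}(q) \leq Sp_{2n}(q)$, and the persistence of $\widetilde{Qd}(p) \leq U_p(q^{e/2})$ inside the ambient group — I expect to require no delicate argument.
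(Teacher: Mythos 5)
Your proposal is correct and follows essentially the same route as the paper: both use Lemma~\ref{sp0} plus Lemma~\ref{wn1} to settle the Abelian/non-Abelian dichotomy, Lemma~\ref{w22} via $GL_n(q) \leq Sp_{2n}(q)$ for odd $e$, and for even $e$ the reduction to $Sp_{ep}(q)$ followed by Lemma~\ref{sp0}$(ii)$ (giving $U_p(q^{e/2})$) and Lemma~\ref{uu2} with $p \mid q^{e/2}+1$ — exactly the paper's argument, merely packaged as a cycle of implications rather than proving the equivalence of $(2)$ and $(3)$ separately.
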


\begin{proof}
By Lemma \ref{sp0}, the equivalence of (2) and (3) follows from a
corresponding result for $GL_m(q)$ for $m=n$ or $2n$, see Lemmas~\ref{uu2},
\ref{w22} and \ref{wn1}. The implication $(1)\Rightarrow (2)$ is trivial.
If $e$ is odd, then (3) implies (1) by Lemma~\ref{w22} as $GL_n(q)$ is a
subgroup of $G$.

Let $e=2m$ be even, so $p|q^m+1$. Suppose first $2n=pe$. By part $(ii)$ of
Lemma~\ref{sp0}, some Sylow $p$-subgroup of $G$ is contained in a
subgroup $X$ isomorphic to $U_p(q^{m})$. As $p|q^m+1$, by Lemma \ref{uu2},
$X$ contains a subgroup isomorphic to $\widetilde{Qd}(p)$. If $2n>pe$, then
$G$ contains a subgroup isomorphic to $Sp_{pe}(q)$, so the result follows.
\qed\end{proof}

\paragraph{Orthogonal groups}
\begin{lem}\label{o9}
Let $G=O^-_{2n}(q)$ or $O_{2n+1}(q)$, $e=2m$ and $2n=de$, where $d$ is odd.
Then a Sylow $p$-subgroup of $G$ is contained in a subgroup $X$ isomorphic
to $U_d(q^m)$.
\end{lem}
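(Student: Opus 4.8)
The plan is to mirror the argument for the symplectic groups in Lemma~\ref{sp0}$(ii)$: I would exhibit a homogeneous semisimple element of order $p$ whose centraliser in $G$ is the desired unitary group, and then check that this centraliser already contains a full Sylow $p$-subgroup of $G$ by comparing $p$-parts of orders. First I would produce the building block. Since $e = 2m = e_p(q)$ is even, $p$ divides $q^{2m}-1 = (q^m-1)(q^m+1)$ but not $q^m-1$, so $p \mid q^m+1$. The orthogonal group of minus type $O^-_e(q) = O^-_{2m}(q)$ contains a cyclic maximal torus of order $q^m+1$, hence an element $g$ of order $p$. As $g$ has order $p$ and acts on a space of dimension $e = e_p(q)$, it is irreducible over $\F_q$, since every irreducible factor of $(x^p-1)/(x-1)$ has degree $e_p(q)$ (compare the argument in the proof of Lemma~\ref{hp2}).

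Next I would assemble the homogeneous element. Writing $2n = de$ with $d$ odd, I decompose the natural module $V$ as an orthogonal direct sum $V = V_1 \perp \cdots \perp V_d$ (in the odd-dimensional case $O_{2n+1}(q)$, together with an extra anisotropic line fixed pointwise), where each $V_i$ is a non-degenerate $e$-dimensional space of minus type carrying a copy of $g$. Because the discriminant is multiplicative and $d$ is odd, $d$ minus-type blocks produce a minus-type form in the even case, matching $O^-_{2n}(q)$; in the odd case the extra line supplies the correct type for $O_{2n+1}(q)$. Let $h = \diag(g, \ldots, g)$ ($d$ copies); then $h \in G$ has order $p$. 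Since $g$ is irreducible over $\F_q$ with enveloping field $\F_{q^e} = \F_{q^{2m}}$, and each eigenvalue $\lambda$ of $g$ satisfies $\lambda^{q^m} = \lambda^{-1}$ (as $\lambda^{q^m+1} = 1$), the involution $x \mapsto x^{q^m}$ of $\F_{q^{2m}}/\F_{q^m}$ carries $\lambda$ to its form-dual $\lambda^{-1}$. Hence the symmetric form on $V$ restricts to a non-degenerate Hermitian form over $\F_{q^m}$, and consequently $C_G(h) \cong U_d(q^m)$ (in the odd-dimensional case, $C_G(h) \cong U_d(q^m)$ times a group of order $2$); I would cite \cite[Lemma 6.6]{emmett:zalesski:2011} for this centraliser computation exactly as in Lemma~\ref{sp0}. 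As $p$ is odd, the extra factor arising in the odd case is a $p'$-group, so it suffices to take $X$ to be the $U_d(q^m)$ direct factor.

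Finally I would verify that a Sylow $p$-subgroup of $X \cong U_d(q^m)$ is a Sylow $p$-subgroup of $G$, which amounts to showing that the $p$-parts of $|G|$ and of $|U_d(q^m)|$ coincide. Using Lemma~\ref{hu2} to pick out, from the standard order formulas, exactly the factors divisible by $p$: in $|U_d(q^m)| = q^{md(d-1)/2}\prod_{i=1}^d(q^{mi}-(-1)^i)$ every factor $q^{mi}-(-1)^i$ is divisible by $p$ (it equals $q^{mi}+1$ for $i$ odd and $q^{mi}-1$ for $i$ even, using $p \mid q^m+1$), while in $|GO^-_{2n}(q)|$ the relevant contributions are $q^n+1 = q^{dm}+1$ together with the terms $q^{2jm}-1$ for $1 \le j \le d-1$, and in $|GO_{2n+1}(q)|$ they are the terms $q^{2jm}-1$ for $1 \le j \le d$.

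I expect the decisive and most delicate step to be the equality of these $p$-parts, which I would establish by a lifting-the-exponent computation. Writing $\alpha$ for the common $p$-adic valuation of $q^m+1$ and of $q^{2m}-1$, one has $v_p(q^{am}+1) = \alpha + v_p(a)$ for $a$ odd and $v_p(q^{2bm}-1) = \alpha + v_p(b)$. Summing these contributions with $d = 2t+1$ reduces both the unitary side and each orthogonal side to the common value $d\,\alpha + v_p(d!)$, the final collapse relying on the identity $v_p((2t+1)!) = v_p(2t+1) + v_p((2t)!)$. This yields the coincidence of $p$-parts, and hence the lemma, uniformly in the two cases $O^-_{2n}(q)$ and $O_{2n+1}(q)$.
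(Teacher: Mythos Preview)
Your argument is correct and follows essentially the same strategy as the paper: build a homogeneous element $h=\diag(g,\ldots,g)$ from an irreducible order-$p$ element $g\in O^-_e(q)$, identify $C_G(h)\cong U_d(q^m)$ via \cite[Lemma~6.6]{emmett:zalesski:2011}, and then compare $p$-parts of orders.

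The only noteworthy difference is in the last step. The paper dispatches the $p$-part comparison in one line by observing that $|SO_{2n+1}(q)|=|Sp_{2n}(q)|$ and invoking Lemma~\ref{sp0}, where the equality of $p$-parts of $|Sp_{2n}(q)|$ and $|U_d(q^m)|$ was already worked out; since $O^-_{2n}(q)\leq O_{2n+1}(q)$, this bounds both cases at once. Your lifting-the-exponent computation is a self-contained alternative that reaches the same value $d\alpha+v_p(d!)$ directly, at the cost of a little more arithmetic. A minor remark: your ``extra anisotropic line'' description for $O_{2n+1}(q)$ tacitly assumes $q$ odd; for $q$ even one has $SO_{2n+1}(q)\cong Sp_{2n}(q)$ and the result is Lemma~\ref{sp0} itself, so the paper's route via the embedding $O^-_{2n}(q)\hookrightarrow O_{2n+1}(q)$ is slightly cleaner here.
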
 

\begin{proof}
We first show that $G = O^-_{2n}(q)$ contains a subgroup isomorphic to
$U_d(q^ m)$. Note
that $O^-_{e}(q)$ contains an element $g$, say, of order $p$ as $p | q^m+1$
which divides $|O^-_e(q)|$ by the order formula.
%\cite[Satz 3]{huppert:1970}.
Observe that $g$ is irreducible as an element
of $GL_{e}(q)$ by the very definition of $e$. As $e|2n$, it follows that
$V$, the natural $\F_q G$-module, is a direct sum of $d=2n/e$ non-degenerate
subspaces of dimension $e$. As $d$ is odd, these can be chosen of Witt
index 1 (see \cite[2.5.11]{kleidman:liebeck} and use Witt's theorem).  One
observes that there is a homogeneous element $h\in G$ of order $p$ (under a
suitable basis of $V$ we have $h=\diag(g, \ldots, g)$). Then $C_G(h)\cong
U_{d}(q^m)$, see for instance \cite[Lemma 6.6]{emmett:zalesski:2011}. So
$O^-_{2n}(q)$ and hence $O_{2n+1}(q)$ contains a subgroup $X$ isomorphic to
$U_d(q^m)$. 

So it suffices to show that the $p$-part of $G$ does not exceed that of
$U_{d}(q^m)$, and in turn that the $p$-part of $O_{2n+1}(q)$ does not
exceed that of $U_{d}(q^m)$. However, $|SO_{2n+1}(q)|=|Sp_{2n}(q)|$, and
the $p$-part of $|Sp_{2n}(q)|$ equals the $p$-part of $|U_{d}(q^m)|$ by
Lemma~\ref{sp0}. So the result follows.  
\qed\end{proof}

\begin{lem}\label{oo2}
Let $G=O_{2n+1}(q)$, $q$ odd, and let $P$ be a Sylow $p$-subgroup of $G$.
If $e$ is even, then $P$ is isomorphic to a Sylow $p$-subgroup of
$GL_{2n+1}(q)$. If $e$ is odd, then $P$ is isomorphic to a Sylow
$p$-subgroup of $GL_{n}(q)$.
\end{lem}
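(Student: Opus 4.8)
The plan is to realise the Sylow $p$-subgroup $P$ of $G=O_{2n+1}(q)$ as a Sylow $p$-subgroup of a suitable general linear group by means of a subgroup inclusion of index prime to $p$, treating the two parities of $e=e_p(q)$ separately. Since $q$ and $p$ are odd, the index $2$ of $SO_{2n+1}(q)$ in $O_{2n+1}(q)$ is prime to $p$, so $P$ may be taken inside $SO_{2n+1}(q)$ and $|G|_p=|SO_{2n+1}(q)|_p$. Throughout I would use $|SO_{2n+1}(q)|=|Sp_{2n}(q)|=q^{n^2}\prod_{i=1}^n(q^{2i}-1)$ (already invoked in Lemma~\ref{o9}) and the criterion of Lemma~\ref{hu2} that $p\mid q^j-1$ if and only if $e\mid j$.

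For $e$ odd I would argue exactly as in the proof of Lemma~\ref{sp0}$(i)$. Writing the natural module as $V=W\oplus W^{*}\oplus\langle v\rangle$ with $W$ a maximal totally isotropic subspace and $v$ anisotropic, the subgroup $GL(W)\cong GL_n(q)$ acting naturally on $W$, contragrediently on $W^{*}$ and trivially on $v$ consists of isometries of determinant $1$, so $GL_n(q)\le SO_{2n+1}(q)\le G$. Dividing orders gives $[G:GL_n(q)]=2\,q^{\binom{n+1}{2}}\prod_{i=1}^n(q^i+1)$, which is prime to $p$: the factor $2\,q^{\binom{n+1}{2}}$ is, and if $p\mid q^i+1$ then $p\mid q^{2i}-1$, so $e\mid 2i$ by Lemma~\ref{hu2}; as $e$ is odd this forces $e\mid i$, whence $p\mid q^i-1$, which is impossible for the odd prime $p$. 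Hence a Sylow $p$-subgroup of the subgroup $GL_n(q)$ is Sylow in $G$, and $P$ is isomorphic to a Sylow $p$-subgroup of $GL_n(q)$.

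For $e$ even I would instead exploit the tautological inclusion $G=O_{2n+1}(q)\le GL_{2n+1}(q)$ and show its index is prime to $p$ by matching $p$-parts along the chain $|G|_p=|SO_{2n+1}(q)|_p=|Sp_{2n}(q)|_p=|GL_{2n}(q)|_p=|GL_{2n+1}(q)|_p$. The middle equality is Lemma~\ref{sp0}$(ii)$, which for $e$ even identifies the Sylow $p$-subgroups of $Sp_{2n}(q)$ with those of $GL_{2n}(q)$; the final one holds because $|GL_{2n+1}(q)|=|GL_{2n}(q)|\cdot q^{2n}(q^{2n+1}-1)$ and, as $e$ is even while $2n+1$ is odd, $e\ndiv 2n+1$, so $p\ndiv q^{2n+1}-1$ by Lemma~\ref{hu2}. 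Thus $|G|_p=|GL_{2n+1}(q)|_p$, and since $G\le GL_{2n+1}(q)$, the group $P$ is itself a Sylow $p$-subgroup of $GL_{2n+1}(q)$, as required.

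The order formulae and the explicit copy of $GL_n$ inside $O_{2n+1}$ are routine; the step I would regard as the crux is the $e$-even case, where the required isomorphism is produced not from a direct structural match but from the numerical coincidence $|SO_{2n+1}(q)|=|Sp_{2n}(q)|$ passed through Lemma~\ref{sp0}$(ii)$ together with $e\ndiv 2n+1$. I would emphasise that it is the inclusions of $p'$-index that upgrade these equalities of $p$-parts to genuine isomorphisms of Sylow subgroups, since equal order by itself would not be enough.
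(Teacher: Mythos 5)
Your proof is correct, and it differs from the paper's in an interesting way. The paper's treatment is very short: for $e$ even it simply cites Weir's paper, and for $e$ odd it combines the containment $GL_n(q)\le G$ with the order coincidence $|G|/2=|Sp_{2n}(q)|$ and Lemma~\ref{sp0}$(i)$ to match the $p$-parts. Your odd case is essentially the same idea, but self-contained: you exhibit the embedding $GL(W)\cong GL_n(q)\le SO_{2n+1}(q)$ explicitly via $V=W\oplus W^{*}\oplus\langle v\rangle$ and check directly that the index $2\,q^{\binom{n+1}{2}}\prod_{i=1}^{n}(q^{i}+1)$ is prime to $p$, rather than routing the numerics through the symplectic group. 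The real divergence is the even case: where the paper appeals to Weir for the orthogonal groups outright, you \emph{derive} the orthogonal statement from the symplectic one, chaining $|G|_p=|SO_{2n+1}(q)|_p=|Sp_{2n}(q)|_p=|GL_{2n}(q)|_p=|GL_{2n+1}(q)|_p$, where the middle step is Lemma~\ref{sp0}$(ii)$ and the last uses $e\ndiv 2n+1$ (so $p\ndiv q^{2n+1}-1$ by Lemma~\ref{hu2}); the inclusion $O_{2n+1}(q)\le GL_{2n+1}(q)$ of $p'$-index then upgrades the equality of $p$-parts to the statement that $P$ is literally Sylow in $GL_{2n+1}(q)$. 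This buys a proof that leans on external literature only through Lemma~\ref{sp0}$(ii)$ (itself Weir's symplectic result) instead of a separate citation for orthogonal groups, at the cost of a slightly longer argument; your closing remark that inclusions of $p'$-index, not mere order coincidences, are what produce the isomorphisms of Sylow subgroups is exactly the right point of emphasis.
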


\begin{proof}
For the first statement  see \cite[p. 532]{Weir:1955}.
Let $e$ be odd. Then $|G|/2$ coincides with  $|Sp_{2n}(q)|$, and $G$
contains a subgroup $X$ isomorphic to $GL_n(q)$.

By Lemma \ref{sp0}, the order of a Sylow $p$-subgroup of $GL_n(q)$
coincides with that of $Sp_{2n}(q)$, and hence with $|P|$. So the result
follows.
\qed\end{proof}

\begin{prop}\label{oo1}
Let $G = O_{2n+1}(q)$ and $e=e_p(q)$. The following are equivalent:

\begin{enumerate}[$(i)$]
\item $G$ contains a subgroup isomorphic to $\widetilde{Qd}(p)$; 

\item a Sylow $p$-subgroup of $G$ is not Abelian;

\item $n\geq ep$ if $e$ is odd, and $n\geq ep/2$ if $e$ is even.
\end{enumerate}
\end{prop}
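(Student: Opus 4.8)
The plan is to prove the equivalence of the three conditions for $G = O_{2n+1}(q)$ by following the same strategy that worked for the symplectic case in Proposition~\ref{sp2}, reducing everything to the general linear and unitary computations already established. The implication $(i) \Rightarrow (ii)$ is immediate, since $\widetilde{Qd}(p)$ has a non-Abelian Sylow $p$-subgroup (its Sylow $p$-subgroup is extraspecial of order $p^3$), so containing such a subgroup forces a non-Abelian Sylow $p$-subgroup of $G$. For the equivalence of $(ii)$ and $(iii)$, I would invoke Lemma~\ref{oo2}: when $e$ is even, a Sylow $p$-subgroup $P$ of $G$ is isomorphic to a Sylow $p$-subgroup of $GL_{2n+1}(q)$, and when $e$ is odd, $P$ is isomorphic to a Sylow $p$-subgroup of $GL_n(q)$. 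Applying the abelianness criterion of Lemma~\ref{wn1}$(i)$ for $GL_m(q)$ then says that $P$ is Abelian precisely when $m < ep$, where $m = 2n+1$ or $m = n$ respectively. Translating $2n+1 < ep$ (equivalently $n < ep/2$, using integrality and that $ep$ is even when $e$ is even) and $n < ep$ gives exactly the negation of $(iii)$.

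The substantive content is the implication $(iii) \Rightarrow (i)$, and here the two parities split. When $e$ is odd and $n \geq ep$, the group $G = O_{2n+1}(q)$ contains a subgroup isomorphic to $GL_n(q)$ (used already in Lemma~\ref{oo2}), and Lemma~\ref{w22}$(i)$ gives a copy of $\widetilde{Qd}(p)$ inside $GL_n(q)$, hence inside $G$. When $e = 2m$ is even and $n \geq ep/2$, so that $2n \geq ep$, I would use Lemma~\ref{o9}: writing $2n = de$ with $d$ the odd part (after first passing to the largest such even-dimensional non-degenerate subspace), a Sylow $p$-subgroup of $G$ lies in a subgroup $X \cong U_d(q^m)$. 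Since $p \mid q^m + 1$ and $d \geq p$, Lemma~\ref{uu2} together with Lemma~\ref{w22}$(ii)$--$(iv)$ (or directly the unitary case of Lemma~\ref{co2} when $d > p$) produces a copy of $\widetilde{Qd}(p)$ in $X$ and therefore in $G$.

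The main obstacle I anticipate is the careful bookkeeping in the even case when $2n$ is not itself a multiple of $e$, so that the containment in a single $U_d(q^m)$ via Lemma~\ref{o9} does not apply verbatim. The clean approach is to first replace the ambient orthogonal space by a non-degenerate subspace of dimension $e \cdot \lfloor 2n/e \rfloor$ whose Witt type is chosen (via Witt's theorem, as in the proof of Lemma~\ref{o9}) so that the decomposition into $e$-dimensional non-degenerate blocks of Witt index $1$ is possible; the stabiliser of this subspace contains $O_{e\lfloor 2n/e\rfloor}^{\pm}(q)$, inside which the homogeneous element of order $p$ lives and whose centraliser realises the unitary subgroup. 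One must check that $\lfloor 2n/e \rfloor \geq p$ follows from $2n \geq ep$, which is clear, and that this floor can be taken odd (absorbing one extra block into the orthogonal complement if necessary) so that Lemma~\ref{o9} applies. The remaining small-case exception flagged in Lemma~\ref{w22}$(iv)$ (namely $e = 2$, $p = 3$, $n = 3$) should be checked not to arise here, or handled separately, since for $O_{2n+1}(q)$ the relevant unitary dimension $d$ is at least $p \geq 3$ and the excluded configuration corresponds to a symplectic-rank bound already excluded by the hypothesis $n \geq ep/2$.
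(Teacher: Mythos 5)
Your outline follows the paper's proof in all essentials: $(i)\Rightarrow(ii)$ is trivial, $(ii)\Leftrightarrow(iii)$ comes from Lemma~\ref{oo2} together with the $GL_m(q)$ criteria (Lemmas~\ref{wn1} and \ref{w22}), and $(iii)\Rightarrow(i)$ splits by the parity of $e$, using $GL_n(q)\leq G$ for $e$ odd and the unitary subgroup of Lemma~\ref{o9} for $e$ even. However, there are two concrete flaws. First, Lemma~\ref{oo2} is stated only for $q$ odd, so your derivation of $(ii)\Leftrightarrow(iii)$ is not licensed in even characteristic; the paper's proof begins precisely by disposing of that case via $SO_{2n+1}(q)\cong Sp_{2n}(q)$ and Proposition~\ref{sp2}, and only then assumes $q$ odd. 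Your proposal needs this reduction (or a separate verification of Lemma~\ref{oo2} for even $q$).

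Second, your claim that the exceptional configuration of Lemma~\ref{w22}$(iv)$ ($e=2$, $p=3$, unitary dimension $3$) is ``already excluded by the hypothesis $n\geq ep/2$'' is false: take $G=O_7(q)$ with $p=3$ and $3\,|\,q+1$, so $e=2$ and $n=3=ep/2$; then $2n=de$ with $d=3$, and the subgroup produced is exactly $U_3(q)$. What saves the argument is not a rank bound but the distinction between $U$ and $SU$: Lemma~\ref{o9} yields the \emph{full} unitary group $U_d(q^m)$, and Lemma~\ref{uu2} places $\widetilde{Qd}(3)$ inside $U_3(q^m)$ without exception (the exception in Lemma~\ref{w22}$(iv)$ concerns $SU_3$ only). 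Relatedly, the floor-function bookkeeping you flag as the main obstacle is unnecessary, and this is how the paper keeps the even case short: since only $d=p$ is needed, one embeds $O^-_{pe}(q)$ into $G$ (a non-degenerate minus-type subspace of dimension exactly $pe\leq 2n$ exists), applies Lemma~\ref{o9} to that subgroup with $d=p$ odd to put its Sylow $p$-subgroup inside $X\cong U_p(q^m)$, and concludes with Lemma~\ref{uu2}.
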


\begin{proof}
Note that if $q$ is even, then $SO_{2n+1}(q) \cong Sp_{2n}(q)$ and the
result follows from Proposition~\ref{sp2}, so we can assume that $q$ is
odd.

By Lemma \ref{oo2}, the equivalence of $(ii)$ and $(iii)$ follows from a
corresponding result for $GL_m(q)$ for $m=n$ or $2n$, see Lemma \ref{wn1}.
The implication $(i) \Rightarrow (ii)$ is trivial. If $e$ is odd, then
$(iii)$ implies $(i)$ by Lemma \ref{w22} as $G$ has a subgroup isomorphic to
$GL_n(q)$.

Let $e=2m$ be even.
% Suppose first $2n=pe$.
Then a Sylow $p$-subgroup of
$O^-_{pe}(q)$ and of $G$ is contained in a subgroup $X$ isomorphic to
$U_p(q^{m})$ (see Lemma \ref{o9}). As $p|q^m+1$, by Lemma \ref{w22} $(iv)$,
$X$ contains a subgroup isomorphic to $\widetilde{Qd}(p)$. If $2n\geq pe$,
then $G$ contains a subgroup isomorphic to $O^-_{pe}(q)$, so the result
follows. 
\qed\end{proof}

\begin{lem}\label{o84}
Let $G = O^\pm _{2n}(q)$, $n>3$ and let $P$ be a Sylow $p$-subgroup of $G$.
\begin{enumerate}[$(i)$]
\item $P$ is isomorphic to a Sylow $p$-subgroup of $O_{2n+1}(q)$ or of
$O_{2n-1}(q)$. 

\item If $p\ndiv q^{2n}-1$ (equivalently, $e\ndiv 2n$), then $P$ is
isomorphic to a Sylow $p$-subgroup of both $O_{2n+1}(q)$ and $O_{2n-1}(q)$.

\item
%$P$ remains a Sylow $p$-subgroup of $O_{2n+1}(q)$ if and only if $p$
%does not divide the index $|{O_{2n+1}(q):G}|$, which is $q^n+1$ for $G =
%O_{2n}^+(q)$ and $q^n-1$ for $G = O_{2n}^-(q)$. In other words, 
$P$ remains
a Sylow $p$-subgroup of $O_{2n+1}(q)$ if and only if either $e \ndiv 2n$ or
$e|n$ for $G = O_{2n}^+(q)$ and $e \ndiv n$ for $G = O_{2n}^-(q)$. 

\item If $q$ is even, the above statements remain true if one replaces
$O_{2i+1}(q)$ by $Sp_{2i}(q)$ for $i=n,n-1$.
\end{enumerate}
\end{lem}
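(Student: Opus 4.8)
The plan is to reduce everything to a comparison of the $p$-parts of the relevant group orders, using two standard subgroup embeddings. For $q$ odd I would work with the chain
\[
O_{2n-1}(q)\ \leq\ O^{\epsilon}_{2n}(q)\ \leq\ O_{2n+1}(q)\qquad(\epsilon=\pm 1),
\]
in which the left-hand copy is the stabiliser of a non-degenerate $1$-space and the right-hand containment realises the $2n$-dimensional orthogonal space as the perp of a non-degenerate vector of a $(2n+1)$-dimensional one; both inclusions hold for either type $\epsilon$ and are instances of the standard subgroup structure of the classical groups (cf.\ \cite{kleidman:liebeck}), the hypothesis $n>3$ keeping us clear of the low-dimensional exceptional isomorphisms. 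Throughout I would use the elementary principle that if $H\leq K$ and the $p$-parts of $|H|$ and $|K|$ agree, then a Sylow $p$-subgroup of $H$ is already a Sylow $p$-subgroup of $K$; as $p$ is odd I may pass freely between $O$ and $SO$, since the index $2$ does not affect $p$-parts.

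Writing $m_p$ for the $p$-part of a positive integer $m$, the order formulas give
\[
|O_{2n+1}(q)|_p=\prod_{i=1}^{n}(q^{2i}-1)_p,\qquad
|O_{2n-1}(q)|_p=\prod_{i=1}^{n-1}(q^{2i}-1)_p,
\]
and $|O^{\epsilon}_{2n}(q)|_p=(q^{n}-\epsilon)_p\prod_{i=1}^{n-1}(q^{2i}-1)_p$, where $\epsilon=+1$ for the plus type and $\epsilon=-1$ for the minus type. Hence the two governing ratios are
\[
\frac{|O_{2n+1}(q)|_p}{|O_{2n-1}(q)|_p}=(q^{2n}-1)_p,\qquad
\frac{|O^{\epsilon}_{2n}(q)|_p}{|O_{2n-1}(q)|_p}=(q^{n}-\epsilon)_p .
\]
The arithmetic heart of the argument is that, since $p$ is odd, $p$ cannot divide both $q^{n}-1$ and $q^{n}+1$ (their greatest common divisor divides $2$); thus one of the two factors of $q^{2n}-1=(q^n-1)(q^n+1)$ carries the whole of $(q^{2n}-1)_p$ and the other has trivial $p$-part. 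Consequently $(q^{n}-\epsilon)_p$ is always \emph{either} $1$ \emph{or} $(q^{2n}-1)_p$, never a proper intermediate divisor.

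This dichotomy proves $(i)$ and $(ii)$ at once. Indeed $|O^{\epsilon}_{2n}(q)|_p$ equals $|O_{2n-1}(q)|_p$ when $(q^{n}-\epsilon)_p=1$ and equals $|O_{2n+1}(q)|_p$ when $(q^{n}-\epsilon)_p=(q^{2n}-1)_p$, so by the principle above $P$ is, through the appropriate embedding, a Sylow $p$-subgroup of $O_{2n-1}(q)$ in the first case and of $O_{2n+1}(q)$ in the second; this is $(i)$. If moreover $e\ndiv 2n$, i.e.\ $p\ndiv q^{2n}-1$, then $(q^{2n}-1)_p=1$, all three $p$-parts coincide, and both embeddings exhibit $P$ as a Sylow $p$-subgroup of each of $O_{2n\pm 1}(q)$; this is $(ii)$. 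For $(iii)$, $P$ remains a Sylow $p$-subgroup of $O_{2n+1}(q)$ exactly when $|O^{\epsilon}_{2n}(q)|_p=|O_{2n+1}(q)|_p$, i.e.\ when $(q^{n}-\epsilon)_p=(q^{2n}-1)_p$, which by the dichotomy holds iff the complementary factor has trivial $p$-part. For the plus type this complementary factor is $q^{n}+1$, giving the condition $p\ndiv q^{n}+1$, that is $e\ndiv 2n$ or $e\mid n$; for the minus type it is $q^{n}-1$, giving $p\ndiv q^{n}-1$, that is $e\ndiv n$ (equivalently $e\ndiv 2n$ or $e\ndiv n$). These are precisely the conditions in $(iii)$, the translations to $e$ being Lemma~\ref{hu2}.

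Finally, for $(iv)$ ($q$ even) I would run the identical bookkeeping with the chain $Sp_{2n-2}(q)\leq O^{\epsilon}_{2n}(q)\leq Sp_{2n}(q)$, where the left inclusion is the stabiliser of a nonsingular point and the right one is the containment of the orthogonal group in the symplectic group of the polarised bilinear form in characteristic $2$. The order formula for $O^{\epsilon}_{2n}(q)$ is unchanged, and $|Sp_{2n}(q)|_p/|Sp_{2n-2}(q)|_p=(q^{2n}-1)_p$ just as before, so the two displayed ratios and the whole case analysis carry over verbatim. I expect the only point requiring care to be the bookkeeping in the case $e\mid 2n$: matching each type $\epsilon$ to the correct one of $O_{2n\pm 1}(q)$ and verifying that the minus-type condition ``$e\ndiv 2n$ or $e\ndiv n$'' indeed reduces to $e\ndiv n$. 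Everything else follows mechanically from the two embeddings and the observation that for odd $p$ at most one of $q^{n}\pm 1$ is divisible by $p$.
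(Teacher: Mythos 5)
Your proof is correct and is essentially the paper's argument: both reduce the lemma to comparing $p$-parts of the standard order formulas for $O^{\pm}_{2n}(q)$ and $O_{2n\pm 1}(q)$, with part $(iii)$ coming down to whether $p$ divides $q^n+1$ (plus type) or $q^n-1$ (minus type) --- precisely the index computation the paper performs. The only minor divergence is part $(iv)$, where the paper simply invokes the isomorphism $SO_{2n+1}(q) \cong Sp_{2n}(q)$ for $q$ even, whereas you rerun the same bookkeeping along the characteristic-$2$ chain $Sp_{2n-2}(q) \le O^{\epsilon}_{2n}(q) \le Sp_{2n}(q)$; both are valid.
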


\begin{proof}
Recall that $p$ divides $q^{i}-1$ if and only if $e$ divides $i$ (see
Lemma~\ref{hu2}).

For $(i)$, see \cite[p. 533]{Weir:1955} or observe that the statement
easily follows  from the formulas for the orders of these three groups.
Recall that
\[
|O^+ _{2n}(q)|
= 2q^{n(n-1)}(q^2-1)\cdot \ldots \cdot (q^{2(n-1)}-1)(q^n-1),
\]
\[
|O^- _{2n}(q)|
= 2q^{n(n-1)}(q^2-1) \cdot \ldots \cdot (q^{2(n-1)}-1)(q^n+1)
\]
and
\[
|O_{2n+1}(q)| = 2q^{n^2} (q^2 - 1) \cdot \ldots \cdot (q^{2n} - 1).
\]
$(ii)$ follows from that the orders of $O_{2n+1}(q)$ and $O_{2n-1}(q)$
differ in a factor $q^{2n-1}(q^{2n}-1)$.

For $(iii)$ observe that $P$ remains a Sylow $p$-subgroup of $O_{2n+1}(q)$
if and only if $p$ does not divide the index $|{O_{2n+1}(q):G}|$, which is
$q^n+1$ for $G = O_{2n}^+(q)$ and $q^n-1$ for $G = O_{2n}^-(q)$. This
happens if either $e \ndiv 2n$ (so $p \ndiv q^{2n}-1$) or $e | n$ for $G =
O_{2n}^+(q)$ and $e \ndiv n$ for $G = O_{2n}^-(q)$.

Finally, $(iv)$ follows from the fact that
$SO_{2n+1}(q) \cong Sp_{2n}(q)$  for $q$ even.
\end{proof}

Lemma~\ref{o84} $(iii)$ together with  Propositions~\ref{sp2} (for $q$ even)
and \ref{oo1} implies:

\begin{prop}\label{oo3}
Let $G = O^\pm _{2n}(q)$. Then $G$ contains no subgroup isomorphic to
$\widetilde{Qd}(p)$ if and only if the Sylow $p$-subgroups of $G$ are
Abelian.
\end{prop}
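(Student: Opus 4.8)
The plan is to prove the two implications separately, exploiting the reduction of the Sylow $p$-subgroups of an even-dimensional orthogonal group to those of an odd-dimensional orthogonal (or, in even characteristic, symplectic) group supplied by Lemma~\ref{o84}. The forward direction is immediate: if $G$ contains a subgroup isomorphic to $\widetilde{Qd}(p)$, then, since a Sylow $p$-subgroup of $\widetilde{Qd}(p)$ is extraspecial of order $p^3$ and hence non-Abelian, a Sylow $p$-subgroup of $G$ contains a non-Abelian $p$-group and is therefore itself non-Abelian. So it remains to show that non-Abelian Sylow $p$-subgroups force a copy of $\widetilde{Qd}(p)$ inside $G$.

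So assume $P$ is a non-Abelian Sylow $p$-subgroup of $G$. By Lemma~\ref{o84}$(i)$ and $(iii)$, $P$ is isomorphic to a Sylow $p$-subgroup of $O_{2n+1}(q)$ or of $O_{2n-1}(q)$, and part $(iii)$ records exactly which of the two occurs. The group $O_{2n-1}(q)$ is a subgroup of $G$ (as the stabiliser of a non-singular vector), and when $q$ is even $O_{2n-1}(q)\cong Sp_{2n-2}(q)$; this is the object in which I intend to locate $\widetilde{Qd}(p)$. Concretely, I would first dispose of the generic case: if the Sylow $p$-subgroups of $O_{2n-1}(q)$ are non-Abelian, then by Proposition~\ref{oo1} (for $q$ odd) or Proposition~\ref{sp2} (for $q$ even) $O_{2n-1}(q)$ already contains a subgroup isomorphic to $\widetilde{Qd}(p)$, and we are done since $O_{2n-1}(q)\le G$.

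The subtlety, and the main obstacle, is that the other odd-dimensional group $O_{2n+1}(q)$ contains $G$ rather than being contained in it, so when $P$ matches the Sylow $p$-subgroup of $O_{2n+1}(q)$ but not of $O_{2n-1}(q)$ there is no immediate subgroup from which to pull $\widetilde{Qd}(p)$ back. By Lemma~\ref{o84}$(ii),(iii)$ this can only happen at the exact boundary where $O_{2n+1}(q)$ has non-Abelian but $O_{2n-1}(q)$ has Abelian Sylow $p$-subgroups; comparing the numerical criteria of Propositions~\ref{oo1} and \ref{sp2} with the divisibility conditions of Lemma~\ref{o84}$(iii)$ pins this down to just two families, namely $G\cong O^{+}_{2ep}(q)$ with $e=e_p(q)$ odd, and $G\cong O^{-}_{pe}(q)$ (that is, $O^{-}_{2n}(q)$ with $2n=pe$) with $e$ even.

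These two families I would handle by hand. For the first I use the Levi subgroup $GL_{ep}(q)\le O^{+}_{2ep}(q)$, which contains $\widetilde{Qd}(p)$ by Lemma~\ref{w22}$(i)$, as $GL_{ep}(q)$ has degree $ep=pe$. For the second, $G=O^{-}_{pe}(q)$ is itself of the shape treated in Lemma~\ref{o9}: a Sylow $p$-subgroup lies in a subgroup $X\cong U_p(q^{e/2})$, and since $p\mid q^{e/2}+1$, $X$ contains $\widetilde{Qd}(p)$ by Lemma~\ref{uu2}, exactly as in the even case of Proposition~\ref{oo1}. Verifying that these two boundary families are the only ones escaping the generic argument, and checking the two containments $GL_{ep}(q)\le O^{+}_{2ep}(q)$ and $U_p(q^{e/2})\le O^{-}_{pe}(q)$, is where the real work lies; everything else is bookkeeping with $e_p(q)$ and the order formulas already assembled in Lemmas~\ref{hu2}, \ref{sp0} and \ref{o84}.
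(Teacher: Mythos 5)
Your proposal is correct and follows essentially the same route as the paper's own proof: reduce to the case of non-Abelian Sylow $p$-subgroups, settle the generic case via $O_{2n-1}(q)\le G$ and Proposition~\ref{oo1}, and then identify the boundary cases $G=O^+_{2ep}(q)$ ($e$ odd) and $G=O^-_{pe}(q)$ ($e$ even) via Lemma~\ref{o84}$(iii)$, handling them with the subgroups $GL_{ep}(q)$ and $U_p(q^{e/2})$ respectively. The only cosmetic difference is that you invoke Proposition~\ref{sp2} separately for $q$ even, whereas the paper's Proposition~\ref{oo1} already absorbs that case internally through the isomorphism $SO_{2n+1}(q)\cong Sp_{2n}(q)$.
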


%\begin{proof}
%One implication is clear, so we shall prove the other implication. If
%$q$ is even, then we are done by Proposition~\ref{sp2}, so let $q$ be
%odd. If the Sylow $p$-subgroups of $O_{2n-1}$ are non-Abelian, then we
%are done by Proposition~\ref{oo1}.
%Observe that the statement may only fail if $q$ is odd, the Sylow
%$p$-subgroups of $O_{2n-1}(q)$ are Abelian and those of $O_{2n+1}$ are
%non-Abelian and isomorphic to those of $G$. By Proposition~\ref{oo1}
%$n = ep$ (for $e$ odd) or $n = ep/2$ (for $e$ even) follows. In both cases,
%$e | 2n$, so by part $(iii)$ of Lemma~\ref{o84} $e|n$ and hence $e$ is odd
%if $G = O_{2n}^+(q)$ and $e \ndiv n$ and hence $e$ is even otherwise.
%However, if $e$ is even, then as in Proposition~\ref{oo1}, $O_{2n}^-(q) =
%O_{pe}^-(q)$ contains a subgroup isomorphic to $\widetilde{Qd}(p)$.
%Finally, if $e$ is odd, so $G = O_{2ep}^+(q)$
%\qed\end{proof}

\begin{proof}
It suffices to show that $G$ contains $\widetilde{Qd}(p)$ if the Sylow
$p$-subgroups of $G$ are non-Abelian. By  Proposition~\ref{oo1}, this is
true if the Sylow $p$-subgroups of $O_{2n-1}(q)$ are non-Abelian.  Assume
that this is not the case. Then, by  Lemma~\ref{o84}(i), the Sylow
$p$-subgroups of $O_{2n+1}(q)$ are non-Abelian, and  Proposition~\ref{oo1}
implies that $n = ep$ (for $e$ odd) or $n = ep/2$ (for $e$ even).  By part
$(iii)$ of Lemma~\ref{o84} we have $G=O_{2n}^+(q) $ if $e $ is odd, and
$G=O_{2n}^-(q) $ if $e$ is even. In the former case $G$ contains $GL_n(q) =
GL_{ep}(q)$ which contains
%\supseteq GL_{p}(q^e)
$\widetilde{Qd}(p)$ by Lemma~\ref{w22}. The latter case has been already
dealt with in the proof of Proposition~\ref{oo1} $(iii)$.
\qed\end{proof}

\begin{prop}\label{po3}
\begin{enumerate}[$(i)$]
\item Let $G = O_{2n}^+(q)$ and let $P$ be a Sylow $p$-subgroup of $G$. If
$e$ is odd, then $P$ is Abelian if and only if $n < ep$. If $e$ is even,
then $P$ is Abelian if and only if $n-1 < ep/2$.

\item Let $G = O_{2n}^-(q)$ and let $P$ be a Sylow $p$-subgroup of $G$. If
$e$ is odd, then $P$ is Abelian  if and only if $n-1 < ep$. If $e$ is even,
then $P$ is Abelian if and only if $n < ep/2$.
\end{enumerate}
\end{prop}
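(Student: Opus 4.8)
The plan is to reduce everything to the odd‑dimensional orthogonal case (or, when $q$ is even, to the symplectic case), where the threshold for non‑Abelian Sylow $p$‑subgroups has already been pinned down. By Lemma~\ref{o84}$(i)$ a Sylow $p$‑subgroup $P$ of $G=O^{\pm}_{2n}(q)$ (I assume $n>3$, as holds when $G$ is one of the simple groups $D_n$, ${}^2D_n$ with $n\geq 4$) is isomorphic to a Sylow $p$‑subgroup of $O_{2n+1}(q)$ or of $O_{2n-1}(q)$; for $q$ even one reads $Sp_{2n}(q)$, $Sp_{2n-2}(q)$ instead, via part $(iv)$. Lemma~\ref{o84}$(iii)$ tells us exactly which one: $P$ is a Sylow $p$‑subgroup of $O_{2n+1}(q)$ precisely when $e\ndiv 2n$ or $e\mid n$ for $G=O^+_{2n}(q)$, respectively when $e\ndiv 2n$ or $e\ndiv n$ for $G=O^-_{2n}(q)$; otherwise, by part $(i)$, $P$ is a Sylow $p$‑subgroup of $O_{2n-1}(q)$. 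Having identified the ambient odd‑dimensional group, I would invoke Proposition~\ref{oo1} (or Proposition~\ref{sp2} for $q$ even, which supplies the same thresholds): the Sylow $p$‑subgroups of $O_{2m+1}(q)$ are Abelian iff $m<ep$ when $e$ is odd and iff $m<ep/2$ when $e$ is even. Reading this off with $m=n$ or $m=n-1$ then gives the Abelianness criterion in each case.

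Next I would run the four‑fold case analysis. For $e$ odd one has $e\mid 2n \Leftrightarrow e\mid n$, so for $G=O^+_{2n}(q)$ the condition ``$e\ndiv 2n$ or $e\mid n$'' always holds and $P$ is always a Sylow $p$‑subgroup of $O_{2n+1}(q)$; hence $P$ is Abelian iff $n<ep$. For $G=O^-_{2n}(q)$ with $e$ odd the condition ``$e\ndiv 2n$ or $e\ndiv n$'' reduces to $e\ndiv n$, so $P$ lies in $O_{2n+1}(q)$ when $e\ndiv n$ (giving Abelian iff $n<ep$) and in $O_{2n-1}(q)$ when $e\mid n$ (giving Abelian iff $n-1<ep$). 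For $e$ even the same bookkeeping, now with threshold $ep/2$, gives: for $G=O^+_{2n}(q)$, $P$ sits in $O_{2n+1}(q)$ except in the subcase $e\mid 2n$ and $e\ndiv n$ (where it sits in $O_{2n-1}(q)$); and for $G=O^-_{2n}(q)$, $P$ sits in $O_{2n+1}(q)$ unless $e\mid n$ (where it sits in $O_{2n-1}(q)$).

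The main (though mild) obstacle is to check that these piecewise answers glue into the single inequalities claimed in the statement, and this is exactly where the parity hypotheses enter. The two thresholds $n<ep$ (resp.\ $n<ep/2$) and $n-1<ep$ (resp.\ $n-1<ep/2$) differ only at the boundary value $n=ep$ (resp.\ $n=ep/2$), so it suffices to verify that this boundary value always lands in the subcase producing the \emph{correct} inequality. For example, in the $O^+_{2n}(q)$, $e$ even case, the subcases treated via $O_{2n+1}(q)$ require $e\ndiv 2n$ or $e\mid n$; at $n=ep/2$ we have $2n=ep$, so $e\mid 2n$, while $e=2m$ divides $n=mp$ only if $2\mid p$, which is impossible since $p$ is odd. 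Thus $n=ep/2$ always falls into the $O_{2n-1}(q)$ subcase, whose answer $n-1<ep/2$ holds there, and off the boundary the strict inequality $n<ep/2$ from the other subcases coincides with $n-1<ep/2$; the two therefore merge into the uniform criterion $n-1<ep/2$. The analogous boundary checks—using $e\mid ep$ in the $e$‑odd cases and $2\nmid p$ in the $e$‑even cases—dispose of the remaining three cases and complete the proof, the $q$‑even cases being handled identically with Proposition~\ref{sp2} in place of Proposition~\ref{oo1}.
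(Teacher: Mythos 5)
Your proof is correct and takes essentially the same route as the paper: both reduce $O^{\pm}_{2n}(q)$ to the odd-dimensional orthogonal (or symplectic) case via Lemma~\ref{o84} and read off the Abelianness thresholds from Proposition~\ref{oo1} (resp.\ Proposition~\ref{sp2}), with the parity observations $e \mid ep$ and $e=2m \ndiv mp$ (as $p$ is odd) deciding the boundary values $n=ep$ and $n=ep/2$. The only difference is organizational: the paper settles $n$ strictly below (resp.\ above) the threshold by containment in $O_{2n+1}(q)$ (resp.\ of $O_{2n-1}(q)$) and invokes Lemma~\ref{o84}$(iii)$ only at the boundary, whereas you invoke it uniformly and then glue the piecewise answers.
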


\begin{proof}
Suppose first that $e$ is odd. By Proposition~\ref{oo1}, the Sylow
$p$-subgroups of $G$ are Abelian if $n < ep$ (since those of $O_{2n+1}(q)$
are Abelian). Furthermore, the Sylow $p$-subgroups of $G$ are non-Abelian
if $n > ep$ (since those of $O_{2n-1}(q)$ are so). If, however, $n = ep$,
then $e|n$ and hence by part $(iii)$ of Lemma~\ref{o84} the Sylow
$p$-subgroups of $O_{2n}^+(q)$ are non-Abelian while those of $O_{2n}^-(q)$
are Abelian.

Let now $e$ be even. By Proposition~\ref{oo1}, the Sylow
$p$-subgroups of $G$ are Abelian if $n < ep/2$.
%(since those of $O_{2n}(q)$ are Abelian).
Furthermore, the Sylow $p$-subgroups of $G$ are non-Abelian if  $n >
ep/2$.
%(since those of $O_{2n-1}(q)$ are so).
If, however, $n = ep/2$, then $e|2n$ and $e \ndiv n$, so by part $(iii)$ of
Lemma~\ref{o84} the Sylow $p$-subgroups of $O_{2n}^-(q)$ are non-Abelian
while those of $O_{2n}^+(q)$ are Abelian and the result follows.

 %Then either $e\ndiv 2n$ or $e|n$. If $G =
%O^-_{2n}(q)$, then the index $|G:O_{2n-1}(q)|=q^{n-1}(q^n+1)$ is coprime to
%$p$, so
%$P$ is isomorphic to a Sylow $p$-subgroup of $O_{2n-1}(q)$ in both cases.
%By Lemma~\ref{oo2}, $P$ is isomorphic to a Sylow $p$-subgroup of
%$GL_{n-1}(q)$, which is Abelian if and only if $n-1 < ep$. Let $G =
%O^+_{2n}(q)$. Then $P$ remains a Sylow $p$-subgroup in $O_{2n+1}(q)$
%(see Lemma~\ref{o84}). As above, this is equivalent to $n < ep$.
%
%Suppose now that $e$ is even. If $e|n$, then the inequalities $n-1 < ep/2$
%and $n < ep/2$ are equivalent. Indeed, if $n = ke$ and $n-1 = ke-1 < ep/2$,
%then $n \nless ep/2$ implies $ke = ep/2$, whence $p = 2k$, a
%contradiction.
%
%Let $G = O^-_{2n}(q)$. If $e \ndiv n$, then $P$ is a Sylow $p$-subgroup in
%$O_{2n+1}(q)$, whence $n < ep/2$ by Lemma~\ref{oo1}. If $e|n$, then $P$ is
%isomorphic to a Sylow $p$-subgroup of $O_{2n-1}(q)$, and hence $n-1 <
%ep/2$, equivalently, $n < ep/2$.
%
%Let $G = O^+_{2n}(q)$. If $e|n$, then $P$ is a Sylow $p$-subgroup of
%$O_{2n+1}(q)$, so $n < ep/2$. If $e \ndiv n$, then $P$ is isomorphic to a
%Sylow $p$-subgroup of $O_{2n-1}(q)$. Therefore, $n-1 < ep/2$.
%%, or equivalently, $n < ep/2$.
\qed\end{proof}

\paragraph{Exceptional groups of Lie type}

We first recall that for $p>2$ the Sylow $p$-subgroups of the simple groups
${}^2B_2(q)$, $q>2$
%, and the Tits' group ${}^2F_4(2)'$ 
are Abelian and the group ${}^2B_2(2)$ is soluble. Therefore, these
groups are not to be considered. 

We use information provided in \cite[p. 111]{gorenstein:lyons:83}. For
$p>2$, a Sylow $p$-subgroup $P$ of a simple group $G$ of Lie type  has an
Abelian normal subgroup $A$ and the order of the quotient group $P_W = P/A$
can be computed from the table in \cite[p. 111]{gorenstein:lyons:83}.
In particular, if $P_W=1$, then $P$ is Abelian. 

Write $|G|=q^ab$, where $b$ is coprime to $q$. Let $\Phi_m$ be the $m$-th
cyclotomic polynomial, that is, an (over the rationals) irreducible
polynomial whose roots are precisely the primitive $m$-th roots of unity.
Then $\Phi_m$ divides $x^m-1$ but does not divide $x^i-1$ for $i<m$. The
table in \cite[p. 111]{gorenstein:lyons:83} provides the expressions
of $b = b(G)$ in terms of the $\Phi_m$'s. For instance, for the twisted
group ${}^2E_6(q)$, we have $b = \Phi_1^4\Phi_2^6\Phi_3^2\Phi_4^2\Phi_6^3
\Phi_8\Phi_{10}\Phi_{12}\Phi_{18}$. Write each expression as $\prod_m
\Phi_m^{r_m}$. Let $m_0$ be the least number $m$ such that $p$ divides
$\Phi_m(q)$. In fact, $m_0 = e_p(q)$, but we prefer to keep here notation
of \cite{gorenstein:lyons:83}.
In a given expression for $b$, let $M$ be the set of numbers $m$ of the
form $m = p^km_0$ for some integer $k > 0$ such that $r_m > 0$. Then
$|P_W|=p^{d}$, where $d=\sum _{m\in M}r_m$. In particular, $P_W = 1$ if and
only if $M$ is empty (see \cite[p. 111]{gorenstein:lyons:83}).

We illustrate this with the example $G = {}^2E_6(q)$. If $p>5$, then $M$ is
empty, so $P$ is Abelian. If $m_0=1$ and $p=5$, then again $P$ is Abelian,
but if $m_0=2$, then $|P_W| = 5$. (In this case $P$ is non-Abelian but this
is not explicitly mentioned in \cite{gorenstein:lyons:83}.)   

We first consider the groups of type $E$. The analysis of the table in
\cite[p. 111]{gorenstein:lyons:83} yields the following conclusion:

\begin{lem}\label{go4}
Let $G = E_6(q)$, $E_7(q)$, $E_8(q)$ or ${}^2E_6(q)$ and let $P$ be a Sylow
$p$-subgroup of $G$. 
\begin{enumerate}[$(i)$]
\item $P$ is Abelian if $p>7$ and non-Abelian if $p=3$;

\item if $p=7$, then $P$ is Abelian unless $G = E_7(q)$ and $m_0=1$ or $2$
or $G = E_8(q)$ and $m_0=1$ or $2$;

\item if $p=5$, then $P$ is Abelian unless one of the following holds:
\begin{enumerate}[$(a)$] 
\item $G = E_6(q)$, $m_0 = 1$;

\item $G = {}^2E_6(q)$, $m_0 = 2$; 

\item $G = E_7(q)$, $m_0 = 1$ or $2$;

\item $G = E_8(q)$, $m_0 = 1$, $2$ or $4$.
\end{enumerate}
%\hskip1cm$(iv)$ Sylow $3$-subgroups of H are non-Abelian.
\end{enumerate}
\end{lem}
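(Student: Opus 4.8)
The plan is to read the lemma directly off the $P_W$-criterion set up just before it. By that discussion, $P$ has an Abelian normal subgroup $A$ with $|P/A| = |P_W| = p^{d}$, where $d = \sum_{m \in M} r_m$ and $M = \{\, p^k m_0 : k \geq 1,\ r_m > 0 \,\}$, so that $P$ is Abelian exactly when $M$ is empty and non-Abelian exactly when $M \neq \emptyset$. Hence the entire statement is equivalent to the following assertion: for the given $G$, $p$ and $m_0 = e_p(q)$, the Sylow $p$-subgroup is non-Abelian if and only if some proper $p$-power multiple $p^k m_0$ (with $k \geq 1$) occurs among the cyclotomic factors of $b(G)$ with positive exponent. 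Everything therefore reduces to two finite, essentially clerical tasks: recording for each of the four groups the set $S(G) = \{ m : r_m > 0 \}$ of indices that actually occur, and then, for each admissible $(p, m_0)$, checking whether any $p^k m_0$ lands in $S(G)$.

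First I would write out $b(G) = \prod_m \Phi_m^{r_m}$ for each group, using the order formulas together with $q^j - 1 = \prod_{d \mid j} \Phi_d(q)$ (or simply quoting the table in \cite{gorenstein:lyons:83}). The only data actually needed are the occurring index sets, which come out as $S(E_6) = \{1,2,3,4,5,6,8,9,12\}$, $S(E_7) = \{1,2,3,4,5,6,7,8,9,10,12,14,18\}$, $S(E_8) = \{1,2,3,4,5,6,7,8,9,10,12,14,15,18,20,24,30\}$ and $S({}^2E_6) = \{1,2,3,4,6,8,10,12,18\}$.

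Next I would use two reductions to make the search short. Since $m_0 = e_p(q)$ is the order of $q$ in $\F_p^*$, it divides $p - 1$; thus $m_0 \in \{1,2\}$ for $p = 3$, $m_0 \in \{1,2,4\}$ for $p = 5$, and $m_0 \in \{1,2,3,6\}$ for $p = 7$. Since the largest index occurring in any $b(G)$ is $18$ (for $E_6$, $E_7$, ${}^2E_6$) or $30$ (for $E_8$), a multiple $p^k m_0$ can lie in $S(G)$ only for very small $k$; indeed for $p \geq 5$ one has $p^2 m_0 \geq 25$, so $k = 1$ suffices apart from the single bound check $25 \notin S(E_8)$. Carrying this out: for $p > 7$ every candidate $p m_0 \geq 11$ misses all four sets, giving $P$ Abelian; for $p = 3$ one has $p m_0 \in \{3,6\}$, both of which lie in all four sets, giving $P$ non-Abelian; for $p = 7$ the only admissible multiples in range are $7 m_0 \in \{7, 14\}$ (the values $21, 42$ coming from $m_0 = 3, 6$ are out of range), and $7, 14$ occur only in $S(E_7)$ and $S(E_8)$, pinning the exceptions to those two groups with $m_0 \in \{1,2\}$; and for $p = 5$ the candidates $5 m_0 \in \{5, 10, 20\}$, read against the four sets, produce exactly the four exceptions (a)--(d), the split between (a) and (b) being caused precisely by $5 \in S(E_6) \setminus S({}^2E_6)$ and $10 \in S({}^2E_6) \setminus S(E_6)$.

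The main difficulty is not conceptual but lies in getting the index sets $S(G)$ exactly right, since a single missing or spurious index flips a conclusion, as the $E_6$ versus ${}^2E_6$ contrast for $p = 5$ shows. I would therefore re-derive each $S(G)$ independently from the degrees of the fundamental invariants of the corresponding Weyl group and cross-check it against \cite{gorenstein:lyons:83}. The only other point requiring attention is to keep track of the out-of-range multiples (here $7 m_0 = 21, 42$ for $p = 7$ and $25 = 5^2$ for $E_8$), which contribute nothing to $P_W$ and are exactly why $m_0 = 3, 6$ are absent from the $p = 7$ exception list.
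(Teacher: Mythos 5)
Your proposal is correct and takes essentially the same route as the paper: the paper's proof of this lemma is precisely ``the analysis of the table in \cite[p.~111]{gorenstein:lyons:83}'' via the criterion, set up just before the statement, that $P$ is Abelian if and only if the set $M$ of indices of the form $p^k m_0$ ($k>0$) with $r_m>0$ is empty. Your explicit index sets $S(G)$ and case checks (including discarding the out-of-range multiples $21$, $42$, $25$, and the $E_6$ versus ${}^2E_6$ split at $p=5$ via $5\in S(E_6)\setminus S({}^2E_6)$ and $10\in S({}^2E_6)\setminus S(E_6)$) simply carry out in detail the computation the paper leaves implicit.
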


Note that $m_0 \neq 6$ in case $(d)$ as $m_0 = e_p(q) < p$. 

We have to decide whether $\widetilde{Qd}(p)$ is a subgroup  of $G$
whenever the Sylow $p$-subgroups of $G$ are non-Abelian. 
%The case $m_0=1$ has been settled in Lemma \ref{ex1}, so we assume $m_0>1$, and hence  $H\neq E_6(q)$ in view of Lemma \ref{go4}.  
The following lemma is an extraction from
\cite[Table 5.1]{liebeck:saxl:seitz}.

%\begin{lemma}\label{sss} Group ${}^2E_6(q)$ contains a subgroup isomorphic to a non-Abelian simple factor of $O^-_{10}(q)$. Group $E_7(q)$ contains subgroups isomorphic to a non-Abelian simple factor of $O^+_{12}(q)$ and $PSU_8(q)$.
%Group $E_8(q)$ contains  subgroups isomorphic to a non-Abelian simple factor of each of the following groups $O^+_{16}(q)$.
%\end{lemma}

\begin{lem}\label{ttt}
Let $G = E_6(q)$, $E_7(q)$, $E_8(q)$ or ${}^2E_6(q)$. Suppose that the
Sylow $p$-subgroups of $G$ are non-Abelian. Then $\widetilde{Qd}(p)$ is a
subgroup of $G$. 
\end{lem}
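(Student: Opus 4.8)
The plan is to turn the statement into a short, explicit case analysis and then, in each case, to exhibit a classical subgroup of $G$ to which one of the results already proved for the classical groups applies. First I would use Lemma~\ref{go4} to list the triples $(G,p,m_0)$, with $m_0=e_p(q)$, for which a Sylow $p$-subgroup is non-Abelian: these are $p=3$ for all four groups (with $m_0\in\{1,2\}$), the cases (a)--(d) of Lemma~\ref{go4}(iii) for $p=5$, and $E_7,E_8$ with $m_0\in\{1,2\}$ for $p=7$. Since $m_0<p$ always, this is a finite list, each entry pinned down by the single parameter $m_0$.

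Next, for each triple I would read off from Table~5.1 of \cite{liebeck:saxl:seitz} a subsystem subgroup $X$ of maximal rank which is a classical group of type $A$, $B$, $C$ or $D$, choosing its form and field of definition so that its rank clears the numerical threshold of the relevant classical result. When $m_0=1$ the element of order $p$ is realised over $\F_q$ itself, so $e=1$ is odd and I would take a split linear subsystem subgroup: $A_5\le E_6$, $A_7\le E_7$, $A_8\le E_8$ give $SL_m(q)$ with $m\ge pe=p$, and Lemma~\ref{w22}(i) yields $\widetilde{Qd}(p)\le X$. When $m_0=2$ the split form has $e=2$ and the linear threshold $pe$ is usually too large, so I would instead use the twisted (unitary) form ${}^2A_{m-1}(q)\cong SU_m(q)$ of the same $A_{m-1}$ subsystem — available since $-1\in W(E_7),W(E_8)$ induces the graph automorphism of the subsystem — and apply Lemma~\ref{w22}(iv), for which $e\equiv2\pmod4$ and $m\ge pe/2$; the single exception $(e,p,n)=(2,3,3)$ of that lemma never occurs because here $m\ge6$. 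For $E_6$ with $p=3$, where $-1\notin W(E_6)$, the split $SL_6(q)$ already suffices ($pe=6$), while for ${}^2E_6$ only unitary subsystems occur and Lemma~\ref{w22}(ii) or (iv) applies to $SU_6(q)$.

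The case I expect to be the genuine obstacle is $G=E_8(q)$ with $p=5$ and $m_0=4$ (Lemma~\ref{go4}(iii)(d)). Here \emph{no} split subsystem subgroup has a non-Abelian Sylow $5$-subgroup: for $e=4$ the subgroups $A_8$, $D_8$, $A_4A_4$, $E_7A_1$ and $E_6A_2$ all have Abelian Sylow $5$-subgroups by Lemmas~\ref{wn1},~\ref{po3} and~\ref{go4}. The remedy is to pass to the extension field $\F_{q^2}$: the subsystem $A_4A_4$ carries a Steinberg endomorphism fusing and inverting its two factors, producing a subgroup $X\cong{}^2A_4(q^2)\cong SU_5(q^2)\le E_8(q)$. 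Over $\F_{q^2}$ one has $e_5(q^2)=e_5(q)/2=2\equiv2\pmod4$, and Lemma~\ref{w22}(iv) with $n=5\ge pe/2=5$ (no exception, as $p=5$) gives $\widetilde{Qd}(5)\le X\le G$; note that the \emph{linear} form $A_4(q^2)\cong SL_5(q^2)$ would fail the threshold, so the graph twist is essential.

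In all cases the conclusion is uniform: $X$ is a classical group whose rank meets the threshold of Lemma~\ref{w22} (or, where an orthogonal subsystem such as $D_8\le E_8$ is more convenient, Proposition~\ref{oo3}), so $X$ contains a copy of $\widetilde{Qd}(p)$ and therefore so does $G$. The only real work is the bookkeeping that confirms, for each of the finitely many triples, that a subsystem subgroup of the required type, twist and field genuinely occurs in $G$ — precisely the content of Table~5.1 of \cite{liebeck:saxl:seitz} — together with the recurring delicate point, namely the passage from the split to the twisted (unitary) form whenever $m_0=2$ or, as above, $m_0=4$.
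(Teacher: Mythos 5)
Your proposal is correct and takes essentially the same route as the paper: reduce to the finite list of cases $(G,p,m_0)$ via Lemma~\ref{go4}, extract classical subgroups from Table~5.1 of \cite{liebeck:saxl:seitz}, apply the classical-group results, and resolve the one genuinely delicate case $G=E_8(q)$, $p=5$, $m_0=4$ exactly as the paper does, through a subgroup $SU_5(q^2)\le E_8(q)$ (the paper finishes that case with Lemma~\ref{uu2} where you invoke Lemma~\ref{w22}$(iv)$, but these amount to the same thing since $5\mid q^2+1$). The only cosmetic difference is that for $E_7(q)$ and $E_8(q)$ with $m_0\le 2$ the paper prefers the orthogonal subgroups $\Omega^+_{12}(q)$ and $\Omega^+_{16}(q)$ together with Propositions~\ref{oo3} and~\ref{po3}, whereas you work with type-$A$ subsystems throughout.
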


\begin{proof}
We use information from \cite[Table 5.1]{liebeck:saxl:seitz}.
%In view of Lemma \ref{ex1} and Lemma \ref{go4}(1), we assume
%Suppose first  $m_0=1$. If $H\cong E_6$ then $p<7$. By \cite[Table 5.1]{LSS}, $H$ contains a subgroup isomorphic to $SL_6(q)/Z$, where $Z$ is a central subgroup of $SL_6(q)$.
%The natural embedding $SL_5(q)\ra SL_6(q)$ yields an embedding $SL_5(q)\ra SL_6(q)/Z$.
%By Lemma \ref{uu2} fpr $p=5$ and Lemma \ref{co2} for $p=3$ the group $X_1$
%contains a subgroup isomorphic to $\tilde G$, whence the result.   

Suppose first that $G \cong E_6(q)$ (resp., ${}^2E_6(q)$).
Then two primes: $p=3$ and $p=5$ have to be considered.
Set $X = SL_6(q)$ (resp., $X =
SU_6(q)$) and $X_1 = SL_5(q)$ (resp., $X_1 = SU_5(q)$). By
\cite[Table 5.1]{liebeck:saxl:seitz}, $G$ contains a subgroup isomorphic to
$X/Z$, where $Z$ is a central subgroup of $X$. Let first $p = 3$. Then by
Lemma~\ref{w22}, $X$ and $X/Z(X)$ and hence also $X/Z$ contain a subgroup
isomorphic to $\widetilde{Qd}(3)$. Let now $p=5$, so $m_0 = 1$ (resp., 2).
The natural embedding $X_1 \to X$ yields an embedding $X_1 \to X/Z$. By
Lemma \ref{uu2} $X_1$ contains a subgroup isomorphic to
$\widetilde{Qd}(5)$ whence the result.
% for $p=5$ and Lemma~\ref{co2} for $p=3$,
%the group $SL_5(q)$ (if $m_0 = 1$) or $SU_5(q)$
%(if $m_0 = 2$) contains a subgroup isomorphic to $\widetilde{Qd}(3)$,

Suppose now that $G = E_7(q)$. Then $p = 3$, $5$ and $7$ have to be
considered. By \cite[Table 5.1]{liebeck:saxl:seitz}, $G$ has a subgroup $X$
isomorphic to $\Omega^+_{12}(q)$. We use Propositions~\ref{oo3} and
\ref{po3}. Since $n = 6 > 3 = 1 \cdot 3 = 2 \cdot 3/2$ and $6 >5$, $X$
contains subgroups isomorphic to $\widetilde{Qd}(3)$ and
$\widetilde{Qd}(5)$. Let now $p = 7$, so $m_0 = 1$ or $2$.
%Suppose that $G = E_i(q)$, $i=7,8$. As $E_i(q)$ contains subgroups
%isomorphic to $E_6(q)$ and ${}^2E_6(q)$, the result follows for $p=3$ and
%for $p=5$ for $m_0 \leq 2$. Then we are left with $p=7$, $m_0 \leq 2$ and
%$p=5$, $m_0 = 4$. Suppose first that $p = 7$. Then 
By \cite[Table 5.1]{liebeck:saxl:seitz}, $G$ contains subgroups isomorphic
to a central quotient of $SL_8(q)$ and of $SU_8(q)$. Therefore, $G$
contains subgroups isomorphic to $SL_7(q)$ and $SU_7(q)$. So the result
follows from Lemma~\ref{uu2}.  

Finally, let $G = E_8(q)$. Then $G$ has a subgroup isomorphic to
$\Omega^+_{16}(q)$, so we have $n = 8$ in Propositions~\ref{oo3} and
\ref{po3}. Then $ep$ or $ep/2$ in question are $3$ (for $p = 3$), $5$, $5$
and $10$ (for $p = 5$) and $7$ (for $p = 7$). Since only $10$ exceeds $8$,
we are left with the case $p = 5$ and $m_0 = 4$. Again by Table~5.1 in
\cite{liebeck:saxl:seitz}, $G$ has a subgroup isomorphic to $SU_5(q^2)$.
As $m_0 = 4$, $p|q^2 + 1$. So $SU_5(q^2)$, and hence $G$, has a subgroup
isomorphic to $\widetilde{Qd}(5)$. This completes the proof. 
%
% $m_0>1$. Then we have the \f cases.
%
%(i) Let $p=7$, $H=E_8(q)$, $m_0=2=e_p(q)$.  By Propositions \ref{po3} and \ref{oo3},
%a \syl of the group $O^+_{16}(q)$ is non-Abelian, and has a section isomorphic to $G$.  Therefore, $H$ does in view of Lemma \ref{sss}.
%
%Let $p=7$, $H=E_7(q)$, $m_0=2$. By Lemma \ref{w22},  
%$PSU_8(q)$, and hence $H$, has a section isomorphic to $G$.
%
%Let $p=5$, $G = E_8(q)$. By \cite[Table 5.1]{liebeck:saxl:seitz}, $G$
%contains a subgroup isomorphic to
% $O^+_{16}(q)$ and 
%$SU_5(q^2)$.
%Let $m_0 =
%3$. By Propositions~\ref{po3} and \ref{oo3}, the group $O^+_{16}(q)$
%has a subgroup isomorphic to $\widetilde{Qd}(5)$, whence the result.  
%As $m_0 = 4$, $p|q^2 + 1$.
% Note that $\Phi_4(q) = q^2+1$. 
%So $SU_5(q^2)$, and hence $G$, has a subgroup isomorphic to
%$\widetilde{Qd}(5)$. This completes the proof. 
\qed\end{proof} 

%Let $p=5$, $H=E_7(q)$, $m_0=2=e_p(q)$. By Propositions \ref{po3} and \ref{oo3},
%a \syl of the group $O^+_{12}(q)$ is non-Abelian, and has a section isomorphic to $G$.  Therefore, so does $H$ in view of Lemma \ref{sss}.
 
%\medskip
Using \cite[p. 111]{gorenstein:lyons:83}, we conclude that for $p>3$,
the Sylow $p$-subgroups of the groups ${}^3D_4(q)$, $F_4(q)$, ${}^2F_4(q)$
($q=2^{2m+1}$), ${}^2F_4(2)'$, $G_2(q)$, ${}^2G_2(q)$, ($q=3^{2m+1}$) are
Abelian. As we assume that $q$ is not a $p$-power, the groups ${}^2G_2(q)$
for $p=3$ are not to be considered here. 

\begin{lem}\label{og5}
Let $p=3$, $3 \ndiv q$
\begin{enumerate}[$(i)$]
\item If $G = {}^3D_4(q)$, $F_4(q)$, ${}^2F_4(q)$ (with $q=2^{2m+1}$, $m >
0$) or ${^2F}_4(2)'$, then $G$ contains a subgroup isomorphic to $Qd(3)$.

\item If $G = G_2(q)$ and $9 \ndiv q^2-1$, then the Sylow $3$-subgroups of
$G$ are non-Abelian and $G$ contains no section isomorphic to $Qd(3)$.

\item If $G = G_2(q)$ and $9 | q^2-1$, then $G$ contains a subgroup
isomorphic to
 %$\widetilde{Qd}^+(3)$ or 
$\widetilde{Qd}^-(3)$.
\end{enumerate}
%Let $p=3$, $3 \ndiv q$ and let $G = {}^3D_4(q)$, $F_4(q)$, ${}^2F_4(q)$
%(with $q=2^{2m+1}$, $m > 0$), ${^2F}_4(2)'$ or $G_2(q)$. 
%\begin{enumerate}[$(i)$]
%\item $G$ contains a subgroup isomorphic to $\widetilde{Qd}(3)$ if and only
%if $G = {}^3D_4(q)$, $F_4(q)$, $G_2(q)$ with $9|q^2-1 $ or ${}^2F_4(q)$
%with $9 | q+1$.
%
%\item If the Sylow $3$-subgroups of $G$ are non-Abelian, then $G$ contains
%a subgroup isomorphic to $\widetilde{Qd}(3)$ unless $9 \ndiv q^2-1$
%and $G = G_2(q)$ or ${}^2F_4(q)$.
%
%\item If $G = G_2(q)$ and $9 \ndiv q^2-1$, then the Sylow $3$-subgroups of
%$G$ are non-Abelian and $G$ contains no section isomorphic to $Qd(3)$.
%
%\item If $G = {}^2F_4(q)$, $q>2$ and $9 \ndiv q^2-1$, then the Sylow
%$3$-subgroups of $G$ are non-Abelian and $Qd(3)$ is a subgroup of $G$, but
%$\widetilde{Qd}(3)$ is not contained in $G$.
%\end{enumerate}
\end{lem}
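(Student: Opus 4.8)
The plan is to obtain the three positive statements by exhibiting, in each group, a subgroup whose $Qd(3)$-content is already settled by the earlier lemmas, and to obtain the negative statement in $(ii)$ by a local analysis that reduces matters to the $3$-stability of those subgroups. I would dispose of $(iii)$ first. Since $3 \nmid q$, the condition $9 \mid q^2-1$ splits as $9 \mid q-1$ or $9 \mid q+1$. The group $G_2(q)$ contains the two $A_2$-subsystem subgroups $SL_3(q)$ and $SU_3(q)$ (the normalisers of a long-root and a short-root $A_2$, realised as the maximal subgroups $SL_3(q){:}2$ and $SU_3(q){:}2$). If $9 \mid q-1$ I apply Lemma~\ref{p3l}$(ii)$ to $SL_3(q)$, and if $9 \mid q+1$ I apply Lemma~\ref{p3u} to $SU_3(q)$; in either case the subgroup contains a copy of $\widetilde{Qd}^-(3)$, which is $(iii)$. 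These same subgroups yield the first assertion of $(ii)$: counting the cyclotomic factors of $|G_2(q)| = q^6\Phi_1^2\Phi_2^2\Phi_3\Phi_6$ divisible by $3$ shows that the $3$-part of $|G_2(q)|$ is $3^3=27$ and is already realised inside $SL_3^{\pm}(q)$, so a Sylow $3$-subgroup $P$ of $G_2(q)$ is the extraspecial group $3^{1+2}_{+}$ appearing in Lemma~\ref{p3l}; in particular $P$ is non-Abelian.

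The substantial part of $(ii)$ is that $Qd(3)$ is not involved. By Glauberman's Theorem~\ref{glaubthm} and Proposition~\ref{glaubpropforsections} this is equivalent to $N_G(R)/R$ being $3$-stable for every $3$-subgroup $R$ of $G=G_2(q)$, and by Corollary~\ref{localpstab} it is enough to treat normalisers of non-cyclic $3$-subgroups. I would argue that the relevant $3$-local subgroups are controlled, through Alperin's fusion theorem, by the two $A_2$-subgroups $SL_3^{\pm}(q){:}2$ and by the normaliser of a maximal torus. The former are $3$-stable precisely under the present hypothesis $9 \nmid q \mp 1$, since by Lemma~\ref{p3l}$(i)$ and Lemma~\ref{p3u} the groups $PSL_3(q)$ and $PSU_3(q)$ then have Abelian Sylow $3$-subgroups and the adjoined graph automorphism of order $2$ contributes no normal $3$-obstruction. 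For the torus normaliser the point is that $\Phi_3(q)$ and $\Phi_6(q)$ divide $|G_2(q)|$ only to the first power, so the $\Phi_3$- and $\Phi_6$-tori have rank $1$; hence the automiser induced on the relevant $3^2$ lies in the Weyl group $W(G_2)$, which is dihedral of order $12$ and contains no copy of $SL_2(3)$ acting as the natural module. Thus no $3$-local subgroup affords the $SL_2(3)$-action that the involvement of $Qd(3)$ would require. This reduction, together with the uniform verification of $3$-stability of the $3$-locals, is the step I expect to be the main obstacle, as it demands a complete list of the $3$-local subgroups of $G_2(q)$.

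Finally I would prove $(i)$ group by group. For ${}^2F_4(2)'$ the maximal subgroup $PSL_3(3)$ contains $Qd(3)$ by Lemma~\ref{QdpinPSL3}. For ${}^2F_4(q)$ with $q=2^{2m+1}$ one has $3 \mid q+1$, and the maximal subgroup $PGU_3(q)$ contains $Qd(3)$ by Lemma~\ref{uu2}. For $F_4(q)$ and ${}^3D_4(q)$ the key observation is that, in contrast with $G_2(q)$, the factor $\Phi_3$ (when $3 \mid q-1$) or $\Phi_6$ (when $3 \mid q+1$) occurs to the second power in the group order, so there is a torus $T$ of rank $2$ whose relative Weyl group contains the complex reflection group $G_4 \cong SL_2(3)$ (this is $G_4$ itself for ${}^3D_4(q)$ and $G_5 \supset G_4$ for $F_4(q)$). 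Since $3 \,\|\, \Phi_3(q)$ (resp. $3 \,\|\, \Phi_6(q)$), the $3$-torsion $\Omega_1(T)$ is elementary Abelian of rank $2$, and the order-$3$ complex reflections reduce modulo $3$ to transvections, so $SL_2(3)$ acts on $\Omega_1(T)$ as the natural module; therefore $\langle \Omega_1(T), SL_2(3)\rangle \cong Qd(3)$ sits inside $G$. The delicate inputs here are the determination of the relevant relative Weyl groups and the verification that the reduction modulo $3$ of the reflection representation is indeed the natural $SL_2(3)$-module; once these are in hand the conclusion of $(i)$, which needs no congruence condition modulo $9$, follows uniformly.
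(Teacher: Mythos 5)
Your part $(iii)$, and the computation that a Sylow $3$-subgroup of $G_2(q)$ is extraspecial of order $27$, are correct and agree with the paper. The negative assertion in $(ii)$, however, contains a genuine gap, and it is exactly the step you yourself flag as the main obstacle. The claim that every $3$-local subgroup of $G_2(q)$ is ``controlled'' by $SL_3^{\pm}(q){:}2$ and a torus normaliser is never proved, and it is not a consequence of Alperin's fusion theorem: that theorem controls fusion of $3$-elements, not which groups are involved in normalisers of $3$-subgroups. Moreover, your torus computation bounds only the automiser that $N_G(T)$ induces on $V=\Omega_1(T)$, whereas what must be bounded is $N_G(V)/C_G(V)$ (indeed, what $N_G(V)$ involves), and a priori $N_G(V)$ is larger than $N_G(T)C_G(V)$ --- that is precisely the point at issue. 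There is also a conflation of groups: the subgroup of $G_2(q)$ is $SL_3(q)$ (resp.\ $SU_3(q)$), whose Sylow $3$-subgroups are the non-Abelian Sylow $3$-subgroups of $G_2(q)$; the Abelian-Sylow statements of Lemmas~\ref{p3l} and~\ref{p3u} concern $PSL_3(q)$ and $PSU_3(q)$, so one still needs the (easy, but necessary) remark that a $Qd(3)$-section of $SL_3^{\pm}(q)$ would survive in the central quotient, since $Qd(3)$ has no non-trivial normal subgroup of order dividing $3$. The paper closes the local analysis concretely: every non-cyclic $3$-subgroup is conjugate to $E$ or to an elementary Abelian $V\le E$ of order $9$ containing $Z=Z(E)$; by the conjugacy facts of \cite{flores:foote:2009}, elements of $E\setminus Z$ are not $G$-conjugate into $Z$, whence $N_G(E)$ and $N_G(V)$ both lie in $N_G(Z)$; and $N_G(Z)$ has an index-$2$ subgroup $SL_3(q)$ or $SU_3(q)$, which has no $Qd(3)$-section by Lemmas~\ref{p3l} and~\ref{p3u}.

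In $(i)$ your treatment of ${}^2F_4(2)'$ coincides with the paper's, and for ${}^2F_4(q)$ quoting Malle's maximal subgroup $PGU_3(q){:}2$ works just as well as the paper's embedding ${}^2F_4(2)'\le{}^2F_4(q)$ from \cite{malle:1991}. For ${}^3D_4(q)$ and $F_4(q)$ your route is genuinely different from the paper's, but as written it too has a gap: the copy of $SL_2(3)$ is a subgroup of the quotient $N_G(T)/C_G(T)$, not of $G$, so ``$\langle\Omega_1(T),SL_2(3)\rangle$'' is not a subgroup of $G$; what you obtain directly is only that $N_G(T)$ \emph{involves} $Qd(3)$, while the lemma asserts an actual subgroup (and the paper's Example~\ref{pstabnonsectpstab_ex} shows involvement does not formally yield a subgroup). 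The repair is standard but must be said: with $T\cong C_m\times C_m$, $m=\Phi_3(q)$ or $\Phi_6(q)$, pass to $N_G(T)/O_{3'}(T)$; the resulting extension $V.SL_2(3)$ splits because $H^2(SL_2(3),V)=0$ (the normal $Q_8$ has order prime to $3$ and $C_V(Q_8)=1$), and the copy of $Qd(3)$ so obtained lifts to a subgroup of $N_G(T)$ by Schur--Zassenhaus, $|O_{3'}(T)|$ being odd and prime to $3$. The paper avoids all of this, and the relative-Weyl-group input your argument needs, by simply quoting Kleidman's maximal subgroups $PGL_3(q)$, $PGU_3(q)$ of ${}^3D_4(q)$ from \cite{kleidman:1988} (then Lemma~\ref{uu2} gives $Qd(3)$) and the containment ${}^3D_4(q)\le F_4(q)$.
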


\begin{proof}
Let  $G={}^3D_4(q)$. By~\cite[p. 182]{kleidman:1988}, $G$ contains a
subgroup $X$ isomorphic to $PGL_3(q)$ (resp., $PGU_3(q)$) if $3 | q-1$
(resp., $3 | q+1$). By Lemma~\ref{uu2} $X$ has a subgroup isomorphic to
$Qd(3)$ whence the claim.
%If $3|q-1$ (resp. $3|q+1$), then let $Y$ be a subgroup
%of $GL_3(q)$ (resp., $U_3(q)$) containing $SL_3(q)$ (resp., $SU_3(q)$) as a
%subgroup of index 3. Note that $Y$ contains $\widetilde{Qd}(3)$. Indeed,
%$\widetilde{Qd}(3)$ is contained in $GL_3(q)$ (resp., $U_3(q)$) by
%Lemma~\ref{uu2}, and the derived subgroup of $\widetilde{Qd}(3)$ is of
%index 3, whence the claim. Furthermore, $G$ contains a subgroup isomorphic
%to $Y$ (see \cite[Table 5.1]{liebeck:saxl:seitz}), so the result follows.  

Let $G = F_4(q)$. Then $G$ contains a subgroup isomorphic to ${}^3D_4(q)$
(see \cite[Table 5.1]{liebeck:saxl:seitz}), so the result follows from that
for ${}^3D_4(q)$.

Let $G = {^2F}_4(2)'$. Then $G$ contains a subgroup isomorphic to
$PSL_3(3)$ by~\cite{atlas}. By Lemma~\ref{QdpinPSL3}, the latter and hence
$G$ has a subgroup isomorphic to $Qd(3)$.

Let now $G={}^2F_4(q)$, $q=2^{2m+1}>2$, $m > 0$. By Lemma~2.2(6)
in~\cite{malle:1991}, $G$ contains a subgroup isomorphic to ${^2F}_4(2)'$,
so the result follows from the previous paragraph.

Let $G = G_2(q)$. Then there are two maximal subgroups $D_1$, $D_2$ of $G$
with non-Abelian Sylow 3-subgroups; moreover, $D_1$ contains $SL_3(q)$,
$D_2$ contains $SU_3(q)$ as a subgroup of index 2 (see
\cite[Table 5.1]{liebeck:saxl:seitz}).
%Therefore, $G$ contains $Qd(3)$ or
%$\widetilde{Qd}(3)$ as a subgroup or a section if and only if so does
%$SL_3(q)$ or $SU_3(q)$.
If $9|q-1$ (resp., $9|q+1$), then $SL_3(q)$ (resp., $SU_3(q)$) has a
subgroup isomorphic to $\widetilde{Qd}^-(3)$ by Lemmas~\ref{p3l}
and \ref{p3u}.
%Otherwise, by these lemmas, if $3|q-1$
%(resp., $3|q+1$), then the Sylow 3-subgroups of $PSL_3(q)$ (resp.,
%$PSU_3(q)$) are Abelian, and this implies that $Qd(3)$ is not a section of
%$SL_3(q)$ (resp., $SU_3(q)$) as $Z(Qd(3))=1$.
If, however, $9 \ndiv q^2 - 1$, then a Sylow 3-subgroup $E$ of $G$ is
extraspecial of order $27$ and exponent $3$. Therefore, if $Qd(3)$ is
involved in $G$, then it must be involved either in the normaliser of $E$
or in the normaliser of some elementary Abelian subgroup $V$ of $E$. Let
$Z = Z(E)$. Then $N_G(E) \subseteq N_G(Z)$, which has a subgroup of index
2 isomorphic to either $SL_3(q)$ or $SU_3(q)$ according to whether $3
| q-1$ or $3 | q+1$ (see~\cite[p. 461]{flores:foote:2009}). By
Lemmas~\ref{p3l} and \ref{p3u}, these groups do not involve $Qd(3)$. Let us
consider the other case. As $V$ is normal in $E$, it must contain $Z$. Now,
all elements of $E \setminus Z$ are conjugate in $C_G(Z)$ and they are not
conjugate to an element of $Z$ in $G$
(see~\cite[p. 461]{flores:foote:2009}). Thus $N_G(V) \subseteq N_G(Z)$,
which has been proved not to involve $Qd(3)$ whence the claim.

%Note that $3|q+1$. There are two
%maximal subgroups $D_1$, $D_2$ of $G$ with non-Abelian Sylow 3-subgroups,
%each contains the subgroups $SU_3(q)$ and $PU_3(q)$ (with $3|q+1$) as a
%subgroup of index 2 (\cite[Table 5.1]{liebeck:saxl:seitz}). Therefore, if
%$9|q+1$, then $G$ contains a subgroup isomorphic to $\widetilde{Qd}(3)$ by
%Lemmas~\ref{p3l} and \ref{p3u}. Suppose that $q+1$ is not divisible by 9.
%We have seen above that $SU_3(q)$ has no section isomorphic to $Qd(3)$.
%However, by Lemma \ref{uu2}, $U_3(q)$ and hence $PU_3(q)$ contain $Qd(3)$,
%whence the result for $q>2$.
\qed\end{proof}
    
%\medskip 

Thus, we can sum the above arguments to get

\begin{prop}\label{exc}
Let $G$ be a simple group of exceptional Lie type. Suppose that a Sylow
$p$-subgroup of $G$ is not Abelian. If $p>3$, then $\widetilde{Qd}(p)$ is a
subgroup of $G$.

If $p=3$, this is true if $G \cong E_6(q)$, $E_7(q)$, $E_8(q)$ or
${^2}E_6(q)$.
%However, if $G \cong {^3}D_4(q)$, $F_4(q)$, ${^2}F_4(2^m)$
%(with $m>0$) or ${^2}F_4(2)'$, then 
Otherwise $G$ contains a subgroup isomorphic to $Qd(3)$ unless $G \cong
G_2(q)$. In the latter case $G$ contains a subgroup isomorphic to
$\widetilde{Qd}^{-}(3)$ if $9 | q^2 - 1$ and has no section isomorphic to
$Qd(3)$ if $9\ndiv q^2-1$.
%and ${}^2F_4(q)$.
%If $9 \ndiv q^2-1$, then ${}^2F_4(q)$ ($q>2$) and also ${}^2F_4(2)'$
%contain subgroups
%isomorphic to $Qd(3)$, whereas, $G_2(q)$ has no section isomorphic to
%$Qd(3)$. If $9 | q^2 -1$, then $G_2(q)$ contains a subgroup isomorphic to
%$\widetilde{Qd}^-(3)$.
\end{prop}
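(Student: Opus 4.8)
The plan is to assemble Proposition~\ref{exc} as a bookkeeping synthesis of the lemmas just proved, organised by the distinction between $p>3$ and $p=3$ and by the family of the exceptional group $G$. The whole content is: whenever a Sylow $p$-subgroup of $G$ is non-Abelian, locate an already-treated classical or smaller exceptional subgroup inside $G$ that is known to contain the relevant copy of $\widetilde{Qd}(p)$, $Qd(p)$ or $\widetilde{Qd}^-(3)$. So the proof is really a case split that routes each $(G,p)$ pair to the correct earlier result.

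First I would dispose of $p>3$. By Lemma~\ref{go4}, the only exceptional groups with non-Abelian Sylow $p$-subgroups for $p>3$ are the groups of type $E$ (with $p=5$ or $p=7$ under the stated $m_0$-conditions). For these, Lemma~\ref{ttt} already gives precisely the conclusion that $\widetilde{Qd}(p)$ is a subgroup of $G$, so this case is immediate. (One should note in passing that all other exceptional families—${}^3D_4(q)$, $F_4(q)$, ${}^2F_4(q)$, ${}^2F_4(2)'$, $G_2(q)$, ${}^2G_2(q)$—have Abelian Sylow $p$-subgroups when $p>3$, as recorded in the paragraph preceding Lemma~\ref{og5}, and hence fall outside the hypothesis.)

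Next I would handle $p=3$. Here I split according to type. For $G$ of type $E$ (that is, $E_6(q)$, $E_7(q)$, $E_8(q)$ or ${}^2E_6(q)$), Lemma~\ref{go4}(i) says $P$ is always non-Abelian at $p=3$, and Lemma~\ref{ttt} again supplies $\widetilde{Qd}(3)\le G$, giving the second assertion of the proposition. For the remaining types, I would invoke Lemma~\ref{og5}: part (i) gives a copy of $Qd(3)$ in ${}^3D_4(q)$, $F_4(q)$, ${}^2F_4(q)$ and ${}^2F_4(2)'$; and parts (ii)–(iii) give the $G_2(q)$ dichotomy, namely $\widetilde{Qd}^-(3)\le G$ when $9\mid q^2-1$ and no section isomorphic to $Qd(3)$ when $9\nmid q^2-1$. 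Collecting these yields exactly the three clauses of the proposition's final sentence.

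I do not expect a genuine obstacle here, since every individual assertion has been established in Lemmas~\ref{go4}, \ref{ttt} and \ref{og5}; the only real task is to verify that these lemmas jointly cover \emph{all} exceptional $G$ with non-Abelian Sylow $p$-subgroup and that their hypotheses match. The mild care-point is making sure the $p>3$ analysis is exhaustive: one must confirm, via Lemma~\ref{go4} together with the preceding remark about ${}^3D_4$, $F_4$, ${}^2F_4$, $G_2$, ${}^2G_2$, that no family other than type $E$ can have a non-Abelian Sylow $p$-subgroup when $p>3$, so that Lemma~\ref{ttt} really settles everything in that range. Once that coverage check is in place, the proof is just a statement that the desired conclusions follow by combining the cited lemmas.
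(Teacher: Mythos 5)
Your proposal is correct and matches the paper's own treatment: the paper gives no separate argument for Proposition~\ref{exc} beyond the phrase ``we can sum the above arguments,'' and the summation it intends is exactly your case split --- Lemma~\ref{go4} plus the Gorenstein--Lyons remark to show only type-$E$ groups can have non-Abelian Sylow $p$-subgroups for $p>3$, Lemma~\ref{ttt} for the type-$E$ conclusion at $p=5,7$ and $p=3$, and Lemma~\ref{og5} for the remaining families at $p=3$, including the $G_2(q)$ dichotomy. Your explicit coverage check is the only substance such a proof requires, and it is carried out correctly.
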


%%%%%%%%%%%%%%%%%%%%%%%%%%%%%%%%%%%%%%%%%%%%%%%%%%%%%%%%%%%%%%%%%%%%%%%%%%%
\section{The case of the sporadic groups}\label{sporadic}
Having a look at the orders of the sporadic groups, we find only few primes
to consider as a group having a $Qd(p)$-section must have a Sylow
$p$-subgroup of order at least $p^3$. The primes together with the relevant
groups are the following:

\begin{itemize}
\item For $p = 3$: $M_{12}$, $M_{24}$, $J_2$, $J_3$, $J_4$, $Co_1$, $Co_2$,
$Co_3$, $Fi_{22}$, $Fi_{23}$, $Fi'_{24}$, $McL$, $He$, $Ru$, $Sz$, $O'N$,
$HN$, $Ly$, $Th$, $B$, $M$.

\item For $p = 5$: $Co_1$, $Co_2$, $Co_3$, $HS$, $McL$, $Ru$, $HN$, $Ly$,
$Th$, $B$, $M$.

\item For $p = 7$: $Fi'_{24}$, $He$, $O'N$, $M$.

\item For $p = 11$: $J_4$.

\item For $p = 13$: $M$.
\end{itemize}

A non-trivial section of a simple group is a section of one of its maximal
subgroups. In the following examination we use the results listed in the
Atlas of finite simple groups, see \cite{atlas}, or \cite{onlineatlas}.
Since we employ results of the Atlas, it seems to be reasonable to keep
Atlas notation in this section.

\begin{itemize}

\item $p = 3$:

The maximal subgroups of $J_2$ with order divisible by 27 are $U_3(3)$
and $3.A_6.2$. As none of them involves $Qd(3)$, $J_2$ does not either.
Similarly, the only non-soluble maximal subgroup of $J_3$ with the
required order is $(3 \times A_6): 2_2$, which does not involve $Qd(3)$
and hence they are section 3-stable.
% The groups $J_2$ and $J_3$ have already been discussed in
% Section~\ref{solublesimple}.

$M_{12}$ has a maximal subgroup of type $3^2{:}2S_4$. Note that $3^2$ is
self-central\-ising and $2S_4 = GL_2(3)$ here. Hence this maximal subgroup
contains a subgroup isomorphic to $Qd(3)$.
Therefore, the simple groups $M_{12}$, $M_{24}$, $J_4$, $Co_1$, $Co_3$,
$Fi_{22}$, $Fi_{23}$, $Fi'_{24}$, $Sz$, $HN$, $B$, $M$ all contain
subgroups isomorphic to $Qd(3)$ and hence they are non-3-stable.

$McL$ contains a maximal subgroup of type $U_4(3)$. By
Theorem~\ref{defchar}, $U_4(3)$ has a subgroup isomorphic to $Qd(3)$. Hence
each of the groups $McL$, $Co_2$ and $Ly$ contains a subgroup isomorphic to
$Qd(3)$, as they are overgroups of $McL$. Consequently, all these groups
are non-3-stable.

%$He$ has a maximal subgroup of type $2^2.L_3(4).S_3$. This group contains
%a non-split extension $2^2{^.}Qd(3)$. This group is a new example for 
%a minimal non-3-stable group. %????

The derived subgroup of the normaliser of $3A^2$ in $He$ has structure
$(2^2 \times 3^2){^.}SL_2(3)$. This is a non-split extension $2^2{^.}Qd(3)
= 3^2{:}(2^2{^.}SL_2(3))$. This group is a new example for a minimal
non-3-stable group.

$Ru$ has a maximal subgroup of type ${^2F}_4(2)'.2$. By
Proposition~\ref{p11} the Sylow 3-subgroups of he latter are non-Abelian.
Thus by Theorem~\ref{mainLie} ${^2F}_4(2)'.2$ and hence $Ru$ contains a
subgroup isomorphic to $Qd(3)$. As a consequence, $Ru$ is non-3-stable.
%By Proposition~\ref{p11},
%the Sylow 3-subgroups of $U_3(5)$ are non-Abelian. Hence by
%Theorem~\ref{mainLie}, $U_3(5)$ and thus $Ru$ contains a subgroup
%isomorphic to $Qd(3)$.

The Sylow 3-subgroups of $O'N$ are elementary Abelian. Hence $O'N$ has no
section isomorphic to $Qd(3)$ and hence it is section 3-stable.

$Th$ has a maximal subgroup of type $U_3(8){:}6$. By Proposition~\ref{p11},
the Sylow 3-subgroups of $U_3(8)$ are non-Abelian. Thus by
Theorem~\ref{mainLie}, $U_3(8)$ and hence $Th$ contains a subgroup
isomorphic to $Qd(3)$. Therefore, $Th$ is non-3-stable.

\item $p = 5$:

The only maximal subgroup of $HS$ with order divisible by $125$ is
$U_3(5):2$. By Theorem~\ref{defchar}, this group and hence $HS$ have no
section isomorphic to $Qd(5)$. Thus it is section 5-stable.

The only non-soluble maximal subgroup of $McL$ with the required order is
$U_3(5)$, so $McL$ has no section isomorphic to $Qd(5)$ whence it is section
5-stable.

The maximal subgroups of $Co_2$ with adequate order are $McL$ and $HS:2$.
Those for $Co_3$ are $McL.2$, $HS$, and $U_3(5):S_3$. Hence none of these
groups has a section isomorphic to $Qd(5)$, so they are all section
5-stable.

$Co_1$ has a maximal subgroup $5^2{:}2A_5$ which is nothing else but
$Qd(5)$. %Ellen?rizve.
We remark that $Co_1$ has a maximal subgroup $5^{1+2}{:}GL_2(5)$, which has
a subgroup isomorphic to $\widetilde{Qd}(5)$.
As a consequence, $Co_1$ is non-5-stable.

$Ru$ has a maximal subgroup of type $5^2{:}4S_5$, which contains a subgroup
isomorphic to $Qd(5)$ and hence $Ru$ is non-5-stable. %Leellen?rizve.

$Th$ has a maximal subgroup of type $5^2{:}GL_2(5)$. Therefore, $Th$ and
its overgroups, $B$ and $M$ have subgroups isomorphic to $Qd(5)$. Thus they
are non-5-stable.

$HN$ has a maximal subgroup of type $5^2.5_+^{1+2}:4A_5$. Here, $4A_5$
contains $SL_2(5)$, which operates on $5^2$ on the natural way. Hence $HN$
has a subgroup isomorphic to $Qd(5)$, so it is not 5-stable.

$Ly$ has a maximal subgroup of type $G_2(5)$, which has a subgroup
isomorphic to $Qd(5)$ by Theorem~\ref{defchar}. Therefore, $Ly$ is
non-5-stable.

\item $p = 7$:

%According to \cite[p. ]{atlas}, 
$He$ has a maximal subgroup of type $7^2{:}2.L_2(7)$, which is isomorphic
to $Qd(7)$. Hence $He$, $Fi'_{24}$ and $M$ all have subgroups isomorphic to
$Qd(7)$ and they are not 7-stable.

The group $O'N$ has a maximal subgroup of type $L_3(7){:}2$. Hence by
Lemma~\ref{QdpinPSL3}, it also has a subgroup isomorphic to $Qd(7)$ and is
therefore non-7-stable.

\item $p = 11$:

$J_4$ has two maximal subgroups of order divisible by $11^3$. These are
$U_3(11){:}2$ and $11^{1+2}{:}(5\times 2S_4)$. None of them has a section
isomorphic to $Qd(11)$, so $J_4$ has no one either. Therefore, $L_4$ is
section 7-stable.

% In section~\ref{solublesimple}, we have already shown that $J_4$ is
% $Qd(5)$-free.

\item $p = 13$:

%Using the Atlas (\cite[p. 217]{atlas}, 
We find that the monster group $M$
has a maximal subgroup with structure $13^2{:}2L_2(13).4$, so $Qd(13)$ is a
subgroup of $M$ and hence it is not 13-stable.
\end{itemize}

We can summarise the above considerations in the next theorem:

\begin{theorem}\label{sporadicthm}
Let $G$ be a sporadic simple group. Then $G$ is $p$-stable if and only if
it is section $p$ stable. Otherwise, either $G = {He}$, $p = 3$ and $G$
contains a subgroup of type $3^2{:}(2^2{^.}SL_2(3))$ or $G$ contains a
subgroup isomorphic to $Qd(p)$ and one of the following holds:
%$G$ is a member of the following list:
\begin{enumerate}[$(i)$]
\item $G = M_{12}$, $M_{24}$, $J_4$, $Co_1$, $Co_2$, $Co_3$, $Fi_{22}$,
$Fi_{23}$, $Fi'_{24}$, $McL$, $Ru$, $Sz$, $HN$, $Ly$, $Th$, $B$ or $M$ and
$p = 3$;

\item $G = Co_1$, $Ru$, $HN$, $Ly$, $Th$, $B$ or $M$ and $p = 5$;

\item $G = Fi'_{24}$, $He$, $O'N$ or $M$ and $p = 7$;

\item $G = M$ and $p = 13$.
\end{enumerate}
\end{theorem}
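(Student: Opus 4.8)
The plan is to treat Theorem~\ref{sporadicthm} as the bookkeeping summary of a finite, Atlas-driven case analysis, organised around two facts established earlier. First, since a group involving $Qd(p)$ must have a Sylow $p$-subgroup of order at least $p^3$, I would begin by scanning the orders of the $26$ sporadic groups and discarding every pair $(G,p)$ with $p^3 \nmid |G|$; this leaves exactly the primes $p=3,5,7,11,13$ together with the finite lists of groups recorded just before the statement. Second, I would use the standard principle that every non-trivial section of a simple group is a section of one of its maximal subgroups, so that deciding whether $Qd(p)$ is involved in $G$ reduces to inspecting the maximal subgroups of $G$ whose order is divisible by $p^3$, consulting \cite{atlas}.

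For each surviving pair $(G,p)$ the argument then splits into two mutually exclusive outcomes. In the positive cases I would exhibit a subgroup isomorphic to $Qd(p)$ inside a suitable maximal subgroup: either by direct recognition (e.g.\ $3^2{:}2S_4 \le M_{12}$ with $2S_4=GL_2(3)$, or $5^2{:}4S_5 \le Ru$, or $13^2{:}2L_2(13).4 \le M$), or by invoking earlier sections --- Lemma~\ref{QdpinPSL3} for maximal subgroups containing $PSL_3(p)$, Theorem~\ref{defchar} for Lie-type maximal subgroups in defining characteristic such as $U_4(3)\le McL$ or $G_2(5)\le Ly$, and Theorem~\ref{mainLie} (with Proposition~\ref{p11} to certify non-Abelian Sylow $p$-subgroups) for the non-defining cases such as $U_3(8)\le Th$ and ${}^2F_4(2)'\le Ru$. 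Inclusions among the groups (for instance $McL < Co_2, Ly$; $He < Fi'_{24}, M$; $Th < B < M$) then propagate each conclusion to the overgroups. Since $Qd(p)$ is non-$p$-stable by Example~\ref{qdp} and $p$-stability passes to subgroups by Proposition~\ref{def2tosubgroup}, every such $G$ is non-$p$-stable. In the negative cases I would instead check that \emph{no} maximal subgroup of adequate order involves $Qd(p)$: here the relevant maximal subgroups are typically themselves small Lie-type groups --- $U_3(3)$ and $3.A_6.2$ in $J_2$, $U_3(5)$ in $HS$ and $McL$, $U_3(11){:}2$ and $11^{1+2}{:}(5\times 2S_4)$ in $J_4$, and an elementary-Abelian Sylow $3$-subgroup in $O'N$ --- for which Theorem~\ref{defchar}, Theorem~\ref{mainLie}, Proposition~\ref{p11}, or a Sylow order count rules out a $Qd(p)$-section. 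By Glauberman's theorem (Theorem~\ref{glaubthm}) such $G$ is then section $p$-stable.

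The single genuinely exceptional case, and the one I expect to be the main obstacle, is $G=He$ at $p=3$. Here the relevant local subgroup is the derived subgroup of $N_G(3A^2)$, of shape $(2^2\times 3^2){.}SL_2(3)$, a non-split extension that is neither isomorphic to $Qd(3)$ nor contains it as a subgroup, so the generic "subgroup $\cong Qd(p)$'' argument does not apply. I would handle it directly: taking $Q=3^2$ and the order-$3$ element $x\in SL_2(3)$ acting as $\smkk 1 1 0 1$, the relation $[Q,x,x]=1$ holds while $x$ does not map into $O_3\big(N_H(Q)/C_H(Q)\big)$, since within this subgroup $H$ the factor $N_H(Q)/C_H(Q)$ is isomorphic to $SL_2(3)$ (the $2^2$ centralising $Q$), whose $O_3$ is trivial, exactly as in Example~\ref{qdp}. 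Hence this subgroup is non-$3$-stable, and therefore so is $He$ by Proposition~\ref{def2tosubgroup}, even though the witness is only $2^2{.}Qd(3)$ rather than $Qd(3)$ itself.

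Finally I would assemble the stated equivalence. The implication "section $p$-stable $\Rightarrow$ $p$-stable'' is the general remark following Definition~\ref{secpstab}, and "section $p$-stable $\iff$ $Qd(p)$-free'' is Theorem~\ref{glaubthm}; the only thing requiring the case analysis is the reverse implication "$p$-stable $\Rightarrow$ section $p$-stable''. But in every pair where $Qd(p)$ is involved we have produced an explicit subgroup (isomorphic to $Qd(p)$, or the $2^2{.}Qd(3)$ in $He$) witnessing non-$p$-stability, so "$Qd(p)$ involved'' and "non-$p$-stable'' coincide for the sporadic groups. This yields the equivalence and, reading off the positive cases, the enumeration $(i)$--$(iv)$ together with the $He$ exception.
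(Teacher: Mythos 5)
Your proposal is correct and follows essentially the same route as the paper: the same reduction via $p^3 \mid |G|$ and maximal subgroups from the Atlas, the same witnesses ($3^2{:}2S_4$ in $M_{12}$, $U_4(3)$ in $McL$, ${}^2F_4(2)'$ and $U_3(8)$ via Proposition~\ref{p11} and Theorem~\ref{mainLie}, $5^2{:}2A_5$ in $Co_1$, $7^2{:}2.L_2(7)$ in $He$, $13^2{:}2L_2(13).4$ in $M$, etc.), the same propagation to overgroups, and the same treatment of $He$ at $p=3$ via the non-split extension $3^2{:}(2^2{^.}SL_2(3))$. Your explicit verification that this last group is non-$3$-stable (via $Q=3^2$ and the unipotent element, with $N_H(Q)/C_H(Q)\cong SL_2(3)$) is slightly more detailed than the paper, which merely asserts it is a minimal non-$3$-stable group, but the argument is the one intended.
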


%%%%%%%%%%%%%%%%%%%%%%%%%%%%%%%%%%%%%%%%%%%%%%%%%%%%%%%%%%%%%%%%%%%%%%%%%%%
\section{Summary on fusion systems}\label{prelims}
%Throughout this paper, $p$ denotes an odd prime, $P$ a finite $p$-group
%which will play the role of either a Sylow $p$-subgroup of a finite group
%$G$ or the $p$-group a fusion system $\mc F$ is defined on.

In this section we summarise the basic knowledge on fusion systems
especially what we need later.
First of all, we give the definition of a saturated fusion system following
\cite{kessar:linckelmann:2008}. All fusion systems we deal with will be
saturated, so we shall omit the word `saturated' in the sequel.

Let $p$ be a prime and let $P$ be a finite $p$-group. A fusion system
$\mc F$ on $P$ is a category whose objects are the subgroups of $P$ and
whose morphisms are certain injective group homomorphisms which will be
written from the right.

The main example of a fusion system is that of a finite group $G$ with
Sylow $p$-subgroup $P$. If $Q$ and $R$ are subgroups of $P$ such that $Q^g
\leq R$ for some element $g \in G$ (that is, $Q$ is subconjugate to $R$),
then conjugation with $g$ gives rise to a map $c_{g, Q, R}$: $Q \to R$
defined by $x \mapsto g^{-1} x g$ for $x \in Q$. The morphisms in the
fusion system $\mc F_P(G)$ of $G$ on $P$ are exactly these maps so that
\[
\Hom_{\mc F_P(G)} (Q, R) = \{ c_{g, Q, R}\ |\ g \in G \text{ s.\,t. }
Q^g \leq R \}.
\]

The definition of an abstract fusion system $\mc F$ extracts the properties
of $\mc F_P(G)$. To give the exact definition, we need some more notions.

\begin{itemize}
\item A subgroup $Q$ of $P$ is called fully $\mc F$-normalised if $|N_P(Q)|
\geq |N_P (Q\phi)|$ for every morphism $\phi \in \mc F$ with domain $Q$.

\item For an isomorphism $\phi$: $Q \to R$ we let
\begin{multline*}
N_\phi = \{ a \in N_P(Q)\ |\ \exists b \in N_P(R)
\text{ s. t. } (a^{-1} x a)\phi = b^{-1} (x\phi) b \forall x \in R \}.
\end{multline*}
This means that the following diagram commutes:

\begin{center}
\begin{tikzpicture}
\node at (-1.3, 1) {$Q$};
\draw[-latex] (-1.1, 1)--(1.1, 1);
\node[above] at (0, 1) {$\phi$};
\node at (1.3, 1) {$R$};
\draw[-latex] (-1.3, 0.7)--(-1.3, -0.7);
\node[left] at (-1.3, 0) {$c_{a, Q, Q}$};
\draw[-latex] (1.3, 0.7)--(1.3, -0.7);
\node[right] at (1.3, 0) {$c_{b, R, R}$};
\node at (-1.3, -1) {$Q$};
\draw[-latex] (-1.1, -1)--(1.1, -1);
\node[below] at (0, -1) {$\phi$};
\node at (1.3, -1) {$R$};
\end{tikzpicture}
\end{center}

Note that if $\phi$ can be extended to a subgroup $H$ of $N_P(Q)$, then
$H \leq N_\phi$.
% $N_\phi$ is the largest subgroup of $N_P(Q)$ to that $\phi$
%is `possibly extendible'.
\end{itemize}

\begin{Def}[Fusion system]
A {\itshape fusion system on the $p$-group $P$} is a category $\mc F$ with
the subgroups of $P$ as objects. Morphisms are injective group homomorphisms
with the usual composition of functions such that the following hold:
\begin{enumerate}[$(i)$]
%\item for all $Q \leq R \leq P$, the inclusion map $\iota$: $Q \to R$
%is a morphism in $\mc F$;
\item For all $Q$, $R \leq P$ the set $\Hom_P(Q, R)$ consisting of the
$P$-conjugations from $Q$ into $R$ is contained in $\Hom_{\mc F} (Q, R)$.

\item For all morphisms $\phi \in \Hom_{\mc F}(Q, R)$, the isomorphism
$\bar{\phi}$: $Q \to Q\phi$ with $x \mapsto x\phi$ (for all $x \in Q$)
and $\bar\phi^{-1}$: $Q\phi \to Q$, $x\phi \to x$ are morphisms in $\mc F$.

\item $\Aut_P(P)$ is a Sylow $p$-subgroup of $\Aut_{\mc F}(P)$.

\item If $Q$ is fully $\mc F$-normalised, then each morphism $\phi \in
\Hom_{\mc F}(Q, P)$ extends to a morphism $\tilde \phi \in \Hom_{\mc F}
(N_\phi, P)$.
\end{enumerate}
\end{Def}

%The main example of a fusion system is the fusion system $\mc F_P(G)$ of
%a finite group $G$ on a Sylow $p$-Subgroup $P$ of $G$, where
%\[
%\Hom_{\mc F}(Q, R) = \Hom_G (Q, R),
%\]
%the set of maps induced by the conjugation action of $G$ on the subgroups
%of $P$. A straightforward calculation shows that $\mc F_P(G)$ is indeed a
%fusion system.

We now collect some notions concerning fusion systems that we shall use
in this paper.

\begin{itemize}
\item A subgroup $Q$ of $P$ is called {\itshape strongly $\mc F$-closed}
if for all subgroup $R$ of $Q$ and for all morphism $\phi$ with domain
$R$, the image $R\phi$ is contained in $Q$.

\item The {\itshape normaliser} of a fully $\mc F$-normalised subgroup $Q$
of $P$ is the subsystem $\mc N_\mc F(Q)$ of $\mc F$ defined on $N_P(Q)$
such that for $R, T \leq N_P(Q)$ the morphism $\phi \in \Hom_\mc F(R, T)$
is in $\Hom_{\mc N_\mc F(Q)}(R, T)$ if $\phi$ extends to a
morphism $\tilde \phi$: $RQ \to TQ$ such that the restriction $\tilde \phi_Q$
is an $\mc F$-automorphism of $Q$.

\item $Q$ is {\itshape normal} in $\mc F$, denoted by $Q \triangleleft \mc F$,
if $\mc F = \mc N_\mc F(Q)$.

\item If $Q$ is normal in $\mc F$, a {\itshape quotient fusion system}
$\mc F / Q$ can be defined on $P/Q$ with morphisms $\bar \phi$: $T/Q \to R/Q$
induced by morphisms $\phi$: $T \to R$.

\item $\mc F$ is called {\itshape soluble} if there is a sequence
\[
1 = Q_0 < Q_1 < Q_2 < \ldots < Q_r = P
\]
with $Q_{i}/Q_{i-1} \triangleleft \mc F / Q_{i-1}$ for all $1 \leq i \leq r$.

\item $O_p(\mc F)$ is the largest normal subgroup of $P$ that is normal in
$\mc F$.

\item A subgroup $Q$ of $P$ is called {\itshape $\mc F$-centric} if $C_P(Q\phi)$
is contained in $Q\phi$ for all morphisms $\phi$ with domain $Q$.

\item $\mc F$ is said to be {\itshape constrained} if $C_P(O_p(\mc F))
\subseteq O_p(\mc F)$.

\item A {\itshape model} of a constrained fusion system $\mc F$ is a
$p$-constrained and $p'$-reduced group $L$ (so that $C_L(O_p(L)) \subseteq
O_p(L)$ and $O_{p'}(L) = 1$) with Sylow $p$-subgroup $P$ such that
$\mc F = \mc F_P(L)$. Note that each constrained fusion system has a
model which is unique up to isomorphism, see
\cite[Proposition C]{brotoetal}.
\end{itemize}

\section{Definition of {\itshape p}-stability for fusion systems}
\label{secpstabfs}
In this section, we define $p$-stable fusion systems and investigate their
properties.

\begin{Def}\label{stabdef}
Let $\mc F$ be a fusion system on the $p$-group $P$. Then $\mc F$ is
said to be {\itshape $p$-stable} if for all fully $\mc F$-normalised
subgroups $Q$ of $P$ whenever $\chi \in \Aut_{\mc F}(Q)$ satisfies
\begin{equation}\label{stabequation} %\tag{$*$}
(a^{-1})^\chi a (a^{-1})^\chi a^{\chi^2} = 1
\end{equation}
for all $a \in Q$, then $\chi \in O_p(\Aut_\mc F(Q))$.
\end{Def}

Next we prove that Definition~\ref{stabdef} is a generalisation of the
notion of $p$-stability of groups to the case of fusion systems.

\begin{theorem}\label{stabeqpstab}
A group $G$ is $p$-stable if and only if its fusion system $\mc F_P(G)$
on a Sylow $p$-subgroup $P$ of $G$ is $p$-stable.
\end{theorem}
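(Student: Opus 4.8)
The plan is to establish the equivalence by carefully translating between the group-theoretic $p$-stability condition (Definition~\ref{pstabdef2}) and the fusion-system condition (Definition~\ref{stabdef}), using the dictionary $\Aut_{\mc F_P(G)}(Q) \cong N_G(Q)/C_G(Q)$ that holds for subgroups $Q$ of the Sylow $p$-subgroup $P$. The first step is to unwind the meaning of the commutator relation (\ref{stabequation}): for an automorphism $\chi$ induced by conjugation by an element $x \in N_G(Q)$, the quantity $(a^{-1})^\chi a (a^{-1})^\chi a^{\chi^2}$ should be recognized as precisely the double-commutator condition $[Q,x,x]=1$ rewritten multiplicatively. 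I would verify this translation explicitly: if $a^\chi = x^{-1}ax$, then expanding $(a^{-1})^\chi a (a^{-1})^\chi a^{\chi^2}$ and comparing with the expansion of the iterated commutator $[[a,x],x]$ should yield the same relation for all $a \in Q$. This is the conceptual heart of the matching and deserves a clean computation.

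First I would handle the forward direction, assuming $G$ is $p$-stable (Definition~\ref{pstabdef2}) and showing $\mc F_P(G)$ is $p$-stable. Take a fully $\mc F$-normalised $Q \le P$ and $\chi \in \Aut_{\mc F}(Q)$ satisfying (\ref{stabequation}). By definition of the fusion system of a group, $\chi = c_{x,Q,Q}$ for some $x \in N_G(Q)$, so $\chi$ corresponds to the image $xC_G(Q) \in N_G(Q)/C_G(Q) \cong \Aut_{\mc F}(Q)$. By the commutator translation, (\ref{stabequation}) is equivalent to $[Q,x,x]=1$. Group $p$-stability then gives $xC_G(Q) \in O_p(N_G(Q)/C_G(Q))$, and under the isomorphism this says exactly $\chi \in O_p(\Aut_{\mc F}(Q))$. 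A subtle point I must address is that Definition~\ref{stabdef} only quantifies over \emph{fully $\mc F$-normalised} subgroups, whereas group $p$-stability ranges over \emph{all} $p$-subgroups; but every $\mc F$-conjugacy class contains a fully normalised representative, and $\mc F$-automorphism groups of $\mc F$-isomorphic subgroups are conjugate (hence have corresponding $O_p$'s), so checking the condition on fully normalised representatives suffices. I should make this reduction explicit, since it is where the saturation axioms quietly enter.

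For the converse, assuming $\mc F_P(G)$ is $p$-stable, I would show $G$ is $p$-stable. Here the issue is that an arbitrary $p$-subgroup $Q$ of $G$ need not lie in the chosen Sylow subgroup $P$. The standard move is to replace $Q$ by a Sylow-conjugate: since any $p$-subgroup is $G$-conjugate into $P$, and both the hypothesis $[Q,x,x]=1$ and the conclusion $xC_G(Q) \in O_p(N_G(Q)/C_G(Q))$ are preserved under conjugation of the pair $(Q,x)$, I may assume $Q \le P$. Then I pass to a fully $\mc F$-normalised $\mc F$-conjugate of $Q$ (again transporting $x$ appropriately and using conjugacy of automizer groups and their $O_p$'s), apply the fusion-system hypothesis to the translated automorphism $\chi = c_{x,Q,Q}$, and translate back.

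The main obstacle I anticipate is not the logical skeleton but the bookkeeping around fully-normalised representatives and the precise identification $\Aut_{\mc F_P(G)}(Q) \cong N_G(Q)/C_G(Q)$ together with its behaviour under $\mc F$-conjugation: I must confirm that $O_p$ of the automizer corresponds correctly on both sides and that conjugating $(Q,x)$ genuinely preserves membership in $O_p$. The purely algebraic verification that relation (\ref{stabequation}) is the multiplicative form of $[Q,x,x]=1$ is routine once set up, but I would present it carefully because it is the linchpin that makes the whole correspondence meaningful, and a sign or ordering slip there would invalidate both directions.
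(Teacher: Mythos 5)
Your proposal is correct and follows essentially the same route as the paper: the multiplicative rewriting of $[a,x,x]$ as $(a^{-1})^x a\, (a^{-1})^x a^{x^2}$ combined with the identification $\Aut_{\mc F_P(G)}(Q) \cong N_G(Q)/C_G(Q)$ under which the two stability conditions correspond. The paper's own proof consists of exactly these two observations and leaves implicit the reductions you spell out (conjugating an arbitrary $p$-subgroup of $G$ into $P$, and passing to a fully $\mc F$-normalised representative while transporting the automorphism and its $O_p$-membership), so your extra bookkeeping completes rather than departs from the paper's argument.
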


\begin{proof}
Observe first that 
\begin{equation}\label{commeq} %\tag{$**$}
[a, x, x] = x^{-1} a^{-1} x a x^{-1} a^{-1} x^{-1} a x x =
(a^{-1})^x a (a^{-1})^x a^{x^2}.
\end{equation}
Let $\chi \in N_G(Q) / C_G(Q) = \Aut_\mc F(Q)$ be the image of $x$ under
the natural homomorphism. Then $\chi$ satisfies
Equation~(\ref{stabequation}) in Definition~\ref{stabdef} if and only if
$x$ satisfies Equation~(\ref{commeq}). Note that as $x$ ranges over the
elements of $N_G(Q)$, its image $\chi$ ranges over the elements of
$\Aut_\mc F(Q)$ and vice versa.
\qed\end{proof}

\begin{prop}\label{subsyststable}
Let $\mc F$ be a $p$-stable fusion system. Then all subsystems of
$\mc F$ are $p$-stable.
\end{prop}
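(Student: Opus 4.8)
The plan is to mirror the group-theoretic argument of Proposition~\ref{def2tosubgroup}, replacing the containment $\bar N_H \leq \bar N$ of quotient normalisers by the containment $\Aut_{\mc E}(Q) \leq \Aut_{\mc F}(Q)$ of automorphism groups, which holds by the very definition of a subsystem $\mc E$ of $\mc F$. The only genuinely new point is that the $p$-stability hypothesis on $\mc F$ is stated only for fully $\mc F$-normalised subgroups, whereas a subgroup $Q$ that is fully $\mc E$-normalised in the subsystem need not be fully $\mc F$-normalised in $\mc F$; this gap will be bridged by $\mc F$-conjugation.

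First I would record the auxiliary fact that $p$-stability of $\mc F$, although stated for fully $\mc F$-normalised $Q$, actually holds for \emph{every} subgroup $Q \leq P$: if $\chi \in \Aut_{\mc F}(Q)$ satisfies Equation~(\ref{stabequation}), then $\chi \in O_p(\Aut_{\mc F}(Q))$. To see this, I would choose an $\mc F$-isomorphism $\psi \colon Q \to Q^{*}$ with $Q^{*}$ fully $\mc F$-normalised (such a $Q^{*}$ always exists in a saturated fusion system). Conjugation by $\psi$ gives a group isomorphism $\Aut_{\mc F}(Q) \to \Aut_{\mc F}(Q^{*})$, $\alpha \mapsto \psi^{-1}\alpha\psi$, which carries $O_p(\Aut_{\mc F}(Q))$ onto $O_p(\Aut_{\mc F}(Q^{*}))$. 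Because $(a\psi)^{\psi^{-1}\chi\psi} = (a^{\chi})\psi$ for all $a \in Q$ and $\psi$ is an injective homomorphism, the image $\chi^{\psi} = \psi^{-1}\chi\psi$ again satisfies Equation~(\ref{stabequation}), now on $Q^{*}$. Applying the $p$-stability of $\mc F$ to the fully $\mc F$-normalised subgroup $Q^{*}$ yields $\chi^{\psi} \in O_p(\Aut_{\mc F}(Q^{*}))$, and transporting back along $\psi$ gives $\chi \in O_p(\Aut_{\mc F}(Q))$, as claimed.

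With this in hand the proposition becomes immediate. Let $\mc E$ be a subsystem of $\mc F$, defined on a subgroup $R \leq P$, and let $Q \leq R$ with $\chi \in \Aut_{\mc E}(Q)$ satisfying Equation~(\ref{stabequation}). Since $\mc E$ is a subsystem, $\Aut_{\mc E}(Q) \leq \Aut_{\mc F}(Q)$, so $\chi$ also satisfies the same equation as an element of $\Aut_{\mc F}(Q)$; by the previous paragraph $\chi \in O_p(\Aut_{\mc F}(Q))$. Now $O_p(\Aut_{\mc F}(Q)) \cap \Aut_{\mc E}(Q)$ is a normal $p$-subgroup of $\Aut_{\mc E}(Q)$, hence contained in $O_p(\Aut_{\mc E}(Q))$; therefore $\chi \in O_p(\Aut_{\mc E}(Q))$. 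As this holds for every $Q \leq R$, in particular for the fully $\mc E$-normalised ones required by Definition~\ref{stabdef}, the subsystem $\mc E$ is $p$-stable.

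I expect the main obstacle to be the reduction carried out in the second paragraph: one must verify that Equation~(\ref{stabequation}) is preserved under $\mc F$-conjugation and that the groups $O_p(\Aut_{\mc F}(Q))$ and $O_p(\Aut_{\mc F}(Q^{*}))$ correspond under the resulting isomorphism. Both are routine once the compatibility $(a\psi)^{\chi^{\psi}} = (a^{\chi})\psi$ is noted, but this is precisely the step where the ``fully normalised'' restriction built into Definition~\ref{stabdef} must be handled with care rather than ignored; everything else reduces to the elementary remark that the intersection of a normal $p$-subgroup with a subgroup is a normal $p$-subgroup of that subgroup.
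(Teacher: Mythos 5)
Your proof is correct and takes essentially the same route as the paper's: the containment $\Aut_{\mc G}(Q) \leq \Aut_{\mc F}(Q)$ for a subsystem, followed by the observation that $O_p(\Aut_{\mc F}(Q)) \cap \Aut_{\mc G}(Q)$ is a normal $p$-subgroup of $\Aut_{\mc G}(Q)$ and hence lies in $O_p(\Aut_{\mc G}(Q))$. Your auxiliary step — transporting $\chi$ along an $\mc F$-isomorphism $\psi\colon Q \to Q^{*}$ to a fully $\mc F$-normalised $Q^{*}$, checking that Equation~(\ref{stabequation}) and the subgroups $O_p(\Aut_{\mc F}(-))$ are preserved — is a detail the paper's terse proof applies Definition~\ref{stabdef} without (it invokes the $p$-stability condition for an arbitrary $Q \leq S$), and it is precisely the conjugation argument the paper itself uses in the theorem immediately following Proposition~\ref{subsyststable}, so your version is, if anything, the more complete one.
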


\begin{proof}
Let $\mc G$ be a subsystem of $\mc F$ on a subgroup $S$ of $P$.
Let $Q$ be a subgroup of $S$. Assume some $\chi \in \Aut_{\mc G}(Q)$
satisfies Equation~(\ref{stabequation}). As $\Aut_{\mc G}(Q) \leq
\Aut_{\mc F}(Q)$, $\chi \in O_p(\Aut_{\mc F}(Q))$ follows. But then
\[
\chi \in O_p(\Aut_{\mc F}(Q)) \cap \Aut_{\mc G}(Q) \leq
O_p(\Aut_{\mc G}(Q)).
\]
So $\mc G$ is $p$-stable.
\qed\end{proof}

We can prove a theorem for fusion systems similar to
Corollary~\ref{localpstab}:

\begin{theorem}
Let $\mc F$ be a fusion system on a $p$-group $P$. Then $\mc F$ is
$p$-stable if and only if $\mc N_{\mc F} (R)$ is $p$-stable for all
non-cyclic fully $\mc F$-normalised subgroups $R$ of $P$.
\end{theorem}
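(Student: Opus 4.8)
The plan is to prove both implications by matching the $p$-stability condition of Definition~\ref{stabdef} across $\mc F$ and its normaliser subsystems, using the subsystem result Proposition~\ref{subsyststable} for one direction and a reduction to non-cyclic subgroups (as in Corollary~\ref{localpstab}) for the other.

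For the forward implication, suppose $\mc F$ is $p$-stable and let $R$ be any fully $\mc F$-normalised subgroup of $P$. Then $\mc N_\mc F(R)$ is a saturated subsystem of $\mc F$ defined on $N_P(R) \leq P$, and for every subgroup $Q$ of $N_P(R)$ one has $\Aut_{\mc N_\mc F(R)}(Q) \leq \Aut_\mc F(Q)$. Hence the argument of Proposition~\ref{subsyststable} applies verbatim and gives that $\mc N_\mc F(R)$ is $p$-stable. (This in fact yields the conclusion for \emph{all} fully $\mc F$-normalised $R$, in particular the non-cyclic ones, so no case distinction is needed here.)

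For the reverse implication, assume $\mc N_\mc F(R)$ is $p$-stable for every non-cyclic fully $\mc F$-normalised $R$, and let $Q$ be a fully $\mc F$-normalised subgroup together with $\chi \in \Aut_\mc F(Q)$ satisfying Equation~(\ref{stabequation}); I must show $\chi \in O_p(\Aut_\mc F(Q))$. First I would dispose of cyclic $Q$: then $\Aut_\mc F(Q) \leq \Aut(Q)$ is Abelian, so $O_p(\Aut_\mc F(Q))$ is its whole Sylow $p$-subgroup and contains every $p$-element. Since Equation~(\ref{stabequation}) is a statement purely about the action of $\langle\chi\rangle$ on $Q$, the observation of the remark following Definition~\ref{pstabdef2}, part $(i)$ (the $p'$-part of a quadratically acting element centralises $Q$) transfers to this abstract setting and forces $\chi$ to be a $p$-element, hence $\chi \in O_p(\Aut_\mc F(Q))$ automatically. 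Thus only non-cyclic $Q$ remain.

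For non-cyclic $Q$ the key step is to recognise $Q$ as a fully normalised object of its own normaliser system and to identify automorphism groups. Taking $R = T = Q$ in the definition of $\mc N_\mc F(Q)$, every $\phi \in \Aut_\mc F(Q)$ is its own extension $\tilde\phi\colon Q \to Q$ with $\tilde\phi_Q = \phi$, so $\phi \in \Aut_{\mc N_\mc F(Q)}(Q)$; the reverse inclusion is immediate, whence $\Aut_{\mc N_\mc F(Q)}(Q) = \Aut_\mc F(Q)$ and the two groups share the same $O_p$. Moreover $Q$ is normal, hence strongly closed, in $\mc N_\mc F(Q)$, so every morphism out of $Q$ in that system is an automorphism of $Q$ and the full-normalisation inequality for $Q$ is vacuous; thus $Q$ is fully $\mc N_\mc F(Q)$-normalised. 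Since $Q$ is non-cyclic and fully $\mc F$-normalised, $\mc N_\mc F(Q)$ is $p$-stable by hypothesis, and applying its $p$-stability to the object $Q$ and the automorphism $\chi$ (which still satisfies Equation~(\ref{stabequation})) gives $\chi \in O_p(\Aut_{\mc N_\mc F(Q)}(Q)) = O_p(\Aut_\mc F(Q))$, as required. I expect the main obstacle to be precisely this bookkeeping from the definition of the normaliser subsystem, namely verifying $\Aut_{\mc N_\mc F(Q)}(Q) = \Aut_\mc F(Q)$ and the full $\mc N_\mc F(Q)$-normalisation of $Q$; once these formalities are secured, the equivalence follows by matching the conditions verbatim.
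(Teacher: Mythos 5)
Your proposal is correct and takes essentially the same approach as the paper: one direction via Proposition~\ref{subsyststable}, the other by disposing of cyclic subgroups and then using the identification $\Aut_{\mc N_\mc F(Q)}(Q) = \Aut_\mc F(Q)$ (together with $Q$ being fully normalised in its own normaliser system) to apply the hypothesis. The only cosmetic difference is that the paper treats an arbitrary subgroup $Q$ by transporting $\chi$ along an $\mc F$-isomorphism $\phi\colon Q \to R$ onto a fully $\mc F$-normalised representative $R$ and conjugating back, whereas you invoke Definition~\ref{stabdef} to assume $Q$ fully normalised from the outset, which amounts to the paper's argument with $R = Q$ and $\phi$ the identity.
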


\begin{proof}
One direction is clear by Proposition~\ref{subsyststable}.

To show the converse let $Q \leq P$. Assume $\chi \in \Aut_\mc F(Q)$
satisfies Equation~\ref{stabequation}. If $Q$ is cyclic, then $\chi \in
O_p(\Aut_\mc F(Q))$ automatically follows, so we may assume $Q$ is
non-cyclic. Let $\phi$: $Q \to R$ be an $\mc F$-isomorphism such that
$R$ is fully $\mc F$-normalised. Then $\phi^{-1} \chi \phi \in \Aut_\mc F(R)
= \Aut_{\mc N_\mc F(R)} (R)$ satisfies Equation~\ref{stabequation}.
As $\mc N_\mc F(R)$ is $p$-stable by assumption, $\phi^{-1} \chi \phi$
is contained in $O_p(\Aut_{\mc N_\mc F(R)}(R)) = O_p(\Aut_{\mc F}(R))$.
Since $\Aut_\mc F(Q) = \phi \Aut_\mc F(R) \phi^{-1}$, it follows that
$\chi \in O_p(\Aut_\mc F(Q))$.
\qed\end{proof}

As mentioned before, $p$-soluble groups are $p$-stable for $p > 3$. Now we
examine the relationship between $p$-stability and solubility for fusion
systems.

\begin{lem}\label{Qdpsol}
The fusion system of $Qd(p)$ is soluble.
\end{lem}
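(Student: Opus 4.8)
The plan is to exhibit an explicit series of the form $1 < V < P$ and to check the two normality conditions in the paper's definition of a soluble fusion system. Here $P$ is the Sylow $p$-subgroup $V \rtimes U$ of $Qd(p) = V \rtimes SL_2(p)$, with $U$ the unipotent subgroup of $SL_2(p)$, so that $|P| = p^3$, $V = O_p(Qd(p))$ has order $p^2$, and $P/V \cong U \cong C_p$. Writing $\mathcal{F} = \mathcal{F}_P(Qd(p))$, the two conditions to verify are $V \triangleleft \mathcal{F}$ and $P/V \triangleleft \mathcal{F}/V$.

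For the first, I would use that $V = O_p(Qd(p))$ is normal in $Qd(p)$ and contained in $P$: such a subgroup is automatically normal in the group's fusion system. Concretely, given any morphism $\phi = c_{g,R,S} \in \operatorname{Hom}_{\mathcal{F}}(R,S)$, the element $g$ normalises $V$ (because $V \triangleleft Qd(p)$), so $c_g$ is defined on $RV$, carries it onto $SV$, fixes $V$ setwise, and restricts on $V$ to an element of $\operatorname{Aut}_{\mathcal{F}}(V)$. Thus $c_{g,RV,SV}$ is the required extension showing $\phi \in \operatorname{Hom}_{\mathcal{N}_{\mathcal{F}}(V)}(R,S)$, and hence $\mathcal{F} = \mathcal{N}_{\mathcal{F}}(V)$, i.e. $V \triangleleft \mathcal{F}$.

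For the second, I would invoke the identification $\mathcal{F}/V \cong \mathcal{F}_{P/V}(Qd(p)/V) = \mathcal{F}_{P/V}(SL_2(p))$ of the quotient fusion system with the fusion system of the quotient group, which is immediate from the definition of $\mathcal{F}/V$ since the induced maps $\overline{c_g} = c_{gV}$ are precisely the conjugation maps in $Qd(p)/V$. Since $P/V \cong C_p$ has order $p$, its only subgroups are $1$ and $P/V$, so every morphism of $\mathcal{F}/V$ is either a map out of the trivial group or an automorphism of $P/V$; in either case it extends to an $\mathcal{F}/V$-automorphism of $P/V$, whence $P/V \triangleleft \mathcal{F}/V$. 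Combining the two steps, the series $1 < V < P$ witnesses the solubility of $\mathcal{F}$.

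The only genuine content is the first step, where the point to be careful about is to verify the extension condition in the paper's definition of $\mathcal{N}_{\mathcal{F}}(V)$ directly, since normality in the fusion system $\mathcal{F}$ is a priori a different condition from normality in the group $Qd(p)$. The top quotient is then disposed of for free, because it is supported on a group of order $p$, on which every fusion system trivially has the whole group as a normal subgroup.
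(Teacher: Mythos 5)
Your proof is correct and takes essentially the same approach as the paper's: the same series $1 < V < P$, with $V = O_p(Qd(p))$ giving $V \triangleleft \mathcal{F}$ and the quotient system living on the cyclic group $P/V \cong C_p$, where normality of the top term is automatic. You simply spell out the extension-condition verifications that the paper's one-line argument leaves implicit.
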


\begin{proof}
The Sylow $p$-subgroups $P$ of $Qd(p)$ have structure $V \rtimes
C$, where $V$ is an elementary Abelian group of rank $2$ and $C$ is a
cyclic group of order $p$. Now, $V = O_p(Qd(p))$ and the quotient system
is defined on $C$, a cyclic group, so the sequence
\[
1 = Q_0 < Q_1 = V < Q_2 = P
\]
proves the solubility of $\mc F_{P}(Qd(p))$.
\qed\end{proof}

\begin{prop}\label{solnotpstab}
There are soluble fusion systems which are non-$p$-stable.
\end{prop}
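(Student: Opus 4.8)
The plan is to take $Qd(p)$ itself as the witnessing example and simply assemble the results already proved. First I would let $P$ be a Sylow $p$-subgroup of $Qd(p)$ and consider the fusion system $\mc F_P(Qd(p))$. By Lemma~\ref{Qdpsol} this fusion system is soluble, the chain $1 < V < P$ providing the required normal series. On the other hand, Example~\ref{qdp} shows that $Qd(p)$ is non-$p$-stable as a group (witnessed by $Q = V$ and the transvection $x$), and Theorem~\ref{stabeqpstab} transfers this directly to the fusion system: since $Qd(p)$ is non-$p$-stable, so is $\mc F_P(Qd(p))$. Combining these two observations exhibits a soluble, non-$p$-stable fusion system and settles the proposition.

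The substantive content to emphasise is that solubility of a fusion system is strictly weaker than solubility of the ambient group. The series $1 < V < P$ lives entirely inside the $p$-group $P = V \rtimes C_p$ and never sees the complement $SL_2(p)$, which is non-soluble precisely when $p \geq 5$. This is why the example persists for \emph{every} odd prime, in particular for $p > 5$, in sharp contrast with the group-theoretic picture: by Theorem~\ref{glaubthm}, a $p$-soluble group is $p$-stable whenever $p \geq 5$, so no soluble \emph{group} can play this role for large $p$. I would make this contrast explicit in the write-up, since it is the whole point of the statement.

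I do not anticipate any genuine technical obstacle, as the work has been done in Lemma~\ref{Qdpsol} and Theorem~\ref{stabeqpstab}. The only point requiring minor care is bookkeeping: one must confirm that the solubility chain of Lemma~\ref{Qdpsol} and the non-$p$-stability datum of Example~\ref{qdp} both refer to the \emph{same} object $\mc F_P(Qd(p))$, so that the two properties hold simultaneously for a single fusion system rather than for two unrelated ones. Once this is noted, the proof is a one-line corollary of the preceding results.

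\begin{proof}
Let $P$ be a Sylow $p$-subgroup of $Qd(p)$ and consider the fusion system $\mc F_P(Qd(p))$. By Lemma~\ref{Qdpsol} this fusion system is soluble. However, by Example~\ref{qdp} the group $Qd(p)$ is non-$p$-stable, so $\mc F_P(Qd(p))$ is non-$p$-stable by Theorem~\ref{stabeqpstab}. Thus $\mc F_P(Qd(p))$ is a soluble fusion system which is not $p$-stable. As $SL_2(p)$ is non-soluble for $p \geq 5$, the group $Qd(p)$ is non-soluble in this range, yet its fusion system remains soluble; hence the example is valid for every odd prime, in particular for $p > 5$.
\qed\end{proof}
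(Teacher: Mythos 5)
Your proof is correct and is essentially identical to the paper's own argument: both take $\mc F_P(Qd(p))$, invoke Lemma~\ref{Qdpsol} for solubility and Theorem~\ref{stabeqpstab} (via Example~\ref{qdp}) for non-$p$-stability. The additional remarks on the contrast with the group-theoretic situation for $p \geq 5$ are accurate but not needed for the proof itself.
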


\begin{proof}
The fusion system $\mc F_{P}(Qd(p))$ is soluble by Lemma~\ref{Qdpsol}
and not $p$-stable by Theorem~\ref{stabeqpstab}.
\qed\end{proof}

A counterpart of Proposition~\ref{solnotpstab} is the following:

\begin{theorem}\label{Qdpfreesol}
Let $G$ be a group with Sylow $p$-subgroup $P$. If $Qd(p)$
is not involved in $G$, then the fusion system $\mc F_P(G)$ is soluble.
\end{theorem}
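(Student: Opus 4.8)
The plan is to argue by induction on $|G|$, the goal at each stage being to produce a nontrivial subgroup $W\le P$ with $W\triangleleft \mc F$ (where $\mc F=\mc F_P(G)$) for which the quotient $\mc F/W$ is again soluble; prepending a soluble series for $W$ then yields one for $\mc F$. First I would make two reductions. Since $\mc F_P(G)=\mc F_{\bar P}(\bar G)$ for $\bar G=G/O_{p'}(G)$, and $Qd(p)$ is not involved in any quotient of $G$, I may assume $O_{p'}(G)=1$. By Glauberman's theorem \ref{glaubthm} the hypothesis is equivalent to all sections of $G$ being $p$-stable, a property inherited by every subgroup and quotient that occurs below; this is the structural input I would carry through the induction.

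The easy inductive step is the case $O_p(G)\neq 1$. Here $O_p(G)\triangleleft G$ gives $\mc F/O_p(G)=\mc F_{P/O_p(G)}(G/O_p(G))$, a genuine group fusion system on a smaller $p$-group, and $G/O_p(G)$ is still $Qd(p)$-free, so $\mc F/O_p(G)$ is soluble by induction. To build the series downward through $O_p(G)$ I would take a chief series of $G$ refining $1<O_p(G)$: each term is a normal $p$-subgroup of $G$ inside $P$, hence normal in $\mc F$, so the successive quotients are normal in the corresponding quotient systems. Concatenating with the pulled-back series from $\mc F/O_p(G)$ gives a soluble series for $\mc F$.

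The substantial case is $O_p(G)=O_{p'}(G)=1$, so that $F^*(G)=E(G)$ is a nontrivial product of components with $p'$-centre and $G$ has \emph{no} nontrivial normal $p$-subgroup, making the previous step unavailable. Here I would invoke the classification of Sections~\ref{Qdsecdefchar}--\ref{sporadic}: every simple section of $G$ divisible by $p$ is $Qd(p)$-free, hence by Theorems~\ref{QdpinAn}, \ref{defchar}, \ref{mainLie} and \ref{sporadicthm} either has abelian Sylow $p$-subgroups or lies on the short explicit list of $Qd(p)$-free groups with non-abelian Sylow $p$-subgroups. In the abelian case fusion is controlled by a Sylow normaliser (Burnside); in each non-abelian case the section analysis shows fusion is controlled by the normaliser of a nontrivial characteristic subgroup of the Sylow $p$-subgroup (for instance the strongly closed centre $Z(P)$ of the extraspecial Sylow $3$-subgroups of $G_2(q)$ with $9\ndiv q^2-1$, as in the proof of Lemma~\ref{og5}). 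Feeding this local control into Alperin's fusion theorem, I would show that every $\mc F$-essential subgroup, together with $P$, normalises a common nontrivial $W\le P$, whence $W\triangleleft \mc F$ and $\mc F/W$ lives on a strictly smaller $p$-group and is soluble.

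The hard part will be exactly this last case. The per-component control of fusion must be assembled into a \emph{single} globally $\mc F$-normal subgroup $W$ in the presence of components permuted by $p$-elements and of outer automorphisms, i.e. of wreath-type and diagonal phenomena; the guiding model is $G=L\wr C_p$ with $L$ simple of abelian Sylow $p$-subgroup, where the correct choice is $W=P\cap E(G)$ and Alperin's theorem shows the only essential subgroup is $W$ itself. Two points demand care: verifying that $W$ is genuinely normal in $\mc F$ and not merely strongly closed, for which controlling the $\mc F$-essential subgroups is the natural tool; and handling $\mc F/W$, which need not be realised by any group, so that its solubility must either be read off directly (as when $P/W$ is cyclic or abelian) or be established by running the same induction on the abstract quotient system rather than on $|G|$.
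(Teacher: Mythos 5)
Your preliminary reduction to $O_{p'}(G)=1$ and your case $O_p(G)\neq 1$ are both sound, but the proof has a genuine gap: the case that carries all the content, $O_p(G)=O_{p'}(G)=1$, is never actually carried out. What you offer there is a programme, not an argument. You propose to quote the classification results of Sections~\ref{Qdsecdefchar}--\ref{sporadic} for the simple sections of $G$ and then to assemble per-component control of fusion into a single nontrivial subgroup $W\triangleleft\mc F$, and you yourself name the two obstructions --- components permuted by $p$-elements together with outer and diagonal automorphisms, and the fact that $\mc F/W$ need not be the fusion system of any group, so that your induction on $|G|$ cannot be applied to it --- without resolving either. So the induction never closes in exactly the situation where $G$ has no nontrivial normal $p$-subgroup, which is where the theorem lives; in addition, the appeal to the classification is far heavier machinery than the statement requires.

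The idea you are missing is that the hypothesis itself hands you the normal subgroup, uniformly and with no case distinction. By Theorem~\ref{glaubthm}, ``$Qd(p)$ is not involved in $G$'' is equivalent to every section of $G$ being $p$-stable, and Glauberman's Theorem~B (\cite[p.~1105]{glauberman:68}) then asserts that $N=N_G(Z(J(P)))$ controls strong fusion in $P$. Hence $\mc F_P(G)=\mc F_P(N)$, and $Q=Z(J(P))\neq 1$, being normal in the group $N$, is normal in $\mc F_P(G)$ --- this works whether or not $O_p(G)=1$. Moreover, since $Q\triangleleft N$, Stancu's theorem (Theorem~5.20 in \cite[p.~145]{craven}) identifies $\mc F_P(G)/Q$ with $\mc F_{P/Q}(N/Q)$, the fusion system of the strictly smaller $Qd(p)$-free group $N/Q$; this simultaneously disposes of your worry about non-realisable quotient systems and lets the induction on the group order run. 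Prepending $Q$ to a soluble series of $\mc F_{P/Q}(N/Q)$ finishes the proof. That is the paper's entire argument, and it is a few lines long.
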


\begin{proof}
Let $G$ be a group not involving $Qd(p)$ and assume the theorem holds for
all groups smaller than $G$. Let $Q = Z(J(P))$, the centre of the
Thompson subgroup\footnote{The Thompson subgroup is the subgroup
of $P$ generated by the Abelian subgroups of $P$ of maximal order}
of $P$. Then the normaliser $N = N_G(Q)$ controls strong fusion by
Theorem~B in~\cite[p. 1105]{glauberman:68}. It follows that
$\mc F_P(G) = \mc F_P(N)$.

Therefore, $Q \triangleleft \mc F_P(N) = \mc F_P(G)$ and hence
\[
\mc F_P(G)/Q = \mc F_{P/Q} (N/Q)
\]
by Theorem 5.20 due to Stancu in~\cite[p.~145]{craven}.

%Since $Q$ is normal in $\mc F_P(N)$, $\mc F_Q(Q)$ is weakly normal in
%$\mc F_P(N)$ by Theorem~5.37 in~\cite[p.~153]{craven} (again due to
%Stancu).
%
%Now, $\mc F_Q(Q)$ is trivially soluble and, by assumption,
$\mc F_P(G)/Q$, being the fusion system of the $Qd(p)$-free group $N/Q$ is
soluble as $|N/Q| < |G|$.
Therefore, $\mc F_P(G)$ is soluble.
%by part $(ii)$ of Lemma~5.90
%in~\cite[pp. 181-182]{craven}. The theorem now follows as $\mc F_P(G) =
%\mc F_P(N)$.
\qed\end{proof}

\section{The maximal subgroup theorem}\label{secthompson}
Our next goal is to prove a fusion theoretic version of Thompson's maximal
subgroup theorem, see in~\cite[p. 295, Thm 8.6.3]{gorenstein}. For this
purpose, we first state and prove a lemma that might have its own interest.

\begin{lem}\label{normfunctor}
Let $\mc N$ be a subsystem of $\mc F$ and assume the subgroup $Q$ of $P$
is normal in $\mc N$. Let $R$ be a fully $\mc F$-normalised subgroup of
$P$ that is $\mc F$-isomorphic to $Q$. Let $\phi$: $N_P(Q) \to N_P(R)$
be an $\mc F$-homomorphism such that $Q\phi = R$. Then $\phi$ induces
an injective functor
\[
\Phi\!: \ \ \mc N \to \mc N_{\mc F}(R)
\]
so that $\mc N$ can be embedded into $\mc N_{\mc F}(R)$.
\end{lem}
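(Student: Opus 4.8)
The plan is to build $\Phi$ by transporting the whole of $\mc N$ through $\phi$, that is, by conjugation. Write $S$ for the subgroup of $P$ on which $\mc N$ is defined. Because $Q \triangleleft \mc N$ we have $\mc N = \mc N_{\mc N}(Q)$, so $Q \triangleleft S \leq N_P(Q)$ and, most importantly, every morphism $\psi \colon A \to B$ of $\mc N$ (with $A, B \leq S$) extends to a morphism $\tilde\psi \colon AQ \to BQ$ whose restriction $\tilde\psi|_Q$ lies in $\Aut_{\mc N}(Q) \subseteq \Aut_{\mc F}(Q)$. On objects I would set $\Phi(T) = T\phi$ for $T \leq S$; since $S \leq N_P(Q)$ and $Q\phi = R$, each $T\phi$ is a subgroup of $N_P(R)$, hence an object of $\mc N_{\mc F}(R)$. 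On morphisms, writing $\bar\phi \colon T \to T\phi$ for the restriction of $\phi$ (an $\mc F$-isomorphism by axiom~(ii)), I would put $\Phi(\psi) = \bar\phi^{-1}\psi\bar\phi \colon T_1\phi \to T_2\phi$. That $\Phi(\psi)$ is a morphism of $\mc F$ is immediate from the fusion-system axioms, and functoriality is routine: identities go to identities and $\Phi(\psi_1\psi_2) = \bar\phi^{-1}\psi_1\bar\phi\,\bar\phi^{-1}\psi_2\bar\phi = \Phi(\psi_1)\Phi(\psi_2)$.

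The main obstacle is to show that $\Phi(\psi)$ lies not merely in $\mc F$ but in $\mc N_{\mc F}(R)$; this is precisely where the normality of $Q$ in $\mc N$ enters. By the definition of the normaliser system I must produce an extension of $\Phi(\psi)$ to a morphism $(T_1\phi)R \to (T_2\phi)R$ restricting to an $\mc F$-automorphism of $R$. The key identity is $(T_i\phi)R = (T_i\phi)(Q\phi) = (T_iQ)\phi$, so the desired target groups are exactly the $\phi$-images of the $T_iQ$. I would then take the extension $\tilde\psi \colon T_1Q \to T_2Q$ supplied by $Q \triangleleft \mc N$ and conjugate it, setting $\widetilde{\Phi(\psi)} = \bar\phi^{-1}\tilde\psi\bar\phi \colon (T_1Q)\phi \to (T_2Q)\phi$; as $\tilde\psi$ extends $\psi$, its conjugate extends $\Phi(\psi)$. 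Restricting $\widetilde{\Phi(\psi)}$ to $R = Q\phi$ yields $\bar\phi^{-1}(\tilde\psi|_Q)\bar\phi$, the conjugate of the $\mc F$-automorphism $\tilde\psi|_Q$ of $Q$ by the $\mc F$-isomorphism $\bar\phi \colon Q \to R$, which is an $\mc F$-automorphism of $R$. This is exactly the condition for $\Phi(\psi) \in \mc N_{\mc F}(R)$.

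Finally, injectivity is immediate: a morphism of a fusion system is injective, so $\phi$ is injective, whence the object map $T \mapsto T\phi$ is injective and the morphism map $\psi \mapsto \bar\phi^{-1}\psi\bar\phi$ is injective (conjugate back by $\phi$ to recover $\psi$). Thus $\Phi$ is an injective and faithful functor, embedding $\mc N$ into $\mc N_{\mc F}(R)$. I do not expect any genuine difficulty beyond the bookkeeping of domains in the second paragraph; note in particular that, since membership in $\mc N_{\mc F}(R)$ only requires the existence of one suitable extension, no independence from the choice of $\tilde\psi$ needs to be verified.
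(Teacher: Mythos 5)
Your proof is correct and takes essentially the same route as the paper's: both define $\Phi$ on objects by $T \mapsto T\phi$ and on morphisms by conjugation with restrictions of $\phi$, and both verify membership in $\mc N_{\mc F}(R)$ by taking the extension $\tilde\psi\colon T_1Q \to T_2Q$ supplied by $Q \triangleleft \mc N$, conjugating it by $\phi$, and using the identity $(T_iQ)\phi = (T_i\phi)R$. The only cosmetic difference is that you phrase the final check as conjugating the automorphism $\tilde\psi|_Q$ of $Q$ into an $\mc F$-automorphism of $R$, while the paper computes directly that $R$ is mapped onto $R$ — the same verification.
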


\begin{proof}
Note first that such a $\phi$ exists for all $R$ (see e. g.
\cite[Lemma~2.2]{kessar:linckelmann:2008}). For an object $T$ of $\mc N$
we define $\Phi(T) = T\phi$. Observe that $T \leq N_P(Q)$ so this definition
makes sense. Let now $\psi$: $T \to S$ be a morphism in $\mc N$. Then
$\Phi(\psi)$: $\Phi(T) \to \Phi(S)$ is defined as
\[
\Phi(\psi) = \psi^\phi = \phi_T^{-1} \psi \phi_S,
\]
where $\phi_T$ and $\phi_S$ denote the restrictions of $\phi$ to $T$ and
$S$, respectively.

\begin{center}
\begin{tikzpicture}
\node at (-1.3, 1) {$T$};
\draw[-latex] (-1.1, 1)--(1.1, 1);
\node[above] at (0, 1) {$\psi$};
\node at (1.3, 1) {$S$};
\draw[-latex] (-1.4, 0.7)--(-1.4, -0.7);
\node[left] at (-1.4, 0) {$\phi$};
\draw[-latex, dashed] (-1.2, -0.7)--(-1.2, 0.7);
\node[right] at (-1.2, 0) {$\phi_T^{-1}$};
\draw[-latex] (1.3, 0.7)--(1.3, -0.7);
\node[right] at (1.3, 0) {$\phi$};
\node at (-1.3, -1) {$T\phi$};
\draw[-latex, dotted] (-1, -1)--(1, -1);
\node[below] at (0, -1) {$\psi^\phi$};
\node at (1.3, -1) {$S\phi$};
\end{tikzpicture}
\end{center}

We claim $\Phi(\psi)$ is an $\mc N_{\mc F}(R)$-morphism.
Indeed, as $\psi$ is an $\mc N$-morphism and $Q \triangleleft \mc N$,
$\psi$ extends to a morphism $\tilde \psi$: $TQ \to SQ$ with $Q\tilde\psi
= Q$. Now, $TQ \leq N_P(Q)$ and hence $\tilde \psi^\phi$ is defined.
We have $(TQ)\phi = (T\phi) R$ and $(SQ)\phi = (S\phi)R$. By construction
$\tilde \psi^\phi$ extends $\psi^\phi$. Moreover,
\[
R\psi^\phi = R \phi_Q^{-1}\psi \phi = Q \psi\phi = R,
\]
so $\tilde \psi ^\phi$ extends $\psi^\phi$ in the required manner.
Therefore, $\Phi(\psi)$ is indeed a morphism in $\mc N_{\mc F}(R)$.

\begin{center}
\begin{tikzpicture}
\begin{scope}
\node[black!50] at (-1.3, 1) {$T$};
\draw[-latex, black!50] (-1.1, 1)--(1.1, 1);
\node[above, black!50] at (0, 1) {$\psi$};
\node[black!50] at (1.3, 1) {$S$};
\draw[-latex, black!50] (-1.4, 0.7)--(-1.4, -0.7);
\node[left, black!50] at (-1.4, 0) {$\phi$};
\draw[-latex, black!50] (-1.3, -0.7)--(-1.3, 0.7);
\node[right, black!50] at (-1.35, -0.3) {$\phi_T^{-1}$};
\draw[-latex, black!50] (1.3, 0.7)--(1.3, -0.7);
\node[right, black!50] at (1.3, 0) {$\phi$};
\node[black!50] at (-1.3, -1) {$T\phi$};
\draw[-latex, black!50] (-1, -1)--(1, -1);
\node[below, black!50] at (-0.1, -1) {$\psi^\phi$};
\node[black!50] at (1.3, -1) {$S\phi$};
\path[right hook-latex, black!75] (1.5, 0.7) edge (3.2, -1.6);
\path[right hook-latex, black!75] (-1.1, 0.7) edge (0.6, -1.6);
\path[right hook-latex, black!75] (1.5, -1.3) edge (3.1, -3.6);
\path[right hook-latex, black!75] (-1.1, -1.3) edge (0.4, -3.6);
\end{scope}

\begin{scope}[xshift=2cm,yshift=-3cm]
\node at (-1.3, 1) {$TQ$};
\draw[-latex] (-1, 1)--(1, 1);
\node[above] at (0.1, 1) {$\tilde\psi$};
\node at (1.3, 1) {$SQ$};
\draw[-latex] (-1.4, 0.7)--(-1.4, -0.7);
\node[left] at (-1.4, 0) {$\phi$};
\draw[-latex, dashed] (-1.2, -0.7)--(-1.2, 0.7);
\node[right] at (-1.2, 0) {$\phi_{TQ}^{-1}$};
\draw[-latex] (1.3, 0.7)--(1.3, -0.7);
\node[right] at (1.3, 0) {$\phi$};
\node at (-1.3, -1) {$T\phi R$};
\draw[-latex, dotted] (-0.85, -1)--(0.85, -1);
\node[below] at (0, -1) {$\tilde\psi^\phi$};
\node at (1.3, -1) {$S\phi R$};
\end{scope}
\end{tikzpicture}
\end{center}

It is straightforward that $\Phi$ preserves compositions and also that
$\Phi$ is injective.
\qed\end{proof}

% Denote the set of self-centralising normal (Abelian) subgroups of $P$ of
% rank at least $3$ by $SCN_3(P)$. Assume this set is non-empty for $P$.
% According to \cite[p. 294]{gorenstein:80}, we define
% \[
%  A_1(P) = \{ Q \leq P \ |\ Q \text{ contains an element of } SCN_3(P) \}
% \]
% and, for $i > 1$
% \begin{multline*}
%  A_i(P) = \{ Q \leq P \ |\ Q \geq R \cong C_p^2 \text{ such that }\\
%  C_P(x) \in A_{i-1}(P) \text{ for all } x \in R \setminus \{1\} \,\}.
% \end{multline*}
% Last, let
% \[
% \mathfrak Q = \bigcup_{i \geq 1} A_i(P).
% \]

\begin{theorem}[Maximal subgroup theorem]\label{thompsonsmaximal}
Let $\mc F$ be a fusion system defined on the $p$-group $P$.
Let $\mathfrak Q$ be a non-empty collection of non-trivial subgroups of $P$
satisfying the following property:
\begin{enumerate} %[$(i)$]
% \item If $Q \in \mathfrak Q$ and $Q \leq R$, then $R \in \mathfrak Q$.
\item[] If $Q \in \mathfrak Q$, and $\phi$: $Q \to R$ is an $\mc
F$-homomorphism, then $R \in \mathfrak Q$.
\end{enumerate}
Set
\[
\mathfrak N = \{ \mc N_{\mc F}(R)\ |\ 1 < R \leq P,\ R
\text{ fully $\mc F$-normalised and } N_P(R) \in \mathfrak Q \}.
\]
Assume each element of $\mathfrak N$ is constrained and $p$-stable.
Then $\mathfrak N$ has a unique maximal element.
\end{theorem}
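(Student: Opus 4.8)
The plan is to show that the unique maximal element is the normaliser system $\mc N_{\mc F}(Z(J(P)))$ of the centre of the Thompson subgroup of the whole of $P$, and to reach it from any member of $\mathfrak N$ by a climbing procedure that strictly enlarges the underlying $p$-group at each step. Since $P$ is finite, $\mathfrak N$ is a finite poset, so it suffices to prove that every element of $\mathfrak N$ is contained in one fixed element which itself lies in $\mathfrak N$.

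First I would attach to each member a canonical normal subgroup. Let $\mc N_{\mc F}(R)\in\mathfrak N$ with $S:=N_P(R)\in\mathfrak Q$. Being constrained, $\mc N_{\mc F}(R)$ has a model $L$, a $p$-constrained, $p'$-reduced group with Sylow $p$-subgroup $S$ and $\mc F_S(L)=\mc N_{\mc F}(R)$; by Theorem~\ref{stabeqpstab} the $p$-stability of the system passes to $L$. Glauberman's $ZJ$-theorem (valid since $p$ is odd and $L$ is $p$-stable and $p$-constrained with $O_{p'}(L)=1$) then yields $W:=Z(J(S))\trianglelefteq L$, and as $W\le S$ this gives $W\trianglelefteq\mc N_{\mc F}(R)$.

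Next comes the climbing step. Since $W$ need not be fully $\mc F$-normalised, I would choose a fully normalised $\mc F$-conjugate $W'$ of $W$ and invoke Lemma~\ref{normfunctor} (with $Q=W\trianglelefteq\mc N_{\mc F}(R)$) to obtain an injective functor $\mc N_{\mc F}(R)\hookrightarrow\mc N_{\mc F}(W')$. To keep the construction inside $\mathfrak N$ I would use the key observation that the inclusion $H\hookrightarrow K$ of any $H\le K\le P$ is an $\mc F$-morphism (axiom~$(i)$), so that the morphism-closure hypothesis on $\mathfrak Q$ forces $\mathfrak Q$ to be closed under overgroups as well. Consequently $S\in\mathfrak Q$ together with $S\le N_P(W)$ gives $N_P(W)\in\mathfrak Q$; the extending map $\phi\colon N_P(W)\to N_P(W')$ of Lemma~\ref{normfunctor} then carries $N_P(W)$ to a subgroup $N_P(W)\phi\in\mathfrak Q$ of $N_P(W')$, whence overgroup-closure again yields $N_P(W')\in\mathfrak Q$. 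Hence $\mc N_{\mc F}(W')\in\mathfrak N$, and by hypothesis it is itself constrained and $p$-stable, so the procedure repeats. Moreover, if $S<P$ then $S<N_P(S)\le N_P(W)$, so $|N_P(W')|\ge|N_P(W)|>|S|$ and the underlying group grows strictly.

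Because the orders strictly increase, the climb terminates at the stage where the underlying group is $P$ itself; there $W=Z(J(P))$ is characteristic in $P$, hence fully normalised with $N_P(W)=P$, so $P\in\mathfrak Q$ and $\mc N_{\mc F}(Z(J(P)))\in\mathfrak N$. As $Z(J(P))$ is canonically attached to $P$, every member of $\mathfrak N$ is carried by this climb into the single system $\mc N_{\mc F}(Z(J(P)))$, which is therefore the unique maximal element. The step I expect to be the main obstacle is precisely the bookkeeping that keeps each $ZJ$-normaliser inside $\mathfrak N$: one must pass $Z(J(S))$ to a fully $\mc F$-normalised conjugate without leaving the class (handled by Lemma~\ref{normfunctor}) and must verify the membership $N_P(W')\in\mathfrak Q$, for which the observation that $\mathfrak Q$ is overgroup-closed is decisive. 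Once this is secured, finiteness together with the strict growth of the underlying $p$-group delivers both existence and uniqueness.
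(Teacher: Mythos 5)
You correctly reduce the theorem to showing that every element of $\mathfrak N$ is contained in one fixed element of $\mathfrak N$, namely $\mc M = \mc N_{\mc F}(Z(J(P)))$, and your preparatory steps are sound and essentially those of the paper: passing $p$-stability to the model via Theorem~\ref{stabeqpstab}, applying Glauberman's $ZJ$-theorem to get $Z(J(N_P(R))) \triangleleft \mc N_{\mc F}(R)$, observing that morphism-closure of $\mathfrak Q$ forces closure under overgroups, and noting the strict growth $|N_P(W')| > |N_P(R)|$. The gap is in the climbing step itself. Lemma~\ref{normfunctor} produces an injective functor $\Phi$ whose effect on a morphism $\psi\colon T \to S$ of $\mc N_{\mc F}(R)$ is $\Phi(\psi) = \phi_T^{-1}\psi\phi_S$, a morphism between $T\phi$ and $S\phi$. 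This is an \emph{embedding}, not a \emph{containment}: the morphism $\psi$ itself is not shown to lie in $\mc N_{\mc F}(W')$, and in general $T$ is not even a subgroup of $N_P(W')$. Iterating the climb therefore only shows that each member of $\mathfrak N$ is \emph{isomorphic to a subsystem} of $\mc M$, which is strictly weaker than the containment your own reduction requires: a member of $\mathfrak N$ that embeds into $\mc M$ but is not contained in any other member would still be a second maximal element, so uniqueness does not follow.

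The missing ingredient---and the heart of the paper's proof---is to show that the connecting morphism $\phi\colon N_P(Z) \to N_P(Z^*)$ (where $Z = Z(J(N_P(R)))$ and $Z^*$ is a fully $\mc F$-normalised conjugate) is itself a morphism of $\mc M$; then every $\psi$ in $\mc N_{\mc F}(R)$ can be rewritten as $\psi = \phi_T\, \psi^\phi\, \phi_S^{-1}$, a composition of morphisms of $\mc M$, giving the genuine containment $\mc N_{\mc F}(R) \subseteq \mc M$. The paper achieves this by downward induction on $|N_P(R)|$ combined with Alperin's fusion theorem: write $\phi = \alpha_1\cdots\alpha_t\sigma$ with $\alpha_i \in \Aut_{\mc F}(L_i)$ for fully $\mc F$-normalised essential subgroups $L_i$ and $\sigma \in \Aut_{\mc F}(P)$; each $L_i$ contains an $\mc F$-isomorphic copy of $N_P(Z)$, so $L_i \in \mathfrak Q$ (your overgroup-closure observation) and $|N_P(L_i)| \geq |N_P(Z)| > |N_P(R)|$, whence $\mc N_{\mc F}(L_i) \subseteq \mc M$ by the induction hypothesis and $\alpha_i \in \mc M$, while $\sigma \in \Aut_{\mc F}(P) \subseteq \mc M$ trivially; the same induction gives $\psi^\phi \in \mc N_{\mc F}(Z^*) \subseteq \mc M$. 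Note that in the case $R \triangleleft P$ your argument does yield containment, since there $W = Z(J(P))$ is fully normalised, $\phi$ may be taken to be the identity, and $W \triangleleft \mc N_{\mc F}(R)$ says precisely that $\mc N_{\mc F}(R) \subseteq \mc M$; it is exactly at the stages $R \ntriangleleft P$ that Alperin's fusion theorem is indispensable, and your proposal never invokes it.
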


\begin{proof}
We prove that each element of $\mathfrak N$ is contained in
$\mc M = \mc N_\mc F(Z(J(P)))$.
%, where $J(P)$ denotes the Thompson subgroup of $P$
First assume $R \triangleleft P$. Then $\mc N_{\mc F}(R)$ is
defined on $P$. As $\mc N_{\mc F}(R)$ is constrained and $p$-stable
by assumption, it has a model $L$ which is $p$-constrained, $p'$-reduced
and $p$-stable. Then $C_L(O_p(L)) \subseteq O_p(L)$ and Theorem~A
of \cite{glauberman:68} applies. Therefore, $Z(J(P))$ is normal in
$L$, whence $Z(J(P)) \triangleleft \mc N_{\mc F}(R)$. So $\mc N_{\mc F}(R)
\subseteq \mc M$.

Let now $R \ntriangleleft P$ and assume $\mc N_{\mc F}(S) \subseteq
\mc M$ for all fully $\mc F$-normalised subgroups $S$ of
$P$ satisfying $N_P(S) \in \mathfrak Q$ and $|N_P(S)| > |N_P(R)|$. Now,
$\mc N_{\mc F}(R)$ is defined on $N_P(R)$ and by the above argument
$Z = Z(J(N_P(R))) \triangleleft \mc N_{\mc F}(R)$. Let $Z^*$ be a fully
$\mc F$-normalised subgroup of $P$ that is $\mc F$-isomorphic to $Z$.
Let $\phi$: $N_P(Z) \to N_P(Z^*)$ be an $\mc F$-morphism. By Alperin's
fusion theorem to fusion systems there is a sequence
\[
N_P(Z) = S_0 \sim S_1 \sim \ldots \sim S_t \sim S_{t+1} \subseteq N_P(Z^*)
\]
of subgroups of $P$, there are fully $\mc F$-normalised (and essential)
subgroups $L_1$, $\ldots$, $L_t$ of $P$ such that $S_{i-1}$, $S_i \leq L_i$
for all $1 \leq i \leq t$, there are morphisms $\alpha_i \in
\Aut_{\mc F}(L_i)$ with $S_{i-1} \alpha_i = S_i$ (for all $1 \leq i \leq
t$) and there is a morphism $\sigma \in \Aut_\mc F(P)$ such that $\phi =
\alpha_1 \alpha_2 \ldots \alpha_t \sigma$. Now,
\[
|N_P(L_i)| \geq |L_i| \geq |S_i| = |N_P(Z)| > |N_P(R)|
\]
as $Z$ is characteristic in $N_P(R) < P$. Moreover, $L_i$ contains $S_i$,
a subgroup of $P$ which is $\mc F$-isomorphic to $N_P(Z)$. Hence $L_i \in
\mathfrak Q$. Therefore, by assumption $\mc N_{\mc F}(L_i) \subseteq \mc M$
holds for all relevant $i$. Observe that $\sigma \in \mc M$ is trivial.
Thus
\[
\phi = \alpha_1 \ldots \alpha_t \in 
\mc M
\]
also holds.

By Lemma~\ref{normfunctor} for each $\psi \in \mc N_{\mc F}(R)$
we have $\psi^\phi \in \mc N_{\mc F}(Z^*)$, because $Z$ is normal in
$\mc N_{\mc F}(R)$. Now, $|N_P(Z^*)| \geq |N_P(Z)| > |N_P(R)|$ and by
construction $N_P(Z^*) \in \mathfrak Q$. Hence $\psi^\phi \in
\mc N_{\mc F}(Z^*) \subseteq \mc M$ by assumption. Therefore,
\[
\psi = \phi_T \psi^\phi \phi_S^{-1} \in \mc M
\]
and so $\mc N_{\mc F}(R) \subseteq \mc M$ which proves the theorem.
\qed\end{proof}

Theorem~\ref{thompsonsmaximal} has the following consequence:

\begin{prop}\label{Opne1}
Let $\mc F$ be a fusion system and assume $\mc N_\mc F(Q)$ is constrained
and $p$-stable for all fully $\mc F$-normalised subgroups $Q \ne 1$ of $P$.
Then $Z(J(P)) \triangleleft \mc F$, so $O_p(\mc F) \ne 1$ and hence $\mc F$
is constrained and $p$-stable.
\end{prop}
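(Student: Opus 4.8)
The plan is to deduce this directly from the maximal subgroup theorem, Theorem~\ref{thompsonsmaximal}, by taking the collection $\mathfrak Q$ as large as it is allowed to be. Concretely, I would let $\mathfrak Q$ be the collection of \emph{all} non-trivial subgroups of $P$. This is trivially closed under $\mc F$-homomorphisms: if $1 \ne Q \leq P$ and $\phi\colon Q \to R$ is an $\mc F$-homomorphism, then $R = Q\phi$ is again a non-trivial subgroup of $P$ since $\phi$ is injective. With this choice, for every fully $\mc F$-normalised $R \ne 1$ the subgroup $N_P(R)$ contains $R$, hence is a non-trivial subgroup of $P$, so $N_P(R) \in \mathfrak Q$ automatically. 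Thus $\mathfrak N$ is exactly the set of all $\mc N_{\mc F}(R)$ with $R \ne 1$ fully $\mc F$-normalised, and the hypothesis of the present proposition is precisely the hypothesis of Theorem~\ref{thompsonsmaximal}.

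Theorem~\ref{thompsonsmaximal} then produces a unique maximal element of $\mathfrak N$; inspecting its proof, that element is $\mc M = \mc N_{\mc F}(Z(J(P)))$ and every member of $\mathfrak N$ is contained in $\mc M$. Note that $Z(J(P))$ is characteristic in $P$, so $N_P(Z(J(P))) = P$; hence $Z(J(P))$ is fully $\mc F$-normalised and $\mc M$ is itself a member of $\mathfrak N$, defined on all of $P$. The essential remaining step is to upgrade the containment ``every element of $\mathfrak N$ lies in $\mc M$'' to the equality $\mc F = \mc M$. For this I would invoke Alperin's fusion theorem for fusion systems, exactly as in the proof of Theorem~\ref{thompsonsmaximal}: every $\mc F$-morphism factors as a composite of automorphisms $\alpha_i \in \Aut_{\mc F}(L_i)$ of fully $\mc F$-normalised (essential) subgroups $L_i$, followed by some $\sigma \in \Aut_{\mc F}(P)$. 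Each $L_i$ is non-trivial and fully $\mc F$-normalised, so $\mc N_{\mc F}(L_i) \in \mathfrak N$ and therefore $\alpha_i \in \Aut_{\mc F}(L_i) \subseteq \mc N_{\mc F}(L_i) \subseteq \mc M$; and $\sigma$ fixes the characteristic subgroup $Z(J(P))$, so it restricts to an $\mc F$-automorphism of $Z(J(P))$ and lies in $\mc N_{\mc F}(Z(J(P))) = \mc M$. Consequently every $\mc F$-morphism is an $\mc M$-morphism, giving $\mc F = \mc M$.

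Once $\mc F = \mc M = \mc N_{\mc F}(Z(J(P)))$ is established, the three conclusions follow immediately. By the definition of normality in a fusion system this equality says exactly $Z(J(P)) \triangleleft \mc F$. Since $P \ne 1$, its Thompson subgroup is non-trivial and so is its centre, giving $O_p(\mc F) \supseteq Z(J(P)) \ne 1$. Finally, because $\mc F = \mc M$ is itself a member of $\mathfrak N$, it is constrained and $p$-stable by the standing hypothesis, which is the last assertion.

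The one genuine obstacle is the passage from the containment statement delivered by Theorem~\ref{thompsonsmaximal} to the equality $\mc F = \mc M$: the maximal subgroup theorem only controls the subsystems collected in $\mathfrak N$, and it is Alperin's fusion theorem that lets the generators of the whole of $\mc F$ be recognised inside $\mc M$. Everything else — verifying that $\mathfrak Q$ meets the closure hypothesis, and that $Z(J(P))$ is characteristic and hence fully $\mc F$-normalised — is routine.
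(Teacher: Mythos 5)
Your proposal is correct and follows essentially the same route as the paper: it invokes Theorem~\ref{thompsonsmaximal} with $\mathfrak Q$ the collection of all non-trivial subgroups of $P$, and then uses Alperin's fusion theorem to decompose an arbitrary $\mc F$-morphism into automorphisms of non-trivial fully $\mc F$-normalised subgroups (each lying in $\mc N_{\mc F}(Z(J(P)))$ by maximality) together with an automorphism of $P$, yielding $\mc F = \mc N_{\mc F}(Z(J(P)))$. Your explicit justification that $\sigma \in \Aut_{\mc F}(P)$ lies in $\mc N_{\mc F}(Z(J(P)))$ because $Z(J(P))$ is characteristic in $P$, and your closing observation that $\mc F$, being a member of $\mathfrak N$, inherits constraint and $p$-stability from the hypothesis, merely spell out steps the paper leaves implicit.
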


\begin{proof}
Let $Z = Z(J(P))$. With the set $\mathfrak Q = \{ 1 < Q \leq P \}$
%\ |\ Q$ is fully $\mc F$-normalised $\}$ 
the conditions of Theorem~\ref{thompsonsmaximal} are
certainly satisfied. Hence $\mc N_\mc F(Z)$ is the unique maximal
element of the set
\[
\mathfrak N = \{ \mc N_{\mc F}(R)\ |\ 1 < R \leq P,\ R
\text{ fully $\mc F$-normalised} \}.
\]
%defined in Theorem~\ref{thompsonsmaximal}.
We show $\mc F = \mc N_\mc F(Z)$. To this end, let $\phi$: $T \to S$ be
a morphism in $\mc F$. By Alperin's fusion theorem, there are subgroups
\[
T = T_0 \sim T_1 \sim \ldots \sim T_t \sim T_{t+1} = T\phi \leq S
\]
of $P$ and for all $i = 1$, $\ldots$, $t$, there are fully
$\mc F$-normalised essential subgroups $L_i \leq P$ with $T_{i-1}$, $T_i
\leq L_i$ and automorphisms $\tau_i \in \Aut_\mc F(L_i)$ with $T_{i-1}
\tau_i = T_i$ and an automorphism $\sigma \in \Aut_\mc F(P)$ such
that $\phi = \tau_1 \tau_2 \ldots \tau_t \sigma$. By assumption, for each
$1 \leq i \leq t$ we have
\[
\tau_i \in \mc N_\mc F(L_i) \subseteq \mc N_\mc F(Z)
\]
as $L_i \ne 1$ is fully $\mc F$-normalised. On the other hand, $\sigma
\in \mc N_\mc F(Z)$ trivially holds.
It follows then that $\phi \in \mc N_\mc F(Z)$ and hence $Z \triangleleft
\mc F = \mc N_\mc F(Z)$, whence $O_p(\mc F) \supseteq Z \ne 1$.
%Then by Proposition~\ref{essmax} $Z(J(P)) \triangleleft \mc F$, so
%$O_p(\mc F) \ne 1$, in fact, it contains $Z(J(P))$. Moreover, $\mc F$ is
%constrained (and $p$-stable) since it is the normaliser of $Z(J(P))$.
\end{proof}

Concerning groups, we have the following corollary:

\begin{cor}\label{conststaballO_pne1}
% Let $\mc F = \mc F_P(G)$ for some $p$-stable group $G$ with Sylow
% $p$-subgroup $P$. If $\mc N_\mc F(Q)$ is constrained for all fully
% $\mc F$-normalised subgroups $Q \ne 1$ of $P$, then a local subgroup $N_G(R)$
% of $G$ controls strong fusion in $P$ for some subgroup $R \ne 1$ of $P$.
% 
Let $G$ be a $p$-stable group with Sylow $p$-subgroup $P$. Assume all
$p$-local
subgroups $N_G(Q)$ of $G$ (with $Q \ne 1$) are $p$-constrained. Then the
subgroup $N_G(Z(J(P)))$ controls strong fusion in $P$.
\end{cor}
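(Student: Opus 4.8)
The plan is to translate the statement into the language of fusion systems and apply Proposition~\ref{Opne1} to $\mc F = \mc F_P(G)$. First I would record that $\mc F$ is $p$-stable: since $G$ is $p$-stable, this is immediate from Theorem~\ref{stabeqpstab}. The central object is, for each nontrivial fully $\mc F$-normalised subgroup $Q \leq P$, the normaliser system $\mc N_{\mc F}(Q)$, and I would use the standard identification $\mc N_{\mc F}(Q) = \mc F_{N_P(Q)}(N_G(Q))$, which is valid because $Q$ being fully $\mc F$-normalised forces $N_P(Q)$ to be a Sylow $p$-subgroup of $N_G(Q)$.

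Next I would verify the two hypotheses of Proposition~\ref{Opne1} for every such $Q$. For $p$-stability: $\mc N_{\mc F}(Q)$ is a subsystem of the $p$-stable system $\mc F$, so Proposition~\ref{subsyststable} applies directly (equivalently, $N_G(Q)$ is $p$-stable by Proposition~\ref{def2tosubgroup} and one invokes Theorem~\ref{stabeqpstab}). For constrainedness: by hypothesis the $p$-local subgroup $N_G(Q)$ is $p$-constrained, and since $Q \ne 1$ is a normal $p$-subgroup of $N_G(Q)$ we have $O_p(N_G(Q)) \ne 1$; passing to the $p'$-reduced quotient $N_G(Q)/O_{p'}(N_G(Q))$, which is $p$-constrained with trivial $O_{p'}$, exhibits a model of $\mc N_{\mc F}(Q)$, so $\mc N_{\mc F}(Q)$ is constrained by the model theory recalled in Section~\ref{prelims}.

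With both hypotheses in hand, Proposition~\ref{Opne1} yields $Z(J(P)) \triangleleft \mc F$, that is, $\mc F = \mc N_{\mc F}(Z(J(P)))$. Because $Z(J(P))$ is characteristic in $P$ we have $N_P(Z(J(P))) = P$, whence $\mc N_{\mc F}(Z(J(P))) = \mc F_P(N_G(Z(J(P))))$ and therefore $\mc F_P(G) = \mc F_P(N)$ where $N = N_G(Z(J(P))) \geq P$. Finally I would convert this equality of fusion systems back into control of strong fusion: given $A \leq P$ and $g \in G$ with $A^g \leq P$, the conjugation $c_g|_A$ lies in $\Hom_{\mc F_P(G)}(A,P) = \Hom_{\mc F_P(N)}(A,P)$, so it equals $c_n|_A$ for some $n \in N$; then $gn^{-1} \in C_G(A)$ gives $g \in C_G(A)N$, which is exactly the assertion that $N$ controls strong fusion in $P$.

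The main obstacle I anticipate is the constrainedness step: one must check carefully that $p$-constrainedness of the group $N_G(Q)$ really does transfer to constrainedness of the associated fusion system, which requires handling $O_{p'}(N_G(Q))$ correctly and appealing to the correspondence between constrained systems and their models. The identification $\mc N_{\mc F}(Q) = \mc F_{N_P(Q)}(N_G(Q))$ and the concluding equivalence between the equality $\mc F_P(G) = \mc F_P(N)$ and control of strong fusion are standard, but I would state both explicitly, the latter being precisely the converse of the implication used in the proof of Theorem~\ref{Qdpfreesol}.
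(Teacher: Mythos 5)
Your proof is correct and takes essentially the same route as the paper: identify $\mc N_{\mc F}(Q)$ with $\mc F_{N_P(Q)}(N_G(Q))$ for each non-trivial fully $\mc F$-normalised $Q$, check that these subsystems are $p$-stable and constrained, and apply Proposition~\ref{Opne1} to conclude $Z(J(P)) \triangleleft \mc F_P(G)$, hence $\mc F_P(G) = \mc F_P(N_G(Z(J(P))))$. The details you supply — $p$-stability of the normaliser subsystems via Proposition~\ref{subsyststable}, constrainedness via the model $N_G(Q)/O_{p'}(N_G(Q))$, and the explicit translation of the equality of fusion systems into control of strong fusion — are precisely the steps the paper's terser proof asserts without elaboration.
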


\begin{proof}
Let $\mc F = \mc F_P(G)$. Then $\mc N_\mc F(Q) = \mc F_{N_P(Q)}(N_G(Q))$ is
$p$-stable and constrained for all non-trivial fully $\mc F$-normalised
subgroups of $P$. Hence Proposition~\ref{Opne1} applies, so $O_p(\mc F)
\geq Z(J(P)) \triangleleft \mc F$. As $Z(J(P))$ is fully $\mc F$-normalised,
$\mc F = \mc N_\mc F(Z(J(P)))$ is the fusion system of $N_G(Z(J(P)))$,
that is, $N_G(Z(J(P)))$ controls strong fusion in $P$.
\qed\end{proof}

\begin{rem}
\begin{enumerate}[$(i)$]
\item The assumptions in Proposition~\ref{Opne1} and
Corollary~\ref{conststaballO_pne1} are strict in the following sense: The
condition that the normaliser systems (or the normalisers in the group)
are $p$-stable cannot be omitted even if it is assumed that the normalisers
are soluble (instead of being constrained). Let namely $G = L_3(3)$, $P$ a
Sylow $3$-subgroup of $G$. Then $G$ is a minimal simple group so that the
local subgroups of $G$ are soluble and hence so are the normaliser systems in
$\mc F_P(G)$. However, the fusion system $\mc F_P(G)$ has no non-trivial
normal subgroups it follows from Theorem~1.2
in~\cite[p. 455]{flores:foote:2009}.
% , see also Section~\ref{solublesimple} of the present paper.

% \item If $G$ is a minimal simple group with Sylow $p$-subgroup $P$, where
% $p > 3$, then $\mc F = \mc F_P(G)$ is soluble. In this case, all local
% subgroups of $G$ are soluble and hence also $p$-stable and $p$-constrained.
% It follows then that $\mc N_\mc F(Q)$ is $p$-stable and constrained for all
% fully normalised subgroups $Q$ of $P$. Then, by Proposition~\ref{Opne1}
% $O_p(\mc F) \ne 1$ and hence $\mc F = \mc N_\mc F(O_p(\mc F))$ is soluble.

\item If $G$ is $p$-soluble (for $p > 3$), then Theorem~C
in~\cite[p. 1105]{glauberman:68} asserts that $N_G(Z(J(P)))$ controls strong
fusion in $P$. It follows from the results of Sections~\ref{Qdsecdefchar}-%
\ref{Qdsecnondefchar}
that the fusion system of a finite simple group $G$ is soluble if and only if
$Z(J(P)) \triangleleft \mc F_P(G)$, that is, if and only if $N_G(Z(J(P)))$
controls strong fusion in $P$. The same is not true in general: the fusion
system of $Qd(p)$ is soluble. For its Sylow $p$-subgroup $P$ we have $J(P)
= P$, so $Z(J(P)) = Z(P)$ has order $p$. Its normaliser is the subgroup $V$
of order $p^2$ (see Example~\ref{qdp}) which certainly does not control the
fusion in $Qd(p)$.
\end{enumerate}
\end{rem}

\section{On {\itshape Qd}({\itshape p})-free fusion systems}
\label{secQdpfree}

For groups, there is a strong connection between $p$-stability and not
involving $Qd(p)$. A corresponding notion for fusion systems is defined
in~\cite[Def.~1.1]{kessar:linckelmann:2008}.

Let $Q$ be a fully $\mc F$-normalised $\mc F$-centric subgroup of $P$.
We examine the normaliser $\mc N = \mc N_{\mc F}(Q)$ of $Q$ in $\mc F$. We
claim $\mc N$ is constrained. Indeed, $Q \leq O_p(\mc N)$, so
\[
O_p(\mc N) \geq Q \geq C_P(Q) \geq C_P(O_p(\mc N))
\]
as $Q$ is $\mc F$-centric. Therefore, $\mc N$ has a model.

\begin{Def}\label{Qdpfree}
Let $\mc F$ be a fusion system on the $p$-group P. $\mc F$ is called
{\itshape $Qd(p)$-free} if $Qd(p)$ is not involved in the models of
$\mc N_{\mc F}(Q)$, where $Q$ runs over the set of $\mc F$-centric fully
$\mc F$-normalised subgroups of $P$.

We shall also call a group {\itshape $Qd(p)$-free} if it does not involve
$Qd(p)$.
\end{Def}

\begin{rem}
Though it is not stated explicitly there, it follows
from~\cite{kessar:linckelmann:2008} that a $Qd(p)$-free
fusion system $\mc F$ is soluble. Indeed, Theorem~B asserts that $Z(J(P))$
is normal in $\mc F$. Now, by Proposition~6.4, $\mc F/Z(J(P))$ is also
$Qd(p)$-free. Since $Z(J(P))$ is non-trivial, the claim follows by
induction.

As the next example shows, a soluble fusion system need not be $Qd(p)$-free.
\end{rem}

\begin{example}
The fusion system of $Qd(p)$ is not $Qd(p)$-free: the subgroup $V$ (as in
Example~\ref{qdp}) is certainly fully $\mc F$-normalised and $\mc F$-centric,
its normaliser is the whole fusion system. The model of the fusion system
is the group $Qd(p)$ itself, being $p$-constrained and $p'$-reduced.
\end{example}

Being soluble, a $Qd(p)$-free fusion system $\mc F$ is constrained and
hence it has a model. By definition, a model of $\mc F =
\mc N_\mc F(O_p(\mc F))$ is $Qd(p)$-free. %Our next result shows that 
Not only is a model of $\mc F$ $Qd(p)$-free, but also every group $G$ such
that $\mc F = \mc F_P(G)$ is $Qd(p)$-free, as the next result shows.

\begin{theorem}\label{freeinv}
Let $G$ be a group, $P$ a Sylow $p$-subgroup of $G$ and $\mc F = \mc F_P(G)$
the fusion system of $G$ on $P$. Then $\mc F$ is $Qd(p)$-free if and only
if $G$ does not involve $Qd(p)$.
\end{theorem}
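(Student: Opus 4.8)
The plan is to translate the statement into facts about the subgroups $N_G(Q)$ and then invoke Glauberman's theorem. The first step is to identify the relevant models. For a fully $\mc F$-normalised, $\mc F$-centric subgroup $Q$ one has $\mc N_{\mc F}(Q) = \mc F_{N_P(Q)}(N_G(Q))$, and, since $Q$ is $\mc F$-centric, $C_G(Q) = Z(Q) \times O_{p'}(C_G(Q))$; hence $N_G(Q)/O_{p'}(N_G(Q))$ is $p'$-reduced and $p$-constrained and realises $\mc N_{\mc F}(Q)$, so by uniqueness of models (\cite{brotoetal}) it \emph{is} the model of $\mc N_{\mc F}(Q)$. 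In particular every such model is a section of $G$. I also record the elementary remark that a finite group $H$ involves $Qd(p)$ if and only if $H/O_{p'}(H)$ does: a section isomorphic to $Qd(p)$ maps in $H/O_{p'}(H)$ onto $Qd(p)/W$ with $W$ a normal $p'$-subgroup of $Qd(p)$, and $O_{p'}(Qd(p)) = 1$ forces $W = 1$. Combining these, the model of $\mc N_{\mc F}(Q)$ involves $Qd(p)$ exactly when $N_G(Q)$ does, so Theorem~\ref{freeinv} is equivalent to
\[ G \text{ is } Qd(p)\text{-free} \iff N_G(Q)\ \text{is}\ Qd(p)\text{-free for every } \mc F\text{-centric, fully } \mc F\text{-normalised } Q. \]

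The implication ``$G$ is $Qd(p)$-free $\Rightarrow$ every $N_G(Q)$ is $Qd(p)$-free'' is immediate: each $N_G(Q)$ is a subgroup of $G$, and a section of a subgroup is a section of $G$, so involvement of $Qd(p)$ passes to subgroups. This already yields the ``if $G$ does not involve $Qd(p)$'' half of the theorem.

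For the converse I argue by contraposition. Assuming $G$ involves $Qd(p)$, I must produce an $\mc F$-centric fully $\mc F$-normalised $Q$ whose normaliser involves $Qd(p)$. First I localise the obstruction: by Theorem~\ref{glaubthm}, $G$ has a non-$p$-stable section, so by Proposition~\ref{glaubpropforsections} some $N_G(R)/R$ is not $p$-stable; a group that is itself non-$p$-stable involves $Qd(p)$ (apply Theorem~\ref{glaubthm} to it as a section of itself), whence $N_G(R)$ involves $Qd(p)$. If this only happens for $R = 1$, then $G$ itself is non-$p$-stable and Corollary~\ref{localpstab} supplies a non-cyclic, hence nontrivial, $Q_0$ with $N_G(Q_0)$ non-$p$-stable. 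Either way, some nontrivial $p$-local $N_G(Q_0)$ involves $Qd(p)$.

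The hard part, which I expect to be the genuine obstacle, is to replace $Q_0$ by an $\mc F$-centric subgroup. I would do this by a minimal-counterexample induction on $|G|$. After reducing to $O_{p'}(G)=1$ (legitimate by the remark above, since the $O_{p'}$-quotient has the same fusion system and the same $\mc F$-centric subgroups), I assume every $\mc F$-centric local $N_G(Q)$ is $Qd(p)$-free; then each is section $p$-stable, hence $p$-stable, and constrained. Feeding this into the Alperin/essential-subgroup argument behind Proposition~\ref{Opne1} — which only tests the essential (hence $\mc F$-centric) subgroups and $P$ itself — forces $Z(J(P)) \triangleleft \mc F$, and one then reduces modulo $Z(J(P))$ to a proper section exactly as in the proof of Theorem~\ref{Qdpfreesol}, contradicting minimality. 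The delicate step is the transfer of $Qd(p)$-involvement across this reduction: the passage from $G$ to $N_G(Z(J(P)))$ I would justify by Glauberman's $ZJ$-theorem (control of strong fusion, as in Corollary~\ref{conststaballO_pne1}), while the central quotient by $Z(J(P))$ requires a careful check that the elementary abelian normal subgroup $V$ of a putative $Qd(p)$-section cannot be absorbed into $Z(J(P))$, using that the only normal $p$-subgroups of $Qd(p)$ are $1$ and $V$.
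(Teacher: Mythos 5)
Your reduction of the theorem to the statement ``$G$ is $Qd(p)$-free if and only if $N_G(Q)$ is $Qd(p)$-free for every $\mc F$-centric fully $\mc F$-normalised $Q$'' is sound: it is the paper's Proposition~\ref{modelofN} combined with your (correct) remark on $O_{p'}$-quotients, and both the easy direction and your localisation of the obstruction to some nontrivial $p$-local $N_G(Q_0)$ (via Theorem~\ref{glaubthm}, Proposition~\ref{glaubpropforsections} and Corollary~\ref{localpstab}) are fine. The proposal breaks down exactly where you say the hard part is, and on two counts. First, it is not true that the argument behind Proposition~\ref{Opne1} ``only tests the essential subgroups and $P$ itself.'' The induction in Theorem~\ref{thompsonsmaximal}, on which Proposition~\ref{Opne1} rests, invokes the constrained-and-$p$-stable hypothesis for $\mc N_{\mc F}(R)$ with $R$ ranging over \emph{all} nontrivial fully $\mc F$-normalised subgroups: it needs a model of each such $\mc N_{\mc F}(R)$ in order to apply Glauberman's ZJ-theorem to it, and it applies the inductive hypothesis to the normalisers of the subgroups $Z(J(N_P(R)))$ and of their fully normalised conjugates $Z^*$, none of which need be $\mc F$-centric. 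Deducing $Z(J(P)) \triangleleft \mc F$ from $Qd(p)$-freeness, i.e.\ from centric data alone, is precisely Theorem~B of \cite{kessar:linckelmann:2008} --- a substantial theorem in its own right, not a rerun of Proposition~\ref{Opne1} with fewer subgroups.

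Second, and fatally, the transfer of $Qd(p)$-involvement from $G$ to $N_G(Z(J(P)))$ is unjustified. Glauberman's ZJ-theorem and Corollary~\ref{conststaballO_pne1} presuppose $p$-stability (and $p$-constraint) of the group and its locals, which is exactly what you cannot assume of a $G$ that involves $Qd(p)$; and control of (even strong) fusion does not transfer involvement of sections --- for a simple group with Abelian Sylow $p$-subgroup, $N_G(P)$ controls fusion by Burnside's theorem yet involves almost nothing of what $G$ involves. Worse, the statement ``if $N$ controls fusion in $G$ then $Qd(p)$-involvement passes from $G$ to $N$'' is, after your own model/$O_{p'}$ reductions, essentially equivalent to Theorem~\ref{freeinv} itself, so appealing to it is circular. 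Consequently your minimal counterexample never yields a smaller one and the contraposition never closes. This transfer is the real content of the theorem, and the paper supplies it by a direct construction (Proposition~\ref{QdpinNpcentric}): a Frattini argument (Lemma~\ref{plocal}) places a $Qd(p)$-section inside $N_G(\tilde V)$; then $Q$ is taken to be a Sylow $p$-subgroup of $\tilde V C_G(\tilde V/W)$, which is $p$-centric by Lemma~\ref{constpcentric}; finally $N_G(Q)$ is shown to be non-$p$-stable by exhibiting explicit elements, whence it involves $Qd(p)$ by Theorem~\ref{glaubthm}. Some argument of this kind --- producing a centric subgroup whose normaliser is provably non-$p$-stable --- is what is missing from your proof.
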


In order to prove this theorem, we need some preparation.

\begin{Def}\label{p-centricdef}
A $p$-subgroup $Q$ of $G$ is called {\itshape $p$-centric} if every
$p$-element centralising $Q$ is contained in $Q$.
\end{Def}

Note that $Q$ is $p$-centric if and only if $C_P(Q) \leq Q$ for
{\slshape all} Sylow $p$-subgroups $P$ of $G$ containing $Q$. In this case,
$Z(Q)$ is a Sylow $p$-subgroup of $C_G(Q)$ and by Burnside's normal
$p$-complement theorem it follows that $C_G(Q) = Z(Q) \times O_{p'}(C_G(Q))$.

\begin{lem}\label{F=pcentric}
Let $G$ be a group with Sylow $p$-subgroup $P$ and let $\mc F = \mc F_P(G)$
be its fusion system on $P$. Let furthermore $Q$ be a fully normalised
subgroup of $P$. Then $Q$ is $\mc F$-centric if and only if it is
$p$-centric.
%%%% Kell a fully normalised hozzá????
\end{lem}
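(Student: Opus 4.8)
The plan is to prove both implications by transporting the relevant centralisers along $G$-conjugation, using the fact that every $\mc F$-morphism with domain $Q$ is a conjugation map $c_{g,Q,R}$ for some $g \in G$ with $Q^g \leq R \leq P$, and that the image $Q\phi = Q^g$ then runs over exactly the $G$-conjugates of $Q$ contained in $P$. The comparison between the two notions is thus purely a matter of moving between the fixed Sylow subgroup $P$ (on which $\mc F$ lives) and an arbitrary Sylow subgroup of $G$.

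For the direction ``$p$-centric $\Rightarrow$ $\mc F$-centric'', I would first record that $p$-centricity is invariant under $G$-conjugation: if every $p$-element of $G$ centralising $Q$ lies in $Q$, then for any $g \in G$ every $p$-element centralising $Q^g$ lies in $Q^g$ (conjugate back by $g^{-1}$). Now let $\phi \in \Hom_{\mc F}(Q,R)$ be arbitrary, say $\phi = c_{g,Q,R}$ with $Q\phi = Q^g \leq P$. Every $y \in C_P(Q^g)$ is a $p$-element of $G$ centralising $Q^g$, so $y \in Q^g = Q\phi$; hence $C_P(Q\phi) \subseteq Q\phi$. As $\phi$ was arbitrary, $Q$ is $\mc F$-centric.

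For the converse ``$\mc F$-centric $\Rightarrow$ $p$-centric'', I would start with a $p$-element $x \in G$ centralising $Q$ and show $x \in Q$. Since $[Q,x]=1$, the product $Q\langle x\rangle$ is a $p$-subgroup of $G$ (an extension of the $p$-group $Q$ by the $p$-group $\langle x\rangle/(\langle x\rangle \cap Q)$), so it lies in some Sylow $p$-subgroup $P'$ of $G$. Choosing $g \in G$ with $(P')^g = P$ gives $Q^g \leq P$ and $x^g \in (P')^g = P$, and $x^g$ centralises $Q^g$, so $x^g \in C_P(Q^g)$. The map $\phi = c_{g,Q,P}$ is an $\mc F$-morphism with domain $Q$ and $Q\phi = Q^g$, so $\mc F$-centricity yields $C_P(Q^g) \subseteq Q^g$; therefore $x^g \in Q^g$ and $x \in Q$.

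The only genuinely technical points are producing the $p$-overgroup $Q\langle x\rangle$ and selecting a conjugating element that carries its Sylow overgroup onto the fixed $P$; once this is in place, both inclusions follow at once from the definition, which conveniently quantifies over \emph{all} morphisms $\phi$ (equivalently, over all maps $c_g$). I expect no serious obstacle beyond this bookkeeping. I note that the argument appears to work for an arbitrary subgroup $Q$, so the hypothesis that $Q$ be fully normalised does not seem to be used; I would nonetheless keep the lemma in the stated form, as that is how it will be invoked in the proof of Theorem~\ref{freeinv}.
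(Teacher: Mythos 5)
Your proposal is correct and follows essentially the same route as the paper: both proofs unfold the definition of $\mc F_P(G)$-morphisms as conjugation maps and transport centralisers along $G$-conjugation, with your second direction spelling out explicitly (via the $p$-overgroup $Q\langle x\rangle$ and Sylow's theorem) the observation the paper records immediately after its definition of $p$-centric, namely that $p$-centricity is equivalent to $C_{P^*}(Q)\leq Q$ for all Sylow $p$-subgroups $P^*$ containing $Q$. Your remark that the fully normalised hypothesis is not needed is also accurate; the paper's proof does not use it either.
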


\begin{proof}
$Q$ is $\mc F$-centric if and only if $C_P(Q^t) \subseteq Q^t$ holds
whenever $Q^t \leq P$. This means that $Q \supseteq C_{P^*}(Q)$ for all
Sylow $p$-subgroups $P^*$ of $G$ containing $Q$. This is equivalent to
saying that $Q$ is $p$-centric.
\qed\end{proof}

\begin{lem}\label{op'factor}
Let $G$ be a group, $P$ a Sylow $p$-subgroup of $G$. Then
\[
\mc F_P(G) = \mc F_P(G/O_{p'}(G)).
\]
Here, we identify $O_{p'}(G)P / O_{p'}(G)$ with $P$.
\end{lem}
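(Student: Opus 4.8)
The plan is to set $K = O_{p'}(G)$ and $\bar G = G/K$, and to show that the two fusion systems have \emph{literally the same} morphisms once each subgroup of $P$ is identified with its image in $\bar G$. First I would record that $P \cap K = 1$, since $P \cap K$ is simultaneously a $p$-group and a $p'$-group; consequently the canonical map $P \to \bar P := PK/K$ is an isomorphism, and each $Q \le P$ maps isomorphically onto $\bar Q := QK/K$. Under this identification a morphism $c_{g,Q,R} \in \Hom_{\mc F_P(G)}(Q,R)$ (the map $x \mapsto x^g$ with $Q^g \le R$) and its induced image $c_{\bar g, \bar Q, \bar R}$ become the very same function, because $\overline{x^g}$ corresponds to $x^g$. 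Thus the lemma reduces to the set equality $\Hom_{\mc F_P(G)}(Q,R) = \Hom_{\mc F_P(\bar G)}(\bar Q, \bar R)$ for all $Q, R \le P$, and in particular the induced map between Hom-sets is automatically faithful, so only fullness is at issue.

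The inclusion $\subseteq$ is immediate: if $g \in G$ satisfies $Q^g \le R$, then $\bar Q^{\bar g} = \overline{Q^g} \le \bar R$, so the induced morphism lies in $\mc F_P(\bar G)$.

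For the reverse inclusion I would take a morphism of $\mc F_P(\bar G)$, say induced by $\bar g \in \bar G$ with $\bar Q^{\bar g} \le \bar R$, and lift $\bar g$ to some $g \in G$. The condition $\bar Q^{\bar g} \le \bar R$ unwinds (using $Q^g K/K = \bar Q^{\bar g}$) to $Q^g \le RK$. The key point is then to replace $g$ by another representative of the same coset $gK$ that actually conjugates $Q$ into $R$. Because $K$ is a $p'$-group and $R \cap K = 1$, the subgroup $R$ is a Sylow $p$-subgroup of $RK$, and the $p$-subgroup $Q^g$ of $RK$ lies in some Sylow $p$-subgroup $S = R^c$ with $c \in RK$. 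Writing $c = rk$ with $r \in R$ and $k \in K$ (unique factorisation, valid since $R \cap K = 1$) and using $R^r = R$, I get $Q^g \le R^k$, hence $(Q^g)^{k^{-1}} \le R$. Setting $g'' = g k^{-1}$ then gives $Q^{g''} \le R$, while $\bar{g''} = \bar g$ since $k \in K$. Therefore $c_{g'',Q,R} \in \mc F_P(G)$ induces exactly the original $\mc F_P(\bar G)$-morphism, which proves $\supseteq$.

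The only genuinely substantive step is this coset adjustment, and the main obstacle to watch for is that a naive application of Sylow's theorem moves $g$ by an element of $RK$ rather than of $K$, which would alter the induced map on $\bar Q$ and break the argument. The factorisation $c = rk$ together with $R^r = R$ is precisely what confines the adjustment to $K$ and keeps the coset $gK$ — equivalently the morphism in $\bar G$ — unchanged. Everything else is formal bookkeeping about the identification $P \cong \bar P$.
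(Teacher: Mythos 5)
Your proof is correct and follows essentially the same route as the paper's: identify $P$ with its image in $\bar G = G/O_{p'}(G)$, note that the correspondence of conjugation maps is injective for free, and establish surjectivity by adjusting a lift $g$ of $\bar g$ by an element of $O_{p'}(G)$ so that it conjugates $Q$ into $R$. The only difference is that the paper simply asserts the existence of such an adjusting element $t \in O_{p'}(G)$, whereas you supply the justifying Sylow argument (with the factorisation $c = rk$ confining the correction to $O_{p'}(G)$), which is exactly the detail implicit in the paper's step.
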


\begin{proof}
%For an element $g \in G$, denote the coset $g Q_{p'}(G)$ by $\bar g$.
%Similarly, for a subgroup $H \leq G$ denote the factor $H O_{p'}(G) /
%O_{p'}(G)$ by $\bar H$.
Denote images in $\bar G = G/O_{p'}(G)$ by bar.
The assignment $c_{g, Q, R} \mapsto c_{\bar g, \bar Q, \bar R}$ defines
a map $\mc F_P(G) \to \mc F_{\bar P} (\bar G)$. We have to show it is a
bijection.

We first prove it is surjective. Let $\bar Q$, $\bar R \leq \bar P$
and $\bar g \in \bar G$ such that $\bar Q^{\bar g} \leq \bar R$. Then
conjugation by $g$ maps $Q$ into $RO_{p'}(G)$ and hence $Q^{gt} \leq R$
for some $t \in O_{p'}(G)$. Therefore, the image of $c_{gt, Q, R}$ is
$c_{\bar g, \bar Q, \bar R}$ and surjectivity is proved.

To prove injectivity, assume $c_{\bar g, \bar Q, \bar R} =
c_{\bar h, \bar S, \bar T}$. Then, first of all, $Q = S$ and $R = T$ as
$P$ maps isomorphically to $\bar P$. By the same reason, the operation of
$g$ and $h$ coincides on $Q$. Thus $c_{g, Q, R} = c_{h, S, T}$ and
injectivity is proven.
%
%We have to show that two elements of $G$ lying in the same coset modulo
%$O_{p'}(G)$ induce the same morphisms in $\mc F_P(G)$.
%Let $g \in hO_{p'}(G)$ and assume $Q$, $Q^g, Q^h \leq P$. Then $t = h^{-1}g
%\in O_{p'}(G)$ conjugates $Q^h$ to $Q^g$. For any $q \in Q^h$
%\[
%[t, q] = t^{-1} \cdot t^q = q^{-t} \cdot q \in O_{p'}(G) \cap P = 1.
%\]
%Hence $t$ centralises $Q^h$, so the operations of $g$ and $h$ from $Q$ to
%$Q^g = Q^h$ coincide.
\qed\end{proof}

\begin{prop}\label{modelofN}
Let $\mc F = \mc F_P(G)$. Furthermore, let $Q$ be a fully
$\mc F$-nor\-malised and $\mc F$-centric subgroup of $P$. Then the model of
$\mc N_{\mc F}(Q)$ is isomorphic to $N_G(Q) / O_{p'}(N_G(Q))$.
\end{prop}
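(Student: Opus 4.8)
The plan is to identify $L := N_G(Q)/O_{p'}(N_G(Q))$ as \emph{a} model of $\mc N_{\mc F}(Q)$ and then invoke the uniqueness of models (\cite[Proposition C]{brotoetal}, recalled in Section~\ref{prelims}) to conclude. Write $N = N_G(Q)$ and $S = N_P(Q)$. Since $Q$ is fully $\mc F$-normalised, $S$ is a Sylow $p$-subgroup of $N$, so its image $\bar S = SO_{p'}(N)/O_{p'}(N) \cong S$ is a Sylow $p$-subgroup of $L$; moreover $L$ is automatically $p'$-reduced, since $O_{p'}(H/O_{p'}(H)) = 1$ for every finite group $H$. Thus it remains to check two things: that $L$ is $p$-constrained, and that $\mc F_{\bar S}(L) = \mc N_{\mc F}(Q)$.

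The fusion-system equality is the easier of the two. By Lemma~\ref{op'factor} we have $\mc F_{\bar S}(L) = \mc F_S(N)$, so it suffices to see that $\mc F_{N_P(Q)}(N_G(Q)) = \mc N_{\mc F}(Q)$. This is the standard description of the normaliser subsystem of a fully $\mc F$-normalised subgroup in a group fusion system, and I would quote it: it follows directly from the definition of $\mc N_{\mc F}(Q)$ recalled in Section~\ref{prelims}, since every extension condition appearing there is realised by conjugation by an element of $N_G(Q)$.

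The main work is the $p$-constraint, and its crux is the computation of $C_L(\bar Q)$, where $\bar Q$ denotes the image of $Q$ in $L$. First I would show $O_{p'}(N) = O_{p'}(C_G(Q))$: the inclusion $\supseteq$ holds because $O_{p'}(C_G(Q))$ is characteristic in $C_G(Q) \triangleleft N$, hence a normal $p'$-subgroup of $N$; the inclusion $\subseteq$ follows from $[O_{p'}(N), Q] \le O_{p'}(N) \cap Q = 1$, which forces $O_{p'}(N) \le C_G(Q)$. Because $Q$ is $\mc F$-centric it is $p$-centric by Lemma~\ref{F=pcentric}, so $C_G(Q) = Z(Q) \times O_{p'}(C_G(Q))$ as recorded after Definition~\ref{p-centricdef}. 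Pulling back $C_L(\bar Q)$ to $N$ and using that every element of $N$ normalises $Q$, one checks that its preimage is exactly $C_N(Q) = C_G(Q)$; hence
\[
C_L(\bar Q) = C_G(Q)/O_{p'}(C_G(Q)) \cong Z(Q) = Z(\bar Q) \le \bar Q.
\]
Since $\bar Q \triangleleft L$ we have $\bar Q \le O_p(L)$, and therefore $C_L(O_p(L)) \le C_L(\bar Q) \le \bar Q \le O_p(L)$, so $L$ is $p$-constrained. With every defining property of a model verified, uniqueness yields that the model of $\mc N_{\mc F}(Q)$ is isomorphic to $L = N_G(Q)/O_{p'}(N_G(Q))$. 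I expect the only delicate point to be the bookkeeping in the computation of $C_L(\bar Q)$, namely passing correctly between $N$ and its quotient $L$.
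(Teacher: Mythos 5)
Your proposal is correct and follows essentially the same route as the paper's proof: verify that $L = N_G(Q)/O_{p'}(N_G(Q))$ satisfies the three defining conditions of a model (Sylow $p$-subgroup $N_P(Q)$ with the right fusion via Lemma~\ref{op'factor}, $p'$-reduced by construction, and $p$-constrained), with the crux in both cases being the computation that the preimage of $C_L(\bar Q)$ is $C_G(Q)$ because commutators $[c,x]$ land in $Q \cap O_{p'}(N_G(Q)) = 1$, combined with the $p$-centric decomposition $C_G(Q) = Z(Q) \times O_{p'}(C_G(Q))$ from Lemma~\ref{F=pcentric}. Your explicit verification that $O_{p'}(N_G(Q)) = O_{p'}(C_G(Q))$ and the explicit appeal to uniqueness of models are only minor elaborations of steps the paper leaves implicit.
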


\begin{proof}
We prove that the group $L = N_G(Q) / O_{p'}(N_G(Q))$ satisfies the three
conditions on a model. First of all, a Sylow $p$-subgroup of $N_G(Q)$ is
$N_P(Q)$ as $Q$ is fully $\mc F$-normalised. The fusion system of $N_G(Q)$
on $N_P(Q)$ is $N_{\mc F}(Q)$ by Theorem~4.27 in~\cite[p.108]{craven}. Now,
the fusion system of $N_G(Q)$ is the same as that of $L$
%$N_G(Q) / O_{p'}(N_G(Q))$ 
by Lemma~\ref{op'factor}.

Obviously, $L$
%$N_G(Q)/O_{p'}(N_G(Q))$ 
is $p'$-reduced by construction.

It only remained to show that $L$
%$N_G(Q) / O_{p'}(N_G(Q))$ 
is $p$-constrained, that is,
\[
%C_{N_G(Q)/O_{p'}(N_G(Q))}\big(O_p\big(N_G(Q)/O_{p'}(N_G(Q))\big)\big) \leq
%O_p\big(N_G(Q)/O_{p'}(N_G(Q))\big).
C_L(O_p(L)) \leq O_p(L).
\]
%First observe that $Q O_{p'}(N_G(Q)) / O_{p'}(N_G(Q)) \leq
%O_p(N_G(Q)/O_{p'}(N_G(Q)))$ 
Denote the image of $Q$ in $L$ by $\bar Q$. Then $\bar Q \leq O_p(L)$
as $Q$ is normal in $N_G(Q)$, so $C_L(O_p(L)) \leq C_L(\bar Q)$.
%the centraliser on the left-hand side is certainly contained in
%$C_{N_G(Q)/O_{p'}(N_G(Q))} \big( Q O_{p'}(N_G(Q)) / O_{p'}(N_G(Q)) \big)$.

Assume $c O_{p'}(N_G(Q))$ is contained in $C_L(\bar Q)$ for some
$c \in N_G(Q)$. Then $[c, x] \in O_{p'}(N_G(Q))$ for all $x \in Q$. But
$[c, x] = x^{-c} x \in Q$, so it must be equal to $1$ and hence $c$
centralises $Q$. Now, $C_{N_G(Q)}(Q) = C_G(Q) = O_{p'}(C_G(Q)) \times Z(Q)$
as $Q$ is $p$-centric by Lemma~\ref{F=pcentric}. As $O_{p'}(C_G(Q)) \leq
O_{p'}(N_G(Q))$, we have $C_L(\bar Q) = Z(\bar Q)$ and hence
\[
%c O_{p'}(N_G(Q)) \subseteq Z(Q) O_{p'}(N_G(Q)) \subseteq Q O_{p'}(N_G(Q))
C_L(O_p(L)) \leq C_L(\bar Q) \leq \bar Q \leq O_p(L).
\]
whence the claim follows.
\qed\end{proof}

\begin{lem}\label{plocal}
Let $G$ be a group. $G$ involves $Qd(p)$ if and only if $N_G(Q)$ also does
for an appropriate non-cyclic $p$-subgroup $Q$ of $G$.
\end{lem}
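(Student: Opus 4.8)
The plan is to split the equivalence into its two implications, the reverse one being immediate and the forward one being carried by a Frattini argument that pulls the defining normal subgroup of $Qd(p)$ down into a Sylow subgroup of a preimage. For the easy direction I would simply note that if $N_G(Q)$ involves $Qd(p)$ for some (non-cyclic) $p$-subgroup $Q$, then since $N_G(Q)\leq G$ every section of $N_G(Q)$ is a section of $G$, so $G$ involves $Qd(p)$; here non-cyclicity of $Q$ is not even used.

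For the forward direction I would start from a witnessing section: assume $K\triangleleft H\leq G$ with $H/K\cong Qd(p)$, and write $\pi\colon H\to H/K$ for the quotient map. The structural input I would invoke is Example~\ref{qdp}, namely that $V=O_p(Qd(p))$ is elementary Abelian of rank $2$, hence non-cyclic, and normal in $Qd(p)$. First I would set $W=\pi^{-1}\big(O_p(H/K)\big)$, so that $K\leq W\triangleleft H$ and $W/K\cong C_p\times C_p$, and then choose $S$ to be a Sylow $p$-subgroup of $W$. Because $W/K$ is a $p$-group one has $W=KS$, so $S$ maps onto $W/K$; thus $S/(S\cap K)\cong C_p\times C_p$ is non-cyclic, which forces $S$ itself to be non-cyclic. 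This $S$ is the candidate for $Q$.

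It then remains to transport the $Qd(p)$-section into a $p$-local subgroup. The engine of the argument is the Frattini argument applied to $S\in\operatorname{Syl}_p(W)$ with $W\triangleleft H$, which yields $H=W\,N_H(S)=K\,N_H(S)$ since $W=KS$ and $S\leq N_H(S)$. Consequently
\[
N_H(S)\big/\big(N_H(S)\cap K\big)\;\cong\;K\,N_H(S)/K\;=\;H/K\;\cong\;Qd(p),
\]
so $N_H(S)$, and hence the larger group $N_G(S)\supseteq N_H(S)$, involves $Qd(p)$. Taking $Q=S$ then finishes the proof.

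The main point where one must choose the right route — and what I expect to be the real obstacle — is producing a $p$-local witness that \emph{genuinely involves} $Qd(p)$ rather than merely being non-$p$-stable. A tempting alternative through Corollary~\ref{localpstab} and Proposition~\ref{glaubpropforsections} only delivers a non-$p$-stable $p$-local section, and a single non-$p$-stable group need not involve $Qd(p)$ (as $\widetilde{Qd}(p)$ from Example~\ref{tildeqdp} shows), so that approach stalls. The section-plus-Frattini argument sketched above avoids this entirely, since it moves an actual $Qd(p)$-section into $N_G(S)$, and the only structural fact it relies on is that $O_p(Qd(p))$ is non-cyclic.
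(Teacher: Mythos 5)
Your proof is correct and follows essentially the same route as the paper's: you take a Sylow $p$-subgroup of the preimage of $V=O_p(Qd(p))$, apply the Frattini argument to get $H=K\,N_H(S)$, invoke the second isomorphism theorem to see $N_H(S)/(N_H(S)\cap K)\cong Qd(p)$, and deduce non-cyclicity of $S$ from its non-cyclic quotient $V$ --- exactly the paper's argument with $S$ in place of its $\tilde V$. The only addition is your closing remark on why a detour through non-$p$-stability would fail, which is sound but not needed.
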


\begin{proof}
Assume $G$ involves $Qd(p)$, so there are $K \triangleleft H \leq G$ such
that $H/K = V \rtimes S \cong Qd(p)$. Here, $V$ is an elementary
Abelian group of order $p^2$ and $S \cong SL_2(p)$. Let $\tilde V$ be
a Sylow $p$-subgroup of the preimage of $V$ under the natural homomorphism
$H \to H/K$. Then $K\tilde V \triangleleft H$ is the preimage of $V$ and
hence $H = K \tilde V N_H(\tilde V) = K N_H(\tilde V)$ by Frattini
argument. Now,
\[
Qd(p) \cong H/K = K N_H(\tilde V)/K \cong
N_H(\tilde V) / N_H(\tilde V) \cap K
\]
by the second isomorphism theorem. Therefore, $N_H(\tilde V)$ and so
$N_G(\tilde V)$ involves $Qd(p)$. Filnally, $\tilde V$ is non-cyclic as
it has a non-cyclic homomorphic image $V$.

The other implication is clear.
\qed\end{proof}

\begin{lem}\label{constpcentric}
Let $Q$ be a $p$-subgroup and $P$ a Sylow $p$-subgroup of $G$ containing
a Sylow $p$-subgroup of $N_G(Q)$. Then any $p$-subgroup of $G$ that
contains $Q C_P(Q)$ is $p$-centric.
%(Compare with $\sim$ Lemma~4.42 in~\cite{craven}.)
\end{lem}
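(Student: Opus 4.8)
The plan is to verify the defining property of $p$-centricity (Definition~\ref{p-centricdef}) directly, by showing that every $p$-element of $G$ that centralises such an $R$ already lies in $R$. Write $N = N_G(Q)$ and $C = C_G(Q)$, so that $C \triangleleft N$. The first step is to pin down the Sylow $p$-structure of $C$. By hypothesis $P$ contains a Sylow $p$-subgroup $P_0$ of $N$; since $P \cap N$ is a $p$-subgroup of $N$ containing $P_0$, maximality of $P_0$ forces $P \cap N = P_0$, so $P \cap N$ is Sylow in $N$. Because $C \triangleleft N$, the intersection of a Sylow $p$-subgroup of $N$ with $C$ is a Sylow $p$-subgroup of $C$; as $C \subseteq N$, this intersection equals $P \cap C = C_P(Q)$. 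Hence $C_P(Q)$ is a Sylow $p$-subgroup of $C_G(Q)$, which is the key structural fact I would extract from the hypotheses.

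Next, let $R$ be a $p$-subgroup of $G$ with $R \supseteq Q\, C_P(Q)$ (note $Q\, C_P(Q)$ is a $p$-group, since $C_P(Q) \subseteq C_G(Q) \subseteq N_G(Q)$ normalises $Q$), and let $y$ be a $p$-element centralising $R$. Since $y$ centralises $Q \leq R$, we have $y \in C = C_G(Q)$. Since $y$ also centralises $C_P(Q) \leq R$, the subgroups $\langle y \rangle$ and $C_P(Q)$ centralise each other, so $C_P(Q)\langle y \rangle$ is a $p$-subgroup of $C_G(Q)$ containing the Sylow $p$-subgroup $C_P(Q)$. By maximality, $C_P(Q)\langle y \rangle = C_P(Q)$, whence $y \in C_P(Q) \subseteq R$. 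Thus every $p$-element centralising $R$ lies in $R$, i.e.\ $R$ is $p$-centric.

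I do not expect a serious obstacle in this argument. The only point needing care is the reduction establishing that $C_P(Q)$ is Sylow in $C_G(Q)$, which rests on the standard fact that a Sylow $p$-subgroup of the ambient group meets a normal subgroup in a Sylow $p$-subgroup of that normal subgroup. It is precisely the hypothesis that $P$ contains a Sylow $p$-subgroup of $N_G(Q)$ that makes $C_P(Q)$ large enough to be Sylow in $C_G(Q)$; without it the maximality step in the second paragraph would break down, since $C_P(Q)\langle y\rangle$ could then be a strictly larger $p$-group and $y$ need not lie in $C_P(Q)$.
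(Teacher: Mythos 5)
Your proof is correct and follows essentially the same route as the paper's: both arguments rest on the observation that the hypothesis on $P$ makes $C_P(Q)$ a Sylow $p$-subgroup of $C_G(Q)$, and then conclude by maximality that any $p$-element centralising $Q$ and $C_P(Q)$ must lie in $C_P(Q)$. The only difference is one of detail — you spell out the Sylow argument (via $P\cap N_G(Q)$ and the normality of $C_G(Q)$ in $N_G(Q)$) that the paper compresses into the phrase ``by construction,'' and you treat an arbitrary overgroup $R \supseteq Q\,C_P(Q)$ explicitly rather than leaving that reduction implicit.
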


\begin{proof}
By construction, $C_P(Q)$ is a Sylow $p$-subgroup of $C_G(Q)$.
Let $c \in C_G(Q C_P(Q))$ be a $p$-element. Then $c$ centralises $Q$
and $C_P(Q)$, so $\langle c \rangle C_P(Q)$ is a $p$-group centralising
$Q$. Hence $c \in C_P(Q) \leq QC_P(Q)$ by the maximality of $C_P(Q)$.
\qed\end{proof}

\begin{prop}\label{QdpinNpcentric}
Let $G$ be a group that involves $Qd(p)$. Then $N_G(Q)$ involves $Qd(p)$
for a $p$-centric subgroup $Q$ of $G$.
\end{prop}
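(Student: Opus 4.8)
The plan is to combine Lemmas~\ref{plocal} and~\ref{constpcentric} with a maximal-order argument. Since $G$ involves $Qd(p)$, Lemma~\ref{plocal} shows that the collection $\mathcal C$ of $p$-subgroups $Q$ of $G$ with $N_G(Q)$ involving $Qd(p)$ is non-empty, so I would choose $Q \in \mathcal C$ of maximal order. Fix a Sylow $p$-subgroup $P$ of $G$ containing a Sylow $p$-subgroup of $N_G(Q)$, so that $C := C_P(Q)$ is a Sylow $p$-subgroup of $C_G(Q)$. The whole task then reduces to showing $C \le Q$: if this holds, then every $p$-element of $C_G(Q)$ is $C_G(Q)$-conjugate into $C \le Q$ and hence lies in $Q$ (as $Q \triangleleft N_G(Q)$), so $Q$ is $p$-centric by Definition~\ref{p-centricdef} and $N_G(Q)$ involves $Qd(p)$, as required.

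Suppose for contradiction that $C \not\le Q$, and put $R = QC$. By Lemma~\ref{constpcentric} $R$ is $p$-centric and $|R| > |Q|$. A short comparison of $p$-parts shows that $R$ is in fact a Sylow $p$-subgroup of the normal subgroup $A := QC_G(Q)$ of $N := N_G(Q)$, so the Frattini argument yields $N = A\,N_N(R)$ with $N_N(R) \le N_G(R)$. I would then contradict maximality by proving that $N_N(R)$, hence $N_G(R)$, involves $Qd(p)$, placing $R$ in $\mathcal C$. A preliminary reduction feeds into this: maximality of $Q$ forces the image of $Q$ in any $Qd(p)$-section $H_1/K_1$ of $N$ (after replacing $H_1$ by $QH_1$, and noting $Q \triangleleft H_1$) to be exactly the normal part $V = O_p(H_1/K_1)$. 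Indeed, applying Lemma~\ref{plocal} inside $N$ to a Sylow $p$-subgroup $\tilde V$ of the preimage of $V$ would otherwise exhibit $Q\tilde V \in \mathcal C$ strictly larger than $Q$, and a normal $p$-subgroup of $Qd(p)$ lies in $V$.

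The hard part will be the transfer of the $Qd(p)$-section into the Frattini complement $N_N(R)$. One cannot merely pass to the quotient $N/A$: the $O_p$-part $V$ of the section lives inside $A \supseteq Q$, so $N/A \cong N_N(R)/(N_N(R)\cap A)$ retains only the $SL_2(p)$-part and in general does not involve $Qd(p)$. Instead I would fix a lift $\hat S \le H_1$ of a complement $SL_2(p)$ to $V$; since $\hat S$ normalises both $Q$ and $C_G(Q)$ it normalises $A$, and a Frattini argument inside $\hat S A$ places a conjugate of $\hat S$ into $M := N_{\hat S A}(R) \le N_G(R)$. It then remains to verify that $M$ involves $Qd(p)$: here $R \triangleleft M$ surjects onto $V$ and $M$ induces on this section the natural $SL_2(p)$-action inherited from $\hat S$, reconstructing $V \rtimes SL_2(p) \cong Qd(p)$ as a section of $M$. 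Making precise the identification of $V$ as an $M$-composition factor of $R$, together with the non-perfect case $p = 3$ (where $SL_2(3)$ plays the role of the otherwise perfect $SL_2(p)$), is the main technical obstacle; once it is settled, $R \in \mathcal C$ contradicts the maximality of $|Q|$, forcing $C \le Q$ and proving the proposition.
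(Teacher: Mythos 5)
Your scaffolding is correct as far as it goes, and it genuinely differs from the paper's proof (which never takes a maximal element of $\mathcal{C}$): the reduction to $C_P(Q)\le Q$, the fact that $R=QC_P(Q)$ is $p$-centric by Lemma~\ref{constpcentric} and is a Sylow $p$-subgroup of $A=QC_G(Q)$, the Frattini factorisation $N=A\,N_N(R)$, and even your preliminary reduction are fine (the image of $Q\cap H_1$ in $H_1/K_1$, being a normal $p$-subgroup of $Qd(p)$, is $1$ or $V$, and the case $1$ contradicts maximality via Lemma~\ref{plocal}). The problem is that the entire content of the proposition sits in the transfer step you explicitly leave open, and the route you sketch for it breaks at two points. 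First, the lift $\hat S\le H_1$ of the complement $SL_2(p)$ need not exist: the preimage of $S$ in $H_1$ is an extension of $K_1$ by $SL_2(p)$, which can be non-split. The paper itself contains the obstruction (Section~\ref{sporadic}): in $He$ the derived subgroup of the normaliser of $3A^2$ is $3^2{:}(2^2\cdot SL_2(3))$, a non-split extension $2^2\cdot Qd(3)$, and it has no subgroup mapping isomorphically onto $SL_2(3)$ --- the same phenomenon that forces $He$ to appear as an exception in Theorem~1. Second, even granting $\hat S$, the Frattini argument in $\hat SA$ yields only the factorisation $\hat SA=A\,N_{\hat SA}(R)$; it does not place a conjugate of $\hat S$ inside $N_{\hat SA}(R)$, and no complement-conjugacy theorem is available since $|\hat S|$ and $|A|$ are not coprime. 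So $R\in\mathcal{C}$ is never established, and the maximality contradiction never gets off the ground.

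The missing idea is exactly the paper's: do not attempt to rebuild an honest $Qd(p)$-section at all. The paper lifts only the two generators $x,a$ of $S$ as \emph{elements}, pushes them by Frattini into $N_{H_1}(Q)$ (writing $\tilde x=n_xk_x$, $\tilde a=n_ak_a$ with $k_x,k_a\in K_1=C_G(\tilde V/W)$), and then argues inside $\bar N=N_{H_1}(Q)/W$: since $k_x,k_a$ centralise $V=\tilde V/W$, the elements $\bar x,\bar a$ act on $V$ exactly as $x,a$ do, so $[V,\bar x,\bar x]=1$ while the image of $\langle\bar x,\bar a\rangle$ in $\bar N/C_{\bar N}(V)$ is $SL_2(p)$, whence $\bar x\notin O_p\big(\bar N/C_{\bar N}(V)\big)$. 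Thus $\bar N$ is not $p$-stable, and Glauberman's Theorem~\ref{glaubthm} --- which your proposal never invokes --- converts non-$p$-stability into ``$\bar N$ involves $Qd(p)$''. This sidesteps both the non-split-extension problem and your worry about $p=3$. Note also that the paper's $p$-centric subgroup is a Sylow $p$-subgroup of $\tilde VC_G(\tilde V/W)$, the centraliser of the \emph{section} $\tilde V/W$ rather than of a group: this is precisely what guarantees that the discarded Frattini factors act trivially on $V$, so the $SL_2(p)$-action survives into $\bar N$; your kernel $A=QC_G(Q)$ does not have this property. If you keep the maximal-choice framework, proving your transfer claim still requires this entire argument, at which point the maximality is doing no work.
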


\begin{proof}
Let $K \triangleleft H \leq G$ such that $H/K = V \rtimes S \cong Qd(p)$.
By the proof of Lemma~\ref{plocal} we may assume $H \leq N_G(\tilde V)$
for a $p$-subgroup $\tilde V$ of $G$ and $W = K \cap \tilde V$ is a normal
subgroup of $H$.

As $S \cong SL_2(p)$, $S = \langle x, a \rangle$ for some $x$,
$a \in S$ such that $x^p = a^4 = 1$ and $[V, x, x] = 1$. Let moreover
$\tilde x$ and $\tilde a$ be preimages of $x$ and $a$ under the natural
homomorphism $H \to H/K$, respectively.

Let $Q$ be a Sylow $p$-subgroup of $\tilde V C_G(\tilde V / W)$. Then
$Q$ is $p$-centric by Lemma~\ref{constpcentric}. Let $H_1 = H C_G (\tilde
V/W)$ and $K_1 = K C_G(\tilde V/W) = C_G(\tilde V/W)$. The latter equality
holds because $[K, \tilde V] \subseteq K \cap \tilde V = W$. Observe that
$H_1$ is a subgroup of $G$ because $H$ normalises both $\tilde V$ and $W$.
Note that $V$ can be identified with $\tilde V/W$ and we do identify them.

Now, $K_1 = \tilde V K_1 \triangleleft H_1$ and $Q$ is a Sylow $p$-subgroup
of $K_1$. Hence by Frattini argument we have
\[
H_1 = 
%N_{H_1}(Q) \cdot \tilde V K_1 = 
N_{H_1}(Q) \cdot K_1.
\]
Then $\tilde x = n_x \cdot k_x$ and $\tilde a = n_a \cdot k_a$ for appropriate
elements $n_x$, $n_a \in N_{H_1}(Q)$ and $k_x$, $k_a \in K_1$.

Consider the factor group $\bar N = N_{H_1}(Q) / W$. By construction,
$V = \tilde V / W \triangleleft \bar N$. Let $\bar x$ and $\bar a$ be
the images under the natural homomorphism $N_{H_1}(Q) \to \bar N$,
of $n_x$ and $n_a$, respectively. Then the operations of $x$ and $\bar x$
on $V$ coincide, just as those of $a$ and $\bar a$, because $K_1$
centralises $V$.

Therefore, $[V, \bar x, \bar x] = 1$, where $\bar x \in N_{\bar N}(V) =
\bar N$.
%Moreover, $[V, \bar x^{\bar a}, \bar x^{\bar a}] = 1$, too. Now, 
The image of $\langle \bar x, \bar a \rangle$ in $\bar N / C_{\bar N}(V)$
is isomorphic to $SL_2(p)$ and hence
\[
\bar x \notin O_p(\bar N / C_{\bar N}(V)).
\]
This means that $\bar N$ is not $p$-stable, so it involves $Qd(p)$ by
Glauberman's Theorem~\ref{glaubthm}. It follows that $N_{H_1}(Q)$ and
hence $N_G(Q)$ involve $Qd(p)$.
\qed\end{proof}

Now we are ready to prove the theorem.

\begin{proof}[of Theorem~$\ref{freeinv}$]
Assume $G$ involves $Qd(p)$. Then $Qd(p)$ is involved in $N_G(Q)$ for some
$p$-centric subgroup $Q$ of $P$ by Proposition~\ref{QdpinNpcentric}.
Observe that some conjugate of $Q$ is fully $\mc F$-normalised and also
$\mc F$-centric (the latter by Lemma~\ref{F=pcentric}). Since $Qd(p)$ has
no normal $p'$-subgroups, it is also involved in $N_G(Q) / O_{p'}(N_G(Q))$.
As this group is the model of $N_{\mc F}(Q)$ by Lemma~\ref{modelofN},
$\mc F$ is not $Qd(P)$-free.

For the converse, assume $\mc F$ is not $Qd(p)$-free. Then $Qd(p)$ is
involved in $N_G(Q)/O_{p'}(N_G(Q))$ for some $\mc F$-centric subgroup $Q$
of $p$ by definition. Therefore, $Qd(p)$ is also involved in $G$.
\qed\end{proof}

The following corollary is a slight refinement of Glauberman's
Theorem~\ref{glaubthm}

\begin{cor} \label{refglaubthm}
The following are equivalent:
\begin{itemize}
\item All sections of $G$ are $p$-stable.
\item $N_G(Q)$ does not involve $Qd(p)$ for any $p$-centric $p$-subgroup
$Q$ of $G$.
\end{itemize}
\end{cor}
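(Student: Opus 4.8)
The plan is to deduce the corollary by combining Glauberman's Theorem~\ref{glaubthm} with the local characterisation of involvement of $Qd(p)$ that is already packaged in Proposition~\ref{QdpinNpcentric}. By Theorem~\ref{glaubthm}, the first condition---that all sections of $G$ are $p$-stable---is equivalent to the assertion that $G$ does not involve $Qd(p)$. So it suffices to prove the single equivalence that $G$ does not involve $Qd(p)$ if and only if $N_G(Q)$ does not involve $Qd(p)$ for every $p$-centric $p$-subgroup $Q$ of $G$.

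I would prove this equivalence by contraposition, splitting into its two implications. The easy direction is immediate: if $N_G(Q)$ involves $Qd(p)$ for some $p$-centric subgroup $Q$, then, since $N_G(Q)$ is a subgroup of $G$, certainly $G$ involves $Qd(p)$ as well; no property of $Q$ beyond its being a subgroup of $G$ is used here. The substantive direction is the converse: if $G$ involves $Qd(p)$, I would invoke Proposition~\ref{QdpinNpcentric} directly, which asserts precisely that then $N_G(Q)$ involves $Qd(p)$ for some $p$-centric subgroup $Q$ of $G$. Chaining these two implications with the reformulation supplied by Theorem~\ref{glaubthm} yields the stated biconditional.

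This is exactly the point where the corollary \emph{refines} Glauberman's theorem: whereas Lemma~\ref{plocal} only guarantees a non-cyclic $p$-subgroup whose normaliser involves $Qd(p)$, Proposition~\ref{QdpinNpcentric} upgrades the witnessing subgroup to a $p$-centric one. Thus the main obstacle is not in the corollary itself---it has been pushed back into Proposition~\ref{QdpinNpcentric}, whose proof does the genuine work of locating a $p$-centric witness (via the Frattini argument inside $\tilde V C_G(\tilde V/W)$ and an application of Glauberman's theorem to the quotient $\bar N$). Granting that proposition, the corollary is a short formal consequence. The only point deserving a moment's care is that the $p$-centricity hypothesis enters symmetrically in both directions: in the easy direction it is an innocuous restriction on which subgroups we are allowed to consider, and in the hard direction Proposition~\ref{QdpinNpcentric} certifies that a $p$-centric witness always exists, so no relevant subgroup is overlooked on either side of the equivalence.
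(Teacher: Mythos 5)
Your proof is correct and is essentially the paper's own (implicit) argument: the paper states Corollary~\ref{refglaubthm} without proof precisely because it follows by chaining Theorem~\ref{glaubthm} with Proposition~\ref{QdpinNpcentric} for the substantive direction and the trivial inclusion $N_G(Q)\leq G$ for the other, exactly as you do.
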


\section{Section {\itshape p}-stability in fusion systems}
\label{secsectpstab}
%The corresponding notion for fusion systems is as follows:
%
%\begin{Def} \label{sectionpstabfussys}
%Let $\mc F$ be a fusion system on the $p$-group $P$. We call $\mc F$
%{\itshape section $p$-stable} if for all fully $\mc F$-normalised and
%$\mc F$-centric subgroups $Q$ of $P$ the model of $\mc N_{\mc F} (Q)$,
%the normaliser of $Q$ in $\mc F$, is a section $p$-stable group.
%\end{Def}
%
%By Corollary~\ref{refglaubthm} it is straightforward that a fusion
%system $\mc F$ is section $p$-stable if and only if it is $Qd(p)$-free.
%
%\begin{rem}
%It follows from~\cite{kessar:linckelmann:2008} that a $Qd(p)$-free
%fusion system $\mc F$ is soluble. Indeed, Theorem~B asserts that $Z(J(P))$
%is normal in $\mc F$. Now, by Proposition~6.4, $\mc F/Z(J(P))$ is also
%$Qd(p)$-free. Since $Z(J(P))$ is non-trivial, the claim follows by
%induction.
%
%Being soluble, $\mc F$ is constrained and hence it has a model.
%By Theorem~\ref{freeinv} it follows that not only the model of a $\mc F$,
%but any group whose fusion system is $\mc F$ has the property that it does
%not involve $Qd(P)$.
%\end{rem}
%
%The question is now, whether there are exotic fusion systems that are
%$p$-stable.

%\begin{prop}
%Let $\mc F$ be a section $p$-stable fusion system on the $p$-group $P$.
%Then for all subsystems $\mc G$ of $\mc F$ (on some subgroup $R$ of $P$)
%and for all subgroups $Q$ of $R$, the fusion system $\mc G/Q$ is section
%$p$-stable.
%\end{prop}
%
%\begin{proof}
%As $\mc F$ is soluble, so are all subsystems $\mc G$ and all factors
%$\mc G/Q$. 
%\qed\end{proof}
We have seen in the case of groups that $p$-stability in itself is not
enough: one needs the notion of section $p$-stability. Two possible
definitions seem to be natural:

\begin{Def}\label{ver1sectpstabfussys}
Let $\mc F$ be a fusion system on the $p$-group $P$. $\mc F$ is called
{\itshape section $p$-stable} if $\mc N_{\mc F}(R) / R$ is $p$-stable
for all fully $\mc F$-normalised subgroups $R$ of $P$.
\end{Def} 

\begin{Def}\label{ver2sectpstabfussys}
Let $\mc F$ be a fusion system on the $p$-group $P$. $\mc F$ is called
{\itshape section $p$-stable} if the model of $\mc N_{\mc F}(R)$ is
section $p$-stable for all $\mc F$-centric and fully $\mc F$-normalised
subgroups $R$ of $P$.
\end{Def}

Clearly, Definition~\ref{ver2sectpstabfussys} is equivalent to
Definition~\ref{Qdpfree} of a $Qd(p)$-free fusion system.

We show that Definitions~\ref{ver1sectpstabfussys}
and~\ref{ver2sectpstabfussys} are equivalent.

% The following lemma is straightforward:
% 
% \begin{lem}\label{factfussys}
% Let $Q$ be a normal $p$-subgroup of $G$. Then
% \[
% \mc F_{P/Q} (G/Q) = \mc F_P(G) / Q.
% \]
% \end{lem}

\begin{theorem}\label{eqdefs}
A fusion system $\mc F$ is section $p$-stable according to
Definition~$\ref{ver1sectpstabfussys}$ if and only if it is section
$p$-stable according to Definition~$\ref{ver2sectpstabfussys}$.
\end{theorem}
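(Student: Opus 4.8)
The plan is to show that each of Definitions~\ref{ver1sectpstabfussys} and~\ref{ver2sectpstabfussys} is equivalent to $\mc F$ being $Qd(p)$-free (Definition~\ref{Qdpfree}); the asserted equivalence then follows. For Definition~\ref{ver2sectpstabfussys} this is essentially immediate, as already noted in the text: for an $\mc F$-centric fully $\mc F$-normalised $Q$ the model $L$ of $\mc N_{\mc F}(Q)$ is a finite group, and by Proposition~\ref{glaubpropforsections} together with Theorem~\ref{glaubthm}, $L$ is section $p$-stable if and only if $L$ does not involve $Qd(p)$. Quantifying over all such $Q$ turns Definition~\ref{ver2sectpstabfussys} into exactly the condition of Definition~\ref{Qdpfree}. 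So the real content is to prove that Definition~\ref{ver1sectpstabfussys} is also equivalent to $Qd(p)$-freeness.

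For the implication ``$Qd(p)$-free $\Rightarrow$ Definition~\ref{ver1sectpstabfussys}'' I would work with a global model. A $Qd(p)$-free $\mc F$ is constrained, so $\mc F = \mc F_P(L)$ for a model $L$ which is itself $Qd(p)$-free. For a fully $\mc F$-normalised $R$, the identity normaliser-of-fusion $=$ fusion-of-normaliser (Thm~4.27 in~\cite{craven}) followed by passage to the quotient (Stancu, Thm~5.20 in~\cite{craven}) gives $\mc N_{\mc F}(R)/R = \mc F_{N_P(R)/R}(N_L(R)/R)$. Now $N_L(R)/R$ is a section of the $Qd(p)$-free group $L$, hence itself $Qd(p)$-free and in particular $p$-stable; by Theorem~\ref{stabeqpstab} its fusion system $\mc N_{\mc F}(R)/R$ is $p$-stable. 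This establishes Definition~\ref{ver1sectpstabfussys}.

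For the converse I would argue contrapositively. Assume $\mc F$ is not $Qd(p)$-free; then some model $L$ of $\mc N_{\mc F}(Q)$, with $Q$ $\mc F$-centric and fully $\mc F$-normalised, involves $Qd(p)$. Writing $S = N_P(Q)$, so that $\mc F_S(L) = \mc N_{\mc F}(Q)$, Proposition~\ref{glaubpropforsections} applied to the group $L$ yields a $p$-subgroup $R'$ with $N_L(R')/R'$ non-$p$-stable, and after $L$-conjugation I may take $R' \le S$ with $N_S(R')$ Sylow in $N_L(R')$, i.e. $R'$ fully $\mc N_{\mc F}(Q)$-normalised. Translating via Theorem~\ref{stabeqpstab} and the identification of quotient normaliser systems with fusion systems of quotient normalisers shows that $\mc N_{\mc N_{\mc F}(Q)}(R')/R'$ is non-$p$-stable.

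The main obstacle is the bookkeeping with full normalisation: $R'$ is fully normalised inside $\mc N_{\mc F}(Q)$ but need not be fully $\mc F$-normalised, whereas Definition~\ref{ver1sectpstabfussys} speaks only about fully $\mc F$-normalised subgroups. This is precisely where Lemma~\ref{normfunctor} is designed to help. Choosing a fully $\mc F$-normalised $\mc F$-conjugate $R^*$ of $R'$ and applying Lemma~\ref{normfunctor} to the subsystem $\mc N = \mc N_{\mc N_{\mc F}(Q)}(R')$ of $\mc F$ (in which $R'$ is normal, being a normaliser subsystem) embeds $\mc N$ into $\mc N_{\mc F}(R^*)$ by a functor carrying $R'$ to $R^*$; hence it descends to an embedding $\mc N_{\mc N_{\mc F}(Q)}(R')/R' \hookrightarrow \mc N_{\mc F}(R^*)/R^*$. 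Since the left-hand system is non-$p$-stable, Proposition~\ref{subsyststable}, in its contrapositive form (a system with a non-$p$-stable subsystem is non-$p$-stable), forces $\mc N_{\mc F}(R^*)/R^*$ to be non-$p$-stable, contradicting Definition~\ref{ver1sectpstabfussys}. I expect the only delicate points to be verifying that $\mc N_{\mc N_{\mc F}(Q)}(R')$ genuinely is a subsystem of $\mc F$ with $R'$ normal, so that Lemma~\ref{normfunctor} applies, and that the induced functor is compatible with the quotients by $R'$ and by $R^*$.
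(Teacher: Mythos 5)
Your proposal is correct and follows essentially the same route as the paper's own proof: your ``$Qd(p)$-free $\Rightarrow$ Definition~\ref{ver1sectpstabfussys}'' argument is the paper's second direction (global model plus Stancu's quotient theorem), and your contrapositive argument is the paper's first direction run backwards, relying on the identical chain of Lemma~\ref{normfunctor}, its descent to quotient systems, Proposition~\ref{subsyststable}, Theorem~\ref{stabeqpstab} and Proposition~\ref{glaubpropforsections}. The only cosmetic difference is that you route both implications explicitly through $Qd(p)$-freeness, which the paper handles by noting beforehand that Definition~\ref{ver2sectpstabfussys} is immediately equivalent to Definition~\ref{Qdpfree}.
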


\begin{proof}
Assume $\mc F$ is section $p$-stable according to
Definition~\ref{ver1sectpstabfussys}. Let $R$ be an $\mc F$-centric and
fully $\mc F$-normalised subgroup of $P$. Let $L$ be the model of
$\mc N_{\mc F}(R)$ with Sylow $p$-subgroup $S = N_P(R)$. We have to show
that $N_L(Q) / Q$ is $p$-stable for all subgroups of $S$. We can
assume $Q$ is fully $\mc N_{\mc F}(R)$-normalised. Then a Sylow $p$-subgroup
of $N_L(Q)$ is $N_S(Q)$ and the corresponding fusion system is
\[
\mc F_{N_S(Q)}(N_L(Q)) = \mc N_{\mc N_{\mc F}(R)} (Q).
\]
Let $\mc N = \mc N_{\mc N_\mc F(R)} (Q)$. By Theorem~5.20
in~\cite[p. 145]{craven}, we have
\[
\mc F_{N_S(Q)/Q}\big(N_L(Q)/Q\big) = \mc N / Q
\]
follows. In view of Theorem~\ref{stabeqpstab} we have to show that
$\mc N / Q$ is $p$-stable.

Let $Q_1$ be a fully $\mc F$-normalised member of the
$\mc F$-isomorphism class of $Q$. Then there is an $\mc F$-morphism
$\phi$: $N_P(Q) \to N_P(Q_1)$ extending an isomorphism $Q \to Q_1$
(see e. g. Lemma~2.2 in~\cite{kessar:linckelmann:2008}).
Then by Lemma~\ref{normfunctor}, $\phi$ induces an injective functor
\[
\Phi:\ \mc N\to \mc N_{\mc F}(Q_1)
\]
and hence $\mc N$ can be identified with a subsystem of $\mc N_{\mc F}(Q_1)$.

We now claim that $\Phi$ induces an injective functor
\[
\overline\Phi:\ \mc N / Q \to \mc N_{\mc F}(Q_1) / Q_1.
\]
Indeed, for all objects $T \geq Q$ of $\mc N$ we have $\Phi(T) = T \phi
\supseteq Q \phi = Q_1$, so we may define $\overline \Phi(T/Q) =
T\phi / Q_1$. Let $\psi$: $T \to S$ be a morphism in $\mc N$ which
induces the morphism $\bar\psi$: $T/Q \to S/Q$ of $\mc N/Q$. Then $\psi^\phi$
induces a morphism $\overline{\psi^\phi}$ in $\mc N_{\mc F}(Q_1) / Q_1$.
What we have to show is the following: $\bar\psi_1 = \bar\psi_2$ if and
only if $\overline{\psi_1^\phi} = \overline{\psi_2^\phi}$. In other words,
$t\psi_1 Q = t \psi_2 Q$ for all $t \in T$ if and only if
$(t\phi) \psi_1^\phi Q_1 = (t\phi) \psi_2^\phi Q_1$ for all $t \in T$.
But this is clear by the definition of $\psi_1^\phi$ and $\psi_2^\phi$.
% 
% To show that $\overline\Phi$ is injective, let $\psi_1$ and $\psi_2$:
% $T \to S$ be morphism of $\mc N$ such that $\overline{\psi_1^\phi} =
% \overline{\psi_2^\phi}$. We have to show that then  $\bar\psi_1 =
% \bar\psi_2$

Identified with a subsystem of the $p$-stable fusion system
$\mc N_{\mc F}(Q_1) / Q_1$, the system $\mc N_{\mc N_{\mc F}(R)} (Q) / Q$
is $p$-stable. Hence $\mc F$ is section $p$-stable according to
Definition~\ref{ver2sectpstabfussys}.

Assume now that $\mc F$ is section $p$-stable according to
Definition~\ref{ver2sectpstabfussys}. Then $\mc F$ is $Qd(p)$-free and
hence constrained by Remark~\ref{Qdpfreesol}. Its model $G$ is $Qd(p)$-free,
therefore section $p$-stable by Theorem~\ref{freeinv}. Now,
$\mc N_{\mc F}(Q)/Q$ is the fusion system of $N_G(Q)/Q$ for all fully
$\mc F$-normalised subgroups $Q$ of $P$. As $N_G(Q)/Q$ is $p$-stable, so is
$\mc N_{\mc F}(Q)/Q$.
\qed\end{proof}

\begin{prop}
The fusion system $\mc F$ is section $p$-stable if and only if for all
subsystems $\mc G$ of $\mc F$ and all subgroups $Q$ of $P$ such that
$Q \triangleleft \mc G$ the quotient system $\mc G / Q$ is $p$-stable.
\end{prop}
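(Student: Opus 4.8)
The plan is to prove the two implications separately, using Definition~\ref{ver1sectpstabfussys} as the working characterisation of section $p$-stability. The backward implication is immediate: assume every quotient $\mc G/Q$ of a subsystem $\mc G \leq \mc F$ by a normal subgroup $Q \triangleleft \mc G$ is $p$-stable. For an arbitrary fully $\mc F$-normalised subgroup $R \leq P$ I would take $\mc G = \mc N_{\mc F}(R)$ and $Q = R$. Since $R$ is normal in its own normaliser system, $R \triangleleft \mc N_{\mc F}(R)$, so $\mc N_{\mc F}(R)$ is a subsystem of $\mc F$ and $\mc N_{\mc F}(R)/R$ is one of its quotients, hence $p$-stable by hypothesis. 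This is precisely the requirement of Definition~\ref{ver1sectpstabfussys}, so $\mc F$ is section $p$-stable.

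For the forward implication, suppose $\mc F$ is section $p$-stable and let $\mc G$ be a subsystem of $\mc F$ on $S \leq P$ with $Q \triangleleft \mc G$. First I would observe that, since $Q \triangleleft \mc G$, the system $\mc G = \mc N_{\mc G}(Q)$ is defined on $N_S(Q)$, forcing $S = N_S(Q)$, that is $Q \triangleleft S$ and $S \leq N_P(Q)$; in particular every object of $\mc G$ lies in $N_P(Q)$. Next I would choose a fully $\mc F$-normalised subgroup $Q_1$ in the $\mc F$-isomorphism class of $Q$ together with an $\mc F$-morphism $\phi\colon N_P(Q) \to N_P(Q_1)$ satisfying $Q\phi = Q_1$ (such a $\phi$ exists, as recalled at the start of the proof of Lemma~\ref{normfunctor}). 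Applying Lemma~\ref{normfunctor} with $\mc N = \mc G$ and $R = Q_1$, the map $\phi$ induces an injective functor $\Phi\colon \mc G \to \mc N_{\mc F}(Q_1)$, realising $\mc G$ as a subsystem of $\mc N_{\mc F}(Q_1)$.

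The crux is to pass to the quotients, i.e. to show that $\Phi$ descends to an injective functor $\overline\Phi\colon \mc G/Q \to \mc N_{\mc F}(Q_1)/Q_1$. Each object $T$ of $\mc G$ satisfies $T \geq Q$, hence $T\phi \geq Q_1$, so one sets $\overline\Phi(T/Q) = T\phi/Q_1$, and on morphisms $\overline\Phi$ is induced by $\psi \mapsto \psi^\phi$. This is exactly the construction carried out inside the proof of Theorem~\ref{eqdefs} for the special subsystem $\mc N_{\mc N_{\mc F}(R)}(Q)$; the same verification — that $t\psi_1 Q = t\psi_2 Q$ for all $t \in T$ if and only if $(t\phi)\psi_1^\phi Q_1 = (t\phi)\psi_2^\phi Q_1$ for all $t \in T$ — shows that $\overline\Phi$ is well defined and injective in the present generality. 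Once $\mc G/Q$ is identified with a subsystem of $\mc N_{\mc F}(Q_1)/Q_1$, I invoke Definition~\ref{ver1sectpstabfussys}: since $Q_1$ is fully $\mc F$-normalised and $\mc F$ is section $p$-stable, $\mc N_{\mc F}(Q_1)/Q_1$ is $p$-stable, whence its subsystem $\mc G/Q$ is $p$-stable by Proposition~\ref{subsyststable}.

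I expect the main obstacle to be the bookkeeping in the descent from $\Phi$ to $\overline\Phi$: one must confirm that the induced functor on quotients is genuinely well defined on objects and morphisms and remains injective, rather than only for the normaliser subsystems treated in Theorem~\ref{eqdefs}. The argument is structurally identical, and it hinges on $Q \triangleleft \mc G$ guaranteeing that every $\mc G$-morphism $\psi$ extends over $Q$ with $Q\tilde\psi = Q$, so that $\psi^\phi$ stabilises $Q_1$ and $\overline\Phi$ indeed lands in $\mc N_{\mc F}(Q_1)/Q_1$. Every remaining step reduces to a result already established, namely Lemma~\ref{normfunctor}, Definition~\ref{ver1sectpstabfussys} and Proposition~\ref{subsyststable}.
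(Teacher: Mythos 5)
Your proof is correct and follows essentially the same route as the paper: the backward direction by specialising to $\mc G = \mc N_{\mc F}(R)$, $Q = R$, and the forward direction by embedding $\mc G/Q$ into $\mc N_{\mc F}(Q_1)/Q_1$ via Lemma~\ref{normfunctor} and the quotient-descent argument of Theorem~\ref{eqdefs}, then applying Proposition~\ref{subsyststable}. The paper's own proof merely cites ``the same line of arguments as in Theorem~\ref{eqdefs}''; you have spelled out exactly those details.
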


\begin{proof}
If all subquotients are $p$-stable, then so are the fusion systems
$\mc N_\mc F(R) / R$ for all fully $\mc F$-normalised subgroups $R$ of $P$.
Hence we only have to prove the other implication.

Let $\mc F$ be section $p$-stable and let $\mc G$ be an arbitrary subsystem
of $\mc F$ with $Q \triangleleft \mc G$. Let $Q_1$ be a fully
$\mc F$-normalised subgroup of $P$ that is $\mc F$-isomorphic to $Q$. By
the same line of arguments as in Theorem~\ref{eqdefs}, $\mc G / Q$ is
isomorphic to a subsystem of $\mc N_\mc F(Q_1) / Q_1$ and, as such, it is
$p$-stable.
\end{proof}

\section{On fusion systems on extraspecial {\itshape p}-groups of order
\texorpdfstring{{\itshape p}\textsuperscript{3}}{{\itshape p} cubed} and exponent {\itshape p}}
\label{extraspecfussysts}
Let $P$ be an extraspecial group of order $p^3$ and exponent $p$.
All fusion systems over $P$ were classified by A.~Ruíz and A.~Viruel in
\cite{ruiz:viruel:2004}. In this section we determine which of these
fusion systems are $p$-stable. This might be crucial in the study of
$p$-stability since this group is the Sylow $p$-subgroup of $Qd(p)$.

We examine the following questions:
%\begin{itemize}
%\item 
Which of these fusion systems are $p$-stable?
%\item 
Which of these fusion systems are section $p$-stable (equivalently,
$Qd(p)$-free)?
%\item 
Which of these fusion systems are soluble?
%\end{itemize}

By Alperin's fusion theorem, a fusion system is completely determined by
the groups $\Aut_\mc F (P)$ and $\Aut_\mc F (R)$, where $R$ ranges over the
set of essential subgroups of $P$. Our first observation is that essential
subgroups of $P$ in our case are precisely the radical subgroups and they
are elementary Abelian of order $p^2$. By this, $\mc F$ is $p$-stable if
and only if $SL_2(p)$ is not contained in $\Aut_\mc F (R)$ for any radical
subgroup $R$ of $p$. Having a look at the tables describing the fusion
systems on $P$ (see Tables~9.1 and~9.2 in \cite[pp. 321, 323]{craven}), we
obtain the result:

\begin{prop}
Let $P$ be an extraspecial group of order $p^3$ and exponent $p$. Then
all fusion systems defined on $P$ are non-$p$-stable except for the
fusion system of $G = P \rtimes H$ ($p \ndiv |H|$), which is section
$p$-stable.
\end{prop}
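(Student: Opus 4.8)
The plan is to upgrade the criterion just established --- that $\mc F$ is $p$-stable precisely when no radical subgroup $R$ has $SL_2(p)\le \Aut_{\mc F}(R)$ --- into a clean dichotomy governed by whether $\mc F$ possesses an essential subgroup at all. First I would prove the sharper statement that the mere existence of an essential subgroup forces the full $SL_2(p)$ into its automizer. An essential subgroup $R$ is elementary abelian of order $p^2$, so $\Aut_{\mc F}(R)\le GL_2(p)$; being radical it satisfies $O_p(\Aut_{\mc F}(R))=1$, and being essential its order is divisible by $p$. Every element of order $p$ in $GL_2(p)$ is a transvection and hence lies in $SL_2(p)$, so $X_0:=\Aut_{\mc F}(R)\cap SL_2(p)$ is a subgroup of $SL_2(p)$ of order divisible by $p$ with $O_p(X_0)=1$. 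The only such subgroup is $SL_2(p)$ itself: this follows from Dickson's description of the subgroups of $SL_2(p)$ (the Borel subgroup is, up to conjugacy, the unique proper subgroup of order divisible by $p$, and it has nontrivial $p$-core) together with the perfectness of $SL_2(p)$ for $p\ge 5$, the case $p=3$ being a direct inspection of the subgroup lattice of $SL_2(3)$. Hence $SL_2(p)\le \Aut_{\mc F}(R)$, and by the criterion every fusion system on $P$ with an essential subgroup is non-$p$-stable.

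It remains to analyse the fusion systems on $P$ without essential subgroups. By Alperin's fusion theorem every $\mc F$-morphism is then a restriction of an automorphism in $\Aut_{\mc F}(P)$, whence $P\triangleleft \mc F$ and $\mc F=\mc N_{\mc F}(P)$. Such a system is constrained with model $P\rtimes H$, where $H=\Aut_{\mc F}(P)/\Aut_P(P)$ is a $p'$-group by the saturation axiom that $\Aut_P(P)$ is a Sylow $p$-subgroup of $\Aut_{\mc F}(P)$. These are exactly the systems $\mc F_P(P\rtimes H)$ with $p\ndiv |H|$, and for them the criterion is vacuously satisfied, so they are $p$-stable. Reading the Ruíz--Viruel tables (Tables~9.1 and 9.2 of \cite{craven}, following \cite{ruiz:viruel:2004}) confirms this dichotomy entry by entry and singles out this family as the unique exception.

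To upgrade $p$-stability to section $p$-stability for this family, I would show that $G=P\rtimes H$ with $p\ndiv |H|$ does not involve $Qd(p)$. Since $P$ is a normal Sylow $p$-subgroup of $G$, it is the unique Sylow $p$-subgroup, so $|G|_p=p^3$. If $Qd(p)$ were a section $L/K$ of $G$, then $|L/K|_p=p^3$ forces $K$ to be a $p'$-group and $L$ to contain a Sylow $p$-subgroup of $G$; by normality this Sylow subgroup is $P$ itself, so $P\le L$ and $P\triangleleft L$. Its image $PK/K\cong P$ is then a normal $p$-subgroup of $L/K\cong Qd(p)$ of order $p^3$, which is impossible because $O_p(Qd(p))$ has order only $p^2$. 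Thus $G$ is $Qd(p)$-free, so by Theorem~\ref{freeinv} the system $\mc F_P(G)$ is $Qd(p)$-free, which is exactly section $p$-stability by Theorem~\ref{eqdefs} (recall that Definition~\ref{ver2sectpstabfussys} coincides with $Qd(p)$-freeness, Definition~\ref{Qdpfree}).

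The main obstacle is the first step, namely converting the abstract criterion into the assertion ``$R$ essential $\Rightarrow SL_2(p)\le \Aut_{\mc F}(R)$''. This rests entirely on the subgroup structure of $SL_2(p)$ over the prime field: the constraint that an essential automizer has order divisible by $p$ yet trivial $p$-core leaves no room below the full $SL_2(p)$. Once this is in hand, the remaining arguments are an appeal to Alperin's theorem and a short order count.
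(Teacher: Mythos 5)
Your proof is correct, but it reaches the conclusion by a genuinely different route than the paper. The paper, after establishing the criterion that $\mc F$ is $p$-stable if and only if no radical subgroup $R$ has $SL_2(p) \leq \Aut_{\mc F}(R)$, simply inspects the Ruíz--Viruel classification tables (Tables~9.1 and~9.2 in Craven's book) entry by entry and reads off which fusion systems on $P$ have such an automizer. You instead prove a structural lemma that makes the table inspection unnecessary: any essential subgroup $R$, being elementary Abelian of order $p^2$ with $p \mid |\Aut_{\mc F}(R)|$ and $O_p(\Aut_{\mc F}(R)) = 1$, automatically has $SL_2(p) \leq \Aut_{\mc F}(R)$, because all $p$-elements of $GL_2(p)$ are transvections and Dickson's theorem leaves no proper subgroup of $SL_2(p)$ of order divisible by $p$ with trivial $p$-core. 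This buys independence from the classification: your argument would work for any saturated fusion system on $P$ without knowing the full list, whereas the paper's argument is shorter given that the tables are at hand. You also make explicit two things the paper leaves implicit: that the essential-subgroup-free systems are exactly the $\mc F_P(P \rtimes H)$ with $p \ndiv |H|$ (via Alperin's theorem and the model of a constrained system), and that this exceptional system is section $p$-stable, which you derive from a clean order-counting argument showing $P \rtimes H$ cannot involve $Qd(p)$ (a $Qd(p)$-section would force a normal subgroup of order $p^3$ inside $Qd(p)$, contradicting $|O_p(Qd(p))| = p^2$), combined with Theorems~\ref{freeinv} and \ref{eqdefs}. Two small points deserve tightening: the assertion $O_p(X_0) = 1$ needs the one-line remark that $X_0 = \Aut_{\mc F}(R) \cap SL_2(p)$ is normal in $\Aut_{\mc F}(R)$ (so $O_p(X_0)$ is a normal $p$-subgroup of $\Aut_{\mc F}(R)$), and the phrase ``the Borel subgroup is, up to conjugacy, the unique proper subgroup of order divisible by $p$'' should say that every proper subgroup of order divisible by $p$ is contained in a conjugate of the Borel --- the conclusion you draw (nontrivial $p$-core) is what actually matters and is correct. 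Neither issue affects the validity of the argument.
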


Concerning solubility, we can establish that $\mc F$ is soluble if and
only if $P$ has a non-trivial strongly closed Abelian subgroup.
By Proposition~4.61 in~\cite[p. 129]{craven} applied to this case, $Q$ is
normal in $\mc F$ if and only if it is contained in every radical subgroup
of $P$.

Therefore, if $P$ has at least two radical subgroups, then the only
possibility for an $\mc F$-normal subgroup is $Z(P)$. However, $SL_2(p)$ is
contained in $\Aut_\mc F(R)$ for all fusion systems with at least two
radical subgroups. Hence $Z(P)$ is not fixed under the action of
$\Aut_\mc F(R)$, so $(Z(P) \ntriangleleft \mc F$ in this case.

If $P$ has exactly one radical subgroup $R$, then certainly $R
\triangleleft \mc F$, so $\mc F$ is soluble in this case. Since the group
$P \rtimes H$ with $p \ndiv |H|$ is $p$-soluble (in which case there are
no radical subgroups), its fusion system is trivially soluble.

Summarising this, we obtain:

\begin{prop}
Let $P$ be an extraspecial group of order $p^3$ and exponent $p$ and let
$\mc F$ be a fusion system on $P$. Then $\mc F$ is soluble if and only if
$P$ has at most one radical subgroup, that is, if $G \cong P \rtimes H$
with $p \ndiv |H|$ or $G \cong R \rtimes (SL_2(p) \rtimes C_r)$ with
$r | p-1$.
\end{prop}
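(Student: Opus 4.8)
The plan is to read off solubility from the number of radical subgroups of $P$, using the normal-subgroup criterion already recorded above. Throughout I would use the structural facts from the proof of the previous proposition: the essential subgroups of $P$ coincide with the radical ones, each is elementary Abelian of order $p^2$, and, being essential with $R \cong \F_p^2$, each such $R$ satisfies $SL_2(p) \leq \Aut_{\mc F}(R)$. Since $P$ is extraspecial of order $p^3$ with centre $Z(P)$ of order $p$, every such $R$ is a maximal subgroup and hence contains $Z(P)$.

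First I would treat the non-soluble direction. Suppose $P$ has at least two radical subgroups $R_1 \neq R_2$. As both have order $p^2$ and contain $Z(P)$, we get $R_1 \cap R_2 = Z(P)$, so the intersection of all radical subgroups is exactly $Z(P)$. If $\mc F$ were soluble, the first term $Q_1$ of a defining chain would be a non-trivial subgroup with $Q_1 \triangleleft \mc F$; by Proposition~4.61 of \cite{craven} it would lie in every radical subgroup, forcing $Q_1 = Z(P)$. But $Z(P)$ is not normal in $\mc F$: since $SL_2(p) \leq \Aut_{\mc F}(R)$ acts transitively on the $p+1$ lines of $R \cong \F_p^2$, it does not stabilise $Z(P)$, so $Z(P)$ is not even strongly $\mc F$-closed. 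This contradiction shows $\mc F$ is not soluble.

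Next I would treat the soluble direction, that is, the case when $P$ has at most one radical subgroup. If there is exactly one radical subgroup $R$, then by Proposition~4.61 of \cite{craven} we have $R \triangleleft \mc F$ (it is trivially contained in the only radical subgroup), so $\mc F = \mc N_{\mc F}(R)$ and $\mc F / R$ is defined on the cyclic group $P/R$ of order $p$; the chain $1 < R < P$ then witnesses solubility. If $P$ has no radical subgroup, then by Alperin's fusion theorem every $\mc F$-morphism is a restriction of an element of $\Aut_{\mc F}(P)$, whence $P \triangleleft \mc F$ and the one-step chain $1 < P$ already shows $\mc F$ is soluble.

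Finally I would identify the two soluble cases with the groups in the statement by inspecting Tables~9.1 and~9.2 of \cite{craven} (the Ruíz--Viruel classification \cite{ruiz:viruel:2004}): the systems with no radical subgroup are precisely the $\mc F_P(P \rtimes H)$ with $p \ndiv |H|$, and those with a single radical subgroup $R$ are precisely the fusion systems of $R \rtimes (SL_2(p) \rtimes C_r)$ with $r | p-1$, whose Sylow $p$-subgroup $R \rtimes C_p$ is again extraspecial of order $p^3$ and exponent $p$. I expect this last step to be the main obstacle: one must verify against the tables that no system with two or more radical subgroups is soluble and, conversely, that the $0$- and $1$-radical systems are exactly these two group-realisable families, so that the abstract count of radical subgroups matches the explicit group descriptions.
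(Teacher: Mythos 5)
Your proof is correct and takes essentially the same approach as the paper: both arguments rest on the criterion that a non-trivial $\mc F$-normal subgroup must lie in every radical subgroup, dispose of the case of two or more radical subgroups by noting that the only candidate $Z(P)$ is moved by $SL_2(p) \leq \Aut_{\mc F}(R)$, handle the case of at most one radical subgroup by exhibiting the normal chain, and defer the identification with the two group families to the classification tables of Ruíz and Viruel. Your only deviations are cosmetic: you derive $SL_2(p) \leq \Aut_{\mc F}(R)$ intrinsically from essentiality rather than from the tables, and you treat the zero-radical case via Alperin's fusion theorem (so that $P \triangleleft \mc F$) where the paper simply observes that $P \rtimes H$ with $p \ndiv |H|$ is $p$-soluble.
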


\section{Concluding remarks and questions}\label{finalrems}
In Sections~\ref{Qdsecdefchar} to \ref{sporadic} we have shown that
a finite simple group is $p$-stable if and only if it is section
$p$-stable. Moreover, we have proved that a non-$p$-stable simple group
contains a subgroup isomorphic to either $Qd(p)$ or $\widetilde{Q}(p)$,
or, if $p = 3$, $\widetilde{Qd}^-(3)$ or $3^2{:}(2^2{^.}SL_2(3))$. Also,
we determined the complete list of finite simple groups with this property
by showing that one of the above groups are contained in them.
We emphasise, however, that our list is not complete in the sense that a
finite simple group may contain more than one group from the above list
even if it has not been proven here. Also, it may contain a minimal
non-$p$-stable group not listed here.
%we are far from completely describing all sections
%of finite simple groups isomorphic to $Qd(p)$.

By all these, the question naturally arises: which groups are minimal
non-$p$-stable at all? By the results presented here, these groups have a
factor group isomorphic to $Qd(p)$, but this is not a sufficient condition:
in Example~\ref{pstabnonsectpstab_ex}, we have found a $p$-stable group with
$Qd(p)$ as a factor group. It might be a reachable project to determine all
minimal non-$p$-stable groups that occur as subgroups of finite simple
groups.
%In the present paper we have found the groups listed in the
%previous paragraph.

By an old result, if a group is soluble, then it is section $p$-stable, but
section $p$-stability does not imply solubility. For fusion systems, the
converse is true: if a fusion system is section $p$-stable, then it is
soluble, but a soluble fusion system need not be section $p$-stable (as for
the fusion system of $Qd(p)$ itself).

Also, for fusion systems of finite simple groups we have seen that
$p$-stability and section $p$-stability are equivalent notions.
%Again, the question arises whether this is a general phenomenon or there are
However, this is not a general phenomenon as the fusion system of the group
in Example~\ref{pstabnonsectpstab_ex} is $p$-stable but not section
$p$-stable. Nevertheless, all of our examples of $p$-stable fusion systems
are soluble as well. So the question arises: Are there $p$-stable fusion
systems that are not soluble?

As soluble fusion systems have models, we can also ask: Are there exotic
$p$-stable fusion systems? Recall that in Section~\ref{extraspecfussysts},
the exotic ones were all non-$p$-stable, so we do not have any examples for
that at the moment.

%As a weakening of our question on solubility, we may ask: Have all
%$p$-stable fusion system a non-trivial $O_p$?

\bibliographystyle{alpha}
\bibliography{pstable}

\newcommand{\etalchar}[1]{$^{#1}$}
\begin{thebibliography}{BCG{\etalchar{+}}05}

\bibitem[BCG{\etalchar{+}}05]{brotoetal}
C.~Broto, N.~Castellana, J.~Grodal, R.~Levi, and B.~Oliver.
\newblock Subgroup families controlling $p$-local finite groups.
\newblock {\em Proc. London Math. Soc.}, 91:325–354, 2005.

\bibitem[CCN{\etalchar{+}}85]{atlas}
J.~H. Conway, R.~T. Curtis, P.~S. Norton, R.~A. Parker, and R.~A. Wilson.
\newblock {\em Atlas of Finite Groups}.
\newblock Oxford University Press, 1985.

\bibitem[Cra11]{craven}
D.~A. Craven.
\newblock {\em The theory of fusion systems: An algebraic approach}, volume 131
  of {\em Cambridge Studies in Advanced Mathematics}.
\newblock Cambridge University Press, 2011.

\bibitem[DH72]{doerk:hawkes}
K.~Doerk and T.~Hawkes.
\newblock {\em Finite soluble groups}.
\newblock De Gruyter, Berlin, 1972.

\bibitem[EZ11]{emmett:zalesski:2011}
L.~Emmett and A.E. Zalesski.
\newblock On regular orbits of elements of classical groups in their
  permutation representations.
\newblock {\em Comm. Alg.}, 39:3356 -- 3409, 2011.

\bibitem[FF09]{flores:foote:2009}
R.~J. Flores and R.~M. Foote.
\newblock Strongly closed subgroups of finite groups.
\newblock {\em Advances in Math}, 222(2):453--484, 2009.

\bibitem[Gag76]{gagen}
T.~M. Gagen.
\newblock {\em Topics in Finite Groups}, volume~16 of {\em London Mathematical
  Society Lecture Notes Series}.
\newblock Cambridge University Press, 1976.

\bibitem[GL83]{gorenstein:lyons:83}
D.~Gorenstein and R.~Lyons.
\newblock {\em The local Structure of Finite Groups of Characteristic 2 Type}.
\newblock Number 276 in Memoirs of the AMS. AMS, 1983.

\bibitem[Gla68]{glauberman:68}
G.~Glauberman.
\newblock A characteristic subgroup of a $p$-stable group.
\newblock {\em Canad. J. Math.}, 20:1101--1135, 1968.

\bibitem[Gla71]{glauberman:71}
G.~Glauberman.
\newblock Global and local properties of finite groups.
\newblock In M.~B. Powell and G.~Higman, editors, {\em Finite Simple Groups,
  Proc. of an instructional conference organized by the LMS}, pages 1--63.
  Academic Press, 1971.

\bibitem[Gor68]{gorenstein}
D.~Gorenstein.
\newblock {\em Finite groups}.
\newblock New York, Harper and Row, 1968.

\bibitem[Gor07]{gorenstein:07}
D.~Gorenstein.
\newblock {\em Finite Groups}.
\newblock AMS Chelsea Publishing, 2007.

\bibitem[GW64]{gorenstein:walter:64}
D.~Gorenstein and J.~H. Walter.
\newblock On the maximal subgroups of finite simple groups.
\newblock {\em J. Algebra}, 1:168--213, 1964.

\bibitem[HB98]{Huppert:Blackburn}
B.~Huppert and N.~Blackburn.
\newblock {\em Finite Groups {I}{I}{I}}.
\newblock Springer Verlag, Berlin Heidelberg NewYork, 198.

\bibitem[HB82]{HuppertII}
B.~Huppert and N.~Blackburn.
\newblock {\em Finite Groups {I}{I}}, volume 242 of {\em Grundlehren der
  mathematischen Wissenschaften}.
\newblock Springer Verlag, Berlin Heidelberg NewYork, 1982.

\bibitem[Hup70]{huppert:1970}
B.~Huppert.
\newblock Singer-zyklen in klassischen gruppen.
\newblock {\em Math. Z.}, 117:141--150, 1970.

\bibitem[Hup83]{HuppertI}
B.~Huppert.
\newblock {\em Endliche Gruppen {I}}, volume 134 of {\em Grundlehren der
  mathematischen Wissenschaften}.
\newblock Springer Verlag, Berlin Heidelberg NewYork, Tokyo, 1983.

\bibitem[KL90]{kleidman:liebeck}
P.~B. Kleidman and M.~W. Liebeck.
\newblock {\em The subgroup structure of the finite classical groups}, volume
  129 of {\em LMS Lecture Notes}.
\newblock Cambridge University Press, 1990.

\bibitem[KL08]{kessar:linckelmann:2008}
R.~Kessar and M.~Linckelmann.
\newblock Z{J}-theorems for fusion systems.
\newblock {\em Transactions of the American Mathematical Society},
  360(2):3093--3106, 2008.

\bibitem[Kle88]{kleidman:1988}
P.~B. Kleidman.
\newblock The maximal subgroups of the steinberg triality groups ${^3D}_4(q)$
  and of their automorphism groups.
\newblock {\em J. Algebra}, 115:182--199, 1988.

\bibitem[LSS92]{liebeck:saxl:seitz}
M.~Liebeck, J.~Saxl, and G.~Seitz.
\newblock Subgroups of maximal rank in finite exceptional groups of lie type.
\newblock {\em Proc. London Math. Soc.}, 65(3):297--325, 1992.

\bibitem[Mal91]{malle:1991}
G.~Malle.
\newblock The maximal subgroups of ${^2F}_4(q)$.
\newblock {\em J. Algebra}, 139:52--69, 1991.

\bibitem[Pui06]{puig:06}
L.~Puig.
\newblock Frobenius categories.
\newblock {\em J. Algebra}, 303:309--357, 2006.

\bibitem[RV04]{ruiz:viruel:2004}
A.~Ruíz and A.~Viruel.
\newblock The classification of $p$-local finite groups over the extraspecial
  group of order $p^3$ and exponent $p$.
\newblock {\em Math. Z.}, 248:45--65, 2004.

\bibitem[SGL05]{solomon}
R.~Solomon, D.~Gorenstein, and R.~Lyons.
\newblock {\em The classification of the finite simple groups}, volume 6
  Volumes.
\newblock American Mathematical Society, 1994–2005.

\bibitem[Wei55]{Weir:1955}
A.~Weir.
\newblock Sylow $p$-subgroups of the classical groups over finite fields with
  characteristic prime to $p$.
\newblock {\em Proc. AMS}, 6:529 -- 533, 1955.

\bibitem[WWT{\etalchar{+}}]{onlineatlas}
R.~Wilson, P.~Walsh, J.~Tripp, I.~Suleiman, R.~Parker, S.~Norton, S.~Nickerson,
  S.~Linton, J.~Bray, and R.~Abbott.
\newblock Atlas of finite group representations.
\newblock http://brauer.maths.qmul.ac.uk/Atlas/v3/.

\end{thebibliography}

\end{document}